\newtheorem{thm}{Theorem}[section]
\newtheorem{cor}[thm]{Corollary}
\newtheorem{lem}[thm]{Lemma}
\newtheorem{prop}[thm]{Proposition}
\newtheorem{conj}[thm]{Conjecture}
\theoremstyle{definition}
\newtheorem{defn}[thm]{Definition}
\newtheorem*{ack}{Acknowledgements}
\numberwithin{equation}{section}
\newcommand{\Z}{\mathbb{Z}}
\newcommand{\hc}{\mathbf{H}^2_{\mathbb{C}}}
\begin{document}

\title[]{ Spherical CR uniformization of   the   magic 3-manifold }
\author{Jiming Ma}
\address{School of Mathematical Sciences, Fudan University, Shanghai, 200433, P. R. China}
\email{majiming@fudan.edu.cn}
\author{Baohua Xie}
\address{School of Mathematics, Hunan University, Changsha, 410082, China
}
\email{xiexbh@hnu.edu.cn}

\keywords{Complex hyperbolic geometry, Spherical CR uniformization, Triangle groups, Cusped hyperbolic 3-manifolds.}

\subjclass[2010]{20H10, 57M50, 22E40, 51M10.}

\date{\today}

\thanks{The first author was supported by   NSFC (No.12171092).  The
second author  was supported by NSFC (No.11871202, No.12271148).}
\maketitle

\begin{abstract} 	
We show the 3-manifold at infinity of the complex hyperbolic triangle group  $\Delta_{3,\infty,\infty;\infty}$    is  the three-cusped   magic 3-manifold $6_1^3$. We also show the 3-manifold at infinity of the complex hyperbolic triangle group  $\Delta_{3,4,\infty;\infty}$ is  the two-cusped   3-manifold  $m295$ in the Snappy  Census, which is a 3-manifold obtained by Dehn filling on one cusp of $6_1^3$. In particular,  hyperbolic 3-manifolds $6_1^3$ and  $m295$  admit  spherical CR uniformizations.

These results support our conjecture that  the 3-manifold at infinity of the complex hyperbolic triangle group $\Delta_{3,n,m;\infty}$ is the one-cusped hyperbolic 3-manifold obtained from the    magic manifold  $6_1^3$  via Dehn fillings   with filling slopes $(n-2)$ and $(m-2)$ on the first two cusps of  it.
\end{abstract}



\section{Introduction}

\subsection{Motivation}
Thurston's work on 3-manifolds has shown that geometry has an important role to play in the study of topology of 3-manifolds.
There is a very close relationship between the topological properties of 3-manifolds and the existence of geometric structures.
A {\it spherical CR-structure} on a smooth 3-manifold $M$ is a maximal collection of distinguished charts modeled on the boundary $\partial \mathbf{H}^2_{\mathbb C}$
of the complex hyperbolic space $\mathbf{H}^2_{\mathbb C}$, where coordinates changes  are restrictions of elements of  $\mathbf{PU}(2,1)$.
In other words, a {\it spherical CR-structure} is a $(G,X)$-structure with $G=\mathbf{PU}(2,1)$ and $X=S^3$. In contrast to the results on other geometric structures carried on 3-manifolds, there are relatively few examples known about spherical CR-structures.

In general, it is very difficult to determine whether a 3-manifold admits a spherical CR-structure or not. Some of the first examples were given by
Burns-Shnider \cite{BS:1976}. Three-manifolds with $Nil^{3}$-geometry naturally admit such structures, but by Goldman \cite{Goldman:1983}, any closed 3-manifold with Euclidean or $Sol^3$-geometry
does not admit such structures.

We are interested in an important class of spherical CR-structures, called uniformizable spherical CR-structures. A spherical CR-structure on a 3-manifold $M$  is {\it uniformizable} if it is
obtained as $M=\Gamma\backslash \Omega_{\Gamma}$, where  $\Omega_{\Gamma}\subset \partial \mathbf{H}^2_{\mathbb C}$ is discontinuity region of a discrete subgroup  $\Gamma$ acting on $\partial \mathbf{H}^2_{\mathbb C}=S^3$.
Constructing discrete subgroups of $\mathbf{PU}(2,1)$  can be used to constructed spherical CR-structures on 3-manifolds.
Thus, the study of the geometry of discrete subgroups of  $\mathbf{PU}(2,1)$ is crucial to the understanding of uniformizable spherical CR-structures.
Complex hyperbolic triangle groups provide  rich examples of such discrete subgroups. As far as we know, almost all known examples of uniformizable spherical CR-structures are relate to complex hyperbolic triangle groups.

Let $\Delta_{p,q,r}$ be the abstract $(p,q,r)$ reflection triangle group with the presentation
$$\langle \sigma_1, \sigma_2, \sigma_3 | \sigma^2_1=\sigma^2_2=\sigma^2_3=(\sigma_2 \sigma_3)^p=(\sigma_3 \sigma_1)^q=(\sigma_1 \sigma_2)^r=id \rangle,$$
where $p,q,r$ are positive integers or $\infty$ satisfying $1/p+1/q+1/r<1$. One can choose the integers so that $p \leq q \leq r$. If $p,q$ or $r$ equals $\infty$, then
the corresponding relation does not appear.
A \emph{complex hyperbolic $(p,q,r)$ triangle group} is a representation $\rho$ of $\Delta_{p,q,r}$ into $\mathbf{PU}(2,1)$
where the generators fix complex lines, we denote $\rho(\sigma_{i})$ by $I_{i}$. It is well known  that the space of $(p,q,r)$-complex reflection triangle groups has real dimension one if $3 \leq p \leq q \leq r$. Sometimes, we denote the  representation of the triangle group $\Delta_{p,q,r}$ into $\mathbf{PU}(2,1)$ such that $I_1 I_3I_2 I_3$ of order $n$ by $\Delta_{p,q,r;n}$.

Richard Schwartz has conjectured the necessary and sufficient condition for a complex hyperbolic $(p,q,r)$ triangle group $\langle I_1,I_2,I_3\rangle < \mathbf{PU}(2,1)$ to be a discrete and faithful  representation of $\Delta_{p,q,r}$ \cite{schwartz-icm}. Schwartz's conjecture has been proved in a few cases.

We now provide a brief historical overview, before discussing our results.
Schwartz proved the following theorem, first conjectured by Goldman and Parker \cite{GoPa}.
\begin{thm}[Schwartz \cite{Schwartz:2001ann, schwartz:2006}]
	Let $\Gamma=\langle I_1, I_2, I_3 \rangle < \mathbf{PU}(2,1)$ be a complex hyperbolic ideal triangle group.
	If $I_1 I_2 I_3$ is not elliptic, then $\Gamma$ is discrete and faithful.
	Moreover, if $I_1 I_2 I_3$ is elliptic, then $\Gamma$ is not discrete.
\end{thm}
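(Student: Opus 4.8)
The plan is to treat the two assertions separately, since they have very different depth. The ``moreover'' --- ellipticity of $W:=I_1I_2I_3$ forces non-discreteness --- is the elementary direction: from the explicit one-parameter family one reads off the conjugacy type of $W$ as a function of $t$, and when $W$ is elliptic one produces inside $\Gamma$ a sequence of pairwise distinct elements accumulating at the identity --- either directly, because $W$ (or a short word built from $W$ and the $I_j$'s) is an elliptic element whose rotation angle is not commensurable with $\pi$, or via a J{\o}rgensen/Shimizu-type obstruction coming from the elliptic $W$ together with a parabolic of $\Gamma$ whose fixed point is moved by $W$. In either case $\Gamma$ fails to be discrete. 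So the substance is the first assertion: if $W$ is \emph{not} elliptic, then $\Gamma$ is discrete and $\rho$ is faithful.

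For this I would run a combination (ping-pong) argument on the ideal boundary $\partial\mathbf{H}^2_{\mathbb C}=S^3$, following the strategy of Schwartz's proof. First reduce to the index-two \emph{even subgroup} $\Gamma_0=\langle A,B\rangle$ with $A=I_1I_2$ and $B=I_2I_3$: abstractly $\Gamma_0$ is free of rank two, the three elements $A$, $B$, $AB=I_1I_3$ are the images of the $\sigma_i\sigma_j$ and are all parabolic (the complex lines fixed by $I_1,I_2,I_3$ are mutually asymptotic in the ideal case), each with a single fixed point $p_A,p_B,p_{AB}\in S^3$. Since $[\Gamma:\Gamma_0]=2$ and the only torsion in the abstract group is conjugate to the $\sigma_i$, whose images $I_i$ are nontrivial complex reflections, it suffices to prove that $\rho|_{\Gamma_0}$ is discrete and faithful.

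Now attach to each parabolic fixed point an invariant \emph{horotube}: a solid-torus region $T_A\ni p_A$ preserved by $\langle A\rangle$, and similarly $T_B$, $T_{AB}$, each modeled in Heisenberg coordinates (with the relevant fixed point sent to $\infty$) on a neighborhood invariant under a Heisenberg translation. The heart of the proof is to choose these horotubes, pairwise disjoint, so that the ping-pong inequalities hold: every nonzero power of $A$ carries $S^3\setminus T_A$ into $T_A$, and likewise for $B$ and $AB$. Granting these inclusions, the ping-pong lemma shows that any nonempty reduced word in $A^{\pm1},B^{\pm1}$ moves the complement of the horotubes off itself, hence is nontrivial (faithfulness), and that $\Gamma_0$ acts properly discontinuously on the union of its translates of that complement (discreteness); taking that complement, suitably closed up, as a fundamental domain and running the Poincar\'e polyhedron theorem then also identifies the quotient and pins down the cusp structure.

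The main obstacle is exactly the verification of these ping-pong inclusions, \emph{uniformly} over the whole interval $0\le t\le t_2=\sqrt{125/3}$. This requires concrete control of the action of $\mathbf{PU}(2,1)$ on $S^3$ --- the geometry of $\mathbb{C}$-circles and $\mathbb{R}$-circles, of bisectors and their spinal spheres, and sharp estimates on how $A,B,AB$ distort the chosen horotubes. The threshold $t_2$ is precisely the parameter at which two translated horotubes first become tangent, so the estimates cannot be slack near the endpoint; in practice this is where a rigorous interval-arithmetic computation is the working tool. One finally checks that at $t=t_2$ itself the configuration is still admissible ($W$ parabolic, the horotubes tangent but with disjoint interiors), which closes the interval $(t_1,t_2]$ left undecided by Goldman and Parker. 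An alternative --- the route of Schwartz's original argument --- replaces the horotubes by an explicit bisector polyhedron and applies the Poincar\'e theorem directly; the combinatorial bookkeeping is heavier, but the logical skeleton (build a fundamental domain, verify the side-pairing and cycle conditions up to the critical parameter) is the same.
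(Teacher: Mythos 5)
This statement is quoted background (Schwartz's resolution of the Goldman--Parker conjecture, \cite{Schwartz:2001ann}); the paper contains no proof of it, so your sketch can only be judged on its own merits, and as it stands it has a genuine gap. The skeleton you describe --- pass to the even subgroup $\Gamma_0=\langle A,B\rangle$, surround the parabolic fixed points of $A$, $B$, $AB$ by pairwise disjoint invariant regions, and run ping-pong / the Poincar\'e polyhedron theorem --- is the right shape, but the entire content of the theorem is concentrated in the step you defer with ``choose these horotubes, pairwise disjoint, so that the ping-pong inequalities hold.'' That existence statement \emph{is} the theorem. The naive candidates (isometric spheres, bisector-bounded Dirichlet domains, round Cygan horotubes) are essentially what Goldman and Parker used, and they provably fail for $t$ between $t_1=\sqrt{105/3}$ and $t_2=\sqrt{125/3}$; this is why the conjecture stayed open. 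Schwartz's fundamental domain is \emph{not} a bisector polyhedron and is not obtained by tuning constants in a horotube ping-pong: its faces are hybrid spheres/hybrid sectors foliated by arcs of $\mathbb{R}$-circles, a construction specifically invented because the totally geodesic and equidistant candidates do not work near $t_2$. Presenting that as ``sharp estimates, in practice interval arithmetic'' conceals that a new geometric object must be built before there is anything to estimate; without it the argument is circular.

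The ``moreover'' direction is also not as elementary as you suggest. When $I_1I_2I_3$ is elliptic of \emph{finite} order (a dense set of parameters in the elliptic range), the ``irrational rotation angle'' mechanism produces nothing, and there is no off-the-shelf J{\o}rgensen/Shimizu inequality in $\mathbf{PU}(2,1)$ that you can simply cite against an elliptic--parabolic pair; the known analogues (Kamiya, Parker) have hypotheses that must be verified parameter by parameter. Indeed Goldman and Parker only concluded that $\Gamma_t$ is not a \emph{discrete embedding} for $t>t_2$; upgrading this to ``not discrete'' for every elliptic parameter is part of what Schwartz actually had to prove. So both halves of your outline need substantial additional input before they constitute a proof.
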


Furthermore, he analyzed the group when $I_1 I_2 I_3$ is parabolic.
\begin{thm}[Schwartz \cite{Schwartz:2001acta}]
	Let $\Gamma=\langle I_1, I_2, I_3 \rangle$ be the complex hyperbolic ideal triangle group with $I_1 I_2 I_3$ being parabolic.
	Let $\Gamma'$ be the even subgroup of $\Gamma$. Then the manifold at infinity of the quotient ${\bf H}^2_{\mathbb C}/{\Gamma'}$ is commensurable with the
	Whitehead link complement in the 3-sphere.
\end{thm}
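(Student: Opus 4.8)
The plan is to realize the manifold at infinity as an explicit quotient of the domain of discontinuity and then to recognize that quotient by its hyperbolic structure. I would work in the Heisenberg model $\partial\hc\setminus\{\infty\}\cong\mathbb{C}\times\Real$, in which the one-parameter family $\Gamma_t$ of complex hyperbolic ideal triangle groups has a concrete normal form, and first pin down the critical parameter $t=t_2=\sqrt{125/3}$ at which $W:=I_1I_2I_3$ becomes parabolic; this is a single algebraic condition placing the trace invariant of $W$ on the relevant discriminant curve. By the Goldman--Parker conjecture, which Schwartz's theorem above lets us assume, $\Gamma=\Gamma_{t_2}$ is discrete and faithful. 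Hence the even subgroup $\Gamma'=\langle I_1I_2,\,I_1I_3\rangle$ is free of rank two (an index-two subgroup of $\mathbb{Z}/2\ast\mathbb{Z}/2\ast\mathbb{Z}/2$ with Euler characteristic $-1$), so it is torsion free, acts freely on $\Omega_\Gamma$, and $M:=\Omega_\Gamma/\Gamma'$ is a genuine open $3$-manifold whose ends are cusps arising from the parabolic subgroups generated by the $I_iI_j$ and by $W$.

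Next I would build a $\Gamma$-invariant polyhedral decomposition of $\hc\cup\Omega_\Gamma$. The natural walls are bisectors (spinal spheres): choose a wall $S_i$ "equidistant" between a candidate fundamental polyhedron $P$ for $\Gamma$ and its image $I_iP$, together with a bisector adapted to $W$, arranged so that the finite vertices of $P$ sit at the parabolic fixed points of $W$ and of the $I_iI_j$. Then $P\cap\Omega_\Gamma$ is an ideal polyhedron each of whose ideal vertices is collared by a horotube, a solid-torus neighborhood on which the relevant parabolic acts by a screw motion, and a fundamental domain for $\Gamma'$ on $\Omega_\Gamma$ is obtained by adjoining a single coset translate of $P$. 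The heart of the argument is to verify the hypotheses of the Poincar\'e polyhedron theorem in this setting: that the chosen bisectors are disjoint or meet cleanly, that the side pairings induced by the $I_i$ satisfy the ridge (cycle) conditions, and that the $\Gamma$-translates of $P$ tile $\hc\cup\Omega_\Gamma$.

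Granting the tessellation, the side-pairing data yield an ideal cell decomposition of $M=\Omega_\Gamma/\Gamma'$ together with a presentation of $\pi_1 M$. Subdividing into ideal tetrahedra and solving Thurston's gluing and completeness equations (in practice, feeding the triangulation to a tool such as SnapPy) produces the complete hyperbolic metric on $M$, whose invariants---volume, cusp shapes, invariant trace field---one matches against those of the Whitehead link complement. Because $\Gamma'$ has index two in $\Gamma$ and the fundamental domain was built for the full group, $M$ comes equipped with an explicit covering relation to the orbifold $\Omega_\Gamma/\Gamma$; comparing this with the standard two-fold and orbifold covers of the Whitehead link complement upgrades the numerical match to an honest commensurability. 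Commensurability, rather than homeomorphism, is the right statement: the even subgroup is merely one convenient choice of torsion-free finite-index subgroup, and another choice would in general change $M$ within its commensurability class.

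The main obstacle, to emphasize, is the fundamental-domain step: exhibiting walls that genuinely tessellate and checking the Poincar\'e conditions when the walls are not totally geodesic---equivalently, identifying the limit set precisely as the fractal complement of $\bigcup_{\gamma\in\Gamma}\gamma P$. Bisectors in $\hc$ intersect in the awkward Giraud and cospinal configurations, so the incidence pattern of the walls near the parabolic fixed points has to be analyzed essentially case by case, and controlling it as the parameter degenerates to $t_2$ (where the $W$ fixed point joins the limit set and a new cusp appears) is the delicate point. Once that combinatorial skeleton is in place, reading off $M$ and verifying that it is commensurable with the Whitehead link complement is comparatively mechanical.
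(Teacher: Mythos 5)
First, note that the paper does not prove this statement: it is quoted as background (Schwartz, Acta Math.\ 2001), so there is no in-paper proof to compare against. Your outline is essentially the Ford-domain/Poincar\'e-polyhedron strategy that the present authors use for \emph{their} theorems on $\Delta_{3,\infty,\infty;\infty}$ and $\Delta_{3,4,\infty;\infty}$, and that Parker--Will used for $\Delta_{3,3,\infty;\infty}$. It is a legitimate modern route, but it is not Schwartz's argument: Schwartz avoided bisector walls precisely because the incidence pattern of bisectors for the ideal triangle group is intractable (this is where Goldman--Parker's original Dirichlet-domain approach bogged down), and instead built his fundamental domain out of $\mathbb{R}$-spheres ruled by $\mathbb{R}$-circles, with the cusp structure controlled by his horotube machinery. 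So you should not present ``the natural walls are bisectors'' as an unproblematic choice for this particular group; for the theorem at hand that choice is exactly the step most likely to fail or to require a completely different kind of wall.

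The concrete gap is in your last step. Producing a triangulation, solving the gluing equations, and observing that the volume, cusp shapes and trace field agree with those of the Whitehead link complement does not prove commensurability: numerically coincident invariants do not certify a common finite cover, and ``comparing with the standard two-fold and orbifold covers'' is not an argument. The rigorous endgame (the one this paper carries out for its own results) is to extract from the side-pairings an explicit finite presentation of $\pi_1(M)$, exhibit an isomorphism with the fundamental group of a concrete finite cover of the Whitehead link complement (e.g.\ verified in Magma), and then invoke irreducibility together with geometrization/Mostow to upgrade the group isomorphism to a homeomorphism, whence commensurability. Two smaller points: the parabolic fixed points of $W$ and of the $I_iI_j$ lie on $\partial\mathbf{H}^2_{\mathbb C}$, so they are ideal vertices of $P$, not finite ones; and discreteness of $\Gamma_{t_2}$ should be credited to Schwartz's direct analysis of the degenerate group rather than derived circularly from ``the Goldman--Parker conjecture, which Schwartz's theorem lets us assume.''
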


Recently, Schwartz's conjecture was shown for complex hyperbolic $(3,3,n)$ triangle groups with positive integer $n\geq 4$ in \cite{ParkerWX:2016} and $n=\infty$ in \cite{ParkerWill:2016}.
\begin{thm}[Parker, Wang and Xie \cite{ParkerWX:2016}, Parker and Will \cite{ParkerWill:2016}]
	Let $n \geq 4$, and let $\Gamma=\langle I_1, I_2, I_3 \rangle$ be a complex hyperbolic $(3,3,n)$ triangle group.
	Then $\Gamma$ is a discrete and faithful representation of  the  $(3,3,n)$ triangle group if and only if $I_1 I_3 I_2 I_3$ is not elliptic.
\end{thm}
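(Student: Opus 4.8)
I only sketch the plan; the details occupy \cite{ParkerWX:2016} and \cite{ParkerWill:2016}. The ``only if'' direction is essentially formal. Realise $\Delta_{3,3,n}$ as the full $(3,3,n)$ reflection group acting on the hyperbolic plane $\mathbf{H}^2$. Then $\sigma_1\sigma_3\sigma_2\sigma_3=(\sigma_1\sigma_3)(\sigma_2\sigma_3)$ is the product of the reflections in the geodesics $g_1$ and $\sigma_3(g_2)$, and one checks that for $n\ge 4$ these two geodesics are disjoint, so this element has no fixed point in $\mathbf{H}^2$ and in particular has infinite order in $\Delta_{3,3,n}$. Hence in any faithful representation $\rho$ the image $W=I_1 I_3 I_2 I_3$ has infinite order; if $W$ were elliptic it would be an infinite-order elliptic, so its closure in the maximal compact subgroup fixing its fixed point is not discrete, and therefore $\langle W\rangle$ --- hence $\Gamma$ --- is not discrete. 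Thus ``$W$ elliptic'' forces ``$\Gamma$ not discrete or not faithful'', which is the contrapositive of the claim.

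The substance is the converse. I would first fix a normal form: up to conjugacy in $\mathbf{PU}(2,1)$ one may take $I_1,I_2,I_3$ to be the order-two complex reflections in three complex lines whose polar vectors realise the angles $\pi/3$, $\pi/3$, $\pi/n$ imposed by $(\sigma_2\sigma_3)^3=(\sigma_3\sigma_1)^3=(\sigma_1\sigma_2)^n=\mathrm{id}$, leaving a single real modulus $t$ (for instance the Cartan angular invariant of the polar vectors, or a normalisation of $\mathrm{tr}(W)$). A direct trace computation then shows that $W=I_1 I_3 I_2 I_3$ is loxodromic on one arc of the modulus line, parabolic at a single transition value $t_\star=t_\star(n)$, and elliptic on the complementary arc; so the theorem asserts that $\Gamma_t$ is discrete and faithful precisely on the (closed) non-elliptic arc.

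To prove discreteness and faithfulness throughout that arc, the plan is to construct an explicit fundamental domain and apply the Poincar\'e polyhedron theorem for $\mathbf{H}^2_{\mathbb C}$. I would use the order-two symmetry of $\Delta_{3,3,n}$ interchanging the two order-three vertices, passing if convenient to the index-two ``even'' subgroup $\langle I_1 I_3,\,I_2 I_3\rangle$ --- a quotient of $\mathbb{Z}/3\ast\mathbb{Z}/3$ that contains $W=(I_1 I_3)(I_2 I_3)$ --- and build a polyhedron bounded by bisectors (equidistant loci from suitable fixed points), with the bounding bisectors paired by the generators and their inverses. One then shows the polyhedron is embedded, determines its ridge (edge) cycles and checks the cycle relations together with the local tessellation condition along every ridge; the Poincar\'e theorem then delivers at once the discreteness of $\Gamma_t$ and a presentation, which one matches with the abstract $(3,3,n)$ presentation to obtain faithfulness. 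Restricting everything to $\partial\mathbf{H}^2_{\mathbb C}=S^3$ and computing the quotient of the domain of discontinuity identifies the manifold at infinity.

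The hard part will be verifying the hypotheses of the Poincar\'e polyhedron theorem: bisectors in $\mathbf{H}^2_{\mathbb C}$ are not totally geodesic, their pairwise intersections (Giraud discs, tangencies) can be delicate, and the combinatorics of the polyhedron must be controlled \emph{uniformly in the parameter $t$} along the whole non-elliptic arc --- most critically at the endpoint $t=t_\star$, where the domain acquires cusps at the fixed point of the parabolic $W$ and one must invoke the cusped version of the theorem and analyse the horospherical cross-sections there. This parabolic endpoint is exactly where the ``magic manifold'' and the Dehn-surgery picture motivating the present paper make contact with the construction. A secondary technical point is to pin down rigorously the loxodromic/parabolic/elliptic trichotomy for $\mathrm{tr}(W)$ and to confirm that the boundary construction genuinely closes up at $t_\star$ rather than degenerating.
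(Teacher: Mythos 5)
This statement is quoted from \cite{ParkerWX:2016} and \cite{ParkerWill:2016}; the present paper gives no proof of it, so there is nothing internal to compare against. Your sketch correctly reproduces the architecture of the cited proofs: the ``only if'' direction via the observation that $\sigma_1\sigma_3\sigma_2\sigma_3$ has infinite order in the abstract group (so an elliptic image is either non-faithful or an infinite-order elliptic, whose closure in the compact point-stabilizer is non-discrete), and the ``if'' direction via a bisector-bounded fundamental domain for (a coset decomposition of) the even subgroup, verified uniformly along the non-elliptic arc with the Poincar\'e polyhedron theorem and special care at the parabolic endpoint. This matches the strategy actually used in those references; the only caveat is that your write-up is an outline, with the substantive work (the combinatorics of the polyhedron and the tessellation and completeness checks) explicitly deferred to them.
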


There are some interesting results on complex hyperbolic $(3,3,n)$ triangle groups with $I_1I_3I_2I_3$ being parabolic.
\begin{thm} [Deraux and Falbel \cite{DerauxF:2015}, Deraux \cite{Deraux:2015} and Acosta \cite{Acosta:2019}]\label{thm:Acosta}
	Let $4 \leq n \leq +\infty $, and let $\Gamma=\langle I_1, I_2, I_3 \rangle$ be a complex hyperbolic $(3,3,n)$ triangle group with $I_1I_3I_2I_3$ being parabolic.
	Let $\Gamma'$ be the even subgroup of $\Gamma$. Then the manifolds at infinity of  the quotient ${\bf H}^2_{\mathbb C}/{\Gamma'}$ is obtained from  Dehn surgery on  one of the cusps of the Whitehead link complement with slope $n-2$.
\end{thm}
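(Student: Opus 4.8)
The plan is to realize the group by explicit matrices and then cut out a fundamental domain at infinity. Inside the one-real-parameter family of complex hyperbolic $(3,3,n)$ reflection triangle groups, the condition that $I_1I_3I_2I_3$ be parabolic is a single equation on the trace of that element, and, exactly as in the discreteness theorem of Parker-Wang-Xie and Parker-Will quoted above, it singles out the endpoint of the interval of discreteness; so for each $n$ with $4\leq n\leq\infty$ I would fix explicit $I_1,I_2,I_3\in\mathrm{SU}(2,1)$ realizing this group and pass to the even (index-two) subgroup $\Gamma'=\langle I_1I_2, I_2I_3\rangle$, whose manifold at infinity is what we want to understand.

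Next I would build a fundamental domain for the action of $\Gamma'$ on the domain of discontinuity $\Omega_\Gamma\subset S^3=\partial\mathbf{H}^2_{\mathbb C}$. The natural candidate is a polyhedron $D\subset\overline{\mathbf{H}^2_{\mathbb C}}$ bounded by bisectors (in the style of Mostow, Goldman and Schwartz) and their boundary spinal spheres, with faces paired by the generators of $\Gamma'$ and their inverses and with combinatorics dictated by the $(3,3,n)$ triangle and by the parabolic fixed points. The heart of the argument is then the verification of the Poincar\'e polyhedron theorem for $\mathbf{PU}(2,1)$: one must check that the ridge and edge cycles close up with the correct cumulative angles, that $D$ tiles a neighborhood of each of its faces, ridges and edges, and that the cusp cross-sections are complete. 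This yields at once discreteness of $\Gamma$ (hence of $\Gamma'$), the fact that $D\cap\Omega_\Gamma$ is a fundamental domain for $\Gamma'$ acting on $\Omega_\Gamma$, a presentation of $\pi_1$ of the quotient $3$-manifold $M_n=\Omega_\Gamma/\Gamma'$, and the topology of its ends.

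With $D$ in hand, identifying $M_n$ is comparatively mechanical: the side identifications on $D\cap\Omega_\Gamma$ give an explicit ideal triangulation (or a surgery description) of $M_n$ together with its peripheral tori, and feeding this to SnapPy, or recognizing the gluing directly, shows that $M_n$ is obtained from the Whitehead link complement by Dehn surgery of slope $n-2$ on one cusp; for $n=\infty$ the surgery is trivial, $M_\infty$ is the Whitehead link complement itself, and this anchors the family. To avoid redoing the fundamental-domain analysis for every $n$ separately, I would carry it out once and then interpolate by a spherical CR Dehn surgery principle, in the spirit of Acosta's work: as $n$ varies the cusp coming from the order-$n$ element $I_1I_2$ is modified by a controlled change of its horospherical cross-section, which on $\Omega_\Gamma/\Gamma'$ is precisely the $(n-2)$-Dehn filling of that cusp of the Whitehead link complement.

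The hard part will be the second step. Bisectors are not totally geodesic, and their pairwise and triple intersections are governed by Giraud's theorem rather than by linear algebra, so proving that the proposed $D$ genuinely has the claimed face/ridge/edge structure, and that all the cycle and local-tiling hypotheses of the Poincar\'e polyhedron theorem are satisfied, is a delicate and computation-heavy verification. Once $D$ is established, the SnapPy identification of $M_n$ and the Dehn-surgery interpolation over $n$ are routine by comparison.
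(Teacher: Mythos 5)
This statement is quoted in the paper as background (it is Theorem 1.5, attributed to Deraux--Falbel, Deraux and Acosta) and is not proved there, so the only meaningful comparison is with the strategy of the cited works and with the paper's own analogous constructions. Your outline --- explicit matrices at the parabolic boundary of the discreteness interval, a bisector-bounded Ford domain whose combinatorics are verified via Giraud intersections and the Poincar\'e polyhedron theorem, identification of the manifold at infinity from the induced side pairings on the domain of discontinuity, and Acosta's spherical CR Dehn surgery to interpolate over $n$ from the Whitehead link complement at $n=\infty$ --- is essentially the approach those proofs (and the paper's own Sections \ref{sec-Ford-3pp}--\ref{section:3-mfd-34p} for Theorems \ref{thm:3pp} and \ref{thm:34p}) actually follow.
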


Note that the choice of the  meridian-longitude systems of the  Whitehead link complement in Theorem  \ref{thm:Acosta} is different from that in \cite{Acosta:2019}. The  meridian-longitude systems chosen here is from \cite{MartelliP:2006}, which  seems more coherent for the manifolds at infinity of the complex hyperbolic triangle group  $\Delta_{3,m,n; \infty}$ below.

These deformations furnish some of the simplest interesting examples in the still mysterious subject of complex hyperbolic deformations. While some progress has been made in understanding these examples, there is still a lot unknown about them.

\subsection{Our result}

The main purpose of this paper is to study the geometry of triangle groups $\Delta_{3,\infty,\infty;\infty}$ and   $\Delta_{3,4,\infty;\infty}$.
Thompson showed  \cite{Thompson:2010} that $\Delta_{3,\infty,\infty;\infty}$ and $\Delta_{3,4,\infty;\infty}$ are arithmetic subgroups of  $\mathbf{PU}(2,1)$, thus they are discrete. We will  identify the manifolds at infinity for them via Ford domains. 
Our main results are the following Theorems \ref{thm:3pp} and \ref{thm:34p}. Theorem \ref {thm:3pp} is  also step one  of a possible approach  to  Conjecture \ref{conj:3mninfty} later.

\begin{thm} \label{thm:3pp}
	Let $\Gamma=\langle I_1, I_2, I_3 \rangle$ be the complex hyperbolic  triangle group $\Delta_{3,\infty,\infty;\infty}$. Then the manifold at infinity of the even subgroup $\langle I_1I_2,I_2I_3\rangle$ of $\Gamma$ is the magic 3-manifold $6_1^3$ in the Snappy  Census.	
\end{thm}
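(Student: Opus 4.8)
The plan is to construct an explicit fundamental domain in $\partial\hc = S^3$ for the action of the even subgroup $\Gamma' = \langle I_1I_2, I_2I_3\rangle$ on its domain of discontinuity $\Omega_{\Gamma'}$, and then to identify the resulting quotient 3-manifold combinatorially with $6_1^3$. First I would fix a normalization of the representation $\Delta_{3,\infty,\infty;\infty}$: choose lifts of $I_1, I_2, I_3$ to $\mathrm{SU}(2,1)$ realizing the three complex reflections, with $I_2I_3$ and $I_3I_1$ parabolic (since $q=r=\infty$), $(I_1I_2)^3 = \mathrm{id}$, and the word $I_1I_3I_2I_3$ parabolic (since $n=\infty$); this pins down the one-parameter family to a single group. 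I would write down the matrices explicitly, locate the three parabolic fixed points on $S^3$, and record which elements of $\Gamma'$ stabilize each — these will be the three cusps of the quotient, matching the three cusps of $6_1^3$.

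Next I would build a Ford domain for $\Gamma$ (as announced in the introduction): take the intersection of the exteriors of all isometric spheres (Dirichlet--Ford bisectors) of the non-parabolic-fixing generators, centered at a chosen parabolic fixed point of the infinite-order elements. The key technical work is to show that finitely many isometric spheres suffice — i.e. to identify a finite list of faces and check, via the Poincar\'e polyhedron theorem for $\mathbf{PU}(2,1)$ (in the form adapted to Ford domains with cusps, as used by Schwartz, Parker--Will, and Deraux--Falbel), that the face-pairing transformations generate $\Gamma$ and that the cycle relations are exactly the defining relations of $\Delta_{3,\infty,\infty;\infty}$. This simultaneously reproves discreteness. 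Passing to the index-two even subgroup $\Gamma'$ doubles up the Ford domain along the mirrors and gives a fundamental domain for $\Gamma'$ acting on $\Omega_{\Gamma'}$; its boundary at infinity, together with the side-pairings restricted to $S^3 \setminus \Omega_{\Gamma'}^c$, presents the manifold at infinity as a specific gluing of polyhedral pieces in $S^3$.

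The final step is the identification $M_\infty \cong 6_1^3$. From the face-pairing combinatorics I would extract a triangulation (or a surgery/Dehn-filling description) of $M_\infty$ and compare it with the census manifold: concretely, compute the fundamental group presentation coming from the side-pairings, and match it against $\pi_1(6_1^3)$; cross-check with the cusp shapes (the parabolic fixed points carry $\mathbb{Z}^2$ peripheral subgroups whose moduli I can read off from the parabolic matrices) and with hyperbolic volume via SnapPy once a triangulation is in hand. Since $6_1^3$ is known to be the unique hyperbolic 3-manifold with these invariants among small-census examples, this pins it down. The hard part will be the polyhedron-theorem verification: correctly guessing the finite combinatorial model of the Ford domain, proving that the listed isometric spheres are the only ones contributing faces (a tiling/covering argument, typically the most delicate and computational part of such papers), and checking the tessellation and cycle conditions around the cusps carefully — especially near the parabolic fixed point of $I_1I_3I_2I_3$, where the geometry degenerates.
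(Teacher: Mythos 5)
Your overall strategy---explicit matrices, a Ford domain verified with the Poincar\'e polyhedron theorem, side-pairings, and a computer-assisted identification of the quotient---is the same as the paper's (which, as a minor difference, works directly with the even subgroup $\Gamma_1=\langle A,B\rangle$, $A=I_1I_2$ unipotent and $B=I_2I_3$ of order $3$, rather than doubling a domain for the reflection group along the mirrors). But there is a genuine gap at the step where you assert that the boundary at infinity of the Ford domain, ``together with the side-pairings, presents the manifold at infinity as a specific gluing of polyhedral pieces.'' Since the center $q_\infty$ is the fixed point of the parabolic $A$, the Ford domain is only a fundamental domain modulo the cusp stabilizer $\langle A\rangle$; more seriously, its ideal boundary is not a ball nor a product $(\mathbb{R}^2\setminus \mathrm{int}\,\mathbb{D}^2)\times\mathbb{R}$: the spinal spheres $I_k^{\pm}$ are tangent to one another at the parabolic points $A^k(p_{AB})$, $A^k(p_{B^{-1}A})$, so $\partial_\infty D_{\Gamma_1}$ is an infinite-genus handlebody. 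Hence one cannot read off a presentation of the fundamental group of the quotient from the isometric-sphere face pairings alone; the ``handle'' generators must be accounted for, and this is precisely where this example differs from the Deraux/Parker--Will situations you cite.

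Closing this gap is the bulk of the paper's actual proof: one constructs an explicit fundamental slab for $\langle A\rangle$ on $\partial\hc\setminus\{q_\infty\}$ bounded by piecewise-linear planes $E_1,E_2$ (built from fans, and shown by direct computation to meet the spinal spheres only at the two tangency points), and then cuts the resulting genus-three handlebody along three further disks $D_1,D_2,D_3$---one of them a union of ruled surfaces---checking by explicit equations that these disks are pairwise disjoint and disjoint from the spinal spheres and from the $A$-translates of the planes. Only after this reduction to a ball with identified faces does one obtain the presentation of $\pi_1(M)$, which is then matched with $\pi_1(6_1^3)$ (via Magma) and upgraded to a homeomorphism using prime decomposition and the Poincar\'e conjecture; your alternative cross-check via cusp shapes and SnapPy volume would equally require first producing a genuine triangulation of $M$, which presupposes the cutting step. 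Note also that the Poincar\'e-theorem verification itself is comparatively easy here (the spheres $I_k^{\pm}$ are pairwise disjoint or tangent, with a single Giraud-disk ridge), so the hard part is not where your proposal places it.
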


\begin{thm} \label{thm:34p}
Let $\Gamma=\langle I_1, I_2, I_3 \rangle$ be the complex hyperbolic  triangle group $\Delta_{3,4,\infty;\infty}$. Then the manifold at infinity of the even subgroup $\langle I_1I_2,I_2I_3\rangle$ of $\Gamma$ is the  two-cusped hyperbolic 3-manifold  $m295$ in the Snappy  Census.
	
\end{thm}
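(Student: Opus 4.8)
The plan is to follow the same strategy that works for Theorem~\ref{thm:3pp}, now specialized to the group $\Delta_{3,4,\infty;\infty}$. First I would write down explicit matrices in $\mathbf{PU}(2,1)$ for the generating complex reflections $I_1,I_2,I_3$ fixing complex lines, using the one-parameter family of $(3,4,\infty)$ complex reflection triangle groups and pinning the parameter by the condition that $I_1I_3I_2I_3$ be parabolic (which is what the subscript ``$;\infty$'' encodes). A convenient normalization puts the parabolic fixed point at $\infty$ in Heisenberg coordinates on $\partial\hc\setminus\{\infty\}$, so that some words in the group act as Heisenberg translations/rotations; this makes the cusp structure visible. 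I would then pass to the even (index-two, orientation-preserving) subgroup $\Gamma'=\langle I_1I_2, I_2I_3\rangle$, whose quotient of the domain of discontinuity is the manifold we want to identify.

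The core of the argument is to construct a Ford domain for $\Gamma'$ acting on $\partial\hc$. Concretely, I would choose the parabolic fixed point as the center, form the isometric spheres (Ford/Cygan spheres) of a finite generating set and its inverses, take $D$ to be the exterior of all of them together with a fundamental domain for the stabilizer of $\infty$, and prove that $D$ is a fundamental domain via the Poincar\'e polyhedron theorem: verify that the finitely many bounding spheres are paired by group elements, that the side-pairings generate $\Gamma'$, and that the cycle (ridge) conditions hold so the tessellation is locally finite and exact. This is where essentially all the work lies, and it is the step I expect to be the main obstacle: one must correctly enumerate the isometric spheres that actually contribute a face (ruling out ``hidden'' spheres), check the sometimes delicate tangency/intersection patterns along ridges, and confirm the parabolic cusp cross-sections are embedded. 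The computations are of the same flavour as, but a little heavier than, those in the $\Delta_{3,\infty,\infty;\infty}$ case, since one cusp has been ``filled'' — the combinatorics of the polyhedron change accordingly.

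Once the Ford domain is in hand, I would read off the topology of the boundary manifold $M'=\Gamma'\backslash\Omega_{\Gamma'}$ from the side-pairing data: the face identifications give an explicit ideal/partial triangulation (or a surgery description), and the cusp cross-sections identify the number and type of cusps — here two, consistent with $m295$ being two-cusped. To pin down that $M'$ is exactly $m295$ rather than merely some two-cusped hyperbolic manifold, I would compute an invariant that SnapPy can match: either feed the triangulation/gluing equations into SnapPy and compare hyperbolic volume, the first homology, and the canonical retriangulation (isometry signature), or exhibit $M'$ directly as the claimed Dehn filling of $6_1^3$ along the slope dictated by the $(3,4,\infty;\infty)$ parameters (filling slope $4-2=2$ on the appropriate cusp, per the conjecture stated in the abstract) and invoke the already-established identification of $6_1^3$ from Theorem~\ref{thm:3pp}. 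The cleanest route is probably the latter: show that replacing the generator responsible for the second cusp by one of finite order $4$ corresponds precisely to a $(2)$-Dehn filling on that cusp of the $\Delta_{3,\infty,\infty;\infty}$ picture, so that the boundary manifold of $\Delta_{3,4,\infty;\infty}$ is the same Dehn filling of $6_1^3$, which SnapPy names $m295$.

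A final bookkeeping step is to confirm that the even subgroup really does carry the spherical CR uniformization we want — i.e. that $\Omega_{\Gamma'}$ is nonempty, $\Gamma'$ acts freely and properly discontinuously there (no fixed points of elliptic elements on $\partial\hc$ for the even group, even though $I_1I_3I_2I_3$ is only parabolic, the relevant finite-order element $(I_3I_1)^{2}$ of order $4$ must be checked to act without fixed points on the domain of discontinuity), and that the quotient is a manifold rather than an orbifold. This is routine given the explicit Ford domain, since freeness of the side-pairings on the boundary is part of the Poincar\'e theorem verification, but it should be stated to make the conclusion ``$m295$ admits a spherical CR uniformization'' rigorous.
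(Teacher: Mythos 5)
Your first two stages --- explicit matrices for $\Delta_{3,4,\infty;\infty}$ normalized so the parabolic fixed point is $q_\infty$, then a Ford domain bounded by isometric spheres of a generating set verified by the Poincar\'e polyhedron theorem --- match the paper (Sections \ref{subsec-34p} and \ref{sec-ford-34p}; the paper works with $A=I_1I_3I_2I_3$, $B=I_2I_3$ and the infinite family $I_k^{\pm}$, $\widehat{I}_k$ modulo the $\langle A\rangle$-action). But your preferred route for the final identification is a genuine gap: arguing that passing from $\Delta_{3,\infty,\infty;\infty}$ to $\Delta_{3,4,\infty;\infty}$ ``corresponds precisely to a $(2)$-Dehn filling on that cusp of $6_1^3$'' is exactly the content of Conjecture \ref{conj:3mninfty}, which this paper does \emph{not} prove --- it is stated as a conjecture for which Theorems \ref{thm:3pp} and \ref{thm:34p} are offered as evidence. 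Making that deformation-to-filling argument rigorous would require something like Acosta's CR Dehn surgery machinery applied to a one-parameter deformation of Ford domains, which the authors explicitly defer to future work. You cannot invoke it to prove the theorem without circularity. Your fallback (compare SnapPy invariants) is closer to what is actually done, but note that volume and $H_1$ are not complete invariants and you have no hyperbolic triangulation of $M$ to canonically retriangulate; the paper instead reads off a finite presentation of $\pi_1(M)$ from the ridge cycles of the polyhedral fundamental domain, checks with Magma that it is isomorphic to $\pi_1(m295)$, and then upgrades to a homeomorphism via prime decomposition and the Poincar\'e conjecture.

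You also substantially underestimate the step ``read off the topology from the side-pairing data.'' Unlike the Whitehead-link and Riley-slice cases, here $\partial_\infty D_{\Gamma_2}$ is an \emph{infinite-genus} handlebody (the spinal spheres are mutually tangent at infinitely many parabolic points and enclose ``holes''), not a product $(\mathbb{R}^2\setminus \mathrm{int}\,\mathbb{D}^2)\times\mathbb{R}$. The bulk of the paper's Section \ref{section:3-mfd-34p} is devoted to building a global combinatorial model of this handlebody, proving it is unknotted and $A$-invariant, and then cutting a genus-three handlebody fundamental domain $H'$ along carefully chosen planes and disks ($E^{*}$, $C^{*}$, $D_1$, $D_2$, $D_3$) so that the quotient can be presented as a single polytope with face identifications. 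Without an argument of this kind your plan does not yet produce the presentation of $\pi_1(M)$ on which the identification rests.
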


The  magic 3-manifold is the complement of the  simplest chain link in $\mathbb{S}^3$ with three components
\cite{MartelliP:2006}, which  appears as $6_1^3$ in  Rolfsen's list \cite{Rolfsen}, and it  is a hyperbolic 3-manifold \cite{CullerDunfield:2014}.
 Note also that $m295$ is the two-components link  $9^2_{50}$ in Rolfsen's list \cite{Rolfsen}.  See Figure 	\ref{fig:631m295} for the diagrams of these two links in the 3-sphere.


\begin{figure}
	\begin{minipage}[t]{0.4 \linewidth}
		\centering
		\includegraphics[height=5.cm,width=5.cm]{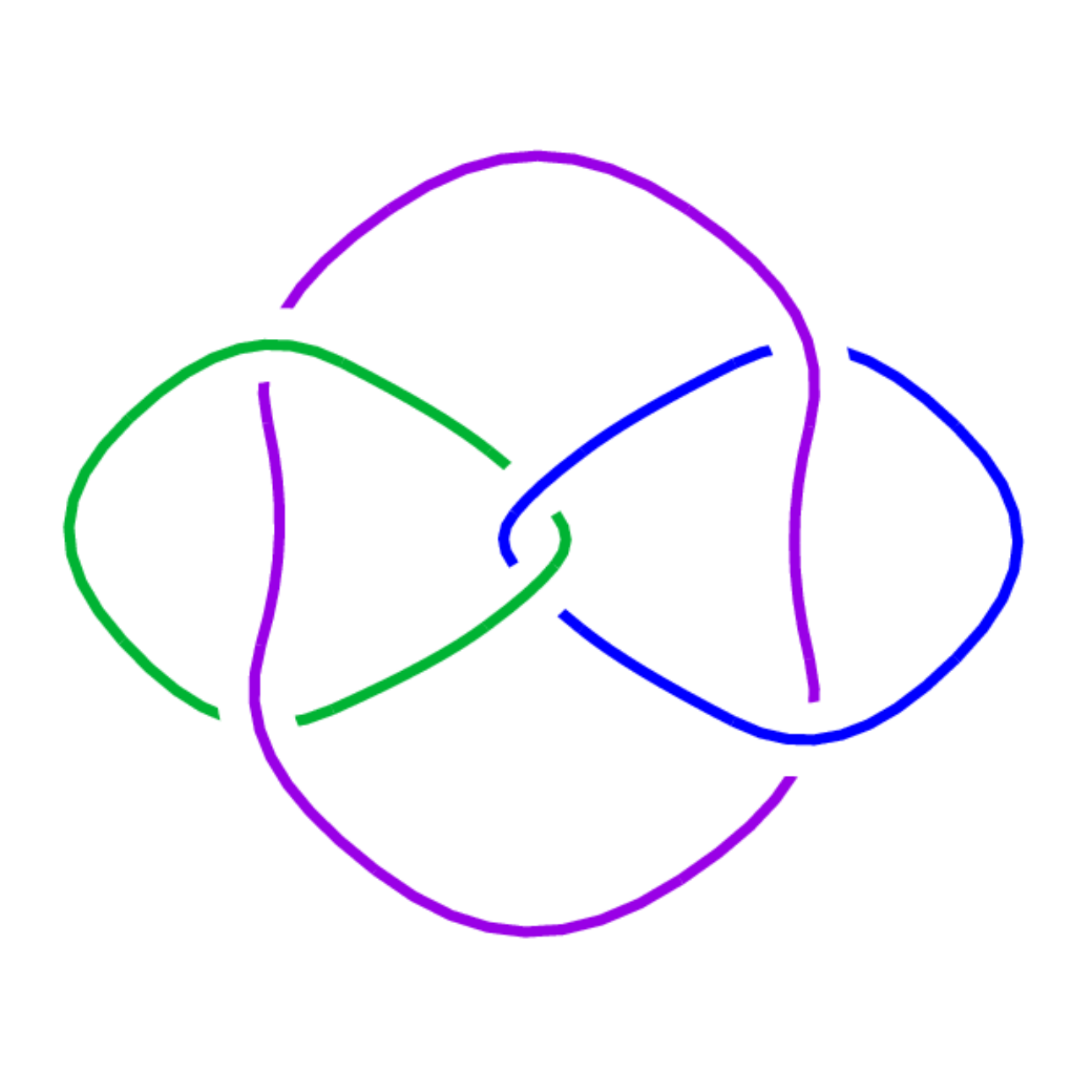}
	\end{minipage}
	\begin{minipage}[t]{0.4 \linewidth}
		\centering
		\includegraphics[height=4.5cm,width=4.5cm]{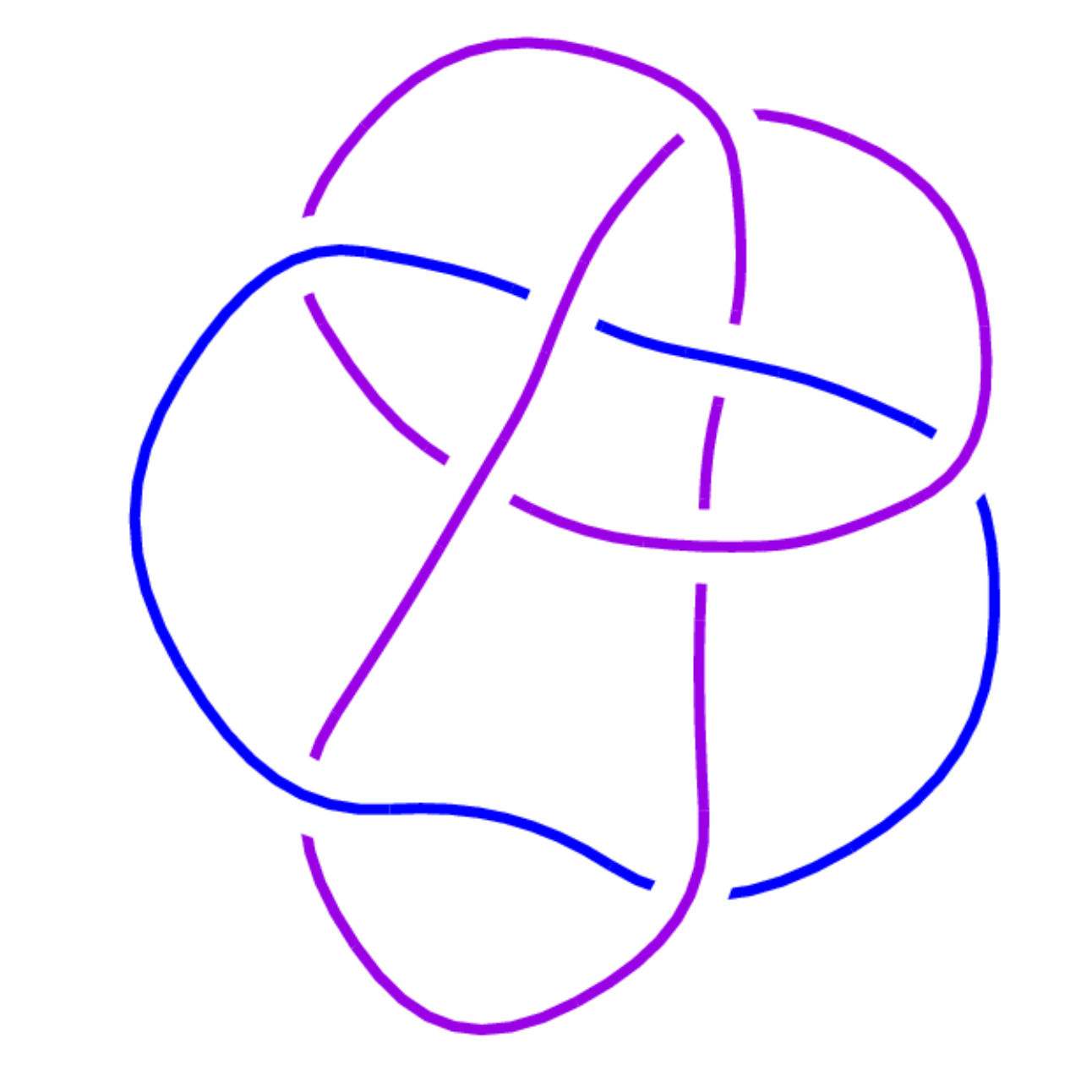}
	\end{minipage}
	\caption{ Left: the magic 3-manifold $6_1^3$. Right: the two-cusped 3-manifold  $m295$. The  Link diagrams are obtained from https://knotinfo.math.indiana.edu.}
	\label{fig:631m295}
\end{figure}



The proofs of   Theorems  \ref{thm:3pp} and  \ref{thm:34p} are  via Ford domains. But they are much more  involved than previous results \cite{Deraux:2016, ParkerWill:2016, jwx, mx}. The main reason is that there are infinitely  many handles in the ideal boundaries of the  Ford domains of our groups. We need to cut  the ideal boundary of the  Ford domain into a simply connected region via hypersurfaces with explicit defining  functions.   We use new kinds of surfaces, say  "crooked-like surfaces" and ruled surfaces,  in the study of complex hyperbolic geometry. We  use  the  computer algebra system to do  some tedious and elementary calculations or to draw some geometric objects; however our paper is independent of the computer and  our proofs are geometric.




First consider the group $\Gamma=\Delta_{3,\infty,\infty;\infty}$. We construct a Ford domain $D_{\Gamma}$ for  $\Gamma$. The ideal boundary  $\partial_{\infty}D_{\Gamma}=D_{\Gamma} \cap \partial \mathbf{H}^2_{\mathbb C}$ is crucial to identify the topology of the manifolds at infinity of $\Gamma$:
 \begin{itemize} \item We construct  "crooked-like surfaces" to cut out a fundamental domain of $Stab_{\Gamma}(\infty)=\mathbb{Z}$ on $\mathbb{S}^{3}\backslash\{\infty\}$. See planes $E_1$ and $E_2$ in Subsection \ref{subsection:3ppfundamentaldomain};

\item   We also need more cutting disks to  cut the fundamental domain of $Stab_{\Gamma}(\infty)$ into a 3-ball. One of these    complicated disks is a union of several ruled surfaces, see the disk $D_2$  in Subsection \ref{subsection:3pphandlebody};

\item   From the combinatorial description of $\partial_{\infty}D_{\Gamma}$ and the cutting disks, we can calculate the fundamental group of the 3-manifold at infinity of $\Gamma$.
  The end result is that the manifold at infinity will be identified with the hyperbolic 3-manifold $6_1^3$.

\end{itemize}

 The group  $\Gamma=\Delta_{3,4,\infty;\infty}$ is even more difficult:
 \begin{itemize} \item  
 	We have to  take a  global combinatorial model of the ideal boundary $\partial _{\infty}D_{\Gamma}$ of  the Ford domain $D_{\Gamma}$ for $\Gamma$ in Subsection
\ref{subsection:globalmodel34p};
\item Then we use geometric argument to show the  geometrical realization of our  combinatorial model is the ideal boundary  of the Ford domain of  $\Gamma$;
\item  We use the combinatorial model to study the 3-manifold at infinity of   $\Gamma$.
  We cut $\partial _{\infty}D_{\Gamma}$ in a geometrical way in the boundary of $\partial _{\infty}D_{\Gamma}$, but in a topological way far from the boundary of  $\partial _{\infty}D_{\Gamma}$, to get a handlebody $H'$;
 \item 
 From $H'$, we can use  the similar argument in the case of  $\Delta_{3,3,\infty;\infty}$ to show the 3-manifold at infinity of  $\Delta_{3,4,\infty;\infty}$  is the 3-manifold   $m295$ in Snappy  Census \cite{CullerDunfield:2014}.
\end{itemize} 

\subsection{A conjectured picture  on  the group $\Delta_{3,m,n;\infty}$}

Our Theorems \ref{thm:3pp} and \ref{thm:34p} are partial results toward a proof of the following conjecture, announced in \cite{mx}:

\begin{conj}\label{conj:3mninfty}	
	The 3-manifold at infinity of the even subgroup of the complex triangle group $\Delta_{3,m,\infty;\infty}$ is the hyperbolic 3-manifold obtained via the Dehn surgery of
	$6_1^3$ on the first  cusp with slope $m-2$. Moreover,
	the manifold at infinity of the even subgroup of the complex triangle group $\Delta_{3,m,n;\infty}$ is the hyperbolic 3-manifold obtained via the Dehn fillings of	$6_1^3$ on the first two cusps with slopes $m-2$ and  $n-2$ respectively.
\end{conj}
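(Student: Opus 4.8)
\medskip

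\noindent\textbf{Proof proposal for Conjecture \ref{conj:3mninfty}.} The plan is to run, uniformly in the parameters, the program that produces Theorems \ref{thm:3pp} and \ref{thm:34p}. Fix $\Gamma=\Delta_{3,m,n;\infty}$ with $3\le m\le n\le\infty$, let $\Gamma'=\langle I_1I_2, I_2I_3\rangle$ be its even subgroup, and conjugate so that the parabolic $I_1I_3I_2I_3$ fixes $\infty\in\partial\mathbf{H}^2_{\mathbb C}$. The first step is to produce a Ford domain $D_\Gamma$ for $\Gamma$ centered at $\infty$: one writes down the isometric spheres of the relevant short words, together with a fundamental domain for $Stab_\Gamma(\infty)$, as \emph{explicit functions of $m$ and $n$}, and checks the side-pairing and cycle conditions of the Poincar\'e polyhedron theorem. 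This simultaneously reproves discreteness (so far known only via arithmeticity, in the cases of \cite{Thompson:2010}) and produces a presentation of $\Gamma$ together with the side pairings. The key structural expectation is that when $q=m$ (resp. $r=n$) is finite the element $I_3I_1$ (resp. $I_1I_2$), parabolic in the $\Delta_{3,\infty,\infty;\infty}$ picture, becomes elliptic of order $m$ (resp. $n$), and that this only appends a controlled, parameter-dependent packet of new bisectors near the corresponding cusp.

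Second, I would analyze the combinatorics of the ideal boundary $\partial_\infty D_\Gamma=D_\Gamma\cap\partial\mathbf{H}^2_{\mathbb C}$. As in Section \ref{section:3ppmanifold} and the $\Delta_{3,4,\infty;\infty}$ analysis, $Stab_\Gamma(\infty)$ is infinite cyclic and $\partial_\infty D_\Gamma$ carries infinitely many ``handles''; one cuts it down to a fundamental domain for this $\mathbb{Z}$-action by piecewise-linear planes agreeing with fans over the geometrically meaningful portion (the analogues of $E_1,E_2$ in Subsection \ref{subsection:3ppfundamentaldomain}), and then cuts the resulting piece into a ball by further disks built from ruled surfaces (the analogue of $D_2$ in Subsection \ref{subsection:3pphandlebody}). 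The efficient route is to guess a \emph{global combinatorial model} of $\partial_\infty D_\Gamma$ parametrized by $m$ and $n$, as in Subsection \ref{subsection:globalmodel34p}, specializing to the two known models at $(m,n)=(\infty,\infty)$ and $(4,\infty)$, and then to prove its geometric realization correct by bisector-intersection estimates of the type used in Subsection \ref{subsection:newcutting34p}.

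Third, with the side pairings and the model in hand, I would compute the manifold at infinity $M_{m,n}=\Gamma'\backslash\Omega_\Gamma$ as a quotient of the cut-open $\partial_\infty D_\Gamma$, read a presentation of $\pi_1(M_{m,n})$ off the face identifications, and compare it with $\pi_1$ of the Dehn filling of $6_1^3$ along slope $m-2$ on the first cusp and $n-2$ on the second (for a suitable normalization of the meridian--longitude framings on the cusps of $6_1^3$). Two checks are built in: $(m,n)=(\infty,\infty)$ must recover the gluing giving $6_1^3$ of Theorem \ref{thm:3pp}, and $(4,\infty)$ the one giving $m295$ of Theorem \ref{thm:34p}; matching the parameter-dependent combinatorics against these specializations essentially locates the filling solid tori. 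Since the asserted fillings of $6_1^3$ are hyperbolic, aspherical, with toral ends, an isomorphism of fundamental groups respecting peripheral structure promotes to a homeomorphism (Mostow--Prasad rigidity, or Waldhausen's theorem for these Haken manifolds), completing the identification.

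The principal obstacle is uniformity. Unlike the one-manifold-at-a-time deformations handled so far, here the combinatorial type of $D_\Gamma$ genuinely changes with $m$ and $n$: the numbers of isometric spheres, of handles, and of cutting disks all grow with the parameters, so no finite case analysis can suffice. I expect the way through to be to treat finite parameters as Dehn-filling perturbations of the $\infty$ case --- to show that decreasing $r=n$ from $\infty$ modifies $D_\Gamma$ only near the second cusp, truncating the infinite family of handles into a finite $n$-dependent pattern whose quotient is exactly the slope-$(n-2)$ filling, and similarly for $q=m$ at the first cusp. Proving this localization --- that the new relations $(I_3I_1)^m=1$ and $(I_1I_2)^n=1$ are geometrically supported near the respective cusps of the $\Delta_{3,\infty,\infty;\infty}$ picture --- together with the accompanying bisector estimates, is where the real difficulty lies; the $\pi_1$ bookkeeping and the final topological identification should then be routine extensions of the arguments in this paper.
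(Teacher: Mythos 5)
The statement you are addressing is stated in the paper as Conjecture \ref{conj:3mninfty}, and the paper itself offers no proof of it: Theorems \ref{thm:3pp} and \ref{thm:34p} establish only the two instances $(m,n)=(\infty,\infty)$ and $(4,\infty)$, and the general case is left open. Your text is a research program rather than a proof, and you say as much yourself: the decisive steps --- constructing a Ford domain for $\Delta_{3,m,n;\infty}$ uniformly in $m$ and $n$, verifying the Poincar\'e polyhedron hypotheses for all parameters, and proving that imposing $(I_3I_1)^m=(I_1I_2)^n=\mathrm{id}$ only modifies the domain ``near the cusps'' so that the quotient is the claimed Dehn filling --- are precisely the ones you defer. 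None of them is carried out, and none follows formally from the two proved cases. In particular, for general $(m,n)$ even the discreteness of $\Delta_{3,m,n;\infty}$ is not available by the arithmeticity argument used for the two cases in the paper (Thompson's result covers only specific parameters, and the Xu--Wang--Xie theorem quoted in the introduction covers the $(3,4,n)$ family); so the very first step of your plan, ``write down the isometric spheres of the relevant short words as explicit functions of $m$ and $n$ and check the Poincar\'e conditions,'' presupposes an answer to a question that is itself open.

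A further concrete difficulty with your localization step: the paper's own experience shows that the combinatorial type of $\partial_\infty D_\Gamma$ changes qualitatively between the two proved cases (compare the isometric-sphere pattern of Section \ref{sec-Ford-3pp}, where only $I_k^{\pm}$ appear, with Section \ref{sec-ford-34p}, where the additional family $\widehat{I}_k$ of smaller spheres and the sector-shaped ridges of Proposition \ref{ridgesectors34p} enter). There is no argument in your proposal that controls how many new families of isometric spheres appear as $m$ and $n$ decrease, nor that the new ridges are always unions of sectors of the kind handled in Subsection \ref{subsection:globalmodel34p}. The idea of treating finite parameters as Dehn-filling perturbations of the $(\infty,\infty)$ picture is essentially Acosta's CR surgery strategy alluded to at the end of the introduction, but invoking it requires exhibiting a one-parameter family of representations with uniformly controlled Ford domains, which is exactly the content that would constitute a proof. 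As it stands, your proposal correctly identifies the shape a proof would likely take, but it does not close the conjecture, and the paper does not either.
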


We use the meridian-longitude systems of the cusps of $6_1^3$ as in \cite{MartelliP:2006}, which is different from  the meridian-longitude systems  in Snappy.
Theorems \ref{thm:3pp} and  \ref{thm:34p}, and results in  \cite{Acosta:2019,DerauxF:2015,  mx, ParkerWill:2016} can be viewed as evidences of Conjecture \ref{conj:3mninfty}. See  \cite{mx} for more  explanations why this conjecture should be true.

A possible  approach to Conjecture \ref{conj:3mninfty} is based on  the Ford domain of  $\Delta_{3,\infty,\infty;\infty}$ studied in this paper, and then using the method in \cite{Acosta:2019}. Even through the rigorous proof seems highly non-trivial.


\textbf{Outline of the paper}: In Section \ref{sec:background}  we give well known background
material on complex hyperbolic geometry. Section \ref{sec-gens} contains the matrix representations of the complex hyperbolic triangle groups $\Delta_{3,\infty, \infty;\infty}$ and   $\Delta_{3,4,\infty;\infty}$ in $\mathbf{SU}(2,1)$. Section \ref{sec-Ford-3pp}  is devoted to the description of the isometric spheres that bound the Ford domains for the complex hyperbolic triangle group  $\Delta_{3,\infty, \infty;\infty}$.
In Section  \ref{section:3ppmanifold}, we study combinatorial structure of the ideal boundary of the Ford domain for  the  group $\Delta_{3,\infty,\infty;\infty}$ and get the 3-manifold at infinity is the hyperbolic magic 3-manifold $6_1^3$.
 Section \ref{sec-ford-34p}  is devoted to the description of the isometric spheres that bound the Ford domain for the group  $\Delta_{3,4, \infty;\infty}$. In Section \ref{section:3-mfd-34p}, we study combinatorial structure of the ideal boundary of the Ford domain for  the group $\Delta_{3,4,\infty;\infty}$ and show the  3-manifold at infinity is the hyperbolic manifold $m295$.

\begin{ack} The second named author is grateful to LMNS(Laboratory of Mathematics for Nonlinear Science) of Fudan University for its hospitality and financial support, where part of this work was carried out. We are  very grateful to the referees for their insightful comments and helpful suggestions.
\end{ack}

\section{Background}\label{sec:background}
The purpose of this section is to introduce briefly complex hyperbolic geometry. One can refer to Goldman's book \cite{Go} for more details.

\subsection{Complex hyperbolic plane}

Let $\langle {\bf{z}}, {\bf{w}} \rangle={\bf{w}^{*}}H{\bf{z}}$ be the Hermitian form on ${\mathbb{C}}^3$ associated to $H$, where $H$ is the Hermitian matrix
$$
H=\left[
  \begin{array}{ccc}
    0 & 0 & 1 \\
    0 & 1 & 0 \\
    1 & 0 & 0 \\
  \end{array}
\right].
$$
Then ${\mathbb{C}}^3$ is the union of negative cone $V_{-}$, null cone $V_{0}$ and positive cone $V_{+}$, where
\begin{eqnarray*}
  V_{-} &=& \left\{ {\bf{z}}\in {\mathbb{C}}^3\backslash\{0\} : \langle {\bf{z}}, {\bf{z}} \rangle <0 \right\}, \\
  V_{0} &=& \left\{ {\bf{z}}\in {\mathbb{C}}^3\backslash\{0\} : \langle {\bf{z}}, {\bf{z}} \rangle =0 \right\}, \\
  V_{+} &=& \left\{ {\bf{z}}\in {\mathbb{C}}^3\backslash\{0\} : \langle {\bf{z}}, {\bf{z}} \rangle >0 \right\}.
\end{eqnarray*}

\begin{defn}
Let $P: {\mathbb{C}}^3\backslash\{0\} \rightarrow {\mathbb{C}P^2}$ be the projectivization map.
Then the \emph{complex hyperbolic plane} $\hc$ is defined to be $P(V_{-})$ and its {boundary} $\partial \hc$ is defined to be $P(V_{0})$.
Let $d(u,v)$ be the distance between two points $u,v \in \hc$.
Then the \emph{Bergman metric} on complex hyperbolic plane is given by the distance formula
\begin{equation}  \label{eq:bergman-metric}
\cosh^2\left(\frac{d(u,v)}{2}\right)=\frac{\langle {\bf{u}}, {\bf{v}} \rangle\langle {\bf{v}}, {\bf{u}} \rangle}{\langle {\bf{u}}, {\bf{u}} \rangle \langle {\bf{v}}, {\bf{v}} \rangle},
\end{equation}
where ${\bf{u}}, {\bf{v}} \in {\mathbb{C}}^3$ are lifts of $u,v$.
\end{defn}

The standard lift $(z_1,z_2,1)^T$ of $z=(z_1,z_2)\in \mathbb {C}^{2}$ is negative if and only if
$$z_1+|z_2|^2+\overline{z}_1=2{\rm Re}(z_1)+|z_2|^2<0.$$  Thus $\mathbb{P}(V_{-})$ is a paraboloid in ${\mathbb C}^{2}$, called the {\it Siegel domain}.
In these coodinates, the boundary $\mathbb{P}(V_{0})$ is given by
$$2{\rm Re}(z_1)+|z_2|^2=0.$$

Let $\mathcal{N}=\mathbb{C}\times \mathbb{R}$ be the Heisenberg group with product
$$
[z,t]\cdot [\zeta,\nu]=[z+\zeta,t+\nu-2{\rm{Im}}(\bar{z}\zeta)].
$$
Then the boundary of complex hyperbolic plane $\partial \hc$ can be identified to the union $\mathcal{N}\cup \{q_{\infty}\}$, where $q_{\infty}$ is the point at infinity.
The \emph{standard lift} of $q_{\infty}$ and $q=[z,t]\in\mathcal{N}$ in $\mathbb{C}^3$ are
\begin{equation}\label{eq:lift}
  {\bf{q}_{\infty}}=\left[
                    \begin{array}{c}
                      1 \\
                      0 \\
                      0 \\
                    \end{array}
                  \right],  \quad
{\bf{q}}=\left[
           \begin{array}{c}
             \frac{-|z|^2+it}{2} \\
             z \\
             1 \\
           \end{array}
         \right].
\end{equation}
We write $q=[z,t]\in\mathcal{N}$ for $z \in \mathbb{C}$ and $t \in \mathbb{R}$ or $q=[x,y,t]\in\mathcal{N}$ for $x,y,t\in \mathbb{R} $  and $z=x+yi$.

Complex hyperbolic plane and its boundary $\hc \cup \partial \hc$ can be identified to ${\mathcal{N}}\times{\mathbb{R}_{\geq 0}}\cup q_{\infty}$.
Any point $q=(z,t,u)\in{\mathcal{N}}\times{\mathbb{R}_{\geq0}}$ has the standard lift
$$
{\bf{q}}=\left[
           \begin{array}{c}
             \frac{-|z|^2-u+it}{2} \\
             z \\
             1 \\
           \end{array}
         \right].
$$
Here $(z,t,u)$ is called the \emph{horospherical coordinates} of $\overline {\bf H}^2_{\mathbb{C}}=\hc \cup \partial \hc$. The natural projection $\mathcal{N}=\mathbb{C} \times \mathbb{R} \rightarrow \mathbb{C}$ is called the {\it vertical projection}.

\begin{defn}
The \emph{Cygan metric} $d_{\textrm{Cyg}}$ on $\partial \hc \backslash\{q_{\infty}\}$ is defined to be
\begin{equation}\label{eq:cygan-metric}
  d_{\textrm{Cyg}}(p,q)=|2\langle {\bf{p}}, {\bf{q}} \rangle|^{1/2}=\left| |z-w|^2-i(t-s+2{\rm{Im}}(z\bar{w})) \right|^{1/2},
\end{equation}
where $p=[z,t]$ and $q=[w,s]$.

The {\it Cygan sphere} with center $(z_0,t_0)$  and radius $r$ has equation
$$d_{\textrm{Cyg}}\left((z,t),(z_0,t_0)\right)=\left||z-z_0|^{2}+i(t-t_0+2{\rm Im}(z\overline{z}_0))\right|=r^2.$$

The \emph{extended Cygan metric} on $\hc$ is given by the formula
\begin{equation}\label{eq:cygan-metric-extend}
  d_{\textrm{Cyg}}(p,q)=\left| |z-w|^2+|u-v|-i(t-s+2{\rm{Im}}(z\bar{w})) \right|^{1/2},
\end{equation}
where $p=(z,t,u)$ and $q=(w,s,v)$.
\end{defn}

If
$$\mathbf{p}=\left(\begin{matrix}
p_1\\ p_2\\p_3\end{matrix}\right),\quad \mathbf{q}=\left(\begin{matrix}
q_1\\ q_2\\q_3\end{matrix}\right)$$ are lifts of $p,q$ in $\mathbf{H}^2_{\mathbb C}$, then the {\it Hermitian cross product} of $p$ and $q$  is defined by

$$\mathbf{p}\boxtimes \mathbf{q}=\left(\begin{matrix}
\overline{p_1q_2-p_2q_1}\\ \overline{p_3 q_1-p_1q_3}\\ \overline{p_2 q_3-p_3q_2}\end{matrix}\right).$$
This vector is orthogonal to $p$ and $q$  with  respect to the Hermitian form  $\langle \cdot,\cdot\rangle$.
It is a Hermitian version of the Euclidean cross product.

\subsection{The isometries }The complex hyperbolic plane is a K\"{a}hler manifold of constant holomorphic sectional curvature $-1$.
We denote by $\mathbf{U}(2,1)$ the Lie group of $\langle \cdot,\cdot\rangle$ preserving complex linear
transformations and by $\mathbf{PU}(2,1)$ the group modulo scalar matrices. The group of holomorphic
isometries of ${\bf H}^2_{\mathbb C}$ is exactly $\mathbf{PU}(2,1)$. It is sometimes convenient to work with
$\mathbf{SU}(2,1)$, which is a 3-fold cover of  $\mathbf{PU}(2,1)$.

The full isometry group of ${\bf H}^2_{\mathbb C}$ is given by
$$\widehat{\mathbf{PU}(2,1)}=\langle \mathbf{PU}(2,1),\iota\rangle,$$
where $\iota$ is given on the level of homogeneous coordinates by complex conjugate
$$
\iota:\left[\begin{matrix} z_1 \\ z_2 \\ z_3 \end{matrix}\right]
\longmapsto \left[\begin{matrix} \overline{z}_1 \\
\overline{z}_2 \\ \overline{z}_3 \end{matrix}\right].
$$

Elements of $\mathbf{SU}(2,1)$ fall into three types, according to the number and types of the fixed points of the corresponding
isometry. Namely, an isometry is {\it loxodromic} (resp. {\it parabolic}) if it has exactly two fixed points (resp. one fixed point)
on $\partial {\bf H}^2_{\mathbb C}$. It is called {\it elliptic}  when it has (at least) one fixed point inside ${\bf H}^2_{\mathbb C}$.
An elliptic $A\in \mathbf{SU}(2,1)$ is called {\it regular elliptic} whenever it has three distinct eigenvalues, and {\it special elliptic} if
it has a repeated eigenvalue.

The types of isometries can be determined by the traces of their matrix realizations, see Theorem 6.2.4 of Goldman \cite{Go}. Assume $A\in{\rm{\mathbf{SU}}}(2,1)$  is non-trivial and has real trace. Then $A$ is elliptic if $-1\leq{\rm{tr}(A)}<3$.
Moreover, $A$ is unipotent if ${\rm{tr}(A)}=3$. In particular, if ${\rm{tr}(A)}=-1,0,1$, $A$ is elliptic of order 2, 3, 4 respectively.

Unipotent elements of $\mathbf{SU}(2,1)$ are conjugate in $\mathbf{SU}(2,1)$
to one fixing $q_{\infty}$ given by:
$$T_{[z,t]}=\left(\begin{matrix}
	1 & -\overline{z}& \frac{-|z|^{2}+it}{2} \\ 0 & 1 & z \\ 0 & 0 & 1 \end{matrix}\right).$$
Note that applying  $T_{[z,t]}$ to $[w,s]$ amounts to doing the Heisenberg
left multiplication by $[z,t]$. For that reason $T_{[z,t]}$ is called a 
{\it Heisenberg translation}.  A Heisenberg translation by $[0,t]$ is called a {\it vertical translation} by $t$.

The full stabilizer of $q_{\infty}$ is generated by the above unipotent group, together with the isometries of the forms
\begin{equation}
\left(\begin{matrix}
1 & 0& 0 \\ 0 & e^{i\theta} & 0 \\ 0 & 0 & 1 \end{matrix}\right) \quad  and   \quad
\left(\begin{matrix}
\lambda & 0 & 0 \\ 0 & 1 & 0 \\ 0 & 0 & 1/\lambda \end{matrix}\right),
\end{equation}
where $\theta,\lambda\in \mathbb{R}$ and $\lambda \neq 0$.  The first acts on $\partial {\bf H}^2_{\mathbb C}\backslash\{q_{\infty}\}=\mathbb{C}\times \mathbb{R}$ as a rotation
with vertical axis:
$$(z,t)\mapsto (e^{i\theta}z,t),$$  whereas the second one acts as $$(z,t)\mapsto (\lambda z,\lambda^2 t).$$
Note that the parabolic isometries fixing $q_{\infty}$ are Cygan isometries, see \cite{Go}.

\subsection{Totally geodesic submanifolds and complex reflections}
There are two kinds of totally geodesic submanifolds of real dimension 2 in ${\bf H}^2_{\mathbb C}$: {\it complex lines} in ${\bf H}^2_{\mathbb C}$ are
complex geodesics (represented by ${\bf H}^1_{\mathbb C}\subset {\bf H}^2_{\mathbb C}$) and {\it Lagrangian planes} in ${\bf H}^2_{\mathbb C}$ are totally
real geodesic 2-planes (represented by ${\bf H}^2_{\mathbb R}\subset {\bf H}^2_{\mathbb C}$). Since the Riemannian sectional curvature of the  complex hyperbolic plane is nonconstant, there are no totally geodesic hypersurfaces.

The ideal boundary of a  Lagrangian plane  on $\partial\hc$ is called a $\mathbb{R}$-circle.
The ideal boundary of a complex line on $\partial\hc$ is called a $\mathbb{C}$-circle.
Let $L$ be a complex line and $\partial L$ the associated $\mathbb{C}$-circle. A {\it polar vector} of $L$ (or $\partial L$) is the unique vector (up to scalar multiplication) perpendicular to this complex line with
respect to the Hermitian form. A polar vector  belongs to  $V_{+}$ and each vector in $V_{+}$ corresponds to a complex line or a $\mathbb{C}$-circle.

In the Heisenberg model, $\mathbb{C}$-circles are either vertical lines, or ellipses whose projection on the $z$-plane are circles.
A finite $\mathbb{C}$-circle is determined by a center and a radius. They may also be described using polar vectors. A finite $\mathbb{C}$-circle
with center $(x+yi,t) \in \mathbb{C} \times \mathbb{R}$ and radius $r$ has polar vector
$$\left(\begin{matrix} \frac{r^2-x^2-y^2+it}{2}\\ x+y i \\ 1 \end{matrix}\right).$$

There is a special class of elliptic elements of order two in $\mathbf{PU}(2,1)$.
\begin{defn}
	The \emph{complex involution} on complex line $C$ with polar vector ${\bf{n}}$ is given by the following formula:
	\begin{equation}\label{eq:involution}
	I_{C}({\bf{z}})=-{\bf{z}}+\frac{2\langle {\bf{z}}, {\bf{n}} \rangle}{\langle {\bf{n}}, {\bf{n}} \rangle} {\bf{n}}.
	\end{equation}
	Then  $I_{C}$ is a holomorphic isometry fixing the complex line $C$.
\end{defn}

\subsection{Bisectors and spinal coordinates}
In order to analyze  2-faces of a Ford polyhedron, we must study the intersections of isometric spheres. Isometric spheres are special examples of bisectors. In this subsection, we will describe a convenient set of
coordinates for bisector intersections, deduced from  the slice decomposition of a bisector.

\begin{defn} The  \emph{bisector} $\mathcal{B}(p_0,p_1)$ between two points
 $p_0$  and $p_1$ in ${\bf H}^2_{\mathbb C}$ is defined by
 $$\mathcal{B}(p_0,p_1)=\{x\in{\bf H}^2_{\mathbb C}: d(x,p_0)=d(x,p_1)\}.$$	
\end{defn}

From the distance formula given in Equation (\ref{eq:bergman-metric}), we have
\begin{prop}Let  $\bf{p}_0$  and  $\bf{p}_1$ be the lifts of  $p_0$  and $p_1$ to
$\mathbb{C}^{2,1}$ with the same norm. Then the bisector $\mathcal{B}(p_0,p_1)$ is simply the projectivization of the set of negative vectors $\bf{x}$ in $\mathbb{C}^{2,1}$ that satisfy
$$|\langle \bf{x},\bf{p}_0\rangle|=|\langle \bf{x},\bf{p}_1\rangle|.$$	
\end{prop}

The  {\it spinal sphere} of the bisector $\mathcal{B}(p_0,p_1)$ is the intersection of $\partial {\bf H}^2_{\mathbb C}$ with the closure of $\mathcal{B}(p_0,p_1)$ in $\overline{{\bf H}^2_{\mathbb C}}= {\bf H}^2_{\mathbb C}\cup \partial { {\bf H}^2_{\mathbb C}}$. The bisector $\mathcal{B}(p_0,p_1)$ is a topological 3-ball, and its spinal sphere is a 2-sphere.
The  {\it complex spine} of $\mathcal{B}(p_0,p_1)$ is the complex line through the two points $p_0$ and $p_1$. The {\it real spine} of $\mathcal{B}(p_0,p_1)$
is the intersection of the complex spine with the bisector itself, which is a (real) geodesic; it is the locus of points inside the complex spine which are equidistant from $p_0$ and $p_1$. Bisectors are not totally geodesic, but they can be foliated by complex lines. Mostow \cite{Mostow:1980}  showed that a bisector is the preimage of the real
spine under the orthogonal projection onto the complex spine. The fibres of this projection are complex lines called the {\it complex slices} of the bisector. Goldman \cite{Go} showed that a bisector is the union of all  Lagrangian planes containing the real spine. Such Lagrangian planes are called the {\it real slices} or {\it meridians} of the bisector. The "foliation" of bisectors by real planes is actually a singular foliation, since all leaves intersect along the real spine.

From the detailed analysis in \cite{Go}, we know that the intersection of two bisectors is usually not totally geodesic and can be somewhat complicated. In  this paper, we shall only consider the intersection of
coequidistant bisectors, i.e. bisectors equidistant from a common point.   When $p,q$ and $r$ are not in a common complex line, that is,  their lifts are linearly independent in $\mathbb {C}^{2,1}$, then the locus $\mathcal{B}(p,q,r)$ of points in $ {\bf H}^2_{\mathbb C}$
equidistant to  $p,q$ and $r$ is a smooth disk that is not totally geodesic, and is often called a \emph{Giraud disk}.  The following property is crucial when studying fundamental domain.

\begin{prop}[Giraud] \label{prop:Giraud}
	If $p,q$ and $r$ are not in a common complex line, then the Giraud disk $\mathcal{B}(p,q,r)$ is contained in precisely three bisectors, namely $\mathcal{B}(p,q),  \mathcal{B}(q,r)$ and  $\mathcal{B}(p,r)$.
\end{prop}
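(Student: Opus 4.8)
\textbf{Proof proposal for Proposition~\ref{prop:Giraud} (Giraud's theorem).}

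The plan is to reduce the statement to a purely linear-algebraic computation about the dimension of the solution set of two coupled equations on $\mathbb{C}^{2,1}$. Let $\mathbf{p},\mathbf{q},\mathbf{r}$ be lifts with $\langle\mathbf{p},\mathbf{p}\rangle=\langle\mathbf{q},\mathbf{q}\rangle=\langle\mathbf{r},\mathbf{r}\rangle=-1$, and let $x$ be a negative vector representing a point of $\mathcal{B}(p,q,r)$. By definition $|\langle x,\mathbf{p}\rangle|=|\langle x,\mathbf{q}\rangle|=|\langle x,\mathbf{r}\rangle|$, so in particular $|\langle x,\mathbf{p}\rangle|=|\langle x,\mathbf{q}\rangle|$ and $|\langle x,\mathbf{q}\rangle|=|\langle x,\mathbf{r}\rangle|$, which says exactly that $x$ lies on $\mathcal{B}(p,q)\cap\mathcal{B}(q,r)$. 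This already gives the inclusion $\mathcal{B}(p,q,r)\subseteq\mathcal{B}(p,q)\cap\mathcal{B}(q,r)$, and the symmetric argument gives containment in $\mathcal{B}(p,r)$ as well; the content of the proposition is the reverse, i.e.\ that \emph{any} point of $\mathcal{B}(p,q)\cap\mathcal{B}(q,r)$ automatically lies on $\mathcal{B}(p,r)$, so that the three pairwise bisectors all cut out the same locus, and that this locus is genuinely a disk (2-dimensional, smooth) rather than something larger.

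First I would set up spinal coordinates adapted to the common point: since the three bisectors are coequidistant, say from a point $p_0$, I can normalize so that the equation $|\langle x,\mathbf{p}\rangle|=|\langle x,p_0\rangle|$ etc.\ become, after scaling each of $\mathbf{p},\mathbf{q},\mathbf{r}$ appropriately, conditions of the form $\mathrm{Re}\langle x,\mathbf{p}-p_0\rangle$-type linear slices. Concretely, following Mostow's slice decomposition quoted above, $\mathcal{B}(p_0,\mathbf{p})$ is foliated by complex slices indexed by a real parameter, and a point of the bisector is recorded by (slice parameter, point-in-slice). The intersection $\mathcal{B}(p_0,\mathbf{p})\cap\mathcal{B}(p_0,\mathbf{q})$ is then described by a pair of equations in these coordinates; the key computation is that, under the hypothesis that $\mathbf{p},\mathbf{q},\mathbf{r}$ (equivalently $p_0,\mathbf{p},\mathbf{q}$) are linearly independent, the two equations $|\langle x,\mathbf{p}\rangle|^2=|\langle x,\mathbf{q}\rangle|^2$ and $|\langle x,\mathbf{q}\rangle|^2=|\langle x,\mathbf{r}\rangle|^2$ cut the $3$-ball $\mathcal{B}(p_0,\mathbf{p})$ (a $3$-real-dimensional set) down to a smooth $2$-real-dimensional submanifold, and that the third equation $|\langle x,\mathbf{p}\rangle|^2=|\langle x,\mathbf{r}\rangle|^2$ is the algebraic sum (difference) of the first two and hence redundant. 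The linear independence is exactly what guarantees the two defining real-quadratic equations are independent so the intersection is a manifold of the expected dimension and not, say, empty or a full bisector. Smoothness and the disk topology then follow by exhibiting an explicit real-analytic parametrization in spinal coordinates (this is the standard Giraud-disk parametrization, e.g.\ as in Goldman's book), which I would simply cite or write down.

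Finally, to get the precise statement ``contained in precisely three bisectors,'' I would argue that if $\mathcal{B}(p,q,r)$ were contained in a fourth bisector $\mathcal{B}(a,b)$, then $\mathcal{B}(a,b)$ would contain a smooth $2$-dimensional piece of the Giraud disk; but a bisector meeting another distinct coequidistant bisector in more than a real slice forces, by Mostow's and Goldman's analysis of bisector intersections, that the two bisectors coincide, and a direct check of which triples $(p_0,\ast)$ can produce the same bisector pins down that the only bisectors containing $\mathcal{B}(p,q,r)$ are $\mathcal{B}(p,q)$, $\mathcal{B}(q,r)$, $\mathcal{B}(p,r)$. The main obstacle I expect is the middle step: verifying cleanly that the two quadratic conditions are independent and that their locus is a smooth disk rather than degenerating, since this is where the non-totally-geodesic nature of bisectors makes the bookkeeping delicate; I would handle it by working entirely in the slice coordinates where each condition becomes affine-linear in the slice parameter, reducing the ``independence'' to the non-vanishing of an explicit determinant built from $\langle\mathbf{p},\mathbf{q}\rangle,\langle\mathbf{q},\mathbf{r}\rangle,\langle\mathbf{p},\mathbf{r}\rangle$, which is nonzero precisely because the three lifts are linearly independent.
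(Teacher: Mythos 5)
The paper offers no proof of this proposition — it is Giraud's classical theorem, imported from Goldman's book — so your proposal stands or falls on its own. The inclusion you establish first is correct but contentless: $\mathcal{B}(p,q,r)$ is by definition the locus where all three moduli $|\langle x,\mathbf{p}\rangle|,|\langle x,\mathbf{q}\rangle|,|\langle x,\mathbf{r}\rangle|$ coincide, so it lies in all three pairwise bisectors by transitivity of equality, and likewise $\mathcal{B}(p,q)\cap\mathcal{B}(q,r)\subseteq\mathcal{B}(p,r)$ is automatic. The substance of the theorem is (i) that this locus is a smooth disk and (ii) that \emph{no further} bisector contains it. Your treatment of (i) — transversality of the two defining real equations, controlled by a nonvanishing determinant in the Hermitian products, which is where linear independence of the lifts enters — is the right idea and is essentially the standard argument.

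The gap is in (ii), and it is fatal as written. You argue that a fourth bisector $\mathcal{B}(a,b)$ containing the Giraud disk would meet $\mathcal{B}(p,q)$ in a $2$-dimensional set, and that two distinct coequidistant bisectors meeting in more than a real slice must coincide. That last claim is precisely what Giraud's theorem refutes: $\mathcal{B}(p,q)$ and $\mathcal{B}(q,r)$ are distinct coequidistant bisectors whose intersection is the $2$-dimensional Giraud disk, which is neither a meridian nor a complex slice. Taken literally, your criterion would show that only \emph{one} bisector can contain $\mathcal{B}(p,q,r)$, contradicting the statement you are proving. The correct route uses the parametrization already quoted in this section: with $V(z_1,z_2)=\mathbf{q}\boxtimes\mathbf{r}+z_1\,\mathbf{r}\boxtimes\mathbf{p}+z_2\,\mathbf{p}\boxtimes\mathbf{q}$, the pairing $\langle V(z_1,z_2),\mathbf{a}\rangle=c_0+c_1z_1+c_2z_2$ is affine in $(z_1,z_2)$, and $\mathcal{B}(a,b)\supseteq\mathcal{B}(p,q,r)$ forces $|c_0+c_1z_1+c_2z_2|^2=|d_0+d_1z_1+d_2z_2|^2$ on an open subset of the torus $|z_1|=|z_2|=1$, hence identically by analyticity. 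Matching the coefficients of $1$, $\bar z_1$, $\bar z_2$ and $z_1\bar z_2$ yields $\sum_i|c_i|^2=\sum_i|d_i|^2$ and $c_i\bar c_j=d_i\bar d_j$ for $i\neq j$; since $\mathbf{a}\mapsto(c_0,c_1,c_2)$ is an isomorphism (the cross products form a dual basis to the linearly independent lifts), a short case analysis on which coefficients vanish shows either $a=b$ projectively (no bisector) or that $a$ and $b$ lie on one of the three complex lines through two of $p,q,r$ in the symmetric position that reproduces $\mathcal{B}(p,q)$, $\mathcal{B}(q,r)$ or $\mathcal{B}(p,r)$. This coefficient-matching step is the actual content of the proposition and cannot be replaced by the intersection heuristic you invoke.
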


Note that checking whether an isometry maps  a Giraud disk to another is equivalent to checking that the corresponding triples of  points are mapped to each other by an element of $\mathbf{PU}(2,1)$ .

In order to study Giraud  disks, we will use {\it spinal coordinates}. The complex slices of $\mathcal{B}(p,q)$ are given explicitly by choosing a lift $\mathbf{p}$ (resp. $\mathbf{q}$) of $p$ (resp. $q$).
When $p,q\in  {\bf H}^2_{\mathbb C}$, we simply choose lifts such that $\langle \mathbf{p},\mathbf{p}\rangle= \langle \mathbf{q},\mathbf{q}\rangle$.
In this paper, we will mainly use these parametrization when  $p,q\in  \partial{\bf H}^2_{\mathbb C}$. In that case, the condition
$\langle \mathbf{p},\mathbf{p}\rangle= \langle \mathbf{q},\mathbf{q}\rangle$  is vacuous, since all lifts are null vectors; we then choose some fixed lift $\mathbf{p}$
for  the center of the Ford domain, and we take $ \mathbf{q}=G( \mathbf{p})$ for some $G\in \mathbf{U}(2,1)$. For  a different matrix $G'=SG$, with $S$
is a scalar matrix, note  that the diagonal element of  $S$ is a unit complex number, so $\mathbf{q}$ is well defined up to a unit complex number.
The complex slices of $\mathcal{B}(p,q)$ are obtained as the set of negative lines $(\overline{z}\mathbf{p}-\mathbf{q})^{\bot}$ in ${\bf H}^2_{\mathbb C}$ for some arc of values of $z\in S^1$,  which is determined by requiring that $\langle \overline{z}\mathbf{p}-\mathbf{q},\overline{z}\mathbf{p}-\mathbf{q}\rangle>0$.

Since a point of the bisector is on precisely one complex slice, we can parameterize the {\it Giraud torus}  $\hat{\mathcal{B}}(p,q,r)$  in ${\bf P}^2_{\mathbb C}$   by $(z_1,z_2)=(e^{it_1},e^{it_2})\in S^1\times S^1 $ via
$$V(z_1,z_2)=(\overline{z}_1\mathbf{p}-\mathbf{q})\boxtimes (\overline{z}_2\mathbf{p}-\mathbf{r})=\mathbf{q}\boxtimes \mathbf{r}+z_1 \mathbf{r}\boxtimes \mathbf{p}+z_2 \mathbf{p}\boxtimes \mathbf{q}.$$

The Giraud disk $\mathcal{B}(p,q,r)$ corresponds to the $(z_1,z_2)\in S^1\times S^1 $  with  $$\langle V(z_1,z_2),V(z_1,z_2)\rangle<0.$$ 

The boundary at infinity $\partial\mathcal{B}(p,q,r)$ is a circle, given in spinal coordinates by the equation
$$\langle V(z_1,z_2),V(z_1,z_2)\rangle=0.$$

Note that the choices of two lifts of $q$ and $r$ affect the spinal coordinates by rotation on each of the $S^1$-factors.

A defining equation for the trace of another bisector $\mathcal{B}(u,v)$ on the Giraud disk $\mathcal{B}(p,q,r)$ can be written in the form
$$| \langle V(z_1,z_2),u\rangle|=| \langle V(z_1,z_2),v\rangle|,$$  provided that $u$ and $v$ are suitably chosen lifts.
The expressions $\langle V(z_1,z_2),u\rangle$ and $\langle V(z_1,z_2),v\rangle$ are affine in $z_1$ and $z_2$.

This triple bisector intersection can be parameterized  fairly explicitly, because one can solve the equation
$$|\langle V(z_1,z_2),u\rangle|^2=|\langle V(z_1,z_2),v\rangle|^2 $$ for one of the variables $z_1$ or $z_2$ simply by solving a quadratic equation.
A detailed explanation of how this works can be found in \cite{Deraux:2016,DerauxF:2015,dpp}.

\subsection{Isometric spheres and Ford polyhedron}

Suppose that $g=(g_{ij})^3_{i,j=1}\in\mathbf{PU}(2,1)$ does not fix $q_{\infty}$. This implies that $g_{31}\neq 0$, see Lemma 4.1 of \cite{Par}.
\begin{defn}
	The \emph{isometric sphere} of $g$, denoted by $ I(g)$, is the set
	\begin{equation}\label{eq:isom-sphere}
	I(g)=\{ p \in {\hc \cup \partial\hc} : |\langle {\bf{p}}, {\bf{q}}_{\infty} \rangle | = |\langle {\bf{p}}, g^{-1}({\bf{q}}_{\infty}) \rangle| \}.
	\end{equation}
\end{defn}
The isometric sphere $I(g)$ is the Cygan sphere with center
$$g^{-1}({\bf{q}}_{\infty})=\left[\frac{\overline{g_{32}}}{\overline{g_{31}}},2{\rm{Im}}(\frac{\overline{g_{33}}}{\overline{g_{31}}})\right]$$
and radius $r_g=\sqrt{\frac{2}{|g_{31}|}}$.

The \emph{interior} of $I(g)$ is the set
\begin{equation}\label{eq:exterior}
\{ p \in {\hc \cup \partial\hc} : |\langle {\bf{p}}, {\bf{q}}_{\infty} \rangle | > |\langle {\bf{p}}, g^{-1}({\bf{q}}_{\infty}) \rangle| \}.
\end{equation}
The \emph{exterior } of $I(g)$ is the set
\begin{equation}\label{eq:interior}
\{ p \in {\hc \cup \partial\hc} : |\langle {\bf{p}}, {\bf{q}}_{\infty} \rangle | < |\langle {\bf{p}}, g^{-1}({\bf{q}}_{\infty}) \rangle| \}.
\end{equation}

We summarize without proofs the relevant properties on isometric sphere in the following proposition.
\begin{prop}[\cite{Go}, Section 5.4.5] Let $g$ and $h$ be elements of $\mathbf{PU}(2,1)$ which do not fix $q_{\infty}$, such that $g^{-1}(q_{\infty})\neq h^{-1}(q_{\infty})$ and let $f \in\mathbf{PU}(2,1)$ be an unipotent transformation  fixing $q_{\infty}$.  Then the followings hold:
	\begin{itemize}
		\item  $g$ maps $I(g)$ to $I(g^{-1})$, and the exterior of $I(g)$ to the interior of $I(g^{-1})$.
		\item  $I(gf)=f^{-1}I(g)$, $I(fg)=I(g)$.
		\item $g(I(g)\cap I(h))=I(g^{-1})\cup I(hg^{-1})$, $h(I(g)\cap I(h))=I(gh^{-1})\cup I(h^{-1})$.
	\end{itemize}
\end{prop}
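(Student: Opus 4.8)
The plan is to reduce everything to the single algebraic fact that the Hermitian form is invariant under $\mathbf{SU}(2,1)$. I would fix lifts of $g,h,f$ to $\mathbf{SU}(2,1)$; then $\langle A\mathbf{x},A\mathbf{y}\rangle=\langle\mathbf{x},\mathbf{y}\rangle$, equivalently $\langle A\mathbf{x},\mathbf{y}\rangle=\langle\mathbf{x},A^{-1}\mathbf{y}\rangle$, for every $A$ among these elements. Since $I(g)$, its interior and its exterior are defined purely through the moduli $|\langle\cdot,\cdot\rangle|$, which are unchanged by rescaling either entry, all the statements descend unambiguously to $\mathbf{PU}(2,1)$. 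I would also record at the outset the one place the hypothesis ``unipotent'' (rather than merely ``fixes $q_\infty$'') is needed: a unipotent $f$ has all eigenvalues equal to $1$, so the chosen lift satisfies $f(\mathbf{q}_\infty)=\mathbf{q}_\infty$ exactly, with no scalar factor; a general element fixing $q_\infty$ could scale $\mathbf{q}_\infty$ and thereby change a radius.

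For the first bullet, evaluate the two sides of \eqref{eq:isom-sphere} for $I(g^{-1})$ at the point $g(p)$. Using the invariance identity one has $|\langle g\mathbf{p},\mathbf{q}_\infty\rangle|=|\langle\mathbf{p},g^{-1}\mathbf{q}_\infty\rangle|$ and $|\langle g\mathbf{p},g\mathbf{q}_\infty\rangle|=|\langle\mathbf{p},\mathbf{q}_\infty\rangle|$, and since $g\mathbf{q}_\infty=(g^{-1})^{-1}\mathbf{q}_\infty$ the defining equation of $I(g^{-1})$ at $g(p)$ becomes literally the defining equation of $I(g)$ at $p$; replacing the equalities by the matching strict inequalities shows that the exterior of $I(g)$ is carried to the interior of $I(g^{-1})$ (and conversely by applying $g^{-1}$). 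For the second bullet, I would expand $(gf)^{-1}(\mathbf{q}_\infty)=f^{-1}g^{-1}(\mathbf{q}_\infty)$ and $(fg)^{-1}(\mathbf{q}_\infty)=g^{-1}f^{-1}(\mathbf{q}_\infty)=g^{-1}(\mathbf{q}_\infty)$, the last equality by unipotence; then $I(fg)=I(g)$ is immediate from the definition, and for $I(gf)=f^{-1}I(g)$ one checks that $x\in f^{-1}I(g)$ iff $f(x)\in I(g)$, moves $f$ across using $\langle f\mathbf{x},\mathbf{y}\rangle=\langle\mathbf{x},f^{-1}\mathbf{y}\rangle$ together with $f^{-1}(\mathbf{q}_\infty)=\mathbf{q}_\infty$, and observes that what remains is exactly the defining equation of $I(gf)$ at $x$.

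For the third bullet, the clean way to organize the computation is to note that $p\in I(g)\cap I(h)$ precisely when the three nonnegative numbers $|\langle\mathbf{p},\mathbf{q}_\infty\rangle|$, $|\langle\mathbf{p},g^{-1}\mathbf{q}_\infty\rangle|$, $|\langle\mathbf{p},h^{-1}\mathbf{q}_\infty\rangle|$ are all equal. Applying $g$ and using the invariance identity (as in the first bullet) transports this triple, up to order, to $|\langle g\mathbf{p},\mathbf{q}_\infty\rangle|$, $|\langle g\mathbf{p},(g^{-1})^{-1}\mathbf{q}_\infty\rangle|$, $|\langle g\mathbf{p},(hg^{-1})^{-1}\mathbf{q}_\infty\rangle|$, whose simultaneous equality is exactly the condition that $g(p)$ lie on both $I(g^{-1})$ and $I(hg^{-1})$; the statement for $h$ follows by the symmetric relabelling. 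I do not expect any genuine obstacle here: the whole proposition is a bookkeeping exercise in the invariance of $\langle\cdot,\cdot\rangle$, and the only points requiring care are keeping the orientation of interior versus exterior straight and invoking unipotence at exactly the step that kills the scalar on $f(\mathbf{q}_\infty)$.
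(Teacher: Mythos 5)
Your proposal is correct, and it is the standard bookkeeping argument via invariance of the Hermitian form; note that the paper explicitly states this proposition \emph{without} proof (``We summarize without proofs the relevant properties\dots''), so there is no proof in the paper to compare against. Two small remarks. First, you rightly isolate where unipotence is used: a lift of $f$ to $\mathbf{SU}(2,1)$ fixes the vector $\mathbf{q}_\infty$ with no scalar, which is exactly what keeps the Cygan radius unchanged in $I(fg)=I(g)$. Second, in the third bullet your computation actually establishes $g\bigl(I(g)\cap I(h)\bigr)=I(g^{-1})\cap I(hg^{-1})$ (``lie on both''), which is the correct statement: the symbol $\cup$ in the displayed proposition is a typo, since the image of the real-codimension-two Giraud-type intersection cannot equal a union of two full isometric spheres. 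You silently prove the intersection version; it would be worth flagging that discrepancy explicitly rather than letting it pass.
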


Since isometric spheres are Cygan spheres, we now recall some facts about Cygan spheres.
Let $S_{[0,0]}(r)$ be the Cygan sphere with center $[0,0]$ and radius $r>0$. Then
\begin{equation}\label{eq:cygan-sphere}
S_{[0,0]}(r)=\left\{ (z,t,u)\in\hc \cup \partial\hc: (|z|^2+u)^2+t^2=r^4 \right\}.
\end{equation}

We are interested in the intersection of Cygan spheres. Cygan spheres are examples of bisectors. Theorem 9.2.6 of \cite{Go} tells us  that two bisectors which  are respectively coequidistant or covertical(their complex spines respectively intersect or are asymptotic) have connected intersection.
The complex spine of the Cygan sphere $I(g)$ is the complex span of $g^{-1}({\bf{q}}_{\infty})$ and ${\bf{q}}_{\infty}$. So two Cygan spheres are covertical.
\begin{prop}[Goldman \cite{Go}, Parker and Will \cite{ParkerWill:2016}]
	The intersection of two Cygan spheres is connected.
\end{prop}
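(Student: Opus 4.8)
The plan is to reduce the statement to a known connectedness result about bisectors by placing the two Cygan spheres in a convenient normal form. First I would use the fact that $\mathbf{PU}(2,1)$ acts transitively on pairs (point of $\partial\hc$, Cygan sphere of given radius) modulo the stabilizer of $q_\infty$: applying a Heisenberg translation and the dilation $(z,t)\mapsto(\lambda z,\lambda^2 t)$ from Section~\ref{sec:background}, I may assume the first Cygan sphere is $S_{[0,0]}(1)$, the isometric sphere of some element $g_0$ with center the origin. The second Cygan sphere $S_{[w_0,s_0]}(r)$ then has arbitrary center and radius. If the two spheres are disjoint or tangent there is nothing to prove, so assume they meet.

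The key step is to recognize each Cygan sphere as (the ideal boundary together with the finite part of) a bisector: $S_{[0,0]}(1)$ is the spinal sphere of the bisector $\mathcal{B}(p_0,p_1)$ equidistant from $q_\infty$ and $g_0(q_\infty)$, and similarly for the second sphere. Thus the intersection of the two Cygan spheres, viewed inside $\overline{\hc}$, is the intersection of two \emph{coequidistant} bisectors — both are equidistant from the common point $q_\infty$ — which by the discussion preceding Proposition~\ref{prop:Giraud} is governed by a Giraud-type analysis. Concretely, I would parametrize one sphere, say $S_{[0,0]}(1)$, using the geographic coordinates $(\alpha,\beta,w)$ of Definition~\ref{def:geographic}, substitute the lift \eqref{eq:geog-coor} into the defining inequality/equality $|\langle\mathbf{q},\mathbf{q}_\infty\rangle|=|\langle\mathbf{q},g^{-1}(\mathbf{q}_\infty)\rangle|$ for the second sphere, and observe — exactly as in the triple bisector intersection analysis cited from \cite{Deraux:2016,DerauxF:2015,dpp} — that this equation is affine in the slice variable $e^{-i\alpha}$ once $(\beta,w)$ are fixed, so that for each admissible $(\alpha,\beta)$ the fibre in the $w$-direction is a (possibly empty, possibly degenerate) interval. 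Projecting the intersection onto the spine coordinate and showing the resulting subset of the real spine is a (connected) subinterval, while each fibre over it is connected, then yields connectedness of the whole intersection.

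The main obstacle is the fibrewise analysis: one must show that the set of $\alpha$ for which the slice $\{\alpha=\alpha_0\}\cap S_{[0,0]}(1)$ meets the second Cygan sphere is a single closed arc, and that within each such slice the intersection is connected. This is where the coequidistance is essential — it forces the defining equation of the second sphere, restricted to a complex slice of the first, to cut out a sub-disk of that slice-disk rather than something disconnected, mirroring the fact used in the excerpt that the Giraud disk region $\langle V(z_1,z_2),V(z_1,z_2)\rangle<0$ is a topological disk because the bisectors are covertical. I would handle the degenerate boundary behavior (where a slice becomes tangent to the second sphere) by a limiting argument, and treat the case where the second sphere is centered at $q_\infty$ — i.e. is not a finite Cygan sphere — separately or rule it out by hypothesis. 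Once the slice-by-slice picture is established, connectedness follows since $S_{[0,0]}(1)$ is a union of its slices over a connected parameter interval and the intersection is a union of connected slice-pieces over a connected sub-interval.
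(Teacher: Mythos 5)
The paper does not actually prove this proposition --- it is quoted from Goldman's book and from Parker--Will and used as a black box --- so there is no in-paper argument to compare against. Your overall strategy (normalize one sphere to $S_{[0,0]}(1)$, regard both Cygan spheres as closures of bisectors coequidistant from the common point $q_{\infty}$, and argue slice by slice in geographic/spinal coordinates) is exactly the strategy of those references, so the route is the right one.

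As written, though, the argument is a sketch with gaps at precisely the two places where the work lies. First, the fibrewise claim is asserted rather than proved, and is misstated: the trace of the second Cygan sphere on a complex slice of the first is generically an \emph{arc}, not a ``sub-disk'' (a $3$-dimensional hypersurface meets a $2$-dimensional slice in something $1$-dimensional). The provable statement you need is this: if a complex slice of the first bisector is parametrized by lifts $c_0+wc_1$, then $\langle c_0+wc_1,q_{\infty}\rangle$ and $\langle c_0+wc_1,g^{-1}(q_{\infty})\rangle$ are affine in $w$, so the defining equation $|\cdot|=|\cdot|$ of the second sphere cuts out an Apollonius circle or a straight line in the $w$-plane, and a circle or line meets the (convex) slice disk in a connected set. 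Second, you never establish that the set of slice parameters $\alpha$ for which the slice meets the second sphere is a single interval; that is the other half of the proof, not something that follows from coequidistance by inspection. (Granting both facts, connectedness does follow, since the intersection is compact and fibers over an interval with nonempty connected fibres, hence cannot be partitioned into two nonempty closed pieces.) Finally, the degenerate case you should isolate is not ``the second sphere centered at $q_{\infty}$'' (which never occurs for a Cygan sphere) but the case where $q_{\infty}$ and the two finite centers lie in a common complex line: there Giraud's theorem does not apply, the spinal-coordinate parametrization degenerates, and the intersection is a complex slice rather than a Giraud disk --- still connected, but only by a separate direct computation with the defining equations.
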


\begin{defn}The \emph{Ford domain} $D_{\Gamma}$ for a discrete group $\Gamma < \mathbf{PU}(2,1)$ centered at $q_{\infty}$ is
	the intersection of the (closures of the) exteriors of all isometric spheres of elements in $\Gamma$ not fixing $q_{\infty}$. That is,
	$$D_{\Gamma}=\{p\in {\bf H}^2_{\mathbb C} \cup \partial{\bf H}^2_{\mathbb C}: |\langle \mathbf{p},q_{\infty}\rangle|\leq|\langle \mathbf{p},G^{-1}(q_{\infty})\rangle|
	\ \forall G\in \Gamma \ \mbox{with} \ G(q_{\infty})\neq q_{\infty} \}.$$
\end{defn}

The boundary  at infinity of the Ford domain is made out of  pieces of isometric spheres. When $q_{\infty}$ is either in the domain  of discontinuity or is a parabolic fixed point, the Ford  domain  is preserved by $\Gamma_{\infty}$, the stabilizer of
$q_{\infty}$ in $\Gamma$.  In this case, $D_{\Gamma}$ is only a fundamental domain modulo the action of $\Gamma_{\infty}$.  In other words, the fundamental domain for
$\Gamma$ is the intersection of the Ford domain with a fundamental domain for $\Gamma_{\infty}$. Facets of codimension one, two, three and four in $D_{\Gamma}$ will be called {\it sides}, {\it ridges}, {\it edges} and {\it vertices}, respectively. Moreover, a  {\it bounded ridge} is a ridge which does not intersect  $\partial {\bf H}^2_{\mathbb C}$, and if the intersection of a ridge $r$ and $\partial {\bf H}^2_{\mathbb C}$ is non-empty, then
$r$ is an {\it infinite ridge}.

It is usually very hard to determine $D_{\Gamma}$ because one should check infinitely many inequalities.
Therefore a general method will be to guess the Ford polyhedron and check  it using the Poincar\'e polyhedron theorem.
The basic idea is that the sides of $D_{\Gamma}$ should be paired by isometries, and the images of $D_{\Gamma}$
under these so-called side-pairing maps should give a local tiling of ${\bf H}^2_{\mathbb C}$.  If they do (and if
the quotient of $D_{\Gamma}$ by the identification given by the side-pairing maps is complete), then the Poincar\'{e} polyhedron
theorem implies that the images of $D_{\Gamma}$ actually give a global tiling of ${\bf H}^2_{\mathbb C}$.

Once a fundamental domain is obtained, one gets an explicit presentation of $\Gamma$ in terms of the generators given by the side-pairing maps together
with a generating set for the stabilizer $\Gamma_{\infty}$, where the relations corresponding to so-called ridge cycles, which correspond to the local tilings bear each codimension-two face. For more on the Poincar\'e polyhedron theorem, see \cite{dpp, ParkerWill:2016}.

\section{The representations of the triangle groups $\Delta_{3,\infty,\infty;\infty}$   and $\Delta_{3,4,\infty; \infty}$}\label{sec-gens}

We will explicitly give  matrix representations in $\mathbf{SU}(2,1)$  of $\Delta_{3,\infty,\infty;\infty}$   and $\Delta_{3,4,\infty; \infty}$ in this section.

\subsection{The representation of the triangle group $\Delta_{3,\infty,\infty;\infty}$}\label{subsec-3pp}

Suppose that complex reflections $I_1$ and $I_2$
in $\mathbf{SU}(2,1)$ so that $I_1I_2$ is an unipotent  element fixing $q_{\infty}$. Conjugating by a Heisenberg rotation and a Heisenberg dilation if necessary, we may  suppose that the vertical $\mathbb{C}$-circles $\partial C_{i}$ fixed by $I_{i}$ for $i=1,2$ are
$$\partial C_{1}=\{-1-\frac{i}{\sqrt{3}}\}\times \mathbb{R}, \partial C_{2}=\{-\frac{i}{\sqrt{3}}\}\times \mathbb{R}.$$
The polar vectors of the $\mathbb{C}$-circles $\partial C_{i}$ for $i=1,2$ are
\begin{eqnarray*}
	{\bf n}_1 = \left[\begin{matrix} 1-\frac{i}{\sqrt{3}} \\ 1 \\ 0 \end{matrix}\right],\quad
	{\bf n}_2 = \left[\begin{matrix} -\frac{i}{\sqrt{3}} \\ 1 \\ 0 \end{matrix}\right].
\end{eqnarray*}
The choose of the  polar vector is not in its simplest form, but that it is more relevant for our purpose. The matrices of $I_1$ and $I_2$ are
\begin{equation}\label{eq-I1-I2}
	I_1=\left[\begin{matrix}
		-1 & 2-\frac{2\sqrt{3}i}{3}& \frac{8}{3} \\ 0 & 1 &  2+\frac{2\sqrt{3}i}{3} \\ 0 & 0 & -1 \end{matrix}\right], \quad
	I_2=\left[\begin{matrix}
		-1 & -\frac{2\sqrt{3}i}{3} & \frac{2}{3} \\ 0 & 1 & \frac{2\sqrt{3}i}{3} \\ 0 & 0 & -1 \end{matrix}\right].
\end{equation}

We want to find $I_3$ so that $I_1I_3$ and $I_1I_3I_2I_3$ are unipotent   and   $I_2I_3$ is an elliptic element of order 3. After some calculation,
 one can find that  the matrix of $I_3$ has the following form
$$I_3=\left[\begin{matrix}
0& 0& \frac{2}{3} \\ 0 &-1 &0 \\ \frac{3}{2} & 0 & 0 \end{matrix}\right].$$

\subsection{The representation of the triangle group $\Delta_{3,4,\infty;\infty}$}\label{subsec-34p}

A matrix  representation of the  complex hyperbolic triangle group $\Delta_{3,4,\infty;\infty}$ is given in \cite{Thompson:2010}:

Take
\begin{equation}
	I_1=\left[\begin{matrix}
		-1& 1-\sqrt{7}i & 4 \\ 0 & 1 &1+\sqrt{7}i \\0  & 0 & -1 \end{matrix}\right] \quad  and   \quad
	I_2=\left[\begin{matrix}
		0& 0& 1 \\ 0 &-1 &0 \\1 & 0 & 0 \end{matrix}\right],
\end{equation}
and take $$I_3=\left[\begin{matrix}
-1& 0&0 \\ \frac{1-\sqrt{7}i}{2} &1 &0 \\1 & \frac{1+\sqrt{7}i}{2} &-1 \end{matrix}\right].$$

Then it is easy to see $I_2I_3$ is elliptic of order 3,  $I_3I_1$  is elliptic of order 4, both  $I_1I_2$ and  $I_1I_3I_2I_3$ are unipotent.

\section{The Ford domain for the even subgroup of $\Delta_{3,\infty,\infty;\infty}$}\label{sec-Ford-3pp}

We will study the local combinatorial structure of  Ford domain for the even subgroup of $\Delta_{3,\infty,\infty;\infty}$ in this section, the main result is Theorem \ref{thm:fundamental-domain3pp}.

Let $\Gamma=\langle I_1, I_2, I_3\rangle$ be the  group $\Delta_{3,\infty,\infty;\infty}$ in Subsection \ref{subsec-3pp}.
Let $A=I_1I_2$, $B=I_2I_3$. So $\Gamma_1=\langle A,B\rangle$ is  the even  subgroup of $\langle I_1, I_2, I_3\rangle$ with index two. By directly calculation, we have
\begin{equation*}\label{eq-I1-I2}
	A=\left[\begin{matrix}
		1 & 2& -2+\frac{4i}{\sqrt{3}} \\ 0 & 1 & -2 \\ 0 & 0 & 1 \end{matrix}\right], \quad
	B=\left[\begin{matrix}
		1 &\frac{2i}{\sqrt{3}}& -\frac{2}{3} \\ \sqrt{3}i & -1&0 \\ \frac{-3}{2} &0 &0 \end{matrix}\right].
\end{equation*}
Note  that $A$ is parabolic and $B$ is elliptic of order 3. Moreover,  $\Gamma_1$ is conjugate to a subgroup of the Einsenstein-Picard  modular group $\mathbf{PU}(2,1;\mathbb{Z}[\omega])$, proving that $\Gamma_1$ is discrete.

We will construct a polyhedron $D_{\Gamma_1}$ whose sides are contained in the isometric spheres of some well-chosen group elements of $\Gamma_1$.  The details of these isometric spheres will be  given in this section.

Note that the elements $BA^{-1}$, $BA$, $AB$, $B^{-1}A$ are unipotent parabolic. For future reference, we provide the lifts of their fixed points, as vectors in $\mathbb{C}^{3}$:

\begin{eqnarray*}
\mathbf{ p}_{B^{-1}A} &=& \left[
                               \begin{array}{c}
                                 -\frac{2}{3}\\
                                1 -\frac{\sqrt{3}i}{3} \\
                                 1 \\
                               \end{array}
                             \right],\\
 \mathbf{ p}_{BA^{-1}} &=& \left[
                               \begin{array}{c}
                                 -\frac{2}{3}+ \frac{2\sqrt{3}}{3} i\\
                                 -1 -\frac{\sqrt{3}}{3} i \\
                                 1 \\
                               \end{array}
                             \right],\\
 \mathbf{ p}_{BA} &=& \left[
                               \begin{array}{c}
                                 -\frac{2}{3}-\frac{2\sqrt{3}}{3} i\\
                                 1 -\frac{\sqrt{3}}{3} i \\
                                 1 \\
                               \end{array}
                             \right],\\
   \mathbf{ p}_{AB}&=& \left[
                               \begin{array}{c}
                                  -\frac{2}{3}\\
                                -1 -\frac{\sqrt{3}i}{3} \\
                                 1 \\
                               \end{array}
                             \right].
\end{eqnarray*}

Note that $A(p_{B^{-1}A})=p_{BA^{-1}}$, $A(p_{BA})=p_{AB}$. Let  $I(B)$ and $I(B^{-1})$ be the  isometric spheres for $B$ and its inverse. From the matrix for $B$ given above, we see that $I(B)$ and $I(B^{-1})$ have the same radius $2/\sqrt{3}$, and centers $[0,0]$ and $[-2i/\sqrt{3},0]$ in Heisenberg coordinates  respectively.

We give some notation for the isometric spheres which are the images of $I(B)$ and $I(B^{-1})$ by powers of $A$.
\begin{defn} For $k\in \mathbb{Z}$, let $I_{k}^{+}$ be the isometric sphere $I(A^{k}BA^{-k})=A^{k}I(B)$ and let
 $I_{k}^{-}$ be the isometric sphere $I(A^{k}B^{-1}A^{-k})=A^{k}I(B^{-1})$.
\end{defn}

The action of $A$ on the Heisenberg group is given by
\begin{equation}\label{eq-A}
(z,t)\rightarrow (z-2, t+4\rm{Im}(z)+8/\sqrt{3}).
\end{equation}

In particular, we have
\begin{prop}\label{prop:A-action}
For any value of $t_0\in \mathbb{R}$, the action $A$ preserves every $\mathbb{R}$-circle of the form $[x-2i/\sqrt{3},t_0]$ with $x\in \mathbb{R}$.
\end{prop}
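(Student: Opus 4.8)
The plan is to verify the claim by direct substitution into the explicit formula \eqref{eq-A} for the action of $A$ on the Heisenberg group $\mathcal{N}=\mathbb{C}\times\mathbb{R}$. Write $C_{t_0}$ for the $\mathbb{R}$-circle $\{[x-2i/\sqrt{3},t_0]:x\in\mathbb{R}\}$; it is a horizontal line contained in the vertical plane $P=\{[z,t]:\mathrm{Im}(z)=-2/\sqrt{3}\}$. The key point is that $A$ restricts to a particularly simple map on $P$, and this is where the numerical coincidence behind the statement lives.

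First I would record that, by \eqref{eq-A}, a point $[z,t]$ with $\mathrm{Im}(z)=-2/\sqrt{3}$ is sent by $A$ to $[\,z-2,\ t+4\,\mathrm{Im}(z)+8/\sqrt{3}\,]=[\,z-2,\ t+4(-2/\sqrt{3})+8/\sqrt{3}\,]=[\,z-2,\ t\,]$. In other words, the vertical-translation term $8/\sqrt{3}$ appearing in \eqref{eq-A} --- which is forced by the normalization $A=T_{(-2,\,8/\sqrt{3})}$ obtained after conjugating by $T$ at the start of the section --- is exactly tuned to cancel the shearing contribution $4\,\mathrm{Im}(z)$ along the plane $P$. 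Consequently $A|_P$ is the pure horizontal translation $[z,t]\mapsto[z-2,t]$, and in particular $A$ preserves $P$.

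Next I would observe that $z\mapsto z-2$ maps the horizontal line $\{z\in\mathbb{C}:\mathrm{Im}(z)=-2/\sqrt{3}\}$ bijectively onto itself while leaving the height coordinate $t$ fixed. Hence $A$ carries $C_{t_0}=\{[x-2i/\sqrt{3},t_0]:x\in\mathbb{R}\}$ bijectively onto $\{[(x-2)-2i/\sqrt{3},t_0]:x\in\mathbb{R}\}=C_{t_0}$, which is precisely the assertion. Since $t_0\in\mathbb{R}$ is arbitrary, $A$ preserves every member of this one-parameter family, and moreover $A$ preserves $P$ and acts on it as a single translation in the $\mathrm{Re}(z)$-direction.

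There is essentially no obstacle here: the proposition reduces to the arithmetic identity $4\cdot(-2/\sqrt{3})+8/\sqrt{3}=0$. The only thing worth flagging is that it is this coincidence --- traceable to the specific conjugating translation $T$ chosen at the beginning of Section \ref{sec-Ford-3pp} --- that makes the plane $P$, and with it the whole family $\{C_{t_0}\}_{t_0\in\mathbb{R}}$, invariant under $\langle A\rangle=\mathrm{Stab}_{\Gamma_1}(q_\infty)$; this invariance is exactly what later permits pieces of $P$ to serve as cutting surfaces ($E_1$, $E_2$) compatible with the cyclic action of $\langle A\rangle$ when assembling the fundamental domain.
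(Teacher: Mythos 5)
Your proposal is correct and follows essentially the same route as the paper, which states Proposition~\ref{prop:A-action} as an immediate consequence of the explicit formula~(\ref{eq-A}): since $A$ is the Heisenberg translation by $[-2,8/\sqrt{3}]$, on points with $\mathrm{Im}(z)=-2/\sqrt{3}$ the vertical shift $4\,\mathrm{Im}(z)+8/\sqrt{3}$ vanishes, so each line $\{[x-2i/\sqrt{3},t_0]:x\in\mathbb{R}\}$ is translated into itself. Your additional remark tracing the cancellation to the normalizing conjugation by $T$ and to the later use of $\langle A\rangle$-invariant cutting surfaces is consistent with how the paper uses this fact (e.g.\ in Proposition~\ref{prop:ideal-boundary}).
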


As $A$ is unipotent and fixes $q_{\infty}$, it is a Cygan isometry, and thus preserves the radii of isometric spheres. Moreover, it follows from
Equation (\ref{eq-A}) that $A^{k}$ acts on $\partial{\bf H}^2_{\mathbb C}$ by left Heisenberg multiplication of $[-2k,8k/\sqrt{3}]$. Then we have

\begin{prop}\label{prop:center-radius3pp}
 For any integer $k\in \mathbb{Z}$, the isometric spheres $I_{k}^{+}$ and $I_{k}^{-}$ have the same radius $2/\sqrt{3}$ and are centered at the point with Heisenberg coordinates $[-2k,8k/\sqrt{3}]$ and $[-2k-2i/\sqrt{3},0]$, respectively.
\end{prop}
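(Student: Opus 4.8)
The plan is to reduce everything to the explicit description of the isometric sphere of an element in terms of its matrix entries, together with the already-established action of $A^k$ on the Heisenberg group. First I would recall that for $g\in\mathbf{PU}(2,1)$ not fixing $q_{\infty}$ the isometric sphere $I(g)$ is the Cygan sphere centered at $g^{-1}(q_{\infty})$ of radius $r_g=\sqrt{2/|g_{31}|}$. Since $A^{-k}$ fixes $q_{\infty}$, the center of $I_{k}^{+}=I(A^{k}BA^{-k})$ is $(A^{k}BA^{-k})^{-1}(q_{\infty})=A^{k}B^{-1}(q_{\infty})=A^{k}(c^{+})$, where $c^{+}=[0,0]$ is the center of $I(B)$; likewise the center of $I_{k}^{-}=I(A^{k}B^{-1}A^{-k})$ is $A^{k}B(q_{\infty})=A^{k}(c^{-})$ with $c^{-}=[-2i/\sqrt{3},0]$ the center of $I(B^{-1})$. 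Because $A$ is unipotent and fixes $q_{\infty}$, it is a Cygan isometry, so conjugation by $A^{k}$ preserves radii of Cygan spheres; hence $I_{k}^{+}$ and $I_{k}^{-}$ retain the common radius $2/\sqrt{3}$. This settles the radius claim.

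For the centers I would use the action of $A^{k}$ recorded before the statement: from Equation (\ref{eq-A}), $A$ acts as left Heisenberg multiplication by $[-2,8/\sqrt{3}]$, and a one-line induction in the Heisenberg group law $[z,t]\cdot[\zeta,\nu]=[z+\zeta,t+\nu-2{\rm Im}(\bar z\zeta)]$ shows that $A^{k}$ acts as left multiplication by $[-2k,8k/\sqrt{3}]$ (the cross term vanishes at each step since the first coordinate stays real). It then remains to evaluate two products. For $I_{k}^{+}$ one gets $[-2k,8k/\sqrt{3}]\cdot[0,0]=[-2k,8k/\sqrt{3}]$. For $I_{k}^{-}$ one gets
$$[-2k,8k/\sqrt{3}]\cdot[-2i/\sqrt{3},0]=\bigl[-2k-2i/\sqrt{3},\,8k/\sqrt{3}-2{\rm Im}\bigl(\overline{(-2k)}\cdot(-2i/\sqrt{3})\bigr)\bigr],$$
and since ${\rm Im}(4ki/\sqrt{3})=4k/\sqrt{3}$ the $t$-coordinate is $8k/\sqrt{3}-8k/\sqrt{3}=0$, so the center is $[-2k-2i/\sqrt{3},0]$. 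This is exactly the assertion.

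There is no serious obstacle here: the statement is a direct computation, and the only points requiring care are (i) verifying that $A^{k}$ really is left translation by $[-2k,8k/\sqrt{3}]$ rather than by an element whose $t$-coordinate accumulates extra contributions, which is handled by the induction above, and (ii) tracking the sign and the factor $2{\rm Im}$ in the Heisenberg product so that the $t$-coordinate of the center of $I_{k}^{-}$ cancels correctly. As a cross-check one could instead conjugate the matrices $A^{k}BA^{-k}$ and $A^{k}B^{-1}A^{-k}$ directly and read off the centers from the formula $g^{-1}(q_{\infty})=\bigl[\overline{g_{32}}/\overline{g_{31}},\,2{\rm Im}(\overline{g_{33}}/\overline{g_{31}})\bigr]$; I would mention this but carry out the shorter Heisenberg-translation argument in the body.
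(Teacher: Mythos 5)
Your proposal is correct and follows the same route as the paper: the paper likewise notes that $A$ is a Cygan isometry (so conjugation by $A^{k}$ preserves the radius $2/\sqrt{3}$ of $I(B)$ and $I(B^{-1})$) and that $A^{k}$ acts on the Heisenberg group as left translation by $[-2k,8k/\sqrt{3}]$, from which the centers $A^{k}([0,0])$ and $A^{k}([-2i/\sqrt{3},0])$ are read off. Your write-up simply makes explicit the induction giving $A^{k}$ as left multiplication by $[-2k,8k/\sqrt{3}]$ and the Heisenberg product yielding the vanishing $t$-coordinate for $I_{k}^{-}$, which the paper leaves implicit.
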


\begin{figure}
\begin{center}
\begin{tikzpicture}[style=thick,scale=1.5]
\draw (-3/2,0.57735/2) circle (1.1547/2) ;
\draw  (-1/2,0.57735/2) circle  (1.1547/2) ;
\draw (1/2,0.57735/2) circle  (1.1547/2) ;
\draw (3/2,0.57735/2) circle  (1.1547/2) ;
\draw (5/2,0.57735/2) circle  (1.1547/2) ;
\draw (7/2,0.57735/2) circle  (1.1547/2) ;
\draw (9/2,0.57735/2) circle  (1.1547/2) ;

\draw (-3/2,-0.57735/2) circle (1.1547/2) ;
\draw  (-1/2,-0.57735/2) circle  (1.1547/2) ;
\draw (1/2,-0.57735/2) circle  (1.1547/2) ;
\draw (3/2,-0.57735/2) circle  (1.1547/2) ;
\draw (5/2,-0.57735/2) circle  (1.1547/2) ;
\draw (7/2,-0.57735/2) circle  (1.1547/2) ;
\draw (9/2,-0.57735/2) circle  (1.1547/2) ;

\draw (3/2,0.57735/2+1.1547/2) [above ] node{$I_0^{+}$};
\draw (3/2,-0.57735/2-1.1547/2) [below ] node{$I_0^{-}$};

\draw (1/2,0.57735/2+1.1547/2) [above ] node{$I_{-1}^{+}$};
\draw (1/2,-0.57735/2-1.1547/2) [below ] node{$I_{-1}^{-}$};

\draw (-1/2,0.57735/2+1.1547/2) [above ] node{$I_{-2}^{+}$};
\draw (-1/2,-0.57735/2-1.1547/2) [below ] node{$I_{-2}^{-}$};

\draw (5/2,0.57735/2+1.1547/2) [above ] node{$I_{1}^{+}$};
\draw (5/2,-0.57735/2-1.1547/2) [below ] node{$I_{1}^{-}$};

\draw (7/2,0.57735/2+1.1547/2) [above ] node{$I_{2}^{+}$};
\draw (7/2,-0.57735/2-1.1547/2) [below ] node{$I_{2}^{-}$};

\end{tikzpicture}
\end{center}
\caption{The vertical projections of some isometric spheres.}
\label{figure:proj-sphere}
\end{figure}
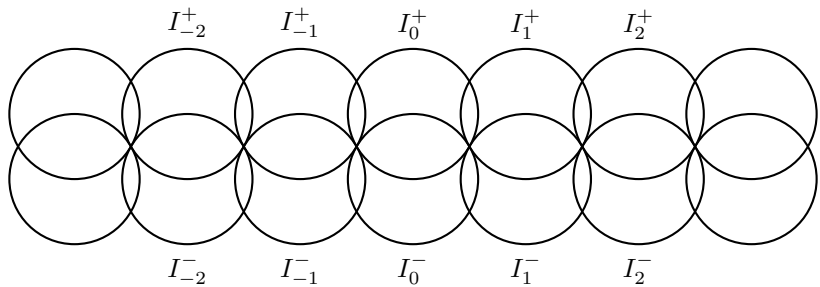

We need to give the details for  the combinatorics of the family of isometric spheres $\{I_{k}^{\pm}: k\in \mathbb{Z}\}$.

\begin{prop}\label{prop:pair-disjoint3pp}
 Any two isometric spheres in $\{I_{k}^{+}: k\in \mathbb{Z}\}$ or in $\{I_{k}^{-}: k\in \mathbb{Z}\}$ are disjoint.
\end{prop}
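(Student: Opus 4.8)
The plan is to reduce the statement to an elementary computation about Cygan spheres in the Heisenberg group, using the explicit centers and radii supplied by Proposition~\ref{prop:center-radius3pp}. By that proposition, for each $k\in\mathbb{Z}$ the sphere $I_k^{+}$ is the Cygan sphere of radius $2/\sqrt{3}$ centered at $[-2k,8k/\sqrt{3}]$, and $I_k^{-}$ is the Cygan sphere of radius $2/\sqrt{3}$ centered at $[-2k-2i/\sqrt{3},0]$. Since $A^{j}$ acts on $\partial\hc$ as a Cygan isometry (it is unipotent fixing $q_\infty$), it maps $I_k^{+}$ to $I_{k+j}^{+}$ and $I_k^{-}$ to $I_{k+j}^{-}$; hence it suffices to prove that $I_0^{\pm}$ is disjoint from $I_k^{\pm}$ for every $k\neq 0$, and in fact by symmetry under $A$ it suffices to treat, say, $I_0^{+}$ versus $I_k^{+}$ for all $k\geq 1$ (and identically for the minus family).

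First I would record the disjointness criterion for two Cygan spheres of equal radius $r$: the spheres with centers $p$ and $q$ in $\mathcal N$ are disjoint precisely when $d_{\mathrm{Cyg}}(p,q)>2r$, where the Cygan distance is given by formula~(\ref{eq:cygan-metric}). For the plus family, $p=[0,0]$ and $q=[-2k,8k/\sqrt{3}]$, so
\[
d_{\mathrm{Cyg}}(p,q)^2=\bigl||{-2k}|^2-i\,(0-8k/\sqrt3+0)\bigr|=\bigl|4k^2+\tfrac{8k}{\sqrt3}i\bigr|=\sqrt{16k^4+\tfrac{64k^2}{3}},
\]
using that $z=0$ makes the $\mathrm{Im}(z\bar w)$ term vanish. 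We must compare this with $(2r)^2=4\cdot\tfrac{4}{3}=\tfrac{16}{3}$. Since $\sqrt{16k^4+64k^2/3}\geq \sqrt{16+64/3}=\sqrt{112/3}>16/3$ already for $k=1$, and the left side is increasing in $|k|$, the strict inequality $d_{\mathrm{Cyg}}(p,q)^2>(2r)^2$ holds for all $k\neq 0$; thus the spheres in $\{I_k^{+}\}$ are pairwise disjoint. For the minus family, $A^{j}$-equivariance again reduces us to $I_0^{-}$ versus $I_k^{-}$; here both centers have the same $t$-coordinate $0$ and differ only by the real amount $-2k$ in the $z$-coordinate, so $d_{\mathrm{Cyg}}^2=\bigl||{-2k}|^2\bigr|=4k^2\geq 4>16/3$ for $|k|\geq 2$, while $k=1$ gives $4=d_{\mathrm{Cyg}}^2$... wait: $4<16/3$, so this case is not immediate and needs the full Cygan formula including the $\mathrm{Im}(z\bar w)$ term — here $z=0$ kills it again, so one genuinely gets $d_{\mathrm{Cyg}}^2=4k^2$, and for $k=\pm1$ this equals $4<16/3$, meaning the naive equal-radius estimate fails. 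So for the minus family I would instead compare centers of $I_k^{-}$ with $I_{k'}^{-}$ directly: the two centers are $[-2k-2i/\sqrt3,0]$ and $[-2k'-2i/\sqrt3,0]$, and $d_{\mathrm{Cyg}}$ between them, using~(\ref{eq:cygan-metric}) with $z=-2k-2i/\sqrt3$, $w=-2k'-2i/\sqrt3$, is $\bigl||z-w|^2-i\cdot 2\,\mathrm{Im}(z\bar w)\bigr|^{1/2}$; since $z-w=-2(k-k')$ is real and $\mathrm{Im}(z\bar w)=\mathrm{Im}\bigl((-2k-2i/\sqrt3)(-2k'+2i/\sqrt3)\bigr)= (4k/\sqrt3)+(4k'/\sqrt3)\cdot(-1)$... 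I would carry this arithmetic out carefully; the upshot is $d_{\mathrm{Cyg}}^2=|4(k-k')^2 - i\,c(k-k')|$ for a nonzero constant $c$, whose modulus exceeds $16/3$ for all $k\neq k'$.

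The main obstacle I anticipate is exactly the borderline arithmetic in the minus family for consecutive indices: the projections to the $z$-plane of $I_0^{-}$ and $I_{-1}^{-}$ are circles of radius $2/\sqrt3\approx1.155$ whose centers are distance $2$ apart (as visible in Figure~\ref{figure:proj-sphere}, where the lower row of disks overlap in projection), so the spheres can only be separated by exploiting the vertical (Heisenberg $t$-coordinate and twisting) directions — the naive "disjoint in projection" argument is false and one must use the genuine Cygan metric, or equivalently the connectedness/ size estimates of Proposition~1.26 together with an explicit check that no point lies on both spheres. Concretely, I would substitute a general point $(z,t,u)\in S_{[0,0]}(2/\sqrt3)$ in geographic coordinates (Definition~\ref{def:geographic}) into the defining equation of the neighboring sphere and show the resulting system has no solution, or more cleanly, verify $d_{\mathrm{Cyg}}$ of the centers strictly exceeds $2r$ after including all terms. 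Once the borderline case is dispatched, monotonicity in $|k-k'|$ finishes both families, completing the proof.
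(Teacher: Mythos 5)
Your proposal is correct and follows essentially the same route as the paper: reduce to $I_0^{\pm}$ versus $I_k^{\pm}$ via the Cygan isometry $A$, then show the Cygan distance between centers (from Proposition \ref{prop:center-radius3pp}) exceeds the sum of the radii $4/\sqrt{3}$. The paper only writes out the plus family, and the minus family turns out to be literally the same computation. One correction to your intermediate reasoning: your claim that for the minus family the $\mathrm{Im}(z\bar w)$ term vanishes (so that $d_{\mathrm{Cyg}}^2=4k^2$, which would be $<16/3$ for $|k|=1$) is false, since the minus-family centers have $z=-2k-2i/\sqrt{3}\neq 0$; the borderline failure you worried about does not occur. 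Completing the arithmetic you left unfinished: with centers $[-2k-2i/\sqrt{3},0]$ and $[-2k'-2i/\sqrt{3},0]$ one finds $\mathrm{Im}(z\bar w)=4(k'-k)/\sqrt{3}$, so for $m=k-k'\neq 0$,
\begin{equation*}
d_{\mathrm{Cyg}}^2=\Bigl|4m^2+\tfrac{8m}{\sqrt{3}}i\Bigr|=\sqrt{16m^4+\tfrac{64m^2}{3}}\;\ge\;\sqrt{112/3}\;>\;\tfrac{16}{3},
\end{equation*}
exactly the bound you obtained for the plus family. The overlap of the vertical projections of consecutive minus spheres, which you correctly flagged, is therefore harmless: the genuine Cygan center-distance test already separates them, and no further analysis in geographic coordinates is needed.
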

\begin{proof}  It is sufficient to prove that $I_{0}^{+}$ and $I_{k}^{+}$ are disjoint for any $k\in \mathbb{Z}\backslash \{0\}$.  Since all the isometric spheres have radius $2/\sqrt{3}$, if we can show that the Cygan distance between their centers is greater than  the sum of the radii of the two Cygan  spheres, then the spheres are disjoint.
From Proposition \ref{prop:center-radius3pp}, we get that $I_{0}^{+}$ is a Cygan sphere centered at  $[0,0]$ and $I_{k}^{+}$ is a Cygan sphere centered at  $[-2k,8k/\sqrt{3}]$. The Cygan distance between centers of $I_{0}^{+}$ and $I_{k}^{+}$ is
$|4k^{2}+i8k/\sqrt{3}|^{1/2},$ which is larger than $4/\sqrt{3}$ for any
 $k\in \mathbb{Z}\backslash \{0\}$. This completes the proof of this proposition.
\end{proof}

\begin{prop}\label{prop:tangent3pp}
\begin{itemize}
\item The isometric spheres $I_{0}^{+}$ and $I_{1}^{-}$ are tangent at  the parabolic fixed point $p_{AB}$ of $AB$.
\item The isometric spheres $I_{0}^{+}$ and $I_{-1}^{-}$ are tangent at  the parabolic fixed point $p_{B^{-1}A}$ of $B^{-1}A$.
\item The isometric spheres $I_{0}^{-}$ and $I_{1}^{+}$ are tangent  at the parabolic fixed point $p_{BA^{-1}}$ of $BA^{-1}$.
\item The isometric spheres $I_{0}^{-}$ and $I_{-1}^{+}$ are tangent  at the parabolic fixed point $p_{BA}$ of  $BA$.
 \end{itemize}
\end{prop}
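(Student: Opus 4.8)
The plan is to verify each tangency by a direct Cygan-geometry computation, exploiting that all four isometric spheres involved have the common radius $2/\sqrt{3}$ (Proposition \ref{prop:center-radius3pp}). For two Cygan spheres of equal radius $r$, tangency is equivalent to the Cygan distance between their centers being exactly $2r$, together with the candidate tangency point lying on both spheres; so for each bullet I would (i) read off the two centers in Heisenberg coordinates from Proposition \ref{prop:center-radius3pp}, (ii) compute $d_{\textrm{Cyg}}$ between them using formula \eqref{eq:cygan-metric} and check it equals $4/\sqrt{3}$, and (iii) check that the listed parabolic fixed point $p_{AB}$, $p_{B^{-1}A}$, $p_{BA^{-1}}$ or $p_{BA}$ — whose lifts are recorded explicitly in the excerpt — satisfies the defining equation \eqref{eq:isom-sphere} of \emph{both} isometric spheres, i.e. $|\langle \mathbf{p},\mathbf{q}_\infty\rangle| = |\langle \mathbf{p}, g^{-1}(\mathbf{q}_\infty)\rangle|$ for the relevant $g$. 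Since a point on both equal-radius spheres that also realizes the minimal center-distance must be the unique intersection point, (ii) and (iii) together force tangency at that point.

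For the first bullet, $I_0^+ = I(B)$ has center $[0,0]$ and $I_1^- = A\,I(B^{-1})$ has center $[-2-2i/\sqrt3,\,0]$ by Proposition \ref{prop:center-radius3pp} with $k=1$. Plugging $p=[0,0]$, $q=[-2-2i/\sqrt3,0]$ into \eqref{eq:cygan-metric} gives $d_{\textrm{Cyg}}^2 = \big||{-2-2i/\sqrt3}|^2\big| = 4 + 4/3 = 16/3$, so $d_{\textrm{Cyg}} = 4/\sqrt3 = 2r$, confirming (ii); then one checks $\mathbf{p}_{AB}$ lies on $I(B)$ and on $I(A B^{-1} A^{-1})$, equivalently that $p_{AB}$ has equal Cygan distance to the two centers, which is immediate once one notes the stated relation $A(p_{BA}) = p_{AB}$ and that $p_{BA}$ is the tangency point of the fourth bullet. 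The remaining three bullets are handled identically: in each case the two centers differ by a Heisenberg translation of the form $[\pm 2, \pm 8/\sqrt3]\cdot[0, \pm 2i/\sqrt3]$ (up to the group law), whose Cygan length squared is again $16/3$; and the four candidate points $p_{B^{-1}A}, p_{BA^{-1}}, p_{BA}$ are permuted among the relevant sphere pairs by $A^{\pm1}$ via the already-noted identities $A(p_{B^{-1}A}) = p_{BA^{-1}}$, $A(p_{BA}) = p_{AB}$, so it suffices to verify membership on the $k=0$ spheres $I(B)$, $I(B^{-1})$ and then transport by $A$ using the second and third parts of the isometric-sphere proposition ($I(A^k g A^{-k}) = A^k I(g)$, $g$ maps $I(g)$ to $I(g^{-1})$).

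I would organize the write-up by first establishing the general principle — equal radii plus center-distance $=2r$ plus a common boundary point implies tangency — and then presenting the four computations as a short table of center pairs and the corresponding candidate points, noting the $A$-equivariance that reduces the four cases to essentially one. The main obstacle, such as it is, is purely bookkeeping: one must be careful that the Heisenberg coordinates of the centers of $I_k^-$ are $[-2k - 2i/\sqrt3, 0]$ (note the nontrivial translation by $-2i/\sqrt3$ coming from the center of $I(B^{-1})$, not $0$), and that the Cygan distance formula \eqref{eq:cygan-metric} includes the imaginary correction term $2\,\mathrm{Im}(z\bar w)$ — here these subtleties conspire to give exactly $16/3$ in every case, and checking that the correction term does not spoil the equality is the one place where a sign or factor could go wrong. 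Everything else is a mechanical substitution into \eqref{eq:isom-sphere} and \eqref{eq:cygan-metric}, and the identities $A(p_{B^{-1}A}) = p_{BA^{-1}}$ and $A(p_{BA}) = p_{AB}$ already supplied in the text do most of the remaining work.
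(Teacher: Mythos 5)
Your overall strategy matches the paper's: verify that the candidate parabolic fixed point lies on both Cygan spheres by computing Cygan distances to the two centers, then argue that the two spheres can meet in at most one point. The membership computations and the use of $A$-equivariance to reduce the four bullets to essentially one are fine (and the paper does the same, checking $d_{\rm Cyg}(p_{AB},B^{-1}(q_\infty))=d_{\rm Cyg}(p_{AB},AB(q_\infty))=2/\sqrt3$ directly).

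The one substantive gap is your uniqueness step. You assert that ``equal radii plus center-distance $=2r$ plus a common boundary point implies tangency,'' but that is Euclidean reasoning: for a general metric, equality $d(c_1,p)+d(p,c_2)=d(c_1,c_2)$ does not single out a unique point $p$ (think of $\ell^\infty$), so ``center distance equals the sum of the radii'' does not by itself force the spheres to meet in at most one point. For the Cygan metric the statement happens to be true, but it requires analyzing the equality case of the Cygan/Kor\'anyi triangle inequality: writing $c_2^{-1}c_1=[\zeta,\tau]$ in Heisenberg coordinates, equality forces $\tau=0$ and forces the common point to sit at the unique position $c_2\cdot[\lambda\zeta,0]$ with $\lambda=r_2/|\zeta|$ — this is exactly where the vanishing of the imaginary correction term $2\,\mathrm{Im}(z\bar w)$ that you flag becomes essential rather than incidental. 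You would need to prove this equality-case lemma to close the argument. The paper avoids it by a different uniqueness argument: the vertical projections of the two spheres are tangent discs, so the intersection lies over a single point of $\mathbb C$, and strict affine convexity of Cygan spheres then gives at most one intersection point. Either route works; yours needs the extra lemma spelled out, and as written the key step is asserted rather than proved. (A minor further point: lying on both spheres means the Cygan distance to \emph{each} center equals $2/\sqrt3$, not merely that the two distances are equal to each other; and your verification of bullet one leans on bullet four, so at least one membership check must be done directly rather than by cross-reference.)
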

\begin{proof}
The isometric spheres $I_{0}^{+}$ and $I_{1}^{-}$ are Cygan spheres with centers $B^{-1}(q_{\infty})$ and $AB(q_{\infty})$.  It is easy  to verify that
$$d_{\rm{Cyg}}\left( p_{AB},B^{-1}(q_{\infty})\right)=d_{\rm{Cyg}}\left( p_{AB},AB(q_{\infty})\right)=2/\sqrt{3},$$ so
$ p_{AB}$ belongs to both $I_{0}^{+}$ and $I_{1}^{-}$. Observe that the vertical projections of $I_{0}^{+}$ and $I_{1}^{-}$ are tangent discs, see Figure \ref{figure:proj-sphere}
and Remark 2.13 of \cite{ParkerWill:2016}. Since all Cygan spheres are strictly affine convex, their intersection contains at most one point. The remaining cases can be proved by using the same argument.
\end{proof}

\begin{prop}\label{prop:intersection-disk3pp}
For any $k_0\in \mathbb{Z}$, the intersection $I_{k_0}^{+}$ and $I_{k_0}^{-}$ is a Giraud disk.
\end{prop}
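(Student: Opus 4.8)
The plan is to identify $I_{k_0}^{+}\cap I_{k_0}^{-}$ with a nonempty Giraud disk and then invoke the fact, recalled in Section~\ref{sec:background}, that a nonempty Giraud disk is a topological disk (in spinal coordinates it is the region $\{\langle V(z_1,z_2),V(z_1,z_2)\rangle<0\}$ of the associated Giraud torus of covertical bisectors). First I would reduce to $k_0=0$: since $I_{k_0}^{\pm}=A^{k_0}I_0^{\pm}$ and $A^{k_0}$ is a homeomorphism of $\hc\cup\partial\hc$, it maps $I_0^{+}\cap I_0^{-}$ homeomorphically onto $I_{k_0}^{+}\cap I_{k_0}^{-}$, so it suffices to treat $k_0=0$. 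Then, by the very definition of isometric spheres, $I_0^{+}=I(B)$ is the locus equidistant from $q_\infty$ and $B^{-1}(q_\infty)$ and $I_0^{-}=I(B^{-1})$ is the locus equidistant from $q_\infty$ and $B(q_\infty)$, so $I_0^{+}\cap I_0^{-}$ is exactly the locus equidistant from the three points $q_\infty$, $B^{-1}(q_\infty)$, $B(q_\infty)$, i.e.\ $I_0^{+}\cap I_0^{-}=\mathcal{B}\big(q_\infty,B^{-1}(q_\infty),B(q_\infty)\big)$.

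To apply the Giraud-disk description I must check that these three points are pairwise distinct and not contained in a common complex line, and that the locus is nonempty. Distinctness is immediate: $B$ does not fix $q_\infty$, and if $B^{-1}(q_\infty)=B(q_\infty)$ then $B^{2}(q_\infty)=q_\infty$, whence $B^{3}=\mathrm{id}$ forces $B(q_\infty)=q_\infty$, a contradiction. For non-collinearity I would use the explicit centres supplied by Proposition~\ref{prop:center-radius3pp}: the centres $B^{-1}(q_\infty)=[0,0]$ and $B(q_\infty)=[-2i/\sqrt{3},0]$ of $I(B)$ and $I(B^{-1})$ have standard lifts $(0,0,1)^{T}$ and $(-\frac{2}{3},-\frac{2i}{\sqrt{3}},1)^{T}$, and these two vectors together with $\mathbf q_\infty=(1,0,0)^{T}$ are linearly independent, since the determinant of the $3\times3$ matrix they form is $2i/\sqrt{3}\neq0$. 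For nonemptiness I would use that $B$, being elliptic, fixes a point $p_0\in\hc$; for a lift $\mathbf p_0$ with $B\mathbf p_0=\lambda\mathbf p_0$, where $|\lambda|=1$ because $B$ has finite order, one gets $|\langle\mathbf p_0,B^{-1}\mathbf q_\infty\rangle|=|\langle B\mathbf p_0,\mathbf q_\infty\rangle|=|\langle\mathbf p_0,\mathbf q_\infty\rangle|$ and, in the same way, $|\langle\mathbf p_0,B\mathbf q_\infty\rangle|=|\langle\mathbf p_0,\mathbf q_\infty\rangle|$, so $p_0\in I_0^{+}\cap I_0^{-}$.

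With these checks in hand the conclusion is immediate: $I_0^{+}\cap I_0^{-}$ is a nonempty Giraud disk, hence a topological disk by the discussion in Section~\ref{sec:background}, and therefore so is $I_{k_0}^{+}\cap I_{k_0}^{-}$ for every $k_0$. I do not anticipate a genuine obstacle here — the content is essentially Giraud's theorem plus bookkeeping. The two steps deserving a little care are the general-position verification (the short determinant computation above) and the nonemptiness verification; for the latter, should the fixed-point argument be felt unsatisfying, one can instead parametrize the triple intersection in spinal coordinates and exhibit a solution of the resulting quadratic equation, as in the references cited in Section~\ref{sec:background}. The same ``coequidistant bisectors meet in a Giraud disk'' reasoning will recur for the many other pairwise intersections of isometric spheres analysed later in the paper.
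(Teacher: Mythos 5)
Your proof is correct and follows essentially the same route as the paper: both arguments come down to the fact that two coequidistant bisectors (here both equidistant from $q_{\infty}$) with nonempty intersection in $\mathbf{H}^2_{\mathbb C}$ meet in a Giraud disk. The only difference is in the nonemptiness certificate --- the paper exhibits the explicit point $q\left(0,0,(2\sqrt{3}-1)/2\right)$ in geographic coordinates, whereas you use the fixed point of the elliptic element $B$ (and you additionally spell out the general-position and distinctness checks that the paper leaves implicit); both are fine.
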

\begin{proof}
One can verify that the point $\left[-2/3,-i/\sqrt{3},1\right]^{t}\in\hc$ lies on $I_{0}^{+}$ and $I_{0}^{-}$ at the same time. So $I_{k_0}^{+}$ and $I_{k_0}^{-}$ are
covertical bisectors with nonempty intersection. Then  $I_{k_0}^{+}\cap I_{k_0}^{-}$ is a smooth disk.
\end{proof}

The following fact describes the intersections of  $I_{k_0}^{+}$ with the other isometric spheres.
\begin{prop}\label{prop:intersection3pp}
 The isometric sphere $I_{0}^{+}$ (resp.  $I_{0}^{-}$) is contained in the exterior of the isometric spheres $\{I_{k}^{\pm}: k\in \mathbb{Z}\}$ except for $I_{0}^{-}$(resp. $I_{0}^{+}$).
\end{prop}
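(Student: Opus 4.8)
The plan is to reduce the statement to a finite collection of disjointness/tangency/intersection checks among Cygan spheres, using the translation symmetry provided by $A$. Since $A$ is a Cygan isometry conjugating $I_k^{\pm}$ to $I_{k+1}^{\pm}$, it suffices to prove the claim for $I_0^{+}$ (the case of $I_0^{-}$ follows by applying the symmetry, or by the anti-holomorphic involution $\iota$ together with a conjugation swapping the roles of $B$ and $B^{-1}$, whichever is cleaner). So the real content is: $I_0^{+}$ lies in the closed exterior of every $I_k^{+}$ with $k\neq 0$, in the closed exterior of every $I_k^{-}$ with $|k|\geq 2$, and meets $I_{1}^{-}$, $I_{-1}^{-}$, $I_0^{-}$ only in the way already described.

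First I would dispose of the same-sign spheres: by Proposition \ref{prop:pair-disjoint3pp}, $I_0^{+}$ is disjoint from every $I_k^{+}$ with $k\neq 0$, and disjointness of two Cygan spheres of equal radius together with connectedness of their complement means each lies in the exterior of the other; so $I_0^{+}\subset \mathrm{ext}(I_k^{+})$ for all $k\neq 0$. Next, the neighbors $I_{\pm 1}^{-}$: by Proposition \ref{prop:tangent3pp}, $I_0^{+}$ is tangent to $I_{1}^{-}$ at $p_{AB}$ and to $I_{-1}^{-}$ at $p_{B^{-1}A}$; since all Cygan spheres are strictly affinely convex (as used in the proof of Proposition \ref{prop:tangent3pp}), tangency at a single point forces $I_0^{+}$ to lie in the closed exterior of each of these. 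The remaining spheres $I_k^{-}$ with $|k|\geq 2$ I would handle by a Cygan-distance estimate exactly as in Proposition \ref{prop:pair-disjoint3pp}: from Proposition \ref{prop:center-radius3pp}, $I_0^{+}$ is centered at $[0,0]$ and $I_k^{-}$ at $[-2k-2i/\sqrt 3,0]$, both of radius $2/\sqrt 3$, so I compute
$$d_{\mathrm{Cyg}}\bigl([0,0],[-2k-2i/\sqrt 3,0]\bigr)=\Bigl|\,|{-2k-2i/\sqrt 3}|^2\,\Bigr|^{1/2}=\Bigl(4k^2+\tfrac{4}{3}\Bigr)^{1/2},$$
and one checks this exceeds $4/\sqrt 3$ precisely when $4k^2+4/3>16/3$, i.e. $k^2>1$, i.e. $|k|\geq 2$. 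Hence these spheres are disjoint from $I_0^{+}$, and again each lies in the exterior of the other.

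Finally, for $k=0$ the proposition explicitly allows $I_0^{+}$ to meet $I_0^{-}$, and Proposition \ref{prop:intersection-disk3pp} already records that $I_0^{+}\cap I_0^{-}$ is a topological disk, so nothing further is needed there. Assembling the three cases ($|k|$ of the same sign, neighbors of opposite sign, far spheres of opposite sign) gives the claim for $I_0^{+}$, and conjugating by powers of $A$ promotes it to all $I_{k_0}^{+}$; the symmetric argument (or $\iota$) gives the $I_0^{-}$ statement. I expect no genuine obstacle here — the one place requiring a little care is making precise the step \emph{``disjoint (or tangent) Cygan spheres of equal radius each lie in the closed exterior of the other''}, which rests on the strict affine convexity of Cygan spheres and the connectedness of the complement of a Cygan ball; I would cite the relevant facts from \cite{Go, ParkerWill:2016} rather than reprove them, and note that tangency is the limiting case already exploited in the proof of Proposition \ref{prop:tangent3pp}.
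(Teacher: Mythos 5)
Your proposal is correct and follows essentially the same route as the paper: same-sign spheres handled by Proposition \ref{prop:pair-disjoint3pp}, the neighbours $I_{\pm1}^{-}$ by the tangency in Proposition \ref{prop:tangent3pp}, and the remaining $I_k^{-}$ with $|k|\geq 2$ by the same Cygan-distance computation $\sqrt{4k^2+4/3}>4/\sqrt{3}$. Your extra care in passing from ``disjoint/tangent'' to ``contained in the (closed) exterior'' via convexity is a reasonable refinement of a step the paper leaves implicit, but it is not a different argument.
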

\begin{proof}
From Proposition \ref{prop:pair-disjoint3pp}, we know that the isometric spheres $I_{0}^{+}$ and $I_{k}^{+}$, for $k\in \mathbb{Z}\backslash \{0\}$  are disjoint.
Using Proposition \ref{prop:center-radius3pp}, the centers of $I_{0}^{+}$ and $I_{k}^{-}$ are
$[0,0]$ and $[-2k-2/\sqrt{3}i,0]$, respectively.
The Cygan distance between their centers is
$$d_{\rm{Cyg}}([0,0],[-2k-2i/\sqrt{3},0])=|-2k-2i/\sqrt{3}|=\sqrt{4k^{2}+4/3}.$$ This number is larger than $4/\sqrt{3}$ when $|k|\geq 2$.  The case $k=\pm 1$ has been considered in Proposition \ref{prop:tangent3pp}.
\end{proof}

We will give a definition of  the infinite polyhedron $D_{\Gamma_1}$  which will be used to study the group $\Gamma_1$ generated by $A$ and $B$.
\begin{defn} The infinite polyhedron $D_{\Gamma_1}$ is the intersection of the exteriors of all the isometric spheres in $\{I_{k}^{\pm}: k\in \mathbb{Z}\}$ with centers $A^{k}\left(B^{\pm}(q_{\infty})\right)$. That is,
$$D_{\Gamma_1}=\{p\in \hc: |\langle \mathbf{p}, \mathbf{q}_{\infty}\rangle| \leq |\langle \mathbf{p}, A^{k}B^{\pm}(\mathbf{q}_{\infty})\rangle|\, \rm{for}\, \rm{ all}\, k\in \mathbb{Z}\}.$$
\end{defn}

The main tool for our study is the  Poincar\'{e} polyhedron theorem, which gives sufficient condition for $D_{\Gamma_1}$ to be a fundamental domain for the group generated by $A$ and $B$. We shall use a version of the   Poincar\'{e} polyhedron theorem for coset decompositions rather than for groups, because $D_{\Gamma_1}$ is stabilized by the cyclic subgroup generated by $A$. We refer to \cite{dpp, ParkerWill:2016} for the precise statement of this version of the Poincar\'{e} polyhedron theorem we need. We proceed to show:
\begin{thm}\label{thm:fundamental-domain3pp}
$D_{\Gamma_1}$ is a fundamental domain for the cosets of $\langle A \rangle$ in $\Gamma_1$. Moreover, the group $\Gamma_1=\langle A, B \rangle$  is discrete and  has the presentation
\begin{equation*}
\langle  A, B: B^3=id\rangle.
\end{equation*}
\end{thm}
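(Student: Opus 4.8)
The plan is to verify the hypotheses of the Poincar\'{e} polyhedron theorem (in the coset version, for the stabilizer $\langle A\rangle$ of $q_\infty$) applied to the infinite polyhedron $D_{\Gamma_1}$. First I would record the side-pairing structure. By Proposition on isometric spheres, $B$ maps $I(B)=I_0^+$ to $I(B^{-1})=I_0^-$, sending the exterior of $I_0^+$ to the interior of $I_0^-$; and $A^k B A^{-k}$ maps $I_k^+$ to $I_k^-$. Since $D_{\Gamma_1}$ is $A$-invariant, modulo $\langle A\rangle$ there is essentially one pair of sides, $s^+\subset I_0^+$ and $s^-\subset I_0^-$, paired by $B$. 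The fact that these are genuine sides (codimension-one faces carrying open pieces of the boundary) and that no isometric sphere in the family is redundant is exactly the content of Propositions on pairwise disjointness, tangency, and the containment of $I_0^\pm$ in the exteriors of all other $I_k^\pm$; in particular consecutive spheres meet only at the parabolic fixed points $p_{AB}, p_{B^{-1}A}, p_{BA^{\pm1}}$, and $I_0^+\cap I_0^-$ is a Giraud disk (the ``central'' ridge).

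Next I would analyze the ridge cycles. The bounded ridges are the Giraud disks $I_0^+\cap I_0^-$ and their $\langle A\rangle$-translates; under $B$ the ridge $I_0^+\cap I_0^-$ maps into $I_0^-\cap B(I_0^-)$, and one must identify $B(I_0^-)=I(B^{-2})=I(B)=I_0^+$ using $B^3=\mathrm{id}$, so the ridge is sent back to $I_0^-\cap I_0^+$ itself. Thus the ridge cycle is generated by $B$ restricted to this Giraud disk, with cycle transformation a power of $B$; since $B^3=\mathrm{id}$ acts trivially, the cycle closes up with cycle relation $B^3=\mathrm{id}$ and the local tiling condition around the ridge is satisfied (three copies $D_{\Gamma_1}, B(D_{\Gamma_1}), B^2(D_{\Gamma_1})$ fitting around it). The ``tangency'' ridges at the parabolic points $p_{AB}$ etc.\ are infinite ridges meeting $\partial\mathbf{H}^2_{\mathbb C}$ only at a point, and the cycle transformations there are the unipotent elements $AB, B^{-1}A, BA^{\pm1}$ fixing those points; one checks these are consistent with the group structure (no extra relations forced, the parabolic fixed points are genuine cusps). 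I would also verify the $\langle A\rangle$-compatibility: the side-pairing $B$ together with $A$ generates, and the coset Poincar\'{e} theorem then yields that $\langle A, B\rangle$ has presentation $\langle A, B \mid B^3=\mathrm{id}\rangle$ together with whatever is contributed by $\langle A\rangle=\mathbb{Z}$ (which contributes no relation).

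Finally I would check the completeness/metric hypothesis of the Poincar\'{e} theorem: the quotient of $D_{\Gamma_1}$ by the side-pairings is complete. Because $q_\infty$ is a parabolic fixed point for $\Gamma_1$ (indeed $A$ is parabolic fixing it) and the other tangency points $p_{AB}$, $p_{B^{-1}A}$, $p_{BA}$, $p_{BA^{-1}}$ are parabolic fixed points of unipotent elements, the only issues are at these cusps; I would use the standard argument (as in \cite{ParkerWill:2016, dpp}) that suitable horoball neighborhoods of the cusps have quotients that are complete, exhibiting consistent horospherical coordinates at each cusp. With all hypotheses verified, the Poincar\'{e} polyhedron theorem gives that $D_{\Gamma_1}$ is a fundamental domain for the cosets of $\langle A\rangle$ in $\Gamma_1$, that $\Gamma_1$ is discrete, and that the presentation is as claimed. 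The main obstacle I anticipate is the ridge-cycle bookkeeping for the central Giraud disk: one must compute $B(I_0^+\cap I_0^-)$ explicitly in spinal coordinates, confirm it is again a side of the tiling picture (using $B(I_0^-)=I_0^+$ via $B^3=\mathrm{id}$), and check the dihedral-angle/local-tiling condition along it; this is where the ``infinitely many handles'' phenomenon mentioned in the introduction could in principle cause trouble, so care is needed to confirm that the combinatorics of $\{I_k^\pm\}$ really do glue up into a manifold chart around every ridge.
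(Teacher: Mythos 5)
Your proposal follows essentially the same route as the paper: verify the coset version of the Poincar\'e polyhedron theorem by exhibiting the side pairings $A^kB^{\pm}A^{-k}\colon s_k^{\pm}\to s_k^{\mp}$, checking the local tessellation condition along the sides and along the Giraud ridge $r_k=I_k^+\cap I_k^-$ with cycle relation $(A^kBA^{-k})^3=id$, and establishing completeness via consistent horoballs at the parabolic points $p_{AB}$, $p_{B^{-1}A}$, $p_{BA^{\pm1}}$ and at $q_\infty$.

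One intermediate claim in your ridge-cycle bookkeeping is wrong, though the conclusion it supports is correct. You assert $B(I_0^-)=I(B^{-2})=I(B)=I_0^+$. This fails: $I_0^-=I(B^{-1})$ is the bisector equidistant from $q_\infty$ and $B(q_\infty)$, so $B(I_0^-)$ is the bisector equidistant from $B(q_\infty)$ and $B^2(q_\infty)=B^{-1}(q_\infty)$, which is the \emph{third} bisector through the Giraud disk $\mathcal{B}(q_\infty,B^{-1}q_\infty,Bq_\infty)$, not the isometric sphere $I_0^+$ (an isometric sphere is by definition coequidistant from $q_\infty$, which this bisector is not). The identity $I(B^{-2})=I(B)$ is true, but $B(I(B^{-1}))\neq I((B^{-1})^{-1}\cdot\text{something})$ in the way you use it; the rule is $g(I(g))=I(g^{-1})$, which gives $B(I_0^+)=I_0^-$ and says nothing directly about $B(I_0^-)$. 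The correct argument, which is the one the paper uses, is that $r_0$ is the locus of the triple equality $|\langle z,\mathbf{q}_\infty\rangle|=|\langle z,B^{-1}(\mathbf{q}_\infty)\rangle|=|\langle z,B(\mathbf{q}_\infty)\rangle|$, and $B$ cyclically permutes the three points $q_\infty$, $B^{-1}(q_\infty)$, $B(q_\infty)$, hence preserves $r_0$; equivalently, by Giraud's theorem the disk lies on exactly three bisectors and the intersection of any two of them is the same disk. With that repair the rest of your outline matches the paper's proof.
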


From Propositions \ref{prop:intersection3pp} and  \ref{prop:intersection-disk3pp}, we see that $I_{k}^{\pm}\cap D_{\Gamma_1}$ has nonempty interior in $I_{k}^{\pm}$. Therefore we refer to $s_{k}^{+}=I_{k}^{+}\cap D_{\Gamma_1}$ and $s_{k}^{-}=I_{k}^{-}\cap D_{\Gamma_1}$ as sides of $D_{\Gamma_1}$.  We describe the combinatorics of the sides $s_{k}^{+}$ and $s_{k}^{-}$ in the
following proposition.
\begin{prop}\label{prop:side3pp}
The side $s_{k}^{\pm}$ is topologically a 3-ball in $\hc \cup \partial\hc$, with boundary a union of two disks: one is $s_{k}^{+}\cap s_{k}^{-}=I_{k}^{+}\cap I_{k}^{-}$ and the other one is $s_{k}^{\pm}\cap \partial\hc=I_{k}^{\pm}\cap \partial\hc$.
\end{prop}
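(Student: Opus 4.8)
\textbf{Proof proposal for Proposition \ref{prop:side3pp}.}

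The plan is to analyze $s_k^{\pm}$ by reducing to the case $k=0$ and $s_0^+$, then transporting the result by the isometry $A^k$ and by the symmetry exchanging $B$ and $B^{-1}$. Since $A^k$ is a Cygan isometry carrying $I(B)$ to $I_k^+$ and $I(B^{-1})$ to $I_k^-$ and preserving $\partial\hc$, it maps $s_0^{\pm}$ homeomorphically to $s_k^{\pm}$ and respects all the claimed incidences; a complex reflection or the explicit form of $B$ relates the $+$ and $-$ cases. So it suffices to understand $s_0^+ = I_0^+ \cap D_{\Gamma_1}$.

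Next I would use Proposition \ref{prop:intersection3pp}, which says $I_0^+$ lies in the closed exterior of every other isometric sphere in the family except $I_0^-$. Therefore, as a subset of the Cygan sphere $I_0^+$ (topologically an $S^3$, or an $S^2 \times [0,1]$-type region once we restrict to $\hc \cup \partial\hc$), the side $s_0^+$ is exactly the part of $I_0^+$ lying in the closed exterior of $I_0^-$. By Proposition \ref{prop:intersection-disk3pp}, $I_0^+ \cap I_0^-$ is a smooth (topological) disk $\delta$; and since both are Cygan spheres which are strictly affinely convex, the trace of $I_0^-$ on the ball-like $I_0^+$ is a properly embedded disk separating $I_0^+$ into two pieces, one of which is $s_0^+$. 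To see $s_0^+$ is a $3$-ball, I would invoke the standard picture (as in \cite{ParkerWill:2016, Go}): a Cygan sphere $I_0^+$ meets $\partial\hc$ in a $2$-sphere (its ideal boundary), and $I_0^+ \cap (\hc \cup \partial\hc)$ is a closed $3$-ball whose boundary $2$-sphere is that ideal boundary sphere; cutting this $3$-ball along the properly embedded disk $\delta$ whose boundary $\partial\delta$ lies on the ideal boundary sphere $I_0^+ \cap \partial\hc$ yields two $3$-balls, and $s_0^+$ is one of them. Its boundary is then the union of the cutting disk $\delta = I_0^+ \cap I_0^-$ and the complementary disk in $I_0^+ \cap \partial\hc$, which is exactly $s_0^+ \cap \partial\hc = I_0^+ \cap \partial\hc$ intersected with the exterior of $I_0^-$; I would note that the tangency statements of Proposition \ref{prop:tangent3pp} are consistent with $\partial\delta$ being a single circle on this $2$-sphere, since the only other isometric spheres meeting $I_0^+$ do so at single parabolic points on $\partial\hc$, which lie on $\partial\delta$ and do not carve off extra faces.

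The main obstacle I anticipate is making rigorous the claim that $\delta = I_0^+ \cap I_0^-$ is a \emph{properly embedded} disk in the $3$-ball $I_0^+ \cap (\hc \cup \partial\hc)$ that actually separates it into two balls — i.e., controlling how $\delta$ meets the ideal boundary sphere and ruling out pathological intersection patterns. This is where I would lean on the structure theory of bisector (Cygan sphere) intersections from \cite{Go} and \cite{ParkerWill:2016}: coequidistant Cygan spheres meet in a Giraud-type disk, the intersection is connected (the cited proposition on intersections of Cygan spheres), and strict affine convexity forces the separation to be into exactly two components. One should also check that $\partial\delta$, the boundary circle of $I_0^+ \cap I_0^-$ at infinity, is a genuine circle in $I_0^+ \cap \partial\hc$ and that it passes through the two tangency points $p_{AB}$ and $p_{B^{-1}A}$ furnished by Proposition \ref{prop:tangent3pp}; this pins down the combinatorics of $\partial s_0^+$ as the union of the two disks claimed. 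Everything else is then formal transport under $A^k$ and the $B \leftrightarrow B^{-1}$ symmetry.
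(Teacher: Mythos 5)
Your proposal is correct and takes essentially the same route as the paper: the paper's own proof just cites Propositions \ref{prop:intersection3pp}, \ref{prop:intersection-disk3pp} and \ref{prop:tangent3pp} and asserts that ``the description follows,'' and your cut-the-3-ball-along-a-properly-embedded-disk argument is precisely the content of that assertion spelled out (including the correct reading of the slightly abusive ``$s_{k}^{\pm}\cap \partial\hc=I_{k}^{\pm}\cap \partial\hc$'' as the sub-disk of the spinal sphere exterior to $I_k^{\mp}$). One factual slip, not load-bearing: the tangent points do \emph{not} lie on $\partial\bigl(I_{0}^{+}\cap I_{0}^{-}\bigr)$. For instance $p_{B^{-1}A}=[1-i/\sqrt{3},\,0]$ gives $|1+i/\sqrt{3}|^{4}+(4/\sqrt{3})^{2}=64/9>16/9$ in the defining equation of $I_{0}^{-}$, so it is strictly exterior to $I_{0}^{-}$ and sits in the interior of the ideal-boundary disk of $s_{0}^{+}$, not on the circle where the two spinal spheres meet; the argument survives because isolated tangency points carve off nothing wherever they sit, but the stated justification should be corrected accordingly.
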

\begin{proof}
The side $s_{k}^{+}$ is contained in  the isometric sphere $I_{k}^{+}$. The isometric sphere $I_{k}^{+}$ doesn't intersect with all the other isometric spheres except $I_{k}^{-}$.   By Proposition \ref{prop:tangent3pp}, $I_{k}^{+}$ is tangent to $I_{k+1}^{-}$ and  $I_{k-1}^{-}$ at a parabolic fixed point respectively. Also,  $I_{k}^{+}\cap I_{k}^{-}$ is a smooth disk by Proposition \ref{prop:intersection-disk3pp}. Therefore the description of  $s_{k}^{+}$ follows. One can describe the side  $s_{k}^{-}$ by using a similar argument.
\end{proof}

The side pairing maps are defined by:
$$A^kBA^{-k}:s_{k}^{+}\longrightarrow s_{k}^{-},\quad  A^kB^{-1}A^{-k}:s_{k}^{-}\longrightarrow s_{k}^{+}.$$
Furthermore, we denote by $r_k=s_{k}^{+}\cap s_{k}^{-}$  a ridge of $D_{\Gamma_1}$ by using Propositions \ref{prop:intersection3pp} and \ref{prop:intersection-disk3pp}.

\begin{prop}\label{prop:sideparing3pp}
The side pairing map  $A^kBA^{-k}$ is a homeomorphism from $s_{k}^{+}$ to $s_{k}^{-}$ and sends the ridge $r_k$ to itself.
\end{prop}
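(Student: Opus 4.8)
The plan is to verify directly that $A^kBA^{-k}$ carries $s_k^+$ homeomorphically onto $s_k^-$ and fixes the ridge $r_k$ setwise, by reducing to the case $k=0$ via conjugation by the Cygan isometry $A^k$. First I would note that $A^k$ conjugates the isometric sphere $I(B)=I_0^+$ to $I(A^kBA^{-k})=I_k^+$ and $I(B^{-1})=I_0^-$ to $I_k^-$, and that $A^k$ permutes the whole family $\{I_j^\pm : j\in\mathbb Z\}$ (it sends $I_j^\pm$ to $I_{j+k}^\pm$), hence $A^k$ maps $D_{\Gamma_1}$ to itself and therefore carries the side $s_0^\pm = I_0^\pm\cap D_{\Gamma_1}$ to $s_k^\pm = I_k^\pm\cap D_{\Gamma_1}$ and the ridge $r_0$ to $r_k$. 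So it suffices to treat $k=0$, i.e. to show $B$ is a homeomorphism from $s_0^+$ onto $s_0^-$ sending $r_0=I_0^+\cap I_0^-$ to itself.

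Next, using the bullet points of the proposition on isometric spheres, $B$ maps $I(B)=I_0^+$ to $I(B^{-1})=I_0^-$, and it sends the exterior of $I_0^+$ to the interior of $I_0^-$. Since $D_{\Gamma_1}$ lies in the exterior of $I_0^-$, the image $B(s_0^+)$ is contained in $I_0^-$; to see it is exactly $s_0^-$ I would check that $B$ maps the complement of $D_{\Gamma_1}$ near $I_0^+$ onto the complement of $D_{\Gamma_1}$ near $I_0^-$, which (because $B$ permutes the relevant isometric spheres: it conjugates the family of constraints at $I_0^+$ to those at $I_0^-$, compatibly with the tangency data of Proposition~\ref{prop:tangent3pp}) identifies $B(s_0^+)$ with $s_0^-$ as the intersection of $I_0^-$ with the exteriors of all the other spheres. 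For the ridge: by Proposition~\ref{prop:Giraud} the Giraud disk $I_0^+\cap I_0^-$ lies in precisely the bisectors $I_0^+$ and $I_0^-$, and $B$ interchanges these two bisectors, so $B$ preserves their common intersection $r_0$; concretely, $B(I_0^+)=I_0^-$ and $B(I_0^-)=I(B^{-2})=I(B)=I_0^+$ using $B^3=\mathrm{id}$, so $B(r_0)=B(I_0^+\cap I_0^-)=I_0^-\cap I_0^+=r_0$. Finally, $B$ is a global homeomorphism of $\overline{\mathbf H^2_{\mathbb C}}$, so its restriction to $s_0^+$ is a homeomorphism onto its image $s_0^-$.

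I expect the main obstacle to be the bookkeeping needed to confirm that $B$ matches up the \emph{defining inequalities} of $s_0^+$ and $s_0^-$: one must check that the only isometric spheres meeting $I_0^+$ inside $D_{\Gamma_1}$ are $I_0^-$ and the two tangent spheres $I_{\pm1}^-$, and that $B$ sends this local constraint configuration to the analogous one at $I_0^-$. This follows from Propositions~\ref{prop:pair-disjoint3pp}, \ref{prop:tangent3pp}, and \ref{prop:intersection3pp} together with the relation $B\,I_0^+ = I_0^-$, $B\,I_0^- = I_0^+$, $B\,I_{\pm1}^- = I(BA^{\mp1}B^{-1}A^{-1})$ — and identifying these images requires only that $B$ conjugates $A^{\pm1}B^{\mp1}A^{\mp1}$-type words appropriately, which one reads off from the tangency points $p_{AB}, p_{B^{-1}A}$ already computed. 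Everything else is formal once the $k=0$ case is in hand.
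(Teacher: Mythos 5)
Your overall architecture (reduce to $k=0$ by conjugating with the Cygan isometry $A^k$, then use the mapping properties of isometric spheres to identify $B(s_0^+)$ with $s_0^-$) matches the paper's, and the side-pairing part is fine. However, your argument for the ridge contains a genuine error. You claim that $B$ \emph{interchanges} $I_0^+$ and $I_0^-$, computing $B(I_0^-)=I(B^{-2})=I(B)=I_0^+$. This is false: the rule ``$g$ maps $I(g)$ to $I(g^{-1})$'' gives $B(I_0^+)=B(I(B))=I(B^{-1})=I_0^-$, but it says nothing about $B$ applied to $I(B^{-1})$. Writing $I_0^-=\mathcal{B}(q_\infty,B(q_\infty))$, its image under $B$ is the bisector $\mathcal{B}(B(q_\infty),B^2(q_\infty))=\mathcal{B}(B(q_\infty),B^{-1}(q_\infty))$, which is \emph{not} an isometric sphere centered relative to $q_\infty$ at all — it is the third bisector of the coequidistant triple. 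Relatedly, your appeal to Proposition~\ref{prop:Giraud} misstates it: the Giraud disk $I_0^+\cap I_0^-$ lies on precisely \emph{three} bisectors, namely $\mathcal{B}(q_\infty,B^{-1}(q_\infty))$, $\mathcal{B}(q_\infty,B(q_\infty))$ and $\mathcal{B}(B^{-1}(q_\infty),B(q_\infty))$, not two. So the sentence ``$B$ interchanges these two bisectors, hence preserves their intersection'' rests on two false premises.

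The conclusion $B(r_0)=r_0$ is nevertheless true, and the correct justification is the one the paper uses: $r_0$ is the locus of the triple equality
\[
|\langle z,\mathbf{q}_{\infty}\rangle|=|\langle z,B^{-1}(\mathbf{q}_{\infty})\rangle|=|\langle z,B(\mathbf{q}_{\infty})\rangle|,
\]
and since $B$ (of order $3$) cyclically permutes the three points $q_\infty$, $B^{-1}(q_\infty)$, $B(q_\infty)$, it permutes the three bisectors of the Giraud triple and hence maps this locus to itself. Equivalently, the third bullet of Proposition 2.12 gives $B\bigl(I(B)\cap I(B^{-1})\bigr)=I(B^{-1})\cap I(B^{-1}B^{-1})=I_0^-\cap I_0^+$. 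Replace your two-bisector argument with this and the proof is complete.
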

\begin{proof} We only need to prove the case where $k=0$. The ridge $r_0=s_{0}^{+}\cap s_{0}^{-}$
is defined by the triple equality
$$\langle z, \mathbf{ q}_{\infty}\rangle=\langle z, B^{-1}(\mathbf{ q}_{\infty})\rangle=\langle z, B(\mathbf{ q}_{\infty})\rangle.$$ The map $B$ of order 3 cyclically permutes  $\mathbf{ q}_{\infty}$,
$B^{-1}(\mathbf{ q}_{\infty} )$  and  $B(\mathbf{ q}_{\infty})$, so $B$ maps  $r_0$ to itself.
\end{proof}

{\bf Proof of Theorem \ref{thm:fundamental-domain3pp}} After Propositions  \ref{prop:side3pp} and  \ref{prop:sideparing3pp}, we now show

\paragraph{{\bf Local tessellation}}
 We prove the tessellation around the sides and ridges of  $D_{\Gamma_1}$.

(1). Since $A^kB^{\pm}A^{-k}$ sends the exterior of $I_k^{\pm}$ to the interior of  $I_k^{\mp}$, we see that $D_{\Gamma_1}$ and $A^kB^{\pm}A^{-k}(D_{\Gamma_1})$ have disjoint interiors and cover a neighborhood of each point in the interior of   $s_k^{\mp}$.

(2).  Observe that the ridge $r_k=s_k^{+}\cap s_k^{-}=I_k^{+}\cap I_k^{-}$ is mapped to itself by $B$.  The cycle transformation for the ridge $r_k$ is
$A^kBA^{-k}$ and the cycle relation is $(A^kBA^{-k})^3=A^kB^3A^{-k}=id$.  By the same argument as in \cite{ParkerWill:2016} or a result of Giraud, we see that $D_{\Gamma_1}\cup B(D_{\Gamma_1} )\cup B^{-1} (D_{\Gamma_1})$ will cover a
small neighborhood of $r_k$.

\paragraph{{\bf Completeness}}

We must construct a system of consistent horoballs at the parabolic fixed points.
First, we consider the side pairing maps on the parabolic fixed points in Proposition \ref{prop:tangent3pp}.
We have
\begin{eqnarray*}
  B &:&   p_{AB} \longrightarrow p_{BA},\\
   AB^{-1} A^{-1}&:&   p_{AB} \longrightarrow A(p_{AB}),\\
  B &:&   p_{B^{-1}A} \longrightarrow p_{BA^{-1}} ,\\
 A^{-1}B^{-1} A &:&   p_{B^{-1}A} \longrightarrow  A^{-1}(p_{B^{-1}A}),\\
   ABA^{-1} &:&   p_{BA^{-1}}  \longrightarrow  A(p_{BA^{-1}}),\\
   A^{-1}BA &:&   p_{BA} \longrightarrow  A^{-1}(p_{BA}).
  \end{eqnarray*}

 Up to powers of $A$,  the cycles for the parabolic fixed points are the following
  \begin{eqnarray*}
&&p_{AB} \xrightarrow{B} p_{BA}\xrightarrow{A} p_{AB},\\
&&p_{B^{-1}A} \xrightarrow{A} p_{BA^{-1}}\xrightarrow{B^{-1}} p_{B^{-1}A}.
\end{eqnarray*}
That is,  $p_{AB}$ and $p_{B^{-1}A}$ are fixed by $AB$ and $B^{-1}A$ respectively.
The elements $AB$ and $B^{-1}A$ are unipotent and  preserve all horoballs at $p_{AB}$ and $p_{B^{-1}A}$ respectively. This ends the proof of Theorem \ref{thm:fundamental-domain3pp}.

\section{The 3-manifold at infinity of the even subgroup of $\Delta_{3,\infty, \infty;\infty}$} \label{section:3ppmanifold}

In this section, based on Section  \ref{sec-Ford-3pp},  we study the manifold  at infinity of the even subgroup $\Gamma_1$ of the complex hyperbolic triangle group  $\Delta_{3,\infty,\infty;\infty}$. We restate Theorem \ref{thm:3pp} as:
\begin{thm} The 3-manifold at infinity of $\Gamma_1$ is homeomorphic to the complement of the magic link $6_3^1$ in the 3-sphere.
\end{thm}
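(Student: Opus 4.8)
The plan is to compute the manifold at infinity $M_\infty = \Gamma_1 \backslash \Omega_{\Gamma_1}$ directly from the Ford domain $D_{\Gamma_1}$ constructed in Section \ref{sec-Ford-3pp}, using the fact (Theorem \ref{thm:fundamental-domain3pp}) that $D_{\Gamma_1}$ is a fundamental domain for the cosets of $\langle A\rangle$ in $\Gamma_1$. First I would describe $\partial_\infty D_{\Gamma_1} = D_{\Gamma_1}\cap\partial\hc$ explicitly: by Propositions \ref{prop:center-radius3pp}--\ref{prop:intersection3pp} it is the region in the Heisenberg group $\mathcal{N}$ exterior to all the Cygan spheres $I_k^\pm$, $k\in\mathbb Z$, where consecutive spheres $I_k^+$ and $I_{k\pm1}^-$ are tangent at the parabolic points $p_{AB}, p_{B^{-1}A}$ etc., and $I_k^+\cap I_k^-$ is a disk. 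Quotienting by $\langle A\rangle$ (which acts by the Heisenberg translation of Equation (\ref{eq-A})) gives a compact picture: a fundamental domain for $\langle A\rangle$ on $\partial\hc\setminus\{q_\infty\}$ cut along the spheres, with one $+$/$-$ sphere pair surviving. The tangency points descend to punctures, giving the cusps; I expect three cusp cross-sections after all identifications, matching the three cusps of $6_1^3$.

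Next I would carry out the side-pairing identifications on this quotient polyhedron. The side $s_0^+$ is glued to $s_0^-$ by $B$, which also fixes the ridge $r_0 = I_0^+\cap I_0^-$ setwise and cyclically; the images of $D_{\Gamma_1}$ under powers of $A$ tessellate near each sphere, so after quotienting by $\langle A\rangle$ the only genuine face identification left is $s_0^+\leftrightarrow s_0^-$ via $B$. The cutting described in the introduction — using the piecewise-linear planes $E_1,E_2$ (partly fans) to cut a fundamental domain for $Stab_{\Gamma_1}(\infty)=\mathbb Z$, and the auxiliary disks, notably the composite ruled-surface disk $D_2$, to cut $\partial_\infty D_{\Gamma_1}$ into a ball — is needed precisely because $\partial_\infty D_{\Gamma_1}$ has infinitely many handles and is not homeomorphic to $(\mathbb R^2 - \mathrm{int}\,\mathbb D^2)\times\mathbb R$. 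After cutting I obtain a handlebody $H$ with a polygonal boundary structure; I then read off a presentation of $\pi_1(M_\infty)$ from the face pairings (generators $A$, $B$ together with the gluings coming from $E_1,E_2,D_2$), and a presentation of the cusp subgroups from the parabolic cycles ($AB$ and $B^{-1}A$ are unipotent, so preserve horoballs — consistency of the cusps).

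Finally I would identify the resulting manifold with $6_1^3$. The clean route is: show the face-pairing description of $H$ with its gluings is combinatorially the same as a known ideal triangulation / Dehn-surgery description of the magic manifold — e.g. present $\pi_1(M_\infty)$ from my side pairings, simplify, and match it (via Tietze moves) to $\pi_1(6_1^3)$, simultaneously checking that the three peripheral subgroups correspond under the isomorphism with the meridian--longitude basis of \cite{MartelliP:2006}. By Mostow--Prasad rigidity (the magic manifold is hyperbolic, \cite{CullerDunfield:2014}) an isomorphism of fundamental groups carrying peripheral structure to peripheral structure is induced by a homeomorphism, which finishes the proof. Alternatively, and perhaps more robustly, one verifies that the combinatorial gluing data of $H$ matches SnapPy's triangulation of the census manifold with the chain-link complement $6_1^3$.

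\textbf{Main obstacle.} The hard part will be the topological bookkeeping of $\partial_\infty D_{\Gamma_1}$ itself: because there are infinitely many tangent Cygan-sphere pairs, $\partial_\infty D_{\Gamma_1}$ is an infinite-genus object, and one must (i) justify that the auxiliary cutting disk $D_2$, assembled from several ruled surfaces, is actually embedded and cuts out a ball, and (ii) keep careful track of which arcs of the polygonal boundary get glued to which under $B$ and under the $\langle A\rangle$-quotient, so that the cusp cross-sections and the final $\pi_1$ come out correctly. Everything else — the discreteness, the Poincaré theorem verification, the Cygan-distance estimates — is already in hand from Section \ref{sec-Ford-3pp}.
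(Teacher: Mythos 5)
Your proposal follows essentially the same route as the paper: construct $\partial_\infty D_{\Gamma_1}$ from the isometric spheres $I_k^{\pm}$, cut a fundamental domain for $\langle A\rangle$ with the piecewise linear planes $E_1,E_2$, cut the resulting infinite-genus piece down to a ball with auxiliary disks (the paper needs three of them, $D_1,D_2,D_3$, of which only $D_2$ is the composite ruled surface; $D_1$ and $D_3$ sit in genuine planes), read off a face-pairing presentation of $\pi_1(M_\infty)$ from the ridge cycles, and match it to $\pi_1(6_1^3)$. The one genuine divergence is the endgame. The paper verifies the abstract isomorphism $\pi_1(M_\infty)\cong\pi_1(6_1^3)$ by computer (Magma) and then concludes homeomorphism via the Kneser--Milnor prime decomposition together with the Poincar\'e conjecture (the simply connected summand must be $S^3$); it never tracks peripheral subgroups. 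You instead propose to carry the meridian--longitude data through the identification and invoke Mostow--Prasad rigidity. Your route costs extra bookkeeping --- you must locate the three cusp subgroups inside the seven-generator presentation coming from the side pairings --- but it is the more standard and self-contained way to promote a $\pi_1$-isomorphism of noncompact $3$-manifolds to a homeomorphism, and it would simultaneously verify the cusp count; the paper's route trades that for reliance on free indecomposability of $\pi_1(6_1^3)$ and on geometrization. Either endgame works once the presentation is in hand, and your identification of the main obstacle (embeddedness of $D_2$ and the arc/gluing bookkeeping on the infinite-genus handlebody) is exactly where the paper spends its effort.
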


We have proved in Section \ref{sec-Ford-3pp} that    the ideal boundary of the  sides $s_k^{\pm}$, that  is $s_k^{\pm}\cap \partial \hc$, is a sub-disk of the spinal sphere of $A^{k}B^{\pm}A^{-k}$.  Recall that the ideal boundary of the side  $s_k^{\pm}$  is the part of  $\partial I_k^{\pm}=  I_k^{\pm} \cap \partial \hc$  which is outside (the ideal boundary of)
all other isometric spheres. In this section, when we speak of sides and ridges we implicitly mean their intersections with $\partial \hc$.

Take a singular surface $$\Sigma =  \bigcup_{k \in \mathbb{Z}} (s_k^{\pm}\cap \partial \hc)$$  in the Heisenberg group.
The ideal boundary of  $D_{\Gamma_1}$, that is $\partial_{\infty} D_{\Gamma_1}=D_{\Gamma_1} \cap \partial \hc$, is a region in the Heisenberg group bounded by the surface $\Sigma$.
We will see that the ideal boundary  of $D_{\Gamma_1}$ is an infinite genus handlebody, invariant under the action of $\langle A \rangle$. Note that the  ideal boundaries  of the fundamental domains constructed in \cite{mx, ParkerWill:2016} are infinite cylinders. Due to the complicate  topology of $\partial_{\infty} D_{\Gamma_1}$,  we need more explicit disks to cut  it into a 3-ball in  our proof  Theorem \ref{thm:3pp}.

\subsection{The Ford domain of the complex hyperbolic triangle group  $\Delta_{3,\infty,\infty;\infty}$}

We have showed that the Ford domain of  $\Delta_{3,\infty,\infty;\infty}$ is bounded by the isometric spheres of $A$-translates of $B$ and $B^{-1}$.
Figure \ref{figure:3pp0} is a  realistic  view   of the ideal boundary $\Sigma$ of the Ford domain for $\Delta_{3,\infty,\infty;\infty}$ with center $q_{\infty}$. This figure is just for motivation, and we have showed in Section \ref{sec-Ford-3pp} the local picture of this figure is correct.

\begin{figure}
\begin{center}
\begin{tikzpicture}
\node at (0,0) {\includegraphics[width=8cm,height=8cm]{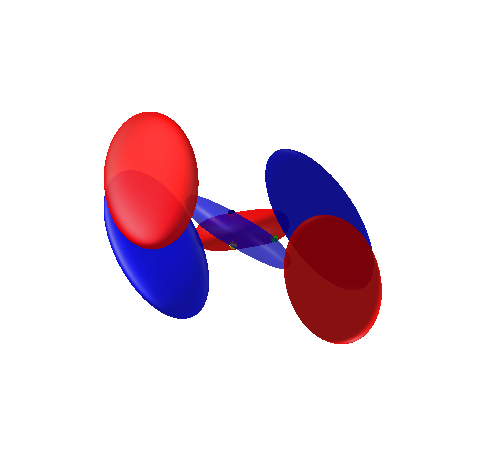}};
\coordinate [label=left:$I_{-1}^{+}$] (S) at (-1.9,2);
\coordinate [label=left:$I_{-1}^{-}$] (S) at (-1.2,-1.5);
\coordinate [label=above:$I_{0}^{-}$] (S) at (0.22,-1.1);
\coordinate [label=above:$I_{0}^{+}$] (S) at (0.22,0.36);
\coordinate [label=above:$I_{1}^{-}$] (S) at (1.5,1);
\coordinate [label=above:$I_{1}^{+}$] (S) at (1.5,-2.5);
\draw[->] (0.6,-0.25)--(0.6,-1.5);
\coordinate [label=below:$B(u)$] (S) at (0.6,-1.5);
\draw[->] (-0.15,-0.35)--(-0.3,-0.6);
\coordinate [label=below:$u$] (S) at (-0.3,-0.6);
\draw[->] (-0.15,0.3)--(-0.25,1);
\coordinate [label=above:$B^2(u)$] (S) at (-0.25,1);

\end{tikzpicture}
\end{center}
  \caption{A  realistic view  of part of the surface $\Sigma$.
		The ideal boundary of the Ford domain of $\Gamma_1$ is the region outside the singular surface $\Sigma$. There are also three  dots  on the intersection circle $I_{0}^{+}\cap I_{0}^{-}$ colored by  yellow, green and black,  which correspond to the points $u$, $B(u)$  and $B^2(u)$. }
  \label{figure:3pp0}
\end{figure}

We hope the figure  is explicit enough such that the reader can see the "holes"  in  Figure \ref{figure:3pp0}. One of them is the white "hole" enclosed by the spinal spheres $I_{0}^{-}$, $I_{0}^{+}$, $I_{1}^{-}$ and $I_{1}^{+}$.
 Another one is  the white "hole" enclosed by the spinal spheres $I_{0}^{-}$, $I_{0}^{+}$, $I_{-1}^{-}$ and $I_{-1}^{+}$.   The ideal boundary of a  Ford domain is just the intersection of  the exteriors  of all the spinal spheres, where the "holes"  make the topology of the ideal boundary of this Ford domain  different from  those studied in \cite{Deraux:2016, jwx, mx, ParkerWill:2016}. Since the  realistic  view in Figure \ref{figure:3pp0} is a little  difficult to understand, we show a combinatorial model of it in Figure	\ref{figure:3ppabstract}. Our  computations later will show  the combinatorial model  coincides with Figure \ref{figure:3pp0}.

\begin{figure}
	\begin{tikzpicture}[style=thick,scale=0.5]
	\draw (1,0) arc (-60:240:2);
	\draw (1,0) arc (60:-240:2);
	\draw[dashed] (1,0) arc (0:180:1 and 0.3);
	\draw(-1,0) arc (180:360:1 and 0.3);

	\draw (5,0) arc (-60:240:2);
	\draw (5,0) arc (60:-240:2);
	\draw[dashed] (5,0) arc (0:180:1 and 0.3);
	\draw(3,0) arc (180:360:1 and 0.3);
	
	\draw (9,0) arc (-60:240:2);
	\draw (9,0) arc (60:-240:2);
	\draw[dashed] (9,0) arc (0:180:1 and 0.3);
	\draw(7,0) arc (180:360:1 and 0.3);
	
	\draw (-3,0) arc (-60:240:2);
	\draw (-3,0) arc (60:-240:2);
	\draw[dashed] (-3,0) arc (0:180:1 and 0.3);
	\draw(-5,0) arc (180:360:1 and 0.3);
	
	\draw (-7,0) arc (-60:240:2);
	\draw (-7,0) arc (60:-240:2);
	\draw[dashed] (-7,0) arc (0:180:1 and 0.3);
	\draw(-9,0) arc (180:360:1 and 0.3);
	
	\node [above] at (0,2.132){$I_{0}^{-}$};
	\node [above] at (4,2.132){$I_{1}^{+}$};
	\node [above] at (-4,2.132){$I_{-1}^{+}$};
	\node [above] at (8,2.132){$I_{2}^{-}$};
	\node [above] at (-8,2.132){$I_{-2}^{-}$};
	\node [below] at (0,-2.132){$I_{0}^{+}$};
	\node [below] at (4,-2.132){$I_{1}^{-}$};
	\node [below] at (-4,-2.132){$I_{-1}^{-}$};
	\node [above] at (8,-3.432){$I_{2}^{+}$};
	\node [above] at (-8,-3.432){$I_{-2}^{+}$};
	
	\draw[fill] (-2,1.732) circle [radius=0.1];\node [left] at (-2,1.732){$p_{BA}$};
	\draw[fill] (-2,-1.732) circle [radius=0.1];\node [left] at (-2,-1.732){$p_{B^{-1}A}$};
	\draw[fill] (2,1.732) circle [radius=0.1];\node [left] at (2,1.732){$p_{BA^{-1}}$};
	\draw[fill] (2,-1.732) circle [radius=0.1];\node [left] at (2,-1.732){$p_{AB}$};
	\end{tikzpicture}
	\caption{A  combinatorial  picture of  the ideal boundary of the Ford domain of $\Delta_{3,\infty,\infty;\infty}$, which is  the region  outside all the spheres in  Figure \ref{figure:3ppabstract}. Where $E_1$ is a   topological plane  tangent to these spheres at  $p_{BA}$ and $p_{B^{-1}A}$, and $E_2$ is a   topological plane  tangent to these spheres at  $p_{AB}$ and $p_{BA^{-1}}$. The genus-3 handlebody $H$ is the region bounded  by $E_1$ and $E_2$ and the 2-disks labeled by $I^{-}_0$ and $I^{+}_0$.}
	\label{figure:3ppabstract}
\end{figure}

The intersection of the spinal spheres of $B$ and $B^{-1}$ is a circle as we have shown in Section \ref{sec-Ford-3pp}, since $B$ is an regular element of order three, the action of $B$ on this circle  is a $\frac{2 \pi}{3}$-rotation. We take a particular point  on the circle:
$$u=\left[\frac{(\sqrt{6}-2)(\sqrt{-9+6\sqrt{6}})}{6}, \frac{4\sqrt{3}-9\sqrt{2}}{6}, \frac{2\sqrt{3}}{9}\sqrt{-9+6\sqrt{6}}\right]$$
in Heisenberg coordinates.

Other $B$-orbits of $u$ are
$$B(u)=\left[-\sqrt{\frac{-3+\sqrt{24}}{3}}, -\frac{1}{\sqrt{3}},\frac{2\sqrt{-3+\sqrt{24}}}{3}\right]$$
and
$$B^{2}(u)=\left[-\frac{(6-3\sqrt{6})\sqrt{-9+6\sqrt6}}{18}, \frac{9\sqrt{2}-8\sqrt{3}}{6}, \frac{(2\sqrt{3}-6\sqrt{2})\sqrt{-9+6\sqrt{6}}}{9}\right]$$
in Heisenberg coordinates.
In  Figure \ref{figure:3pp0}, $u$, $B(u)$ and  $B^2(u)$ are indicated as yellow, green and black dots, respectively.

From the points $u$, $B(u)$ and  $B^2(u)$ we know the orientation  of the $B$-action on the intersection circle of these two spinal spheres.
From  (\ref{3ppIplus}) and (\ref{3ppIminus}) below, it is also  easy to check that $u$, $B(u)$ and $B^2(u)$ lie in the intersection circle of the spinal spheres of $B$ and $B^{-1}$.

\subsection{A fundamental domain of $\langle A \rangle$-action on $\partial_{\infty}\mathbf{H}^2_{\mathbb C}\backslash\{q_{\infty}\}$} \label{subsection:3ppfundamentaldomain}

We will take two piecewise linear  planes $E_{1}$ and $E_{2}$ in  $\partial_{\infty}\mathbf{H}^2_{\mathbb C}\setminus\{q_{\infty}\}$, which cut out a region $U$ in  $\partial_{\infty}\mathbf{H}^2_{\mathbb C}\backslash\{q_{\infty}\}$, $U$ is homeomorphic to $\mathbb{R}^2 \times [0,1]$, which  is a  fundamental domain of $\langle A \rangle$-action on $\partial_{\infty}\mathbf{H}^2_{\mathbb C}\backslash\{q_{\infty}\}$. 
Let  $H$ be the  intersection of $U$ with the boundary at infinity of the Ford domain $D_{\Gamma_1}$. Then $H$
is a fundamental domain of $\Gamma_1$ on  $\partial_{\infty}\mathbf{H}^2_{\mathbb C}\backslash\{q_{\infty}\}$.


Let $\mathcal{C}$ be the contact plane based at the point with Heisenberg coordinate $[-\frac{1}{2},-\frac{1}{\sqrt{3}},0]$.
Let $\mathcal{L}_1$, $\mathcal{L}_2$ be the two lines in 
$\mathcal{C}$ whose images under vertical projection are the lines given by 
the equations $y+\sqrt{3}(x-1)=-\frac{1}{\sqrt{3}}$ and $y-\sqrt{3}(x-1)=-\frac{1}{\sqrt{3}}$ respectively, where $z=x+yi$.
The intersection point of $\mathcal{L}_1$ and $\mathcal{L}_2$ is the point
with Heisenberg coordinate $[1,-\frac{1}{\sqrt{3}},1-\frac{4}{\sqrt{3}}]$.
$E_{1,1}$ is the upper part of the fan defined by $\mathcal{L}_1$ and $E_{1,2}$ is the lower part of the fan defined by $\mathcal{L}_2$. See   \cite{Go} for an explicit definition of the fan. $E_{1,3}\cup E_{1,4}$ is the part of  $\mathcal{C}$ between $\mathcal{L}_1$ and $\mathcal{L}_2$.
Let $E_{1}=\cup^4_{i=1} E_{1,i}.$  Then $E_{1}$ is a piecewise linear plane from the description. It is similar to crooked plane appearing in anti de Sitter geometry, see \cite{BurelleCFG:2021, Drumm}. We thank the referee for  bringing this to our attention.  So  we also view $E_1$ as a "crooked-like surface".
See Figure \ref{figure:plane3pp} for  $E_{1,i}$, $1 \leq i \leq 4$.

In Heisenberg coordinates, we have

\begin{equation}
\begin{aligned}\label{E1}
	E_{1,1}&=\left\{\left[c+1, -\sqrt{3}c-\frac{1}{\sqrt{3}}, s+4\sqrt{3}c-\frac{8c+4}{\sqrt{3}}\right]  \,\Big| \, c \in \mathbb{R}, s \leq 1\right\}, \\
E_{1,2}&=\left\{\left[\frac{c}{\sqrt{3}}+1, c-\frac{1}{\sqrt{3}}, s-\frac{8c}{3}-\frac{4}{\sqrt{3
}}\right] \,\Big| \, c \in \mathbb{R}, s \geq 1\right\}, \\
E_{1,3}&=\left\{\left[\frac{ad}{\sqrt{3}}+1, d-\frac{1}{\sqrt{3}}, 1-2d-\frac{2ad}{3}-\frac{4}{\sqrt{3}}\right] \,\Big| \, a \in [-1,1], d \geq 0\right\},\\
E_{1,4}&=\left\{\left[\frac{ad}{\sqrt{3}}+1, d-\frac{1}{\sqrt{3}}, 1-2d-\frac{2ad}{3}-\frac{4}{\sqrt{3}}\right] \,\Big| \, a \in [-1,1], d \leq 0\right\}.	
\end{aligned}
\end{equation}

\begin{figure}
\begin{center}
\begin{tikzpicture}
\node at (0,0) {\includegraphics[width=6cm,height=6cm]{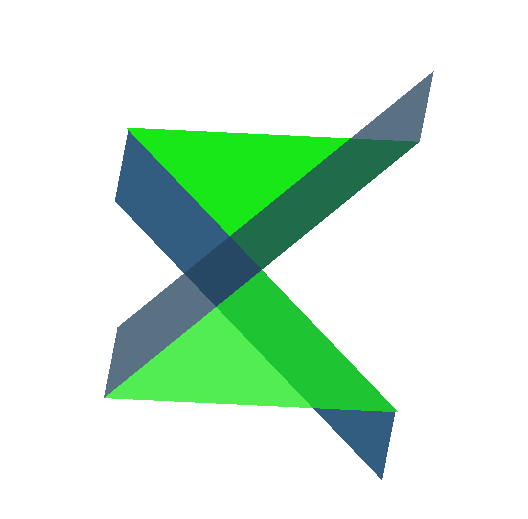}};
\coordinate [label=left:$E_{1,1}$] (S) at (1.8,2.2);
\coordinate [label=left:$E_{1,2}$] (S) at (-1.5,0.4);
\coordinate [label=left:$E_{1,4}$] (S) at (0,1.7);
\coordinate [label=left:$E_{1,3}$] (S) at (0,-1.8);
\end{tikzpicture}
\end{center}
 \caption{A  realistic view of part of the piecewise linear plane $E_{1}=\cup^4_{i=1} E_{1,i}$.}
	\label{figure:plane3pp}
\end{figure}

Let $E_{2,i}(1 \leq i \leq 4)$ be the image of $E_{1,i}(1 \leq i \leq 4)$ under the action of $A$.  Let $$E_{2}=\cup^4_{i=1} E_{2,i}.$$

In Heisenberg coordinates, we have
\begin{equation}
	\begin{aligned}\label{E2}
E_{2,1}&=\left\{\left[c-1, -\sqrt{3}c-\frac{1}{\sqrt{3}}, s-\frac{8c}{\sqrt{3}}\right] \,\Big| \, c \in \mathbb{R}, s \leq 1\right\}, \\
E_{2,2}&=\left\{\left[\frac{c}{\sqrt{3}}-1, c-\frac{1}{\sqrt{3}}
, s+\frac{4c}{3}\right] \,\Big| \, c \in \mathbb{R}, s \geq 1\right\},\\
	E_{2,3}&=\left\{\left[\frac{ad}{\sqrt{3}}-1, d-\frac{1}{\sqrt{3}}, 1+2d-\frac{2ad}{3}\right] \,\Big| \, a \in [-1,1], d \geq 0\right\},\\
	E_{2,4}&=\left\{\left[\frac{ad}{\sqrt{3}}-1, d-\frac{1}{\sqrt{3}}, 1+2d-\frac{2ad}{3}\right] \,\Big| \, a \in [-1,1], d \leq 0\right\}.
\end{aligned}
\end{equation}

\begin{prop}\label{prop:nonintersection}
	The intersection of 	$E_{1}$ and $A^{k}(E_{1})$ is empty  for any $k \neq 0 \in \mathbb{Z}$.
\end{prop}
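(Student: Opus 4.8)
The plan is to exploit that $A$ fixes the $y$-coordinate. Indeed, by (\ref{eq-A}) the map $A$ sends $(x,y,t)$ to $(x-2,\,y,\,t+4y+\tfrac{8}{\sqrt3})$, so $A^{k}$ sends it to $(x-2k,\,y,\,t+4ky+\tfrac{8k}{\sqrt3})$; in particular $A$ preserves every horizontal plane $P_{\rho}=\{y=\rho\}$ and acts on it, in the $(x,t)$ coordinates, as the translation by the nonzero vector $(-2,\,4\rho+\tfrac{8}{\sqrt3})$. Since $\partial\hc\setminus\{q_{\infty}\}=\bigcup_{\rho\in\mathbb{R}}P_{\rho}$, it suffices to show that for each $\rho$ the slice $E_{1}\cap P_{\rho}$ is disjoint from all of its translates $A^{k}(E_{1}\cap P_{\rho})$ with $k\ne 0$.

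First I would describe $E_{1}$ more geometrically. A direct computation with (\ref{E11})--(\ref{E14}) shows: the vertical projection of $E_{1}$ to the $(x,y)$-plane is the ``bowtie'' $B_{0}=\{\sqrt3\,|x-1|\le|y+\tfrac1{\sqrt3}|\}$, pinched at its apex $(1,-\tfrac1{\sqrt3})$; over the interior of $B_{0}$ the set $E_{1,3}\cup E_{1,4}$ is the graph of the single affine function
$$T_{0}(x,y)=1-2y-\tfrac{2}{\sqrt3}x-\tfrac{4}{\sqrt3};$$
$E_{1,1}$ is the ``flap'' over the edge $y=-\sqrt3(x-1)-\tfrac1{\sqrt3}$ of $B_{0}$, hanging downward, $E_{1,1}=\{(x,y,t):y=-\sqrt3(x-1)-\tfrac1{\sqrt3},\ t\le T_{0}(x,y)\}$; and $E_{1,2}$ is the flap over the other edge $y=\sqrt3(x-1)-\tfrac1{\sqrt3}$, rising upward, $E_{1,2}=\{(x,y,t):y=\sqrt3(x-1)-\tfrac1{\sqrt3},\ t\ge T_{0}(x,y)\}$. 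Consequently, for $\rho\ne-\tfrac1{\sqrt3}$ the slice $E_{1}\cap P_{\rho}$ is a connected piecewise-linear curve with two infinite ends: a segment lying on the line $t=T_{0}(\cdot,\rho)$ with $x$ running over an interval of length $\tfrac{2}{\sqrt3}\,|\rho+\tfrac1{\sqrt3}|$, together with a downward vertical ray attached at one endpoint and an upward vertical ray at the other. For $\rho=-\tfrac1{\sqrt3}$ the slice is the whole vertical line $\{x=1\}$, and there the claim is immediate since $A^{k}$ carries it to $\{x=1-2k\}$.

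Then I would transport the slice by $A^{k}$. From the formula for $A^{k}$ above, $A^{k}(E_{1}\cap P_{\rho})$ has the same shape: its segment lies on the line $t=T_{k}(\cdot,\rho)$ with
$$T_{k}(x,y)=T_{0}(x,y)+4k\Bigl(y+\tfrac1{\sqrt3}\Bigr),$$
and its two rays are those of $E_{1}\cap P_{\rho}$ shifted in $x$ by $-2k$. Disjointness then follows from a short case analysis inside $P_{\rho}$: the two segments lie on distinct parallel lines, since they differ in height by $4k(\rho+\tfrac1{\sqrt3})\ne0$; two vertical rays are disjoint unless they have the same $x$-coordinate, which forces $\rho+\tfrac1{\sqrt3}=\pm\sqrt3\,k$; and in each remaining case, where a ray of one arc shares the $x$-coordinate of a ray or of the segment of the other, the sign of $4k(\rho+\tfrac1{\sqrt3})$ --- which by the displayed identity decides whether $T_{k}$ lies above or below $T_{0}$ at that $x$, hence whether the two overlapping rays point toward or away from each other --- turns out to be exactly the one forcing them to point apart. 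For instance, at the single point where the edge $y=-\sqrt3(x-1)-\tfrac1{\sqrt3}$ of $B_{0}$ meets the $A^{k}$-shift of the other edge one computes $4k(\rho+\tfrac1{\sqrt3})=4\sqrt3\,k^{2}>0$, so $E_{1}$'s downward flap $\{t\le T_{0}\}$ and the $A^{k}$-image of $E_{1}$'s upward flap $\{t\ge T_{k}\}$ cannot meet.

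The hard part is this last step. One might hope that $E_{1}$ lies inside a single ``period'' of the $A$-action so that disjointness is automatic, but the vertical projections $B_{0}$ and $B_{0}-(2k,0)$ of $E_{1}$ and $A^{k}(E_{1})$ do overlap once $|y+\tfrac1{\sqrt3}|$ is large, so there is something genuine to verify. The delicate point is the bookkeeping of how the two flaps of $E_{1}$ sit relative to those of $A^{k}(E_{1})$ over this overlapping shadow, and it is precisely the identity $T_{k}-T_{0}=4k(y+\tfrac1{\sqrt3})$, combined with the fact that on the relevant edges $y+\tfrac1{\sqrt3}$ has a forced sign, that makes every case close.
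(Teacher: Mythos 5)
Your argument is correct, and it takes a genuinely different route from the paper's. The paper first invokes Proposition 5.3 to say each $A^{k}(E_{1})$ is a separating plane, reduces to the single case $k=1$, and then kills the sixteen pairwise intersections $E_{1,i}\cap E_{2,j}$ by direct coordinate computations (writing out four of them and declaring the rest routine). You instead slice by the $A$-invariant horizontal planes $\{y=\rho\}$, repackage $E_{1}$ as the graph of the affine function $T_{0}$ over the bowtie $\sqrt3\,|x-1|\le|y+\tfrac1{\sqrt3}|$ together with a downward flap over one edge and an upward flap over the other (I checked this re-description against (5.1)--(5.4); it is exact, e.g.\ on the $E_{1,1}$-edge one has $T_{0}=1+\tfrac{4(x-2)}{\sqrt3}$, matching $s\le1$), and then dispose of every $k$ at once via the identity $T_{k}-T_{0}=4k\bigl(y+\tfrac1{\sqrt3}\bigr)$. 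What your approach buys is threefold: it avoids the separating-plane reduction (which itself needs a small argument that $A(E_{1})$ lies between $E_{1}$ and $A^{2}(E_{1})$), it replaces sixteen ad hoc computations by one structural sign check, and it makes visible \emph{why} the statement holds --- the two flaps of $E_{1}$ point in the directions forced apart by the sign of $4k(\rho+\tfrac1{\sqrt3})$ on the respective edges. I verified the remaining cases you only sketch (a ray of one slice sharing its $x$-coordinate with the segment of the other requires $0<k\le(\rho+\tfrac1{\sqrt3})/\sqrt3$ or the mirror condition, and in every such case $k(\rho+\tfrac1{\sqrt3})$ has the sign that pushes the ray off the segment), so the case analysis does close as claimed; if you write this up you should spell those subcases out, but there is no gap.
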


\begin{proof} We only need to show the intersection of  $E_{1}$ and  $E_{2}=A(E_{1})$ is empty. We show this by direct calculations.

If $$\left[c_1+1, -\sqrt{3}c_1-\frac{1}{\sqrt{3}}, s_1+4\sqrt{3}c_1-\frac{8c_1+4}{\sqrt{3}}\right]\in E_{1,1}$$
and
$$\left[c_2-1, -c_2\sqrt{3}-\frac{1}{\sqrt{3}}, s_2-\frac{8c_2}{\sqrt{3}}\right] \in E_{2,1}$$
 is an intersection point of $E_{1,1}$ and  $E_{2,1}$, then $c_1=c_2$ and $c_1+1=c_2-1$, which is a contradiction, so $E_{1,1} \cap E_{2,1}=\emptyset$.

If $$\left[c_1+1, -\sqrt{3}c_1-\frac{1}{\sqrt{3}}, s_1+4\sqrt{3}c_1-\frac{8c_1+4}{\sqrt{3}}\right] \in E_{1,1}$$
and
$$\left[\frac{c_2}{\sqrt{3}}-1, c_2-\frac{1}{\sqrt{3}}
	, s_2+\frac{4c_2}{3}\right]  \in E_{2,2}$$
 corresponds to an intersection point of $E_{1,1}$ and  $E_{2,2}$,
then $c_2=-\sqrt{3}c_1$
and $c_1+1=\frac{c_2}{\sqrt{3}}-1$, so $c_1=-1$,  $s_1-4\sqrt{3}=s_2$ and
$s_1-4\sqrt{3}+\frac{4}{\sqrt{3}}=s_2-8$, but note that $s_1 \leq 1$ and  $s_2 \geq 1$, so we have  $E_{1,1} \cap E_{2,2}=\emptyset$.
	
If $$\left[c+1, -\sqrt{3}c-\frac{1}{\sqrt{3}}, s+4\sqrt{3}c-\frac{8c+4}{\sqrt{3}}\right]\in E_{1,1}$$
 and
 $$\left[\frac{ad}{\sqrt{3}}-1, d-\frac{1}{\sqrt{3}}, 1+2d-\frac{2ad}{3}\right] \in    E_{2,3}$$
  is an intersection point of $E_{1,1}$ and  $E_{2,3}$,
then $d=-\sqrt{3}c$
and $c+2=-ac$. So we must have $a \in (-1,1]$ (recall  we assume $a\in [-1,1]$),  and $c$ is negative,
then $s=1-6\sqrt{3}c+\frac{6c}{\sqrt{3}}$ has no solution for $s \leq 1$ and $c \leq 0$, so $E_{1,1} \cap E_{2,3} =\emptyset$.

If $$\left[c+1, -\sqrt{3}c-\frac{1}{\sqrt{3}}, s+4\sqrt{3}c-\frac{8c+4}{\sqrt{3}}\right]\in E_{1,1} $$ and
$$\left[\frac{ad}{\sqrt{3}}-1, d-\frac{1}{\sqrt{3}}, 1+2d-\frac{2ad}{3}\right]  \in   E_{2,4}$$
 is an intersection point of $E_{1,1}$ and  $E_{2,4}$,
then $d=-\sqrt{3}c$ and   $c+2=-ac$.
So we must have  $c$ is negative and $d$ is positive,
	but recall that for
	$$\left[\frac{ad}{\sqrt{3}}-1, d-\frac{1}{\sqrt{3}}, 1+2d-\frac{2ad}{3}\right]    \in E_{2,4}$$ we have $d \leq 0$,
	so $E_{1,1} \cap  E_{2,4}=\emptyset$.
	
Similarly, we can show that $E_{1,i} \cap E_{2,j} =\emptyset$ for all $1 \leq i,j \leq 4$, we omit the routine arguments.
		
\end{proof}

\begin{figure}
\begin{center}
\begin{tikzpicture}
\node at (0,0) {\includegraphics[width=6cm,height=6cm]{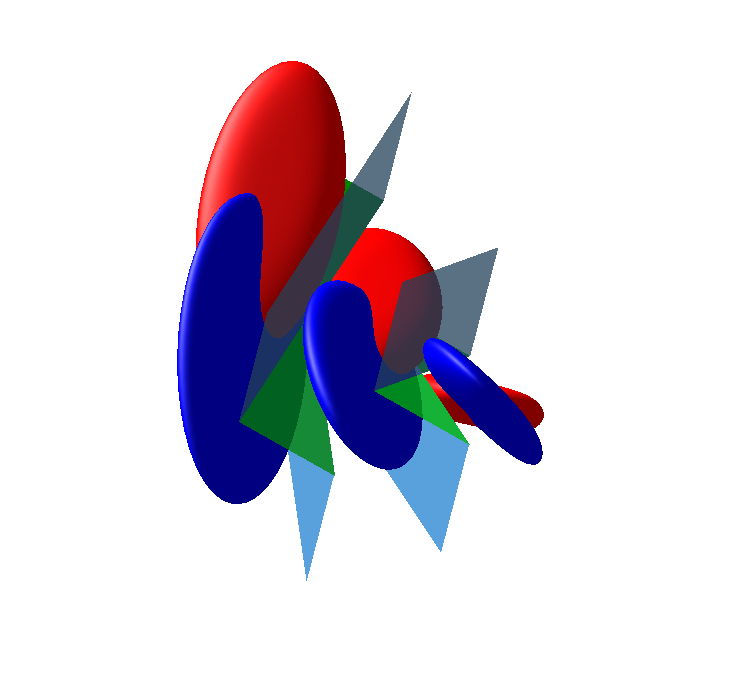}};
\coordinate [label=left:$ A(E_{1})$] (S) at (1.4,2.2);
\coordinate [label=left:$ E_{1}$] (S) at (1.5,1);
\end{tikzpicture}
\end{center}
 \caption{A  realistic view of a fundamental domain of the $\langle A \rangle$-action on $\partial_{\infty}\mathbf{H}^2_{\mathbb C}\backslash \{q_{\infty}\}$.  The plane $E_1$ is the same as in Figure \ref{figure:plane3pp}, and the spinal spheres are the same as in Figure	\ref{figure:3pp0}.}
\label{figure:fundamentaldomain3pp}
\end{figure}

\begin{prop}
	For any $k \in \mathbb{Z}$, the intersection of $E_{1}$ with $A^{k}(I(B))$ (resp.\ $A^{k}(I(B^{-1}))$) is reduced to $p_{B^{-1}A}$ (resp.\ $p_{BA}$).
\end{prop}

\begin{proof}

The  equation of the spinal sphere of  $I^{+}_{k}$    in Heisenberg coordinates  is	\begin{equation}\label{3ppIplus}
	|x+2k+yi|^4+(t-8k/\sqrt{3}-4ky)^2=\frac{16}{9}
	\end{equation}
		for $k \in \mathbb{Z}$.
The  equation of the spinal  sphere of  $I^{-}_{k}$ in Heisenberg coordinates is
\begin{equation}\label{3ppIminus}
	|x+2k+(y+2/\sqrt{3})i|^4+(t-4ky+4x/\sqrt{3})^2=\frac{16}{9}
	\end{equation}
	for $k \in \mathbb{Z}$.
So the vertical projection of the spinal sphere of  $I^{+}_{k}$  is the disk in $\mathbb{C}$  bounded by the circle
	\begin{equation}\label{proj3ppIplus}
	|x+2k+yi|^2=\frac{4}{3}
	\end{equation}
	for $k \in \mathbb{Z}$.
The vertical projection of the spinal sphere of  $I^{-}_{k}$  is the disk in $\mathbb{C}$ bounded by the circle
	\begin{equation}\label{proj3ppIminus}
	|x+2k+(y+2/\sqrt{3})i|^2=\frac{4}{3}
	\end{equation}
	for $k \in \mathbb{Z}$.
The vertical  projection of the boundary of the sector $E_{1,3}$ is the union of two half-lines
	\begin{equation}\label{3ppprocE131}
\{	b/\sqrt{3}+1+(b-i/\sqrt{3}), b\geq 0\}
	\end{equation}
	and
	\begin{equation}\label{3ppprocE132}
\{	-b/\sqrt{3}+1+(b-i/\sqrt{3}),  b\geq 0\}.
	\end{equation}
The vertical projection of the boundary of the sector $E_{1,4}$ is the union of two half-lines with the same equations as (\ref{3ppprocE131}) and (\ref{3ppprocE132}) but both with $b \leq 0$ respectively. See Figure \ref{figure:3ppproj}.

With the coordinates of $p_{B^{-1}A}$ and $p_{BA}$ in Section \ref{sec-Ford-3pp}, now it is easy to see that $p_{B^{-1}A}$ lies in the triple intersection of $E_{1,2}$,  $I^{+}_{0}$ and $I^{-}_{-1}$, and  $p_{BA}$ lies in the triple intersection of $E_{1,1}$,  $I^{+}_{-1}$ and $I_{0}^{-}$.

\begin{figure}
\begin{tikzpicture}[style=thick,scale=1.2]
\draw (0,0)  circle (1.1547);  \node [above right] at (-0.25,0.2){$I_{0}^{+}$};
\draw (0,-1.1547)  circle (1.1547); \node [below right] at (-0.25,-1.2){$I_{0}^{-}$};
\draw (2,0)  circle (1.1547); \node [above right] at (1.75,0.2){$I_{-1}^{+}$};
\draw (2,-1.1547)  circle (1.1547); \node [below right] at (1.75,-1.2){$I_{-1}^{-}$};
\draw (-2,0)  circle (1.1547);  \node [above right] at (-2.25,0.2){$I_{1}^{+}$};
\draw (-2,-1.1547)  circle (1.1547);\node [below right] at (-2.25,-1.2){$I_{1}^{-}$};
\draw[red, ultra thick] (-0.5,2.0207) -- (2.5,-3.1754);
\draw[red, ultra thick] (-0.5,-3.1754) -- (2.5,2.0207);
\filldraw[fill=green, opacity=0.7] (-0.372578,1.8) -- (1,-0.57735) -- (2.37258,1.8) -- cycle;
\filldraw[fill=cyan, opacity=0.7] (-0.340996,-2.9) -- (1,-0.57735) -- (2.341,-2.9) -- cycle;
\node [above ] at (-0.5,2){$E_{1,2}$};
\node [above ] at (2.5,2){$E_{1,1}$};
\node [above ] at (0.9,1){$E_{1,4}$};
\node [below ] at (1,-2.2){$E_{1,3}$};

\filldraw[fill=green, opacity=0.7] (-0.372578-2,1.8) -- (1-2,-0.57735) -- (2.37258-2,1.8) -- cycle;
\filldraw[fill=cyan, opacity=0.7] (-0.340996-2,-2.9) -- (1-2,-0.57735) -- (2.341-2,-2.9) -- cycle;
\node [above ] at (-0.5-2,2){$E_{2,2}$};
\node [above ] at (2.5-2,2){$E_{2,1}$};
\node [above ] at (0.9-2,1){$E_{2,4}$};
\node [below ] at (1-2,-2.2){$E_{2,3}$};

\draw[blue, ultra thick] (-0.5-2,2.0207) -- (2.5-2,-3.1754);
\draw[blue, ultra thick] (-0.5-2,-3.1754) -- (2.5-2,2.0207);
\end{tikzpicture}
\caption{The vertical projections of the planes $E_{i,1}$,  $E_{i,2}$, $E_{i,3}$ and $E_{i,4}$  and  the vertical projections of the  spinal spheres of $A^{k}BA^{-k}$ and $A^{k}B^{-1}A^{-k}$ for $k=-1,0,1$ in $\mathbb{C}$.   The union of the red lines are the vertical projections of $E_{1,1}$  and $E_{1,2}$;  The union of the  blue lines are the vertical  projections of $E_{2,1}$ and $E_{2,2}$;  The green and blue colored  regions co-bounded by the red lines is the vertical  projections of $E_{1,3}$ and $E_{1,4}$; The green and blue colored  regions co-bounded by the blue lines are the projections of $E_{2,3}$ and $E_{2,4}$. }	
\label{figure:3ppproj}
\end{figure}

Firstly,  we consider the intersection of  half-plane $E_{1,2}$ with all the  isometric spheres $I^{\pm}_{k}$:

(1). The vertical  projection of  $E_{1,2}$ is the line
	
\begin{equation}\label{3ppprocE12}
\{	c/\sqrt{3}+1+(c-1/\sqrt{3})i, c \in \mathbb{R}\}.
	\end{equation}
	
	From  (\ref{3ppprocE12}) and (\ref{3ppIplus}), we know that any intersection point  of $E_{1,2}$ and  $I^{+}_{0}$ is
	$$[c/\sqrt{3}+1+(c-1/\sqrt{3})i, t]$$  in Heisenberg coordinates  satisfies
	$$\left(\left(\frac{c}{\sqrt{3}}+1\right)^2 +\left(c-\frac{1}{\sqrt{3}}\right)^2\right)^2+t^2=\frac{16}{9}.$$
	But then $c=t=0$  is the only solution which corresponds to the point $p_{B^{-1}A}$.
	
(2). Similarly we can see  $p_{B^{-1}A}$  is the unique  intersection point  of $E_{1,2}$ and  $I^{-}_{-1}$.

(3). From  (\ref{3ppprocE12}) and (\ref{proj3ppIplus}), the intersection of the projections of   $E_{1,2}$ and  $I^{-}_{0}$ is non-empty, and by Equation  (\ref{3ppIminus}),   any intersection point  of $E_{1,2}$ and  $I^{-}_{0}$
is
$$\left[\frac{c}{\sqrt{3}}+1+(c-\frac{1}{\sqrt{3}})i, s-\frac{8c}{3}-\frac{4}{\sqrt{3
	}}\right] $$  in Heisenberg coordinates   with some $ c \in \mathbb{R}, s \geq 1$
	satisfying
\begin{equation}\label{3ppIminusE12}
\left(\left(\frac{c}{\sqrt{3}}+1\right)^2 +\left(c+\frac{1}{\sqrt{3}}\right)^2\right)^2+t^2=\frac{16}{9}.	\end{equation}
That is,
\begin{equation}\label{E120minus}
	\left(\frac{4c^2}{3}+\frac{4c}{\sqrt{3}} + \frac{4}{3}\right)^2+\left(s-\frac{4c}{3}\right)^2=\frac{16}{9}.
\end{equation}
	
If $c \in [-\infty, -\sqrt{3}] \cup  [0, \infty]$, then $4c^2/3+4c/\sqrt{3}$ is non-negative, so Equation (\ref{E120minus}) has no solution.
Note that $s \geq 1$, so for any $c \in [-\sqrt{3},0]$,  the minimal value of the left side of  Equation \ref{E120minus} is achieved when $s=1$, that is $$\left(\frac{4c^2}{3}+\frac{4c}{\sqrt{3}} + \frac{4}{3}\right)^2+\left(1-\frac{4c}{3}\right)^2.$$
For this degree-4 polynomial, in the interval $c \in [-\sqrt{3},0]$, it is easy to see that
when
$$c=\frac{(8\sqrt{3}+12+4\sqrt{25+12\sqrt{3}})^{1/3}}{4}-\frac{1}{(8\sqrt{3}+12+4\sqrt{25+12\sqrt{3}})^{1/3}}-\frac{\sqrt{3}}{2},$$
which is $c=-0.1937649849$ numerically, we get the minimum of this polynomial in this interval, it is $2.459384508  >  \frac{16}{9}$ numerically. So  the intersection of   $E_{1,2}$ and  $I^{-}_{0}$ is empty.

(4). Similarly, we have   the intersection of   $E_{1,2}$ and  $I^{+}_{-1}$ is empty.

(5).	 $E_{1,2}$ is disjoint from  $I^{\pm}_{k}$ for $k \in \mathbb{Z}\backslash\{0,-1\}$,  since   their projections in $\mathbb{C}$ are disjoint by  (\ref{3ppprocE12}), (\ref{proj3ppIplus}) and  (\ref{proj3ppIminus}).

The projection of  $E_{1,1}$ is the line
\begin{equation}\label{3ppprocE11}
	\{c+1+(-\sqrt{3}c-1/\sqrt{3})i, c\in \mathbb{R}\}.
	\end{equation}
Similarly to the case of $E_{1,2}$, we have   the intersection of   $E_{1,1}$ and $I^{\pm}_{k}$ for all $k \in \mathbb{Z}$  is just the point  $p_{BA}$.
	
Secondly,  we show  the intersection of   $E_{1,3}$ and $I^{\pm}_{k}$ is empty for any  $k \in \mathbb{Z}$.
	
(1). The intersection of  $E_{1,3}$  and $I^{+}_{-1}$ is empty, see the  vertical projection of  $E_{1,3}$ and $I^{+}_{-1}$ into $\mathbb{C}$ in  Figure 	\ref{figure:3ppproj}.

From Equation  (\ref{3ppIplus}), any intersection of  $E_{1,3}$  and $I^{+}_{-1}$ satisfies the equation
	\begin{equation}\label{E13Ifu1plus}
	\left(\left(\frac{ad}{\sqrt{3}}-1\right)^2+\left(d-\frac{1}{\sqrt{3}}\right)^2\right)^2+\left(1+2d-\frac{2ad}{3}\right)^2=\frac{16}{9}
	\end{equation}
	for  $a \in [-1,1]$ and $d \geq 0$.

The left-side of Equation (\ref{E13Ifu1plus}) is larger than  $$\left(\left(\frac{ad}{\sqrt{3}}-1\right)^2+\left(d-\frac{1}{\sqrt{3}}\right)^2\right)^2+\left(1+\frac{4d}{3}\right)^2,$$ since $a \in [-1,1]$ and $d \geq 0$.
	
If $d\in [1/4, \infty)$, then the left-side of  Equation (\ref{E13Ifu1plus}) is larger than $(1+1/3)^2=16/9$. If  $d\in [0,1/4]$, then the absolute value of $ad/\sqrt{3}$ is less than one, so   the left-side of  Equation (\ref{E13Ifu1plus}) is larger than $$\left(\left(1-\frac{d}{\sqrt{3}}\right)^2+\left(d-\frac{1}{\sqrt{3}}\right)^2\right)^2+1,$$ which is at least $(1-1/4\sqrt{3})^4+1$ and larger than $16/9$.
	
In total, there is no solution of (\ref{E13Ifu1plus}) with $d \geq 0$, so  the intersection of  $E_{1,3}$  and $I^{+}_{-1}$ is empty.
	
(2). Similarly,  the intersection of  $E_{1,3}$  and $I^{+}_{0}$ is empty.

(3). The intersection of  $E_{1,3}$  and $I^{-}_{-1}$ is empty.
From Equation  (\ref{3ppIminus}), any intersection of  $E_{1,3}$  and $I^{+}_{-1}$ is
	$$[ad/\sqrt{3}+1, d+1/\sqrt{3}, 1-2d-2ad/3-4/\sqrt{3}]$$  in Heisenberg coordinates   satisfying
\begin{equation}\label{E13Ifu1minus}
	\left(\left(\frac{ad}{\sqrt{3}}-1\right)^2+\left(d-\frac{1}{\sqrt{3}}\right)^2\right)^2+\left( 1+2d+\frac{2ad}{3}+\frac{4}{\sqrt{3}}\right)^2=\frac{16}{9}
	\end{equation}
	for some  $a \in [-1,1]$ and $d \geq 0$.
	Since  $a \in [-1,1]$ and $d \geq 0$,  the left-side of Equation (\ref{E13Ifu1minus}) is larger than $(1+4/\sqrt{3})^2$, which in turn is larger than $16/9$.
	So there is no solution of (\ref{E13Ifu1minus}) with $d \geq 0$,  the intersection of  $E_{1,3}$  and $I^{-}_{-1}$ is empty.
	
(4). Similarly,  the intersection of  $E_{1,3}$  and $I^{-}_{0}$ is empty.
	
(5). $E_{1,3}$ is disjoint from  $I^{\pm}_{k}$ for $k \in \mathbb{Z}\backslash\{0,-1\}$,  since  their vertical  projections in $\mathbb{C}$ are disjoint by (\ref{3ppprocE11}), (\ref{3ppprocE12}), (\ref{proj3ppIplus}) and  (\ref{proj3ppIminus}).

Similarly,  the intersection of $E_{1,4}$ and $I^{\pm}_{k}$ is empty for any  $k \in \mathbb{Z}$.
\end{proof}

\subsection{A fundamental domain of $\langle A \rangle$-action on   the ideal boundary of the Ford domain for $\Gamma_1$  is a genus-3 handlebody} \label{subsection:3pphandlebody}

Figure \ref{figure:3ppabstract} is a  combinatorial  picture of  the ideal boundary of the Ford domain of $\Delta_{3,\infty,\infty;\infty}$.
Each of the halfsphere is the intersection of the  spinal sphere of some $A^{k}BA^{-k}$ or
$A^{k}B^{-1}A^{-k}$ with the ideal boundary of the Ford domain.

Recall that the point $p_{BA}$ is the tangent point of the spinal spheres of $A^{-1}BA$ and $B^{-1}$. Similarly,  the points $p_{B^{-1}A}$, $p_{AB}$ and $p_{BA^{-1}}$ are  the tangent points of the spinal spheres of $\{A^{-1}B^{-1}A, B\}$,  $\{ABA^{-1}, B^{-1}\}$ and $\{AB^{-1}A^{-1}, B\}$ respectively. We also have $B(p_{B^{-1}A})=A(p_{B^{-1}A})=p_{BA^{-1}}$ and $B(p_{AB})=A^{-1}(p_{AB})=p_{BA}$. The $A$-action is the horizontal translation with a half-turn to the right in Figure \ref{figure:3ppabstract}. The ideal boundary of the Ford domain of $\Delta_{3,\infty,\infty;\infty}$ is the region which is  outside all the spheres in  Figure \ref{figure:3ppabstract}. Since there are infinitely  many tangent points between these spinal spheres,  the ideal boundary of the Ford domain is   an infinite genus handlebody in topology, which is  different from those  in  \cite{mx, ParkerWill:2016}.  Where the ideal boundaries of  Ford domains studied in  \cite{mx, ParkerWill:2016} are solid tori, that is, genus one handlebodies. 

We  have taken  a fundamental domain of the $A$-action on the ideal boundary of the Ford domain of $\Delta_{3,\infty,\infty;\infty}$ in Subsection \ref{subsection:3ppfundamentaldomain}. That is, the region $U$ co-bounded by $E_1$ and  $E_2$. So, $U$ is topologically  the product of the plane and the interval.

\begin{defn}The sub-region of $U \subset \mathbb{C} \times \mathbb{R}$ which is outside the isometric spheres of  $B$ and $B^{-1}$ is denoted by $H$.
\end{defn}
Both  the isometric spheres of  $B$ and $B^{-1}$ bound a finite 3-ball in   $U$, these two 3-balls intersect in a disk, so the union of them  is a big 3-ball. But this big 3-ball  is tangent to the frontiers of $U$, say $E_1$ and $E_2$, in four points.   So $H$ can  also be viewed as the complement of this big 3-ball in $U$, then   $H$ is a genus three handlebody. Compare to Figure \ref{figure:3ppabstract}.  We will take three disks $D_1$,  $D_2$ and  $D_3$ which cut $H$  into a 3-ball rigorously. Then the 3-manifold $M$ at infinity of $\Delta_{3,\infty,\infty;\infty}$ is the
quotient of a topological 3-ball by side pairings.  Which in turn enables us to  write down precisely the fundamental group of $M$ in 
 the end of Subsection  \ref{subsection:threemorearcs}.



\subsubsection{The first disk $D_1$} \label{3ppD1}

Firstly, we take a half-plane $\mathcal{P}_1$ in the  Heisenberg group which passes  through  $p_{BA^{-1}}$,   $p_{BA}$ and another point with  Heisenberg coordinates $[0,0,0]$. Note that $[0,0,0]$ is a point in the isometric sphere of $B^{-1}$.
The half-plane   $\mathcal{P}_1$ is given by
\begin{equation}\label{3ppP1}
\left\{\left[a, \frac{-1+b}{\sqrt{3}},\frac{-4a}{\sqrt{3}}\right] \,\Big| \, a \in \mathbb{R}, b
\leq 0 \right\}.
\end{equation}

We now consider the intersections of $\mathcal{P}_1$ with $E_1$, $E_2$ and the isometric sphere of $B^{-1}$. From  (\ref{E1}) and (\ref{3ppP1}), the intersection of  $\mathcal{P}_1$  and $E_{1,1}$ is a half-line

\begin{equation}\label{P1E11}
\left\{\left[c+1, -\sqrt{3}c-\frac{1}{\sqrt{3}}, \frac{4c-4}{\sqrt{3}}\right]  \,\Big| \,  c \geq 0 \right\}.
\end{equation}
with  one vertex  $p_{BA}$ and  diverges to $q_{\infty}$. We denote this half-line by $[p_{BA}, q_{\infty}]$.
From   (\ref{E1}) and (\ref{3ppP1}), it follows that the intersection of  $\mathcal{P}_1$  and $E_{1,i}$ is empty for $i=2,3,4$.

From  (\ref{E2}) and (\ref{3ppP1}), the intersection of  $\mathcal{P}_1$  and $E_{2,2}$ is a half-line
\begin{equation}\label{P1E22}
\left\{\left[\frac{c}{\sqrt{3}}-1, c-\frac{1}{\sqrt{3}}, \frac{4-4c}{\sqrt{3}}\right]  \,\Big| \,  c \leq 0 \right\}.
\end{equation}
with  one vertex  $p_{BA^{-1}}$ and  diverges to  $q_{\infty}$. We denote this half-line by $[p_{BA^{-1}}, q_{\infty}]$.

By  (\ref{E2}) and (\ref{3ppP1}), the intersection of  $\mathcal{P}_1$  and $E_{2,i}$ is empty for $i=1,3,4$.

From  (\ref{3ppIminus}) and (\ref{3ppP1}), the intersection of  $\mathcal{P}_1$  and the spinal sphere of $B^{-1}$ is an arc
\begin{equation}\label{P1b}
\left[a,\frac{-1+b}{\sqrt{3}},\frac{-4a}{\sqrt{3}}\right]
\end{equation}
in the  Heisenberg group
with $a^2+\frac{(b+1)^2}{3}=\frac{4}{3}$ and
$b \leq 0$. It is an arc
with  one vertex  $p_{BA^{-1}}$ and  another vertex $p_{BA}$.
We denote this arc  by $[p_{BA^{-1}}, p_{BA}]$. We remark here   $[p_{BA^{-1}}, p_{BA}]$ is a superior arc in an ellipse, so
we divide this arc into two subarcs and parameterize them  by $b\in [-3,0]$ and $a= \pm\sqrt{\frac{4-(b+1)^2}{3}}$.
By  Equation (\ref{3ppIplus}),  the intersection of  $\mathcal{P}_1$  and the spinal sphere of $B$ is empty.

Combining above equations,  we see that the three arcs  $[p_{BA}, q_{\infty}]$, $[p_{BA^{-1}}, q_{\infty}]$ and  $[p_{BA^{-1}}, p_{BA}]$ have disjoint interiors. So they glue together to get a simple closed curve in $\mathcal{P}_1$.
Now the region in the plane  $\mathcal{P}_1$ co-bounded by $[p_{BA}, q_{\infty}]$, $[p_{BA^{-1}}, q_{\infty}]$ and  $[p_{BA^{-1}}, p_{BA}]$ is a disk $D_1$, and the interior of  $D_1$ is disjoint from all the spinal spheres and $A^{k}(E_1)$ for any  $k \in \mathbb{Z}$.

\subsubsection{The second disk $D_2$} \label{3ppD2}

Secondly, we note  the equation of  $B^{-1}\left([p_{BA},p_{BA^{-1}}]\right)$ is
\begin{equation}\label{3ppbasecurve}
\left[-a, -\frac{b+1}{\sqrt{3}},0\right]
\end{equation}
in   Heisenberg coordinates, with $b \leq 0$ and $a^2+(b+1)^2/3=4/3$. It is an arc in the spinal sphere of $B$,  its end points are $p_{B^{-1}A}$ and $p_{AB}$.   We also divide this arc into two subarcs and  parameterize them  by $b\in [-3,0]$ and $a= \pm\sqrt{\frac{4-(b+1)^2}{3}}$.

The half-line $A^{-1}([p_{BA^{-1}}, q_{\infty}])$
is 
\begin{equation}
\left[\frac{c}{\sqrt{3}}+1, c-\frac{1}{\sqrt{3}}, -\frac{16c}{3}\right]
\end{equation}
in Heisenberg coordinates with $c \leq 0$.
We rewrite it as
\begin{equation}\label{P2E12}
\left[-\frac{c}{\sqrt{3}}+1, -c-\frac{1}{\sqrt{3}}, \frac{16c}{3}\right]
\end{equation}
in Heisenberg coordinates with  $ c \geq 0$.
One vertex of it is  $p_{AB}$ and  it diverges to the infinity. We denote this half-line by $[p_{AB}, q_{\infty}]$. It lies in the  half-plane  $E_{2,1}$.

The equation of  $A([p_{BA}, q_{\infty}])$
is
\begin{equation}\label{P2E210}
\left[c-1, -\sqrt{3}c-\frac{1}{\sqrt{3}}, -\frac{16c}{\sqrt{3}}\right]
\end{equation}
in Heisenberg coordinates with $c \geq 0$.
One vertex of it is  $p_{B^{-1}A}$ and it diverges to the infinity. We denote this half-line by $[p_{B^{-1}A}, q_{\infty}]$. It is a half-line in $E_{2,2}$.

We now construct  a ruled surface $\mathcal{P}_2$ in the Heisenberg group with  base curve   $B^{-1}([p_{BA},p_{BA^{-1}}])$, and  two of  $\mathcal{P}_2$'s  half-lines are  $A^{-1}([p_{BA^{-1}}, q_{\infty}])=[p_{B^{-1}A}, q_{\infty}]$ and $A([p_{BA}, q_{\infty}])=[p_{AB}, q_{\infty}]$.

The disk  $D_2$ is half of the ruled surface  $\mathcal{P}_2$. It  is a union of four subdisks
$$D_2=\cup^4_{i=1} D_{2,i}.$$ Each of $D_{2,i}$ is a part of the  ruled surface $D_2$, for $1 \leq  i \leq 4$.

The subdisk $D_{2,1}$ is
\begin{equation}\label{D21}
\left[\frac{\sqrt{4-(b+1)^2}}{\sqrt{3}}, \frac{-(b+1)}{\sqrt{3}},0\right]+s\left[\frac{-b-1}{\sqrt{3}}, -b-1, \frac{16+6b}{3}\right]
\end{equation}
with $b \in  [-2, 0]$ and $s \in [0, \infty)$
in  Heisenberg coordinates. Here
\begin{equation}
\left[\frac{\sqrt{4-(b+1)^2}}{\sqrt{3}}, \frac{-(b+1)}{\sqrt{3}},0\right]
\end{equation}
with  $b \in  [-2, 0]$ is the base curve. Moreover,  for each fixed  $b \in  [-2, 0]$, the equation
\begin{equation}
s\left[\frac{-b-1}{\sqrt{3}}, -b-1, \frac{16+6b}{3}\right]
\end{equation}
with $s \in[0, \infty)$
is a half-line in the  Heisenberg group.  $D_{2,1}$ is the pink ruled surface in Figure 	\ref{figure:D2-3pp}, with one half-line of it lies in $E_1$.

The subdisk $D_{2,2}$ is
\begin{multline}\label{D22}
\left[\frac{\sqrt{4-(b+1)^2}}{\sqrt{3}}, \frac{-(b+1)}{\sqrt{3}},0\right]\\
+s\left[\frac{4+2\sqrt{3}+b+b\sqrt{3}}{2\sqrt{3}},\frac{4-2\sqrt{3}-b\sqrt{3}+b}{2}, \frac{28-16\sqrt{3}-8\sqrt{3}b+12b}{3}\right]
\end{multline}
with $b \in  [-3, -2]$ and $s \in[0, \infty)$
in  Heisenberg coordinates.  $D_{2,2}$ is the green  ruled surface in Figure 	\ref{figure:D2-3pp}, it has a common half-line with $D_{2,1}$.

The subdisk $D_{2,3}$ is
\begin{multline}\label{D23}
\left[-\frac{\sqrt{4-(b+1)^2}}{\sqrt{3}}, \frac{-(b+1)}{\sqrt{3}},0\right]\\
+s\left[\frac{-2-4\sqrt{3}-b-b\sqrt{3}}{2\sqrt{3}},\frac{4\sqrt{3}-2+\sqrt{3}b-b}{2},
\frac{-28\sqrt{3}+16+8b-12\sqrt{3}b}{3}\right]
\end{multline}
with $b \in  [-3, -2]$ and $s \in[0, \infty)$
in  Heisenberg coordinates.  $D_{2,3}$ is the red   ruled surface in Figure 	\ref{figure:D2-3pp}, it has a common half-line with $D_{2,2}$.

The subdisk $D_{2,4}$ is
\begin{equation}\label{D24}
\left[-\frac{\sqrt{4-(b+1)^2}}{\sqrt{3}}, \frac{-(b+1)}{\sqrt{3}},0\right]+s\left[b+1, -\sqrt{3}b-\sqrt{3}, \frac{-16-6b}{\sqrt{3}}\right]
\end{equation}
with $b \in  [-2, 0]$ and $s \in[0, \infty)$
in   Heisenberg coordinates.  $D_{2,4}$ is the blue    ruled surface in Figure 	\ref{figure:D2-3pp}, it has a common half-line with $E_{2}$.

\begin{figure}

	\begin{center}
		\begin{tikzpicture}
		\node at (0,0) {\includegraphics[width=14cm,height=8cm]{{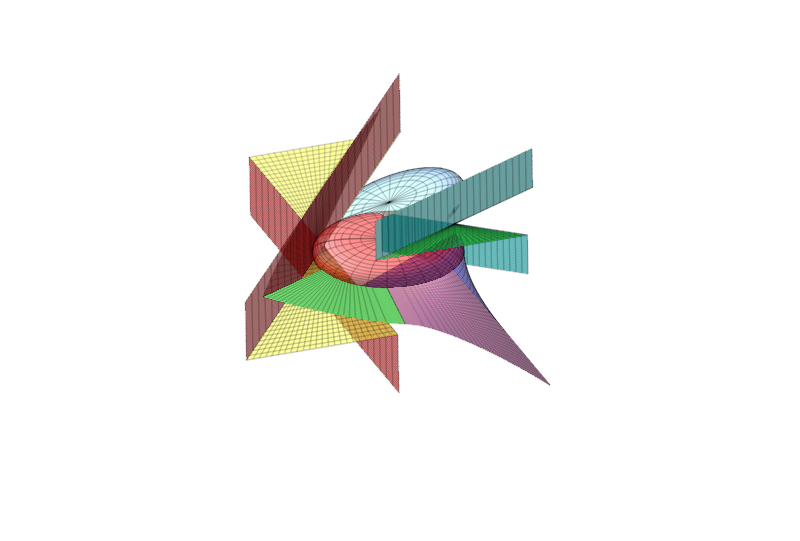}}};
		\coordinate [label=left:$D_{2,1}$] (S) at (-1.6,0.1);
		\coordinate [label=left:$D_{2,2}$] (S) at (-0.5,-0.5);
		\coordinate [label=left:$D_{2,3}$] (S) at (1.2,-0.7);
		\coordinate [label=left:$D_{2,4}$] (S) at (1.9,-0.1);
		\end{tikzpicture}
	\end{center}
	\caption{The cutting disk $D_2$
		we need in the study of the 3-manifold at infinity of  the group $\Delta_{3,\infty,\infty;\infty}$.}
	\label{figure:D2-3pp}
\end{figure}

Note that for $b=0$ in Equation (\ref{D21}), we get Equation  (\ref{P2E12})    of $A^{-1}([p_{BA^{-1}}, q_{\infty}])$, and  for $b=0$ in Equation (\ref{D24}), we get  Equation  (\ref{P2E210})    for  $A([p_{BA}, q_{\infty}])$.

We now consider the intersections of  disks $D_{2,i}$ for $i=1,2,3,4$.
\begin{itemize}
\item For $b=-2$ in  (\ref{D21}) and (\ref{D22}), we get a common half-line in both  $D_{2,1}$ and $D_{2,2}$; 
\item For $b=-3$  in  (\ref{D22}) and (\ref{D23}), we get a common half-line in both  $D_{2,2}$ and $D_{2,3}$;

\item  For $b=-2$  in  (\ref{D23}) and (\ref{D24}), we get a common half-line in both  $D_{2,3}$ and $D_{2,4}$.
\end{itemize}
\begin{lem} \label{D2for3pp}
	The disks $D_{2,i}$ for $1 \leq i \leq 4$,   intersect only in above three half-lines $$D_{2,1} \cap D_{2,2}, ~~D_{2,2} \cap D_{2,3},~~D_{2,3} \cap D_{2,4}. $$ Then $D_2=\cup^4_{i=1} D_{2,i}$,  is a disk whose  boundary consists of three arcs  $A^{-1}([p_{BA^{-1}} q_{\infty}])$, $B^{-1}([p_{BA},p_{BA^{-1}}])$  and $A([p_{BA}, q_{\infty}])$.
\end{lem}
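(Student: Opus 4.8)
The plan is to exhibit $D_2$ as a chain of four embedded ruled disks, glued successively along single ruling segments, so that the union is manifestly a topological disk, and then to identify its boundary directly from the parametrizations \eqref{D21}--\eqref{D24}.

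First I would show that each $D_{2,i}$ is an embedded disk. Each is the image of the map $(b,s)\mapsto\gamma_i(b)+s\,v_i(b)$, $s\ge 0$, where $\gamma_i$ is an arc of the base curve $B^{-1}[p_{BA},p_{BA^{-1}}]$ on the parameter interval $[-2,0]$, $[-3,-2]$, $[-3,-2]$, $[-2,0]$ respectively and $v_i(b)\ne 0$ is the ruling direction occurring in \eqref{D21}--\eqref{D24}. Since the rulings are affine rays, injectivity of this map amounts to showing that for $b\ne b'$ the rays over $b$ and $b'$ do not meet; this is a finite check, done by first comparing the vertical projections to $\mathbb{C}$ (which are explicit rays and segments) and, on the small parameter set where those projected rays cross, verifying that the $t$-coordinates differ. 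As every ruling of $D_{2,i}$ escapes to $q_{\infty}$ in $\partial\hc$, the set $D_{2,i}\cup\{q_{\infty}\}$ is then the continuous injective (hence homeomorphic) image of the compact cone $\gamma_i\times[0,\infty]/(\gamma_i\times\{\infty\})$, i.e.\ a closed disk whose boundary is $\gamma_i$ together with its two extreme rulings.

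Next I would record the pairwise intersections. For the three consecutive pairs, the parameter substitutions noted just before the statement already show that $D_{2,1}$ and $D_{2,2}$ share the ruling over $b=-2$, $D_{2,2}$ and $D_{2,3}$ the ruling over $b=-3$, and $D_{2,3}$ and $D_{2,4}$ the ruling over $b=-2$; one then checks by the same projection-and-$t$-coordinate argument that there is no further intersection. For the non-consecutive pairs $D_{2,1}\cap D_{2,3}$, $D_{2,1}\cap D_{2,4}$, $D_{2,2}\cap D_{2,4}$, one shows the intersection is empty, again by comparing vertical projections (disjoint except possibly along one ray, where the $t$-values are then seen to disagree). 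Granting this, $D_2$ is assembled from four disks in a chain, each consecutive pair glued along a single boundary arc and with no other identification; since gluing two disks along a boundary arc yields a disk, $D_2$ is a disk. Its boundary consists of exactly the parts of the $\partial D_{2,i}$ that are not interior gluing arcs: the four base arcs $\gamma_1,\dots,\gamma_4$ join up (at $b=-2$, $b=-3$, $b=-2$) into the full arc $B^{-1}[p_{BA},p_{BA^{-1}}]$, the ruling over $b=-3$ lies in the interior of $D_2$ apart from its two endpoints (being common to $D_{2,2}$ and $D_{2,3}$), and the only free extreme rulings are the ruling of $D_{2,1}$ over $b=0$, equal by \eqref{D21} to $A^{-1}([p_{BA^{-1}},q_{\infty}])$, and the ruling of $D_{2,4}$ over $b=0$, equal by \eqref{D24} to $A([p_{BA},q_{\infty}])$. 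Hence $\partial D_2=A^{-1}([p_{BA^{-1}},q_{\infty}])\cup B^{-1}[p_{BA},p_{BA^{-1}}]\cup A([p_{BA},q_{\infty}])$, as claimed.

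The main obstacle is the disjointness and intersection bookkeeping in the first two steps: confirming that distinct rulings within one piece, and rulings from different pieces, meet only where stated. Because all rulings are affine rays, the vertical projection reduces each such assertion to an elementary planar computation plus a comparison of $t$-coordinates, but the coefficients in \eqref{D21}--\eqref{D24} are cumbersome and there are many cases, so this part is long albeit routine.
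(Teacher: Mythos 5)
Your proposal is correct and follows essentially the same route as the paper: the authors simply state that the lemma "can be checked case-by-case from the equations of all the sub-disks" and omit the details, and your outline (embedded ruled pieces, pairwise intersection bookkeeping via vertical projections and $t$-coordinates, gluing along the shared rulings, and reading off the boundary from the $b=0$ and base-curve strata) is exactly the case-by-case verification they have in mind.
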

\begin{proof}From the  equations of all the sub-disks, Lemma \ref{D2for3pp} can be  checked case-by-case,  we omit the routine details.
\end{proof}

\subsubsection{The third disk $D_3$} \label{3ppD3}
Thirdly, we take a plane $\mathcal{P}_3$ in the  Heisenberg group  with $y=\frac{-1}{\sqrt{3}}$, note that $\mathcal{P}_3$ passes through $p_{BA^{-1}}$ and  $p_{AB}$  (and in fact $\mathcal{P}_3$ also passes through $p_{BA}$ and   $p_{B^{-1}A}$, but we do not use this fact). From  (\ref{3ppIplus}) and (\ref{3ppIminus}), we know there are four triple intersection points of $\mathcal{P}_3$, $I^{+}_{0}$ and $I^{-}_{0}$, which are
$$\left[0, \frac{-1}{\sqrt{3}}, \pm\frac{\sqrt{5}}{\sqrt{3}}\right], $$
$$\left[\sqrt{\frac{-3+\sqrt{24}}{3}}, \frac{-1}{\sqrt{3}}, -\frac{2}{3}\sqrt{-3+\sqrt{24}}\right]$$  and
$$\left[-\sqrt{\frac{-3+\sqrt{24}}{3}}, \frac{-1}{\sqrt{3}}, \frac{2}{3}\sqrt{-3+\sqrt{24}}\right]$$
in   Heisenberg coordinates.
We also note that  the last one is just $B(u)$ we have defined.

Note that the intersection  $\mathcal{P}_3 \cap I^{+}_{0}$
is the set $\left\{\left[x, \frac{-1}{\sqrt{3}},t\right]\right\}$  in   Heisenberg coordinates
with the equation
$$x^4+\frac{2x^2}{3}+t^2=\frac{5}{3}.$$
Let $[B(u),p_{AB}]$ be the arc in $\mathcal{P}_3 \cap I^{+}_{0}$, such that $x \in \left[-1, -\sqrt{\frac{-3+\sqrt{24}}{3}}\right]$
and $t \geq 0$.

The intersection $\mathcal{P}_3 \cap I^{-}_{0}$
is the set $\left\{\left[x, \frac{-1}{\sqrt{3}},t\right]\right\}$  in   Heisenberg coordinates
with the equation
$$x^4+\frac{2x^2}{3}+\left(t+\frac{4x}{\sqrt{3}}\right)^2=\frac{5}{3}.$$
Let  $[B(u), p_{BA^{-1}}]$ be the arc in $\mathcal{P}_3 \cap I^{-}_{0}$, such that $x \in \left[-1, -\sqrt{\frac{-3+\sqrt{24}}{3}}\right]$ and $t=-\sqrt{-x^4-\frac{2x^2}{3}+\frac{5}{3}}-\frac{4x}{3}$.
We note that $\sqrt{\frac{-3+\sqrt{24}}{3}}$  is $0.7956086739$ numerically.

The intersection $\mathcal{P}_3 \cap E_{2,1}$  is
$$\left[-1,  -1/\sqrt{3},s\right]$$ in   Heisenberg coordinates with $s \leq 1$,
the intersection $\mathcal{P}_3 \cap E_{2,2}$  is
$$\left[-1,  -1/\sqrt{3},s\right]$$
in   Heisenberg coordinates with $s \geq 1$.

We take $[p_{BA^{-1}}, p_{AB}]$ to be the arc
$$\left[-1,  -1/\sqrt{3},s\right]$$ in   Heisenberg coordinates with $s \in [0, 4/\sqrt{3}]$. So the half-part of  $[p_{BA^{-1}}, p_{AB}]$ lies in $\mathcal{P}_3 \cap E_{2,1}$, and the other half-part of  $[p_{BA^{-1}}, p_{AB}]$ lies in $\mathcal{P}_3 \cap E_{2,2}$.

From the  equations above, it is easy to see the three arcs  $[B(u), p_{BA^{-1}}]$, $[B(u), p_{AB}]$ and $[p_{BA^{-1}}, p_{AB}]$ have disjoint interior, so they glue together to get a circle in  $\mathcal{P}_3$. Let $D_3$  denote the disk bounded by this circle in $\mathcal{P}_3$.

\begin{lem} \label{disjointcuttingdisks3pp}  The point $p_{BA^{-1}}$ is the  common vertex of $D_1$ and $D_3$, the point $p_{AB}$ is the  common vertex of $D_2$ and $D_3$,   and the point $q_{\infty}$ is the  common vertex of $D_1$ and $D_2$.
Moreover, the  interiors of disks $D_{j}$ for $1 \leq j \leq 3$   are disjoint from each other.
\end{lem}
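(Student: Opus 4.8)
The plan is to handle the two assertions in turn. For the common vertices I would read off the three bounding arcs of each disk from Subsections~\ref{3ppD1}, \ref{3ppD2} and~\ref{3ppD3}: $\partial D_1$ is the union of $[p_{BA},q_\infty]$, $[p_{BA^{-1}},q_\infty]$ and $[p_{BA^{-1}},p_{BA}]$, so the vertices of $D_1$ are $p_{BA}$, $p_{BA^{-1}}$, $q_\infty$; $\partial D_2$ is the union of $A^{-1}([p_{BA^{-1}},q_\infty])=[p_{B^{-1}A},q_\infty]$, $B^{-1}[p_{BA},p_{BA^{-1}}]$ and $A([p_{BA},q_\infty])=[p_{AB},q_\infty]$, so the vertices of $D_2$ are $p_{B^{-1}A}$, $p_{AB}$, $q_\infty$; and $\partial D_3$ is the union of $[B(u),p_{BA^{-1}}]$, $[B(u),p_{AB}]$ and $[p_{BA^{-1}},p_{AB}]$, so the vertices of $D_3$ are $B(u)$, $p_{BA^{-1}}$, $p_{AB}$. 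Intersecting the three vertex sets pairwise gives precisely $\{p_{BA^{-1}}\}$, $\{p_{AB}\}$ and $\{q_\infty\}$, which is the first part of the statement.

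For the disjointness of interiors the point is that each disk lies in a carrier surface with a tractable equation: from the defining formulas, $D_1\subset\mathcal P_1=\{\,t=-4x/\sqrt3,\ y\le-1/\sqrt3\,\}$ and $D_3\subset\mathcal P_3=\{\,y=-1/\sqrt3\,\}$ are (half-)planes in the Heisenberg group, while $D_2=\bigcup_{i=1}^4 D_{2,i}$ is a union of four explicitly parametrized ruled pieces. For $D_1$ and $D_3$: setting $y=-1/\sqrt3$ in the parametrization of $\mathcal P_1$ forces $b=0$, so $\mathcal P_1\cap\mathcal P_3$ is a single line; since each of the three arcs bounding $D_1$ lies in $\{y\le-1/\sqrt3\}$ and meets $\{y=-1/\sqrt3\}$ only at $p_{BA}$ and $p_{BA^{-1}}$, the interior of $D_1$ lies in $\{y<-1/\sqrt3\}$ and hence is disjoint from $\mathcal P_3\supset D_3$. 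For $D_1$ and $D_2$: on $D_{2,2}$ and $D_{2,3}$ the $y$-coordinate is strictly positive (a sign analysis of the two defining equations on $b\in[-3,-2]$, $s\ge0$), so these miss the half-space $\{y\le-1/\sqrt3\}$ containing $\mathcal P_1$; on $D_{2,1}$ and $D_{2,4}$, substituting the parametrization into $t=-4x/\sqrt3$ collapses to a relation between $s$ and $b$ of the form $\pm s(12+2b)=c\sqrt{4-(b+1)^2}$ with $c>0$, whose two sides have strictly opposite signs on $b\in[-2,0]$, $s\ge0$. Thus $D_2\cap\mathcal P_1=\emptyset$ inside the Heisenberg group, and since the unbounded edges of $D_1$ and of $D_2$ lie respectively in $E_1$ and in $E_2=A(E_1)$, which are disjoint by Proposition~\ref{prop:nonintersection}, the disks meet only at $q_\infty$.

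The pair $D_2$, $D_3$ I expect to be the real obstacle, because there the carrier surfaces genuinely intersect along curves rather than being disjoint. Once more $D_{2,2}$ and $D_{2,3}$ have $y>0$ and miss $\mathcal P_3$, but on $D_{2,1}$ and $D_{2,4}$ the locus $\{y=-1/\sqrt3\}$ is an honest sub-arc (parametrized by $b$ near $0$), and ruling out an intersection with $D_3$ itself demands a quantitative separation. The route I would take is to bound the $t$- and $x$-coordinates along the three arcs $[B(u),p_{BA^{-1}}]\subset\mathcal P_3\cap I_0^-$, $[B(u),p_{AB}]\subset\mathcal P_3\cap I_0^+$ and $[p_{BA^{-1}},p_{AB}]\subset\mathcal P_3\cap(E_{2,1}\cup E_{2,2})$ that bound $D_3$, using the equations $x^4+\tfrac23x^2+(t+\tfrac{4x}{\sqrt3})^2=\tfrac53$ and $x^4+\tfrac23x^2+t^2=\tfrac53$; this confines $D_3$ to a bounded region of $\mathcal P_3$, after which one checks that the two sub-arcs lying in $\mathcal P_3$ leave that region except at their shared vertex ($p_{B^{-1}A}$ on $D_{2,1}$, $p_{AB}$ on $D_{2,4}$). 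Granting that the interiors of the three disks are pairwise disjoint, the remaining stray-vertex checks, namely $p_{BA}\notin D_3$ and $p_{B^{-1}A}\notin D_3$ (both immediate since these points have $x$-coordinate $1$ while every point of $D_3$ has $x\in[-1,-\sqrt{(\sqrt{24}-3)/3}]$), upgrade the vertex intersections to full intersections and finish the lemma. All the inequalities involved are finite sign checks of the same flavour as those already performed for Proposition~\ref{prop:nonintersection} and in Subsection~\ref{subsection:3ppfundamentaldomain}, so I would simply carry them out.
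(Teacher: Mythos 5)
Your proposal is correct and follows exactly the route the paper itself indicates (and then omits as ``routine''): a case-by-case verification from the explicit defining equations of $\mathcal{P}_1$, $\mathcal{P}_3$, the four ruled pieces of $D_2$, and the bounding arcs. The sign checks you outline do close up --- e.g.\ on $D_{2,1}$ the condition $t=-4x/\sqrt{3}$ reduces to $s(12+2b)=-4\sqrt{4-(b+1)^2}$, with nonnegative left side and strictly negative right side for $b\in[-2,0]$, $s\ge 0$, and the trace of $D_{2,4}$ on $\mathcal{P}_3$ has $t\le 0$ with equality only at $p_{AB}$ while $D_3$ lies in $\{t\ge 0\}$ touching $\{t=0\}$ only at $p_{AB}$ --- so your argument is complete modulo the finite arithmetic.
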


\begin{lem} \label{disjointcuttingdisksisometricsphere3pp}
	The interiors of  disks $D_{j}$ for $1 \leq j \leq 3$ are disjoint from $E_1$ and $E_2$, they are also disjoint from the spinal spheres of $B$ and $B^{-1}$.
\end{lem}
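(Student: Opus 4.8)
The plan is to handle $D_1$, $D_2$, $D_3$ in turn. Each $D_j$ lies in an explicit surface — the half-plane $\mathcal{P}_1$ of (\ref{3ppP1}) for $j=1$, the ruled surface $\bigcup_{i=1}^4 D_{2,i}$ of (\ref{D21})--(\ref{D24}) for $j=2$, and the plane $\mathcal{P}_3=\{y=-1/\sqrt3\}$ for $j=3$ — and, by construction, $\partial D_j$ consists of three arcs lying in $E_1\cup E_2\cup I_0^+\cup I_0^-$, where $I_0^+$ and $I_0^-$ denote the spinal spheres of $B$ and $B^{-1}$. So for each $j$ it suffices to substitute the parameterization of (each piece of) $D_j$ into the defining equations (\ref{E11})--(\ref{E14}), (\ref{E21})--(\ref{E24}) of $E_1,E_2$ and (\ref{3ppIplus}), (\ref{3ppIminus}) with $k=0$ of $I_0^\pm$, and to check that every solution lies on one of those three arcs. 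A useful first filter is the vertical projection: the half-planes $E_{k,j}$ project onto the lines and sectors of (\ref{3ppprocE131}), (\ref{3ppprocE132}), (\ref{3ppprocE11}), (\ref{3ppprocE12}), and $I_0^\pm$ onto the disks of (\ref{proj3ppIplus}), (\ref{proj3ppIminus}), so any pair with disjoint projections needs no further argument.

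For $D_1$ the needed statements were essentially already obtained when $D_1$ was defined in Subsection \ref{3ppD1}: there we checked that $\mathcal{P}_1$ is disjoint from every $E_{1,j}$ and $E_{2,j}$ apart from $\mathcal{P}_1\cap E_{1,1}=[p_{BA},q_\infty]$ and $\mathcal{P}_1\cap E_{2,2}=[p_{BA^{-1}},q_\infty]$, and that $\mathcal{P}_1$ meets $I_0^+$ and $I_0^-$ only in the arc $[p_{BA^{-1}},p_{BA}]$. These three arcs are precisely $\partial D_1$, so $\mathrm{int}(D_1)$ meets none of $E_1,E_2,I_0^+,I_0^-$.

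For $D_3\subset\mathcal{P}_3$ the computation is short. Putting $y=-1/\sqrt3$ into (\ref{E11})--(\ref{E14}) forces the first Heisenberg coordinate to equal $1$, so $\mathcal{P}_3\cap E_1$ lies on the line $\{x=1\}$; likewise $\mathcal{P}_3\cap E_2$ lies on $\{x=-1\}$. Since every point of $D_3$ has $-1\le x\le-\sqrt{(-3+\sqrt{24})/3}<0$ and $\mathrm{int}(D_3)$ has $x>-1$, neither line meets $\mathrm{int}(D_3)$. The traces $\mathcal{P}_3\cap I_0^+$ and $\mathcal{P}_3\cap I_0^-$ are the ovals $x^4+\frac{2}{3}x^2+t^2=\frac{5}{3}$ and $x^4+\frac{2}{3}x^2+(t+\frac{4x}{\sqrt3})^2=\frac{5}{3}$, each bounding a convex disk since the defining functions are convex. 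One checks that each of the three boundary arcs of $D_3$ lies in the common exterior of these oval-disks: on the segment $\{x=-1\}$ both quantities equal $\frac{5}{3}$ plus a square; on the arc on the first oval the quantity $x^4+\frac{2}{3}x^2+(t+\frac{4x}{\sqrt3})^2-\frac{5}{3}$ reduces to $\frac{16}{3}x^2+\frac{8}{\sqrt3}xt$, which is nonnegative precisely because $x^2\ge(-3+\sqrt{24})/3$ along that arc, and symmetrically for the third arc. As $D_3$ is the curvilinear triangle bounded by one arc of each oval together with the segment, a routine separation argument then puts $D_3$ in the common exterior with contact only along those arcs; hence $\mathrm{int}(D_3)$ avoids $I_0^+$, $I_0^-$, and also $E_1$ and $E_2$.

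The case $D_2=\bigcup_{i=1}^4 D_{2,i}$ carries the bulk of the proof, and I expect it to be the main obstacle. For each ruled piece $D_{2,i}$, parameterized by $(b,s)$ with $s\ge0$ and $b\in[-2,0]$ (for $i=1,4$) or $b\in[-3,-2]$ (for $i=2,3$), and for each of the twelve surfaces $E_{1,j}$, $E_{2,j}$ ($1\le j\le4$), $I_0^+$, $I_0^-$, I would substitute (\ref{D21})--(\ref{D24}) into the surface's equation. Many pairs are eliminated immediately by the projection filter. For the survivors the substitution yields a polynomial equation in $(b,s)$ which, on the relevant parameter rectangle, either has no solution, or forces $s=0$ (a point of the base curve $B^{-1}[p_{BA^{-1}},p_{BA}]$), or forces $b=0$ (for $i=1,4$, a point of $[p_{AB},q_\infty]$ or $[p_{B^{-1}A},q_\infty]$) — always a point of $\partial D_2$, and in particular never an interior point of one of the three common half-lines of Lemma \ref{D2for3pp}. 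Since $\mathrm{int}(D_2)$ is the union of the $\mathrm{int}(D_{2,i})$ together with those common half-lines (whose endpoints lie on the base curve), combining the four computations gives $\mathrm{int}(D_2)\cap(E_1\cup E_2\cup I_0^+\cup I_0^-)=\emptyset$. The delicate part is the sign analysis of the quartics arising from $D_{2,2}$ and $D_{2,3}$, whose coefficients involve $\sqrt3$ and whose $b$-interval is $[-3,-2]$; these are treated just as the estimates for $E_{1,2}\cap I_0^-$ and $E_{1,3}\cap I_{-1}^+$ in Subsection \ref{subsection:3ppfundamentaldomain}, by minimizing over $s\ge0$ and then locating the critical points of the resulting one-variable polynomial.
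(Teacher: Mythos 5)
Your proposal is correct and takes essentially the same approach as the paper: the paper's entire proof of this lemma is the sentence ``We can prove Lemmas \ref{disjointcuttingdisks3pp} and \ref{disjointcuttingdisksisometricsphere3pp} from the equations of all the surfaces. We omit the routine details here,'' i.e.\ exactly the case-by-case substitution of the parameterizations (\ref{3ppP1}), (\ref{D21})--(\ref{D24}), $\mathcal{P}_3$ into (\ref{E11})--(\ref{E24}), (\ref{3ppIplus}), (\ref{3ppIminus}) that you describe, with the vertical-projection filter doing most of the eliminations. The details you do supply (reusing the Subsection \ref{3ppD1} computations for $D_1$, and the reduction of the $I_0^-$-defining quantity on $\mathcal{P}_3\cap I_0^+$ to $\frac{16}{3}x^2+\frac{8}{\sqrt3}xt$ for $D_3$) check out and in fact go beyond what the paper records.
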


We can prove Lemmas \ref{disjointcuttingdisks3pp} and \ref{disjointcuttingdisksisometricsphere3pp} from the  equations of all the surfaces.  We omit the routine details here.

\subsection{Three more arcs in planes $E_1$, $E_2$, and in the spinal spheres of $B$, $B^{-1}$.}
\label{subsection:threemorearcs}
We still need more arcs in  the study of the 3-manifold at infinity of  the group $\Delta_{3,\infty,\infty;\infty}$.

Let $[p_{BA}, p_{B^{-1}A}]$ be the arc which is the  $A^{-1}$-image of the arc $[p_{BA^{-1}}, p_{AB}]$, so its equation is
$$\left[-1, -1/\sqrt{3},s- 4/\sqrt{3}\right]$$  in   Heisenberg coordinates with $s \in [0, 4/\sqrt{3}]$.
See the arc in Figure 	\ref{figure:3ppmorearcs} connecting $p_{BA}$ to $p_{BA^{-1}}$.

We define
$$\alpha(s):=-3s^4-2s^2+5, s \in \left[-1, -\sqrt{-1+\sqrt{8/3}}\right].$$
Let $[p_{B^{-1}A},u]$ be the arc which is the  $B^{-1}$-image of the arc $[p_{BA^{-1}}, B(u)]$, so its equation is
$$\left[\frac{-3s^3-s-\sqrt{\alpha(s)}}{4},\frac{-\sqrt{3}s^2-1/\sqrt{3}+s\sqrt{3\alpha(s)}}{4}, \frac{\sqrt{3\alpha(s)}}{3}\right]$$
in   Heisenberg coordinates. It is the arc in Figure 	\ref{figure:3ppmorearcs} connecting the points $p_{B^{-1}A}$ and $u$.

Let $[p_{BA},B^{2}(u)]$ be the arc which is the  $B$-image of the arc $[p_{AB}, B(u)]$, so its equation is
$$\left[-\frac{3s^3+s+\sqrt{\alpha(s)}}{4},\frac{3\sqrt{3}s^2-7\sqrt{3}-3\sqrt{3}s\sqrt{\alpha(s)}}{12},\frac{3s^3+s
}{\sqrt{3}}\right]
,$$
in   Heisenberg coordinates.
It is the arc in Figure 	\ref{figure:3ppmorearcs} connecting the points $p_{BA}$ and $B^{2}(u)$.

\begin{figure}
\begin{center}
\begin{tikzpicture}
\node at (0,0) {\includegraphics[width=7cm,height=7cm]{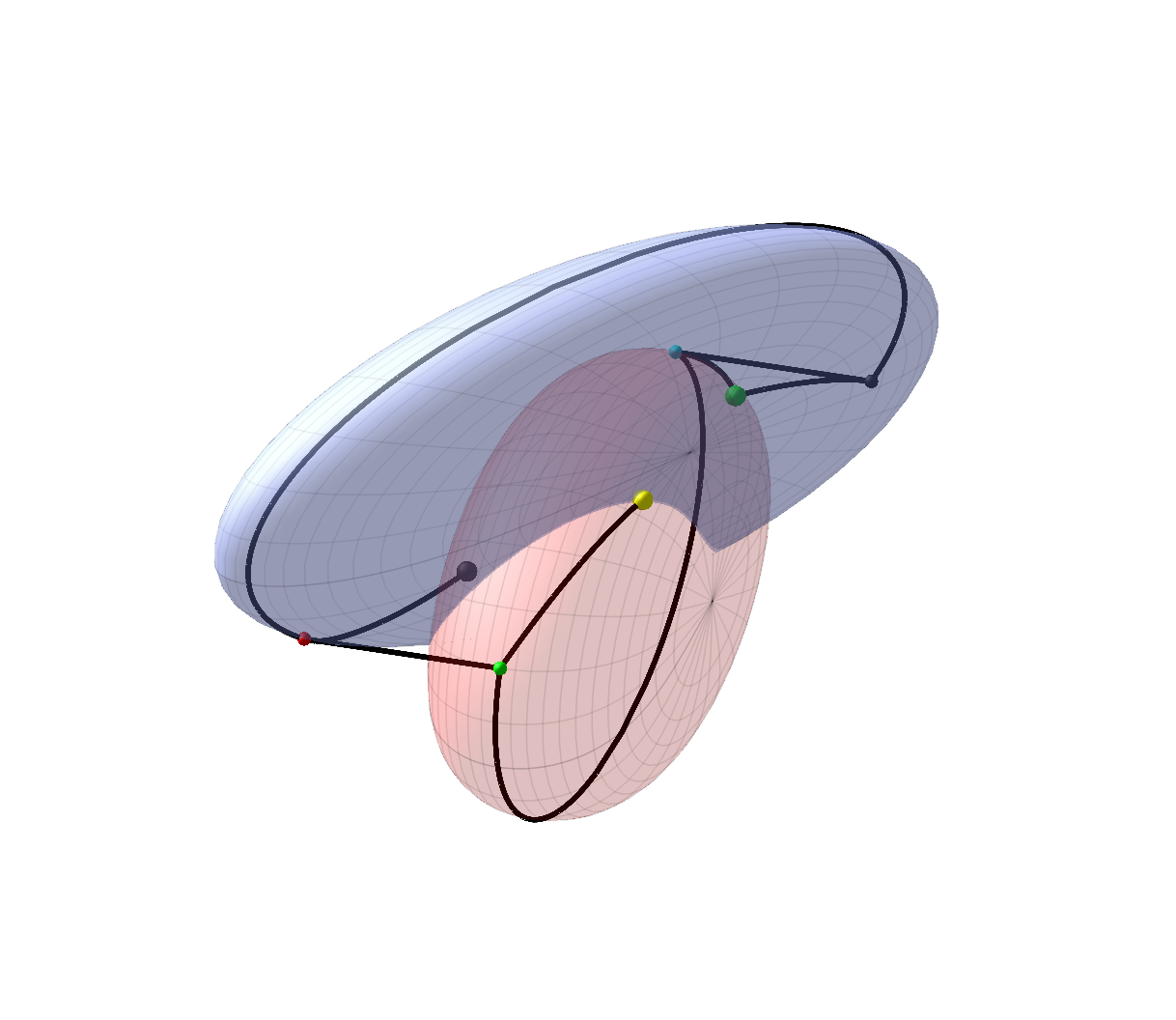}};
\coordinate [label=left:$ \scriptscriptstyle{I_{0}^{-}}$] (S) at (1.4,2.2);
\coordinate [label=left:$ \scriptscriptstyle{I_{0}^{+}}$] (S) at (0.3,-2.4);
\coordinate [label=left:$\scriptscriptstyle{u}$] (S) at (0.6,-0.1);
\coordinate [label=left:$\scriptscriptstyle{p_{B^{-1}A}}$] (S) at (0.5,-1.1);
\coordinate [label=left:$\scriptscriptstyle{p_{BA}}$] (S) at (-1.5,-1.1);
\coordinate [label=left:$\scriptscriptstyle{B^{-1}(u)}$] (S) at (-0.3,-0.2);
\coordinate [label=left:$\scriptscriptstyle{B(u)}$] (S) at (0.85,1.3);
\coordinate [label=left:$\scriptscriptstyle{p_{BA^{-1}}}$] (S) at (2.8,0.9);
\coordinate [label=left:$\scriptscriptstyle{p_{AB}}$] (S) at (1.4,0.6);
\end{tikzpicture}
\end{center}
 \caption{A realistic view  of all the arcs we need in the study of the 3-manifold at infinity of  the group $\Delta_{3,\infty,\infty;\infty}$.}
	\label{figure:3ppmorearcs}
\end{figure}

We take $[u,B(u)]$ as the arc in the intersection circle of the spinal spheres of $B$ and $B^{-1}$ with end points   $u$ and $B(u)$, but it does not contain $B^2(u)$; We take $[B(u), B^2(u)]$ as the arc in the intersection circle of the spinal spheres of $B$ and $B^{-1}$ with end points  $B(u)$ and $B^2(u)$, but it does not contain $u$; We take $[B^2(u), u]$ as the arc in the intersection circle of the spinal spheres of $B$ and $B^{-1}$ with end points   $u$ and $B^2(u)$, but it does not contain $B(u)$.

In total, we have  eleven arcs $[u,B(u)]$,  $[B(u), B^2(u)]$, $[B^2(u),u]$,  $[p_{AB},p_{BA^{-1}}]$, $[p_{BA},p_{B^{-1}A}]$,
$[B(u), p_{BA^{-1}}]$, $[B(u), p_{AB}]$,  $[B^2(u), p_{BA}]$,   $[u, p_{B^{-1}A}]$,
$[p_{BA}, p_{BA^{-1}}]$  and
$[p_{B^{-1}A}, p_{AB}]$. These arcs intersect on their common end vertices, that is, $u$, $B(u)$,  $B^2(u)$,  $p_{AB}$, $p_{A^{-1}B}$, $p_{B^{-1}A}$,
and $p_{BA^{-1}}$.

\begin{lem} \label{disjointarc3pp}
	The eleven arcs above only intersect in their common end vertices.
\end{lem}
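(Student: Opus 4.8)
The plan is to reduce the lemma to a bounded list of pairwise checks, organized by the surface containing each arc. Using the constructions of Subsection~\ref{subsection:3pphandlebody}, I would first record which surface contains each of the eleven arcs: the three arcs $[u,B(u)]$, $[B(u),B^{2}(u)]$, $[B^{2}(u),u]$ all lie on the circle $I(B)\cap I(B^{-1})$, on which $B$ acts as a $\tfrac{2\pi}{3}$-rotation; the arcs $[B(u),p_{AB}]$ (inside the plane $\mathcal{P}_{3}$), $[u,p_{B^{-1}A}]$ and $[p_{B^{-1}A},p_{AB}]$ all lie on the spinal sphere $I(B)=I^{+}_{0}$, the latter two being $B^{-1}$-images of arcs on $I(B^{-1})$; the arcs $[B(u),p_{BA^{-1}}]$ (inside $\mathcal{P}_{3}$), $[B^{2}(u),p_{BA}]$ (the $B$-image of an arc on $I(B)$) and $[p_{BA},p_{BA^{-1}}]$ (inside $\mathcal{P}_{1}$) all lie on $I(B^{-1})=I^{-}_{0}$; the arc $[p_{AB},p_{BA^{-1}}]$ lies on $E_{2}$ and also on $\mathcal{P}_{3}$; and $[p_{BA},p_{B^{-1}A}]=A^{-1}([p_{AB},p_{BA^{-1}}])$ lies on $E_{1}$. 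Note that the three circle arcs lie simultaneously on both spinal spheres.

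Second, I would use the symmetries to cut down the cases: $B$ cyclically permutes the three circle arcs; $B$ sends $[B(u),p_{AB}]\to[B^{2}(u),p_{BA}]$ and, on the boundary of $D_{2}$, $B^{-1}$ sends $[p_{BA},p_{BA^{-1}}]\to[p_{B^{-1}A},p_{AB}]$; $B^{-1}$ sends $[B(u),p_{BA^{-1}}]\to[u,p_{B^{-1}A}]$; and $A^{-1}$ sends $[p_{AB},p_{BA^{-1}}]\to[p_{BA},p_{B^{-1}A}]$. So, up to applying powers of $B$ and of $A$, only a handful of genuinely distinct arc pairs remain to be inspected.

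Third, I would dispose of the remaining pairs as follows. When two arcs lie on a common surface — the circle $I(B)\cap I(B^{-1})$, one of the two spinal spheres, or one of the planes $\mathcal{P}_{1},\mathcal{P}_{3},E_{1},E_{2}$ — their intersection is computed inside that surface from the explicit parametrizations together with Equations~(\ref{3ppIplus}) and (\ref{3ppIminus}); on the circle this is a one-dimensional check among three arcs of pairwise disjoint interior, while on a spinal sphere or a plane it reduces to solving a small polynomial system, exactly as in Proposition~\ref{prop:plane} and Lemmas~\ref{D2for3pp} and \ref{disjointcuttingdisks3pp}. When two arcs lie on distinct surfaces I would first invoke the disjointness already established — Propositions~\ref{prop:pair-disjoint3pp}, \ref{prop:tangent3pp}, \ref{prop:intersection3pp} for the family of spinal spheres, and Lemmas~\ref{disjointcuttingdisks3pp} and \ref{disjointcuttingdisksisometricsphere3pp} for the cutting disks and the planes $E_{1},E_{2}$ — which already forces most such pairs to meet, if at all, only at the tangency points $p_{AB},p_{BA},p_{B^{-1}A},p_{BA^{-1}}$; the residual pairs are handled by substituting one parametrization into the defining equation of the other surface and checking that the resulting system has no solution away from the common vertices.

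The main obstacle is organizational rather than conceptual: the arcs sharing an endpoint must be shown to separate at that endpoint and never to meet again, and since the ambient surfaces are Cygan spheres, which are not totally geodesic, this cannot be read off from convexity and must be verified from the explicit equations. The worst vertex is $B(u)$, where four arcs meet — $[u,B(u)]$ and $[B(u),B^{2}(u)]$ on the circle, and $[B(u),p_{AB}]$ and $[B(u),p_{BA^{-1}}]$ both inside $\mathcal{P}_{3}$ — and one must check that their initial directions are distinct and that no pair re-intersects. Carrying this out at each of the seven vertices $u,B(u),B^{2}(u),p_{AB},p_{BA},p_{B^{-1}A},p_{BA^{-1}}$, together with the finitely many cross-surface checks above, establishes the lemma; as with Lemmas~\ref{D2for3pp}--\ref{disjointcuttingdisksisometricsphere3pp}, the individual verifications are routine and most of the arithmetic would be omitted.
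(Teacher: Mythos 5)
Your proposal is correct and follows essentially the same route as the paper: the paper's entire proof is the one-line remark that the lemma ``can be checked case-by-case from the equations of all the arcs,'' with all details omitted, and your plan (sorting the arcs by ambient surface, using the $A$- and $B$-symmetries to reduce the number of pairs, and then verifying each remaining pair from the explicit parametrizations) is a sound organization of exactly that check. Your surface assignments and symmetry identifications for the eleven arcs all match the constructions in Subsections~\ref{3ppD1}--\ref{3ppD3} and the paragraph preceding the lemma, so nothing further is needed.
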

\begin{proof} This lemma can be checked case-by-case  from the  equations of all the arcs, we omit the details.
\end{proof}

We denote  the complement of the two balls bounded by  the spinal  spheres of $B$ and $B^{-1}$ in $U$ by $H$.
 $H$ is just part of  the ideal boundary of the Ford domain, the  boundary of $H$ is  consists of  $E_1$ and  $E_2$, also parts of  the spinal  spheres of $B$ and $B^{-1}$, see Figure \ref{figure:3ppdisks} for an abstract picture of this,
  and the reader should compare with Figures 	\ref{figure:fundamentaldomain3pp} and \ref{figure:3ppabstract}. $H$ is a fundamental domain for the boundary 3-manifold of $\Gamma_1$.
  We will show  $H$ is a  genus 3 handlebody.

\begin{figure}
\begin{tikzpicture}[style=thick,scale=1]

\draw (-4,4) -- (4,4);
\draw (-4,-4) -- (4,-4);

\draw (-2,4) arc (90:270: 0.8 and 4);
\draw[dashed] (-2,-4) arc (-90:90: 0.8 and 4);
\draw[dashed] (2,4) arc (90:270: 0.8 and 4);
\draw (2,-4) arc (-90:90: 0.8 and 4);
\draw[fill] (-2,4) circle [radius=0.08]; \node [above] at (-2,4.1){$q_{\infty}$};
\draw[fill] (-2,1.732) circle [radius=0.08]; \node [above left] at (-2,1.732){$p_{BA}$};
\draw[fill] (-2,-1.732) circle [radius=0.08];\node [below left] at (-2,-1.732){$p_{B^{-1}A}$};
\draw[fill] (-2,-4) circle [radius=0.08];\node [below] at (-2,-4.1){$q_{\infty}$};

\draw[fill] (2,4) circle [radius=0.08]; \node [above] at (2,4.1){$q_{\infty}$};
\draw[fill] (2,1.732) circle [radius=0.08]; \node [above right] at (2,1.732){$p_{BA^{-1}}$};
\draw[fill] (2,-1.732) circle [radius=0.08];\node [below right] at (2,-1.732){$p_{AB}$};
\draw[fill] (2,-4) circle [radius=0.08];\node [below] at (2,-4.1){$q_{\infty}$};
\node [right] at (-3.5,0){$E_{1}$};
\node [right] at (3,0){$E_{2}$};

\draw[fill] (1,0) circle [radius=0.08];\node [right] at (1,0){$B(u)$};
\draw[fill] (-0.5,0.2598) circle [radius=0.08];\node [above] at (-0.5,0.2598){$B^{-1}(u)$};
\draw[fill] (0,-0.3) circle [radius=0.08];\node [below right] at (0,-0.3){$u$};

\draw[red, line width = 3pt] (1,0) arc (-60:0:2);
\draw[blue,line width = 3pt] (2,1.732) arc (0:180:2);
\draw(-2,1.732) arc (180:240: 2);

\draw[red,line width = 3pt] (1,0) arc (60:0:2);
\draw[green,line width = 3pt] (2,-1.732) arc (0:-180:2);
\draw (-2,-1.732) arc (-180:-240:2);

\draw[blue,line width = 3pt] (-2,1.732) -- (-2,4);
\draw (-2,1.732) -- (-2,-1.732);
\draw[green,line width = 3pt] (-2,-4) -- (-2,-1.732);

\draw[blue,line width = 3pt] (2,4) -- (2,1.732);
\draw[red,line width = 3pt] (2,-1.732) -- (2,1.732);
\draw[green,line width = 3pt] (2,-1.732) -- (2,-4);

 \draw[dashed] (1,0) arc (0:180:1 and 0.3);
\draw(-1,0) arc (180:360:1 and 0.3);

\draw (-2,-1.732) parabola (0,-0.3);
\draw[dashed] (-0.5,0.2598) parabola (-2,1.732);

\end{tikzpicture}
\caption{A  combinatorial  picture of the  cutting disks $D_1$, $D_2$ and $D_3$ of a fundamental domain $H$ for the $\langle A \rangle$-action on the ideal boundary of  the Ford domain of $\Delta_{3,\infty,\infty;\infty}$. $D_1$ is a disk bounded by the blue circle (the disk with three blue thick boundary arcs, with vertices labeled by $q_{\infty}$, $p_{BA}$ and $p_{BA^{-1}}$); $D_2$ is a disk bounded by the green circle  (the disk with three green  thick boundary arcs,  with vertices labeled by $q_{\infty}$, $p_{AB}$ and $p_{B^{-1}A}$); $D_3$ is a disk bounded by the red circle (the disk with three red thick boundary arcs, with vertices labeled by $B(u)$, $p_{AB}$ and $p_{BA^{-1}}$).}
	\label{figure:3ppdisks}
\end{figure}
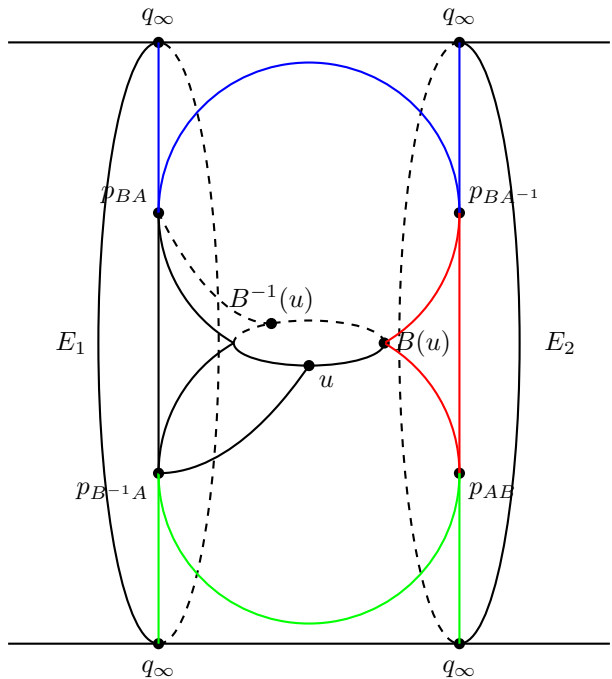

Now we cut $H$ along disks $D_1$, $D_2$ and $D_3$, we get a 3-manifold $N$, we show the boundary of the 3-manifold $N$  in Figure \ref{figure:3pppolytope}.
What is shown in Figure \ref{figure:3pppolytope} is a disk, but we glue the pairs of edges labeled by  $e_{1,i}$ for $i=1,2,3$  matching the labels of ending vertices, we get an annulus,  and then we
glue the pairs of edges labeled by  $e_j$ for $i=2,3$   matching the labels of ending vertices, we get a 2-sphere.

We note that  for each disk $D_{i}$, now there are two copies of it in the boundary of $N$, we denote them by $D_{i,+}$ and $D_{i,-}$.    $D_{i,-}$ is the copy of $D_{i}$  which is closer to us in Figure
\ref{figure:3ppdisks}, and  $D_{i,+}$ is the copy of $D_{i}$  which is further away from us in Figure
\ref{figure:3ppdisks}.

The plane $E_1$ is divided   by three arcs $[q_{\infty}, p_{BA}]$, $[p_{BA},p_{B^{-1}A}]$ and $[p_{B^{-1}A}, q_{\infty}]$ into two disks $E_{1,+}$ and $E_{1,-}$, where $E_{1,-}$  is the one  which is closer to us in Figure
\ref{figure:3ppdisks}, and $E_{1,+}$   is further away from us in Figure
\ref{figure:3ppdisks}.
The plane  $E_2$ is divided by three arcs $[q_{\infty}, p_{BA^{-1}}]$, $[p_{BA^{-1}}, p_{AB}]$ and $[p_{AB}, q_{\infty}]$ into two disks $E_{2,+}$ and $E_{2,-}$, where $E_{2,-}$  is the one  which is closer to us in Figure
\ref{figure:3ppdisks},   and $E_{2,+}$   is further away from us in Figure
\ref{figure:3ppdisks}.
The intersection of  the spinal sphere  of $B$ and the boundary of $H$ is a 2-disk with boundary consisting of $[u, B(u)]$, $[B(u), B^2(u)]$ and $[B^2(u),u]$. Now this 2-disk is divided into two 2-disks by $[B(u), p_{AB}]$, $[p_{B^{-1}A},u]$ and $[p_{B^{-1}A},p_{AB}]$,  one of them is a quadrilateral, we denote it by $B_4$ in Figure \ref{figure:3pppolytope}, another one is a
pentagon and we denote it by $B_5$ in Figure \ref{figure:3pppolytope}.
The intersection of  the spinal sphere  of $B^{-1}$ and the boundary of $H$ is a 2-disk with boundary consisting of $[u, B(u)]$, $[B(u), B^2(u)]$ and $[B^2(u),u]$. This 2-disk is divided into two 2-disks by $[B(u), p_{BA^{-1}}]$, $[p_{BA}, B^2(u)]$ and $[p_{BA},p_{BA^{-1}}]$,  one of them is a quadrilateral, we denote it by $B^{-1}_4$ in Figure \ref{figure:3pppolytope}, another one is a
pentagon and we denote it by $B^{-1}_5$ in Figure \ref{figure:3pppolytope}.

\begin{figure}
\begin{center}
\begin{tikzpicture}[style=thick,scale=1]

\draw (-5,0) -- (5,0);
\draw (-5,-6) -- (5,-6);
\draw (-5,-6) -- (-5,0);
\draw (5,-6) -- (5,0);
\draw[fill] (-5,0) circle [radius=0.06];
\draw[fill] (5,0) circle [radius=0.06];
\draw[fill] (-5,-6) circle [radius=0.06];
\draw[fill] (5,-6) circle [radius=0.06];
\node [ left] at (-5,0) {$B(u)$};
\node [ right] at (5,0) {$B(u)$};
\node [ left] at (-5,-6) {$B(u)$};
\node [ right] at (5,-6) {$B(u)$};

\draw (0,-6) -- (0,0);
\draw (-2.5,-2) -- (5,-2);
\draw (2.5,-4) -- (-5,-4);
\draw (-5,0) -- (0,-4);
\draw (0,-2) -- (5,-6);
\draw (-5,-4) -- (-2.5,-2);
\draw (-5,-4) -- (0,-2);
\draw (0,-4) -- (5,-2);
\draw (2.5,-4) -- (5,-2);
\node [ above] at (0,0) {$B^2(u)$};\draw[fill] (0,0) circle [radius=0.06];
\node [ above right] at (0,-2) {$p_{BA}$};\draw[fill] (0,-2) circle [radius=0.06];
\node [ below right] at (0,-4) {$p_{B^{-1}A}$};\draw[fill] (0,-4) circle [radius=0.06];
\node [ below ] at (0,-6) {$u$};\draw[fill] (0,-6) circle [radius=0.06];
\node [ below ] at (-2.5,-6) {$B^2(u)$};\draw[fill] (-2.5,-6) circle [radius=0.06];
\node [ above] at (2.5,0) {$u$};\draw[fill] (2.5,0) circle [radius=0.06];
\node [ left] at (-5,-4) {$p_{AB}$};\draw[fill] (-5,-4) circle [radius=0.06];
\node [ right] at (5,-2) {$p_{BA^{-1}}$};\draw[fill] (5,-2) circle [radius=0.06];
\node [ left] at (-2.5,-2) {$p_{BA^{-1}}$};\draw[fill] (-2.5,-2) circle [radius=0.06];
\node [ below] at (2.5,-4) {$p_{AB}$};\draw[fill] (2.5,-4) circle [radius=0.06];
\node [ below] at (-5/3,-8/3) {$q_{\infty}$};\draw[fill] (-5/3,-8/3) circle [radius=0.06];
\node [ above] at (5/3,-10/3) {$q_{\infty}$};\draw[fill] (5/3,-10/3) circle [radius=0.06];

\node [ above] at (-2.5,0) {$e_{1,1}$};
\node [ above] at (1.25,0) {$e_{1,2}$};
\node [ above] at (3.75,0) {$e_{1,3}$};
\node [ right] at (-2,-1) {$B^{-1}_{4}$};
\node [ right] at (2,-1) {$B^{-1}_{5}$};
\node [ right] at (5,-1) {$e_{2}$};
\node [ left] at (-5,-2) {$e_{3}$};
\node [ left] at (-5,-5) {$e_{3}$};
\node [ left] at (-3.8,-2) {$D_{3, +}$};
\node [below] at (-2.6,-2.3) {$E_{2, +}$};
\node [ below] at (-1.5,-2) {$D_{1, +}$};
\node [below] at (-0.5,-2.5) {$E_{1, +}$};
\node [below] at (0.5,-2.6) {$E_{1, -}$};
\node [below] at (2,-2.2) {$D_{1, -}$};
\node [right] at (2.1,-10/3)  {$E_{2, -}$};
\node [ below] at (5/3,-3.4) {$D_{2, -}$};
\node [ below] at (-1.9,-3.2){$D_{2, +}$};
\node [ below] at (-2.5,-4.8){$B_{5}$};
\node [ below] at (2.2,-4.8){$B_{4}$};
\node [ below] at (2.2,-6){$e_{1,3}$};
\node [ right] at (3.5,-4){$D_{3, -}$};
\node [ right] at (5,-4){$e_{2}$};
\node [ below] at (-1.25,-6){$e_{1,2}$};
\node [ below] at (-3.75,-6){$e_{1,1}$};

\end{tikzpicture}
\end{center}
\caption{The boundary of the polytope for  $\Delta_{3,\infty,\infty;\infty}$.}
\label{figure:3pppolytope}
\end{figure}

Note that the 2-sphere in Figure  \ref{figure:3pppolytope} lies in $\partial \mathbf{H}^2_{\mathbb C}=\mathbb{S}^3$ and  every piecewise smooth 2-sphere in $\mathbb{S}^3$ separates $\mathbb{S}^3$ into two 3-balls.  $N$ is topologically a 3-ball which is one of them. In fact, $N$ is a polytope with the facet structure in Figure  \ref{figure:3pppolytope}.

Note that
$$\partial_{\infty} D_{\Gamma_1} =\cup^{\infty} _{k= -\infty} A^{k}(H).$$
 The reader can  compare the following proposition with Proposition \ref{prop:unknotted-34p} and  Lemma  \ref{34pcombinatorialmodel} further.

\begin{prop}\label{prop:ideal-boundary}
	$\partial_{\infty} D_{\Gamma_1}$ is an unknotted infinite genus handlebody preserved by the action of $A$.
\end{prop}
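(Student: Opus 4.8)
The plan is to exhibit $\partial_{\infty}D_{\Gamma_1}=\bigcup_{k\in\mathbb Z}A^k(H)$ as an ascending union of unknotted handlebodies of finite, unbounded genus, obtained by stacking the blocks $A^k(H)$ along their planar faces. The first step is to record that the block $H$ is a genus-$3$ handlebody: by Lemmas \ref{disjointcuttingdisks3pp} and \ref{disjointcuttingdisksisometricsphere3pp} the disks $D_1,D_2,D_3$ are properly embedded in $H$ with pairwise disjoint interiors and are disjoint from $E_1\cup E_2$ and from the spinal spheres of $B$ and $B^{-1}$, so cutting $H$ along $D_1\cup D_2\cup D_3$ produces the region $N$. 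As explained above, after the edge identifications of Figure \ref{figure:3pppolytope} the surface $\partial N$ becomes a piecewise smooth $2$-sphere in $\partial\hc=\mathbb S^3$, which bounds a $3$-ball on each side; hence $N$ is a topological $3$-ball. Since each $D_i$ survives in $\partial N$ as two distinct faces $D_{i,+},D_{i,-}$, none of the cuts disconnects, so $H$ is recovered from the connected ball $N$ by attaching three $1$-handles, i.e.\ $H$ is a handlebody of genus $3$ (equivalently, $H$ collapses onto a wedge of three circles).

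Next I would stack the blocks. Recall that $A^k(H)$ and $A^{k+1}(H)$ meet exactly along the common planar face carried by $A^{k+1}(E_1)=A^k(E_2)$, and that by Proposition \ref{prop:nonintersection} the planes $\{A^k(E_1)\}_{k\in\mathbb Z}$ are pairwise disjoint; so the blocks have pairwise disjoint interiors, and only consecutive ones meet, each consecutive pair meeting along a single disk face (here one uses that $E_1$ touches the family $\{I_k^{\pm}\}$ only at the tangency points $p_{BA}$ and $p_{B^{-1}A}$, and $E_2$ only at $p_{AB}$ and $p_{BA^{-1}}$, so the face of a block on such a plane is essentially the whole plane). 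Gluing two handlebodies along a disk in their boundaries is a boundary connected sum, hence genus-additive, so for every $n$ the union $V_n:=\bigcup_{k=-n}^{n}A^k(H)$ is a handlebody and by induction $g(V_n)=3(2n+1)$; moreover $V_n\subset V_{n+1}$ is the inclusion obtained by attaching the two outer blocks $A^{\pm(n+1)}(H)$, which raises the genus by $6$. Therefore $\partial_{\infty}D_{\Gamma_1}=\bigcup_{n\ge 0}V_n$ is an ascending union of handlebodies with $g(V_n)\to\infty$, that is, an infinite-genus handlebody; it is $A$-invariant because $A$ merely shifts the stack, $A(A^k(H))=A^{k+1}(H)$.

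For unknottedness I would use that the complement of $\partial_{\infty}D_{\Gamma_1}$ in $\mathbb S^3$ is the union of the round Cygan balls bounded by the isometric spheres $I_k^{\pm}$, and that these balls never link: by Propositions \ref{prop:pair-disjoint3pp}, \ref{prop:intersection-disk3pp}, \ref{prop:tangent3pp} and \ref{prop:intersection3pp}, the only intersections among them are the disk overlaps $I_k^+\cap I_k^-$ and the point tangencies of the consecutive pairs $\{I_k^+,I_{k\pm1}^-\}$. Shrinking each ball about its center is an ambient isotopy of $\mathbb S^3$ that makes the family pairwise disjoint, and the resulting family of disjoint round balls (accumulating only at $q_{\infty}$) can be isotoped to a standard chain; equivalently, each $V_n$ is, up to isotopy, the complement of a standard finite sub-chain and hence an unknotted handlebody, with the handle attachments $V_n\subset V_{n+1}$ the standard ones. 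An ascending union of unknotted handlebodies via standard inclusions is unknotted, which gives the proposition.

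\textbf{The main obstacle} is the passage through the singular points of the boundary surface $\Sigma$ — the tangency points $p_{AB},p_{BA},p_{BA^{-1}},p_{B^{-1}A}$ and their $A$-orbits, together with $q_{\infty}$ — where one must be certain that no extra identifications or linking are hidden when asserting that consecutive blocks meet along an honest disk and that the complementary balls are unlinked. This is exactly why the precise tangency and disjointness statements of Propositions \ref{prop:tangent3pp}--\ref{prop:intersection3pp}, the explicit equations cutting out $E_1,E_2$, and Lemmas \ref{disjointcuttingdisks3pp} and \ref{disjointcuttingdisksisometricsphere3pp} are needed, and why one first reduces everything to the combinatorial polytope $N$ of Figure \ref{figure:3pppolytope} before drawing topological conclusions; the $\Delta_{3,4,\infty;\infty}$ case later will require an even more careful version of this reduction.
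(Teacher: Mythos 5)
There are two genuine gaps, one in the stacking argument and one in the unknottedness argument.

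First, your claim that consecutive blocks $A^k(H)$ and $A^{k+1}(H)$ ``meet along a single disk face'' so that the union is a boundary connected sum is false, and it is false in exactly the place where the interesting topology lives. The common face is the whole plane $A^k(E_2)$, but that plane passes through the two tangent points $A^k(p_{AB})$ and $A^k(p_{BA^{-1}})$, where the ball $I_k^{+}\cup I_k^{-}$ on one side touches the ball $I_{k+1}^{+}\cup I_{k+1}^{-}$ on the other side (Proposition \ref{prop:tangent3pp} together with the proposition locating the intersection of $E_1$ with the spinal spheres). So the locus along which the two open blocks actually communicate is a twice-punctured plane, not a disk, and the gluing is not genus-additive. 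Concretely, a short Alexander-duality computation with the chain of complementary balls $C_-,\beta_{-n},\dots,\beta_{n},C_+$ (consecutive pieces doubly tangent, $C_-\cap C_+=\{q_\infty\}$) gives $g(V_n)=2n+3$, e.g.\ $g(H\cup A(H))=5$, not $6$; your count $g(V_n)=3(2n+1)$ is wrong. The conclusion ``infinite genus'' survives, but the assertion that each $V_n$ is a handlebody no longer follows from the disk-gluing lemma you invoke, and gluing two handlebodies along a planar surface that is not a disk does not in general produce a handlebody, so this step needs a genuine replacement. The paper avoids the issue entirely by not stacking blocks: it observes that each $s_k^{+}\cup s_k^{-}$ is a topological $2$-sphere (Propositions \ref{prop:intersection-disk3pp} and \ref{prop:intersection3pp}) and that consecutive spheres are tangent at exactly two points, and reads off the handlebody structure of the complement of the resulting chain directly.

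Second, the unknottedness argument does not work as stated. Shrinking each complementary ball about its center is indeed an isotopy of the balls, but it is not an isotopy of the pair $(\mathbb{S}^3,\partial_\infty D_{\Gamma_1})$: once the balls become pairwise disjoint the complement has genus $0$, so all of the handles — and with them all of the potential knotting — have been destroyed, and nothing about the original handlebody can be deduced from the shrunk configuration. The knottedness of a handlebody complementary to a chain of tangent balls is carried precisely by how the tangencies are arranged, which is the information your isotopy discards; the subsequent phrase ``can be isotoped to a standard chain'' is exactly the statement to be proved. The paper supplies the missing input geometrically: the $A$-invariant affine line $\{[x-2i/\sqrt{3},0]:x\in\mathbb{R}\}$ threads through the interiors of all the balls (the segment $\tau$ lies inside $I_0^{-}$ and inside the sphere $s_0^{+}\cup s_0^{-}$, and its $A^k$-translates sweep out the line), so the union of the complementary balls is strung along an unknotted properly embedded line through $q_\infty$, which is what forces the complement to be standardly embedded. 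You would need this, or some equivalent explicit witness, to close the argument.
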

\begin{proof}  By Proposition \ref{prop:intersection3pp}, $s_k^{+}\cup s_k^{-}$ are topological spheres. Moreover, the spheres $s_k^{+}\cup s_k^{-}$ and  $A(s_k^{+}\cup s_k^{-})$ are placed in contact with each other on two points from Proposition \ref{prop:tangent3pp}. From this, we conclude that the union of the sets  $\{A^k(s_k^{+}\cup s_k^{-}): k\in \mathbb{Z}\}$ is an infinite genus handlebody. It is preserved by the cyclic group $\langle  A \rangle$.  See Figure \ref{figure:3ppabstract}.
	
Note that the $A$ preserves the $\mathbb{R}$-circle
	\begin{equation*}
	\{[x-2i/\sqrt{3},0]: x\in \mathbb{R}\}.
	\end{equation*}
	
Let $\tau$ be the segment
\begin{equation*}
	\tau=\{[x-2i/\sqrt{3},0]: x\in [-1/2\sqrt{3},1/2\sqrt{3}]\},
	\end{equation*} which in the above $\mathbb{R}$-circle. It is easy to check that $\tau$ belongs to the interior of  $I_0^{-}$.  Furthermore,  it  belongs to the topological
	sphere  $s_0^{+}\cup s_0^{-}$.  So $A^k(\tau)\subset A^k(s_k^{+}\cup s_k^{-})$ and $\partial D_{\Gamma_1}$ can not be knotted.
\end{proof}

If we  denote the 3-manifold at infinity of the even subgroup $\Gamma_1=\langle I_1I_2, I_2I_3 \rangle$ of  $\Delta_{3,\infty,\infty;\infty}$ by $M$,  $M$ is just the quotient space of this 3-ball $N$, then the side-pairings are
\begin{eqnarray*}
	\alpha_1&: &E_{1,-}\longrightarrow E_{2,+},\\
	\alpha_2&:& E_{2,-}\longrightarrow E_{1,+},\\
	\delta_1&:& D_{1,-}\longrightarrow D_{1,+},\\
	\delta_2&: &D_{2,-}\longrightarrow D_{2,+},\\
	\delta_3&: &D_{3,-}\longrightarrow D_{3,+},\\
	\beta_4&: &B_{4}\longrightarrow B^{-1}_{4},\\
	\beta_5&: &B_{5}\longrightarrow B^{-1}_{5}.
 \end{eqnarray*}

 The side-pairing $\alpha_1$ is the homeomorphism  from  $E_{1,-}$ to $E_{2,+}$ such that $\alpha_1(p_{BA})=p_{AB}$,  $\alpha_1(p_{B^{-1}A})=w$ and $\alpha_1(q_{\infty})=q_{\infty}$;  $\alpha_2$ is the homeomorphism  from $E_{2,-}$ to  $E_{1,+}$ such that $\alpha_2(p_{AB})=p_{BA}$,  $\alpha_2(p_{BA^{-1}})=p_{B^{-1}A}$ and $\alpha_2(q_{\infty})=q_{\infty}$;  $\delta_
{i}$ is the homeomorphism  from  $D_{i,-}$ to $D_{i,+}$ which  preserves the labels on vertices for $i=1,2,3$; $\beta_4$  is the homeomorphism  from $B_{4}$ to $B^{-1}_{4}$  such that  $\beta_4(p_{AB})=p_{BA}$,  $\beta_4(p_{B^{-1}A})=p_{BA^{-1}}$,  $\beta_4(u)=B(u)$ and  $\beta_4(B(u))=B^2(u)$; $\beta_5$  is the homeomorphism  from $B_{5}$ to $B^{-1}_{5}$  such that  $\beta_5(p_{AB})=p_{BA}$,  $\beta_5(p_{B^{-1}A})=p_{BA^{-1}}$,  $\beta_5(u)=B(u)$, $\beta_5(B^2(u))=u$ and  $\beta_5(B(u))=B^2(u)$.

We write down the  ridge cycles for the 3-manifold $M$  in Table \ref{table:edgecircle3pp}.

\begin{table}[!htbp]
\caption{Ridge cycles of the 3-manifold at infinity  of $\Delta_{3,\infty,\infty;\infty}$.}
  \centering
\begin{tabular}{c|c}
\toprule
\textbf{Ridge} & \textbf{Ridge Cycle   } \\
\midrule
$e_{1}$ & $B_4\cap B_5^{-1}(e_{1,3})\xrightarrow{\beta_4} B^{-1}_4\cap B_5(e_{1,1}) \xrightarrow{\beta_5} B^{-1}_5\cap B_5(e_{1,2}) \xrightarrow{\beta_5} B^{-1}_5\cap B_4(e_{1,3})$  \\

$e_{2}$ & $B_4\cap B_5\xrightarrow{\beta_5} B^{-1}_5\cap D_{3,-} \xrightarrow{\delta_3} D_{3,+}\cap B^{-1}_4 \xrightarrow{\beta_4^{-1}} B_4\cap B_5$\\

$e_{3}$ & $B^{-1}_4\cap B^{-1}_5\xrightarrow{\beta_5^{-1}} B_5\cap D_{3,+} \xrightarrow{\delta_3^{-1}} D_{3,-}\cap B_4 \xrightarrow{\beta_4} B^{-1}_4\cap B^{-1}_5$  \\
$e_{4}$ & $D_{1,+}\cap E_{1,+}\xrightarrow{\alpha_2^{-1}} E_{2,-}\cap D_{2,-} \xrightarrow{\delta_2} D_{2,+}\cap E_{2,+} \xrightarrow{\alpha_1^{-1}} E_{1,-}\cap D_{1,-} \xrightarrow{\delta_1} D_{1,+}\cap E_{1,+}$ \\

$e_{5}$ & $E_{2,+}\cap D_{1,+}\xrightarrow{\delta_1^{-1}} D_{1,-}\cap E_{2,-} \xrightarrow{\alpha_2} E_{1,+}\cap D_{2,+} \xrightarrow{\delta_2^{-1}} D_{2,-}\cap E_{1,-} \xrightarrow{\alpha_1} E_{2,+}\cap D_{1,+}$  \\

$e_{6}$ & $E_{1,+}\cap E_{1,-}\xrightarrow{\alpha_1} E_{2,+}\cap D_{3,+} \xrightarrow{\delta_3^{-1}}  D_{3,-}\cap E_{2,-} \xrightarrow{\alpha_2} E_{1,+}\cap E_{1,-}$  \\	

$e_{7}$ & $B^{-1}_4\cap D_{1,+}\xrightarrow{\delta_1^{-1}} D_{1,-}\cap B^{-1}_5 \xrightarrow{\beta_5^{-1}} B_5\cap D_{2,+} \xrightarrow{\delta_2^{-1}} D_{2,-}\cap B_4 \xrightarrow{\beta_4} B^{-1}_4 \cap D_{1,+}$ \\
\bottomrule
\end{tabular}
\label{table:edgecircle3pp}
\end{table}

\subsection{The 3-manifold at infinity of $\Delta_{3,\infty, \infty;\infty}$} \label{subsection:3ppmanifoldfinal}
From Table	\ref{table:edgecircle3pp}, we get Table  \ref{table:relation3pp}, then we get a presentation of the fundamental group of  the 3-manifold $M$. It is a group $\pi_{1}(M)$ on seven generators $\alpha_1$, $\alpha_2$, $\delta_1$, $\delta_2$, $\delta_3$, $\beta_4$, $\beta_5$ and seven relations in Table \ref{table:relation3pp}.

\begin{table}[!htbp]
\caption{Cycle relation  of the 3-manifold at infinity of $\Delta_{3,\infty,\infty;\infty}$.}
  \centering
\begin{tabular}{c|c}
\toprule
\textbf{Ridge} & \textbf{Cycle relation  } \\
\midrule
$e_{1}$  & $\beta^2_5\beta_4$ \\

$e_{2}$  & $\beta^{-1}_4 \delta_3\beta_5$ \\

$e_{3}$  & $\beta_4 \delta^{-1}_3\beta^{-1}_5$ \\

$e_{4}$  & $ \delta_1 \alpha^{-1}_1 \delta_2 \alpha^{-1}_2$ \\

$e_{5}$  & $\alpha_1  \delta^{-1}_2 \alpha_2  \delta^{-1}_1$ \\

$e_{6}$  & $\alpha_2 \delta^{-1}_3\alpha_1$ \\

$e_{7}$ & $\beta_4 \delta^{-1}_2\beta^{-1}_5 \delta^{-1}_1$ \\
\bottomrule
\end{tabular}
\label{table:relation3pp}
\end{table}

Let $6_1^3$ be the magic 3-manifold   in Snappy  Census \cite{CullerDunfield:2014}, it is hyperbolic with volume 5.3334895669... and $$\pi_{1}(6_1^3)=\langle s,t,w | s^2w^{-1}sts^{-2}ws^{-1}t^{-1},tw^{-1}t^{-1}w\rangle.$$

\texttt{Magma} tells us that  $\pi_{1}(6_1^3)$ and  $\pi_{1}(M)$  above are isomorphic and finds an  isomorphism $\Psi_1: \pi_1(6_1^3)\longrightarrow \pi_1(M)$ given by
$$
\Psi_1(s)=\beta_5^{-1}, \quad \Psi_1(t)=\beta_4\alpha_2^{-1},\quad \Psi_1(w)=\delta_1.
$$

So, by the prime decompositions of 3-manifolds \cite{Hempel}, $M$ is the connected sum of $6_1^3$  with $L$, where
$L$ is a closed 3-manifold with trivial fundamental group. By the solution of the
Poincar\'e Conjecture, then  $L$ is the 3-sphere, so $M$ is homeomorphic to  $6_1^3$. This finishes the proof of Theorem \ref{thm:3pp}.

\section{The Ford domain for the even subgroup of $\Delta_{3,4,\infty;\infty}$}\label{sec-ford-34p}

Let $\Gamma  =\langle I_1, I_2, I_3\rangle$ be the group  $\Delta_{3,4,\infty;\infty}$ in Subsection \ref{subsec-34p}.
We consider the even  subgroup  $\Gamma_2=\langle A, B\rangle$ of $\langle I_1, I_2, I_3\rangle$ with index two, where $A=I_1I_3I_2I_3$, $B=I_2I_3$, then $B^3=id$ and  $(A^{-1}B)^4=id$.
In this section,
we will  study the combinatorics of the side of  the Ford domain for  $\Gamma_2$. The main result is Theorem \ref{thm:fundamental-domain34p}.

 The matrices for $A$, $B$ are
\begin{equation*}\label{eq-I1-I2}
	A=\left[\begin{matrix}
		1 & \frac{3-\sqrt{7}i}{2}& -2+\sqrt{7} i\\ 0 & 1 &  \frac{-3-\sqrt{7}i}{2}\\ 0 & 0 & 1 \end{matrix}\right], \quad
	B=\left[\begin{matrix}
		1&\frac{1+\sqrt{7}i}{2} &-1\\ \frac{-1+\sqrt{7}i}{2} & -1& 0\\ -1&0 & 0 \end{matrix}\right].
\end{equation*}

Observe that $\Gamma_2=\langle A, B\rangle$ is a discrete subgroup of  $\mathbf{PU}(2,1)$, since $A$, $B$ are contained in the Euclidean Picard modular group $\mathbf{PU}(2,1;\mathbb{Z}[(-1+\sqrt{7}i)/2])$.  For more details of Euclidean Picard modular group we refer the reader to \cite{zhao}.

\begin{defn}For $k\in \mathbb{Z}$, let
\begin{itemize}
\item  $I_{k}^{+}$ be the isometric sphere $I(A^kBA^{-k})=A^k(I(B))$,
\item  $I_{k}^{-}$ be the isometric sphere $I(A^kB^{-1}A^{-k})=A^k(I(B^{-1}))$,
\item  $\widehat{I}_{k}$ be the isometric sphere $I(A^kB^{-1}AB^{-1}A^{-k})=A^k(I(B^{-1}AB^{-1}))$.
\end{itemize}
\end{defn}

Moreover, since $ B^{-1}AB^{-1}=A^{-1}BA^{-1}BA^{-1}$,
$ B^{-1}AB^{-1}$ and $ABA^{-1}BA^{-1}$ have the same isometric  sphere. We summarize some information for these isometric spheres in  Table \ref{tab:isometric spheres} .
\begin{table}[!htbp]
\caption{The centers and radii of the isometric spheres.}
  \centering
\begin{tabular}{c|c|c}
\toprule
\textbf{Isometric sphere} & \textbf{Center  } & \textbf{Radius}\\
\midrule
$I_{k}^{+}$&$[(-3k-\sqrt{7}ki)/2,2\sqrt{7}k]$&$\sqrt{2}$\\
$I_{k}^{-}$&$[(-3k+1-\sqrt{7}(k+1)i)/2,0]$&$\sqrt{2}$\\
$\widehat{I}_{k}$&$[(-3k-1-(k+1)\sqrt{7}i)/2,(k+1)\sqrt{7}]$&$1$\\
\bottomrule
\end{tabular}
\label{tab:isometric spheres}
\end{table}

\begin{prop}\label{B-pairwise-intersection}
\begin{enumerate}
\item The isometric sphere $I_{0}^{+}$ is contained in the exteriors of the isometric spheres $I_{k}^{\pm}$ and $\widehat{I}_{k}$ for all $k\in \Z$, except for $I_{1}^{+}$, $I_{-1}^{+}$, $I_{0}^{-}$, $I_{-1}^{-}$, $\widehat{I}_{0}$ and $\widehat{I}_{-1}$.
 \item The isometric sphere $I_{0}^{-}$ is contained in the exteriors of the isometric spheres $I_{k}^{\pm}$ and $\widehat{I}_{k}$ for all $k\in \Z$, except for $I_{0}^{+}$, $I_{1}^{+}$, $I_{-1}^{-}$, $I_{1}^{-}$, $\widehat{I}_{0}$ and $\widehat{I}_{-1}$.
  \item The isometric sphere $\widehat{I}_{0}$ is contained in the exteriors of the isometric spheres $I_{k}^{\pm}$ and $\widehat{I}_{k}$ for all $k\in \Z$, except for  $I_{0}^{+}$, $I_{1}^{+}$, $I_{0}^{-}$ and $I_{1}^{-}$.
\end{enumerate}
\end{prop}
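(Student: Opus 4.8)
The plan is to reduce the statement to a finite list of pairwise comparisons between Cygan spheres, exactly as in the proof of Proposition~\ref{prop:intersection3pp}. Recall that two Cygan spheres of radii $r_1,r_2$ centered at $c_1,c_2$ in Heisenberg coordinates are disjoint, with each lying in the exterior of the other, provided $d_{\mathrm{Cyg}}(c_1,c_2)>r_1+r_2$; they are tangent when equality holds; and when they do meet, each is partly inside the other. So the entire proposition is a bookkeeping exercise over the centers and radii recorded in Table~\ref{tab:isometric spheres}, together with the observation (from Proposition~\ref{prop:A-action}-type $A$-equivariance) that it suffices to treat $I_0^{+}$ and $I_0^{-}$ since $A^k$ is a Cygan isometry permuting the whole family.

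First I would tabulate, for $I_0^{+}$ (center $[0,0]$, radius $\sqrt2$), the Cygan distance to the center of each of $I_k^{\pm}$ and $\widehat I_k$. For $I_k^{+}$, center $[(-3k-\sqrt7 ki)/2,\,2\sqrt7 k]$: the Cygan distance squared is $\bigl|\,|(-3k-\sqrt7ki)/2|^2 - 2\sqrt7 k\, i\,\bigr|=\bigl|\,4k^2 - 2\sqrt7 k\, i\,\bigr|$, which for $|k|\ge 2$ exceeds $(2\sqrt2)^2=8$, while for $k=\pm1$ it is strictly less, giving the stated exceptions $I_{\pm1}^{+}$. For $I_k^{-}$, center $[(-3k+1-\sqrt7(k+1)i)/2,\,0]$: here $d_{\mathrm{Cyg}}$ to $[0,0]$ equals the Euclidean modulus $|(-3k+1-\sqrt7(k+1)i)/2|$; one checks this is $>2\sqrt2$ for all $k$ except $k=0,-1$, and that the borderline values at $k=0,-1$ are $\le 2\sqrt2$ (in fact one of these should be tangency, matching the appearance of $\widehat I$'s in the excluded set). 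For $\widehat I_k$, center $[(-3k-1-(k+1)\sqrt7i)/2,\,(k+1)\sqrt7]$, radius $1$: compare $d_{\mathrm{Cyg}}$ against $\sqrt2+1$, and verify the only failures are $k=0,-1$. Part~(2) is identical with $I_0^{+}$ replaced by $I_0^{-}$, center $[(1-\sqrt7 i)/2,0]$, radius $\sqrt2$; the same three distance computations, now against centers of $I_k^{+}$, $I_k^{-}$, $\widehat I_k$, isolate the exceptions $I_0^{+},I_1^{+},I_{-1}^{-},I_1^{-},\widehat I_0,\widehat I_{-1}$.

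The routine obstacle is simply organizing the dozen-or-so modulus inequalities so that the monotonicity in $|k|$ is transparent; the genuinely delicate cases are the borderline ones at $k=0$ and $k=-1$, where one must decide tangency versus transverse intersection. For those I would imitate Proposition~\ref{prop:tangent3pp}: exhibit the putative tangency point as a parabolic fixed point of an appropriate word (e.g.\ $AB$, $B^{-1}A$, or a word involving $B^{-1}AB^{-1}$) lying on both spheres, and invoke strict affine convexity of Cygan spheres to conclude the intersection is a single point; for the genuinely transverse pairs (such as $I_0^{+}\cap I_0^{-}$) one produces an interior point of the intersection, as in Proposition~\ref{prop:intersection-disk3pp}, to rule out disjointness. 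Since all centers have coordinates in $\mathbb{Z}[(-1+\sqrt7i)/2]$, every inequality is an elementary arithmetic check, and I would simply state that these are verified directly, presenting only the few borderline cases in detail and omitting the rest as routine.
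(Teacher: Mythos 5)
Your proposal follows essentially the same route as the paper: reduce everything to pairwise Cygan-distance comparisons between the centers in Table~\ref{tab:isometric spheres} against the sum of the radii ($2\sqrt2$ or $1+\sqrt2$), use $A$-equivariance to restrict to $I_0^{\pm}$, and defer the borderline/tangency cases to the argument of Proposition~\ref{B-pairwise-tangent}. One bookkeeping slip to fix: for $I_0^{+}$ versus $I_k^{-}$ the squared Cygan distance between centers is $4k^2+2k+2$, which is $<8$ for $k=0,-1$ (genuine exceptions), $=8$ for $k=1$ (the tangency at $p_{AB}$, which is why $I_1^{-}$ is \emph{not} in the exception list even though the strict inequality fails there), and $>8$ otherwise; your claim that the inequality fails only at $k=0,-1$, and your attribution of the tangency to those values, is therefore off by one case, though the method you describe handles it correctly once the $k=1$ equality is noticed.
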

\begin{proof} (1) All the isometric spheres in  $\{I_{k}^{+}; k\in \Z\}$ have radius $\sqrt{2}$.  So two isometric spheres in  $\{I_{k}^{+}; k\in \Z\}$ are disjoint if their centers are a Cygan distance at least $2\sqrt{2}$  apart. The center of $I_{k}^{+}$ is $[(-3k-\sqrt{7}ki)/2,2\sqrt{7}k],$ see Table \ref{tab:isometric spheres}.  So the Cygan distance between the centers of $I_{0}^{+}$ and $I_{k}^{+}$ is
\begin{equation*}
d_{\rm Cyg}\bigl([0,0],[(-3k-\sqrt{7}ki)/2,2\sqrt{7}k]\bigr)^4=16k^4+28k^2.
\end{equation*}
 This number is larger than 64 when $|k|\geq 2$.

 From Table \ref{tab:isometric spheres}, $\widehat{I}_{k}$ is a Cygan sphere with center $$[(-3k-1-(k+1)\sqrt{7}i)/2,(k+1)\sqrt{7}]$$ and radius 1.
The Cygan distance between the centres of $I_{0}^{+}$ and $\widehat{I}_{k}$ is
 \begin{eqnarray*}
&&d_{\rm Cyg}\bigl([0,0],[(-3k-1-(k+1)\sqrt{7}i)/2,(k+1)\sqrt{7}]\bigr)^4\\&=&\frac{1}{16}((3k+1)^2+7(k+1)^2)^2+7(k+1)^2.
\end{eqnarray*}
  When $k>0$ or $k<-1$, this number is larger than $(1+\sqrt{2})^4$.

 The items (2) and (3)  can be proved by the same argument as above.

\end{proof}

We take two points $p_{AB}$  and $p_{B^{-1}AB^{-1}}$  in  $\partial \hc$ with  homogeneous coordinates

\begin{equation*}
\mathbf{p}_{AB} = \left[
                               \begin{array}{c}
                                 -1\\
                                -1/2 -i\sqrt{7}/2 \\
                                 1 \\
                               \end{array}
                             \right], \quad
 \mathbf{ p}_{B^{-1}AB^{-1}} = \left[
                               \begin{array}{c}
                                -1/4 +i\sqrt{7}/4\\
                                1/4 -i\sqrt{7}/4 \\
                                 1 \\
                               \end{array}
                             \right].
 \end{equation*}

\begin{prop}\label{B-pairwise-tangent}
The isometric spheres $I_{0}^{+}$ and $I_{1}^{-}$ are tangent at $p_{AB}$. The isometric spheres $\widehat{I}_{0}$ and $\widehat{I}_{1}$ are tangent at $A(p_{B^{-1}AB^{-1}})$.
\end{prop}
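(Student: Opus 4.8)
The plan is to mimic the proof of Proposition \ref{prop:tangent3pp} verbatim, since $I_0^+,I_1^-,\widehat I_0,\widehat I_1$ are all Cygan spheres and the only thing to do is (a) exhibit the candidate contact point and check it lies on both spheres, and (b) show the two spheres meet in \emph{exactly} one point. For (b) the mechanism is the same as in the $3,\infty,\infty;\infty$ case: the vertical projections of the two Cygan spheres are externally tangent round discs in $\mathbb{C}$, and since every Cygan sphere is strictly affine convex (Remark 2.13 of \cite{ParkerWill:2016}), two Cygan spheres whose vertical projections are tangent discs can intersect in at most one point. Combined with (a), this forces the intersection to be the single prescribed point, which is exactly the assertion of tangency.

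For the first statement I would argue as follows. By Table \ref{tab:isometric spheres}, $I_0^+=I(B)$ is the Cygan sphere of radius $\sqrt2$ centered at $B^{-1}(q_\infty)=[0,0]$, and $I_1^-=I(AB^{-1}A^{-1})$ is the Cygan sphere of radius $\sqrt2$ centered at $AB(q_\infty)=[-1-\sqrt7 i,0]$ (using that $A$ fixes $q_\infty$). Reading off the standard lift (\ref{eq:lift}), the point $\mathbf p_{AB}$ has Heisenberg coordinates $[-\tfrac12-\tfrac{\sqrt7}{2}i,\,0]$. A direct evaluation of the Cygan metric (\ref{eq:cygan-metric}) gives $d_{\rm Cyg}\!\bigl(p_{AB},[0,0]\bigr)=d_{\rm Cyg}\!\bigl(p_{AB},[-1-\sqrt7 i,0]\bigr)=\sqrt2$, so $p_{AB}\in I_0^+\cap I_1^-$. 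The vertical projections of $I_0^+$ and $I_1^-$ are discs of radius $\sqrt2$ centered at $0$ and at $-1-\sqrt7 i$; since $|-1-\sqrt7 i|=2\sqrt2=\sqrt2+\sqrt2$, these discs are externally tangent, and (b) finishes the case.

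For the second statement the same scheme applies, with slightly more bookkeeping. From Table \ref{tab:isometric spheres}, $\widehat I_0=I(B^{-1}AB^{-1})$ and $\widehat I_1=A(\widehat I_0)$ are the radius-$1$ Cygan spheres centered at $[-\tfrac12-\tfrac{\sqrt7}{2}i,\,\sqrt7]$ and $[-2-\sqrt7 i,\,2\sqrt7]$ respectively. From the lift $\mathbf p_{B^{-1}AB^{-1}}$ one gets the Heisenberg point $p_{B^{-1}AB^{-1}}=[\tfrac14-\tfrac{\sqrt7}{4}i,\,\tfrac{\sqrt7}{2}]$; comparing the matrix of $A$ with the standard form of $T_{(z,t)}$ identifies $A$ with left Heisenberg translation by $[-\tfrac32-\tfrac{\sqrt7}{2}i,\,2\sqrt7]$, whence the Heisenberg product gives $A(p_{B^{-1}AB^{-1}})=[-\tfrac54-\tfrac{3\sqrt7}{4}i,\,\tfrac{3\sqrt7}{2}]$. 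One then checks by (\ref{eq:cygan-metric}) that this point is at Cygan distance $1$ from each of the two centers, hence lies on $\widehat I_0\cap\widehat I_1$. Finally the two centers project to $-\tfrac12-\tfrac{\sqrt7}{2}i$ and $-2-\sqrt7 i$, which are at Euclidean distance $\bigl|-\tfrac32-\tfrac{\sqrt7}{2}i\bigr|=2=1+1$, so again the vertical projections are tangent discs and (b) applies.

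The only real work here is the routine bookkeeping: converting homogeneous lifts to Heisenberg coordinates, recognizing $A$ as an explicit Heisenberg translation and applying it, and evaluating the Cygan-distance expressions; all of these are short and I would relegate them to "direct calculation.'' The one conceptual point that deserves a sentence — and which is precisely the content borrowed from Proposition \ref{prop:tangent3pp} — is that tangency of the vertical projections together with strict affine convexity of Cygan spheres genuinely forces the spheres to meet in a single point rather than in an arc, so that producing one common point is enough to conclude tangency.
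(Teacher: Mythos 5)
Your proposal is correct and follows essentially the same route as the paper's proof: exhibit the claimed contact point, verify by the Cygan distance formula that it lies on both spheres, and conclude that the intersection is a single point because the vertical projections are externally tangent discs and Cygan spheres are strictly affine convex. The only difference is that you carry out the second tangency (for $\widehat{I}_0$ and $\widehat{I}_1$) explicitly, including the identification of $A$ as a Heisenberg translation, whereas the paper dismisses it with "the other tangency can be proved in a similar way"; your computations there are correct.
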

\begin{proof}
The isometric spheres $I_{0}^{+}$ and $I_{1}^{-}$  are Cygan spheres with centres $[0,0]$ and $[-1-\sqrt{7}i,0]$. Both isometric spheres have radius $\sqrt{2}$. Since
$$d_{\rm Cyg}\bigl([0,0], p_{AB}\bigr)=d_{\rm Cyg}\bigl([-1-\sqrt{7}i,0],p_{AB}\bigr)=\sqrt{2},$$ we see that $p_{AB}$ belongs to both $I_{0}^{+}$ and $I_{1}^{-}$. The vertical projections of $I_{0}^{+}$ and $I_{1}^{-}$  are tangent discs, see Figure   \ref{figure:projE-34p} later. Therefore the intersection of $I_{0}^{+}$ and $I_{1}^{-}$ contains at most one point as the Cygan sphere is strictly convex. The other tangency can be proved in a similar way.

\end{proof}

\begin{prop}\label{empty-ridge1}
\begin{enumerate}	

\item The intersection $I_{0}^{+}\cap I_{1}^{+}\cap I_{0}^{-} $ is empty. Moreover, the intersection $I_{0}^{+}\cap I_{1}^{+}$ is contained in the interior of $I_{0}^{-}$.

\item The intersection $I_{0}^{+}\cap I_{-1}^{+}\cap I_{-1}^{-} $ is empty. The intersection $I_{0}^{+}\cap I_{-1}^{+}$ is contained in the interior of the isometric sphere $I_{-1}^{-}$.

\item The intersection $I_{0}^{+}\cap I_{0}^{-}\cap \widehat{I}_{0} $ is empty. The intersection $I_{0}^{+}\cap \widehat{I}_{0}$ is contained in the interior of the isometric sphere $I_{0}^{-}$.

\item The intersection $\widehat{I}_{0}\cap I_{1}^{-}\cap I_{1}^{+} $ is empty. The intersection $\widehat{I}_{0}\cap I_{1}^{-}$ is contained in the interior of the isometric sphere $I_{1}^{+}$.

\item The intersection $\widehat{I}_{0}\cap I_{0}^{+}\cap I_{0}^{-} $ is empty. The intersection $\widehat{I}_{0}\cap I_{0}^{+}$ is contained in the interior of the isometric sphere $I_{0}^{-}$.
\end{enumerate}
\end{prop}

\begin{proof}
 The intersection $I_{0}^{+}\cap I_{1}^{+}$ can be parameterized by negative vectors of the
form
$$V(z_1,z_2)=\left(\overline{z_1}\mathbf{q}_{\infty}-B^{-1}(\mathbf{q}_{\infty})\right)\boxtimes \left(\overline{z_2}\mathbf{q}_{\infty}-AB^{-1}A^{-1}(\mathbf{q}_{\infty})\right),$$ where $|z_1|=|z_2|=1$.
One can check that $I_{0}^{+}\cap I_{1}^{+}$ is a smooth disk according to Proposition  \ref{prop:Giraud}, since $\langle V(e^{-\frac{\pi}{4}i},e^{\frac{\pi}{4}i}), V(e^{-\frac{\pi}{4}i},e^{\frac{\pi}{4}i})\rangle=13-2\sqrt{14}-4\sqrt{2}\approx -0.140169<0$.

We claim that the intersection $I_{0}^{+}\cap I_{1}^{+}\cap I_{0}^{-} $ is empty.
The equation of the intersection $I_{0}^{+}\cap I_{1}^{+}\cap I_{0}^{-} $ is given by
 \begin{equation}\label{equation:1-I0+-I1+-I0-}
 |\langle V(z_1,z_2), \mathbf{q}_{\infty}\rangle|^{2} = |\langle V(z_1,z_2), B(\mathbf{q}_{\infty})\rangle|^{2}.
\end{equation}

Suppose that $\theta_1, \theta_2\in [0,2\pi]$. We define the function
$$f_1(\theta_1,\theta_2)= \sqrt{7}(\sin(\theta_2-\theta_1)-\sin(\theta_2)+2\sin(\theta_1))-\cos(\theta_1-\theta_2)-3\cos(\theta_2)-2\cos(\theta_1)+5.$$
 Writing $z_1=e^{i\theta_1},z_2=e^{i\theta_2}$, Equation (\ref{equation:1-I0+-I1+-I0-}) is equivalent to $f_1(\theta_1,\theta_2)=0$.

By using the computer algebra system, we find that  the global maximum of  the function $f_1(\theta_1,\theta_2)$ with the  constraint
\begin{eqnarray*}
 &&\langle V(e^{i\theta_1},e^{i\theta_2}),V(e^{i\theta_1},e^{i\theta_2})\rangle\\
 &=&13-4\cos(\theta_1)+2\sqrt{7}\sin(\theta_1)-2\sqrt{7}\sin(\theta_2)-2\cos(\theta_1-\theta_2)-4\cos(\theta_2)<0
\end{eqnarray*}
is given approximately by  $-0.6899$.

By a simple calculation, we have
$$|\langle V(e^{-\frac{\pi}{4}i},e^{\frac{\pi}{4}i}), B(\mathbf{q}_{\infty})\rangle|\approx 0.9977<|\langle V(e^{-\frac{\pi}{4}i},e^{\frac{\pi}{4}i}), \mathbf{q}_{\infty}\rangle|=2.$$
Therefore, $V(e^{-\frac{\pi}{4}i},e^{\frac{\pi}{4}i})\in I_{0}^{+}\cap I_{1}^{+}$ lies inside the isometric sphere  $I_{0}^{-}$. See Figure \ref{fig:ridges-B-and-ABa}. Since the Giraud disk $I_{0}^{+}\cap I_{1}^{+}$ is connect, we get that $I_{0}^{+}\cap I_{1}^{+}$ lies inside $I_{0}^{-}$.
The completes the proof of the item (1).  The rest of the proof runs as before.

\end{proof}

By applying powers of $A$, all pairwise intersections of the isometric spheres can be summarized in the following corollary.
\begin{cor}\label{cor:intersections}
 Let $\mathcal{F}=\{I_k^{\pm}, \widehat{I}_k: k \in \mathbb{Z} \}$ be the set of all the isometric spheres. Then for all $k\in\mathbb{Z}$:
\begin{enumerate}
  \item The isometric sphere $I_k^{+}$ is contained in the exterior of all the isometric  spheres in $\mathcal{F}$, except for $I_{k+1}^{+}$, $I_{-k-1}^{+}$, $I_{k}^{-}$, $I_{-k-1}^{-}$, $\widehat{I}_{k}$ and $\widehat{I}_{-k-1}$. Moreover, $I_{k}^{+}\cap I_{k+1}^{-}$ (resp. $I_{k}^{+}\cap I_{k-1}^{+}$) is contained in the interior of $I_{k}^{-}$(resp. $I_{k-1}^{-}$). $I_{k}^{+}$ is tangent to $I_{k+1}^{-}$ at the parabolic fixed point $A^{k}(p_{AB})$.

  \item The isometric sphere $I_k^{-}$ is contained in the exterior of all the isometric spheres in $\mathcal{F}$, except for $I_{k}^{+}$, $I_{k+1}^{+}$, $I_{-k-1}^{-}$, $I_{k+1}^{-}$, $\widehat{I}_{k}$ and $\widehat{I}_{-k-1}$.

   \item The isometric sphere $\widehat{I}_k$ is contained in the exterior of all the isometric spheres in $\mathcal{F}$, except for $I_{k}^{+}$, $I_{k+1}^{+}$, $I_{k}^{-}$ and $I_{k+1}^{-}$. Moreover, $\widehat{I}_{k}\cap I_{k+1}^{-}$ (resp. $\widehat{I}_{k}\cap I_{k}^{+}$) is contained in the interior of $I_{k}^{+}$(resp. $I_{k
       }^{-}$). $\widehat{I}_{k}^{+}$ is tangent to $\widehat{I}_{k+1}^{+}$ at the parabolic fixed point $A^{k+1}(p_{B^{-1}AB^{-1}})$.
\end{enumerate}
\end{cor}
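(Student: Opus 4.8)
The plan is to obtain Corollary~\ref{cor:intersections} for arbitrary $k\in\mathbb Z$ by transporting the $k=0$ statements---which are precisely Propositions~\ref{B-pairwise-intersection}, \ref{B-pairwise-tangent}, \ref{bAb-pairwise-intersection} and \ref{empty-ridge1}--\ref{empty-ridge5}---under the powers of the parabolic element $A$. First I would record the two elementary facts that make this work. Since $A$ is unipotent and fixes $q_\infty$, it is a Cygan isometry, so it maps Cygan spheres to Cygan spheres and preserves the partition of the complement into interior and exterior, as well as tangency of two such spheres. And since $A$ fixes $q_\infty$, the identities $I(fg)=I(g)$ and $I(gf)=f^{-1}I(g)$ recalled in Section~\ref{sec:background} give $I(fgf^{-1})=f\bigl(I(g)\bigr)$ for any $f$ in $\mathrm{Stab}(q_\infty)$; applying this with $f=A^k$ yields $A^k(I_j^\pm)=I_{j+k}^\pm$ and $A^k(\widehat I_j)=\widehat I_{j+k}$ straight from the definitions of these isometric spheres. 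In other words, $\langle A\rangle$ permutes the family $\mathcal F=\{I_j^\pm,\widehat I_j:j\in\mathbb Z\}$ by the index shift $j\mapsto j+1$.

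With these in hand the proof is pure bookkeeping. For part~(1) I would apply $A^k$ to Proposition~\ref{B-pairwise-intersection}(1): because that proposition locates $I_0^+$ relative to \emph{every} member of $\mathcal F$, its $A^k$-image locates $I_k^+$ relative to every member of $\mathcal F$, and the exceptional set is simply the $A^k$-translate of the exceptional set for $k=0$. The containment (``moreover'') clauses are the $A^k$-images of Propositions~\ref{empty-ridge1} and~\ref{empty-ridge2}, and the tangency statement is the $A^k$-image of the first half of Proposition~\ref{B-pairwise-tangent}, using that $A^k$ sends the parabolic fixed point $p_{AB}$ of $AB$ to the parabolic fixed point $A^k(p_{AB})$ of $A^k(AB)A^{-k}$. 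Parts~(2) and~(3) are handled identically: (2) is the $A^k$-image of Proposition~\ref{B-pairwise-intersection}(2), and (3) combines the $A^k$-images of Proposition~\ref{bAb-pairwise-intersection} (for the exterior statement), of Propositions~\ref{empty-ridge3}, \ref{empty-ridge4} and~\ref{empty-ridge5} (for the containment statements), and of the second half of Proposition~\ref{B-pairwise-tangent} (for the tangency of $\widehat I_k$ with $\widehat I_{k+1}$ at $A^{k+1}(p_{B^{-1}AB^{-1}})$).

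I do not expect a genuine obstacle here: all of the analytic content---the Cygan-distance estimates, the strict convexity of Cygan spheres, and the DirectSearch optimisations on the Giraud disks---already lives in the cited propositions. The one point that needs a moment of care is to verify that passing to $A^k$ creates no new coincidences, that is, that $I_k^+$ cannot meet, or be tangent to, some sphere of $\mathcal F$ that $I_0^+$ avoided. This cannot happen precisely because Propositions~\ref{B-pairwise-intersection} and~\ref{bAb-pairwise-intersection} describe the position of $I_0^+$, $I_0^-$ and $\widehat I_0$ against \emph{all} of $\mathcal F$---not just finitely many neighbours---so their $\langle A\rangle$-translates inherit a complete description and the finite exception lists close up under the shift $j\mapsto j+1$. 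Once that is observed, Corollary~\ref{cor:intersections} follows at once by $A$-equivariance.
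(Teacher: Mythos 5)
This is correct and is exactly the paper's own argument: the corollary is introduced with the single sentence ``By applying powers of $A$, all pairwise intersections of the isometric spheres can be summarized in the following corollary,'' and your $A^k$-equivariance bookkeeping (using that $A$ is a Cygan isometry fixing $q_\infty$ together with $A^k(I_j^{\pm})=I_{j+k}^{\pm}$ and $A^k(\widehat I_j)=\widehat I_{j+k}$) is precisely that proof written out, with all analytic content already contained in Propositions \ref{B-pairwise-intersection}--\ref{empty-ridge5}. One remark: carrying out the shift honestly, as you do, yields exception indices $k-1$ (and containments such as $I_k^{+}\cap I_{k+1}^{+}\subset \operatorname{int} I_k^{-}$), whereas the printed statement has $-k-1$ and $I_k^{+}\cap I_{k+1}^{-}$; these agree only at $k=0$, so your derivation in fact corrects what appear to be typos in the corollary as stated rather than conflicting with its intended content.
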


\begin{defn} Let $D_{\Gamma_2}$ be the intersection of the closures of the exteriors of all the isometric spheres  $I_{k}^{+}, I_{k}^{-}$  and $\widehat{I}_{k}$ for $k\in \mathbb{Z}$.
\end{defn}

 $D_{\Gamma_2}$ above is our guessed Ford domain for the group  $\Gamma_2$.

\begin{defn} For $k\in \mathbb{Z}$, let $s_k^{+}, s_k^{-}, \widehat{s}_k$ be the sides of  $D_{\Gamma_2}$ contained in the isometric spheres $I_{k}^{+}, I_{k}^{-}, \widehat{I}_{k}$ respectively.
\end{defn}

\begin{defn} A  \textit{ridge} is defined to be the $2$-dimensional connected intersections of two sides.
\end{defn}

We now describe the topology of the ridges of $D_{\Gamma_2}$.  First, we describe the topology of the ridges of the side $s_0^{+}$. Then $s_0^{-}$ has the same kind of combinatorial structure of   $s_0^{+}$, since it is paired by $B$.
\begin{prop}\label{Prop:ridgegiraud}
The ridge $s_{0}^{+}\cap s_{0}^{-}$ is a Giraud disk, which is contained in the exterior of the isometric spheres $I_{1}^{+}, I_{-1}^{+},I_{-1}^{-},\widehat{I}_{-1}$ and $\widehat{I}_{0}$.
\end{prop}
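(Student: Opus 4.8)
The plan is to combine the Giraud–disk criterion (Proposition~\ref{prop:Giraud}) with a direct verification, in spinal coordinates, that each of the five competing isometric spheres lies in the exterior of the Giraud disk $\mathcal{B}(p,q,r)$ determining $s_0^+\cap s_0^-$. First I would set up the parametrization: writing $\mathbf{q}_\infty$, $B^{-1}(\mathbf{q}_\infty)$, $B(\mathbf{q}_\infty)$ for the three relevant vectors, the trace of $s_0^+\cap s_0^-$ is carried by the Giraud disk
$$V(z_1,z_2)=\left(\overline{z}_1\mathbf{q}_\infty-B^{-1}(\mathbf{q}_\infty)\right)\boxtimes\left(\overline{z}_2\mathbf{q}_\infty-B(\mathbf{q}_\infty)\right),\qquad |z_1|=|z_2|=1,$$
where the disk corresponds to the region $\langle V(z_1,z_2),V(z_1,z_2)\rangle<0$. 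Since $B$ has order $3$ and cyclically permutes $\mathbf{q}_\infty, B^{-1}(\mathbf{q}_\infty), B(\mathbf{q}_\infty)$, this region is a genuine topological disk — this gives the "$s_0^+\cap s_0^-$ is a Giraud disk" part immediately from Proposition~\ref{prop:Giraud} together with Proposition~\ref{empty-ridge3} and Proposition~\ref{empty-ridge5} (which already tell us $I_0^+\cap I_0^-$ is not swallowed by $\widehat I_0$, so a two–dimensional ridge genuinely exists).

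Next I would handle the exteriority claims. For each competing sphere $I(g)$ with $g\in\{A BA^{-1},\ A^{-1}BA,\ A^{-1}B^{-1}A,\ A^{-1}B^{-1}AB^{-1}A,\ B^{-1}AB^{-1}\}$ (these being the elements whose isometric spheres are $I_1^+, I_{-1}^+, I_{-1}^-, \widehat I_{-1}, \widehat I_0$ respectively), the condition that a point $V(z_1,z_2)$ of the Giraud disk lie strictly in the exterior of $I(g)$ is
$$|\langle V(z_1,z_2),\mathbf{q}_\infty\rangle| < |\langle V(z_1,z_2),g^{-1}(\mathbf{q}_\infty)\rangle|.$$
Both sides are (the moduli of) affine functions of $z_1,z_2$, so writing $z_j=e^{i\theta_j}$ the inequality becomes a trigonometric polynomial inequality in $(\theta_1,\theta_2)$, to be verified on the open region cut out by $\langle V,V\rangle<0$. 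This is exactly the situation already treated in Propositions~\ref{empty-ridge1}--\ref{empty-ridge5}: I would reduce each case to showing that a function $f(\theta_1,\theta_2)$ has a strictly negative (or strictly positive) global extremum on the constrained region, and invoke the same Maple DirectSearch optimization argument used there, recording the numerical bound in each case. For two of the five spheres ($\widehat I_0$ and $\widehat I_{-1}$) Propositions~\ref{empty-ridge3} and \ref{empty-ridge5} and their $A$–translates already do essentially all the work; for $I_1^+$, $I_{-1}^+$, $I_{-1}^-$ one uses Proposition~\ref{B-pairwise-intersection}(1), which says $I_0^+$ is disjoint from all of $\mathcal F$ except its six named neighbours, to see that the only spheres that could possibly meet the ridge are the six neighbours of $I_0^+$ and the six of $I_0^-$, i.e. precisely the list in the statement plus $I_0^+, I_0^-$ themselves.

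The main obstacle I anticipate is purely bookkeeping/numerical: there is no conceptual difficulty, but one must (a) correctly identify which group elements produce the isometric spheres $I_1^+,\dots,\widehat I_0$ adjacent to the ridge, (b) choose the right lifts so that the affine expressions $\langle V(z_1,z_2),g^{-1}(\mathbf q_\infty)\rangle$ are normalized consistently with the isometric–sphere normalization (radius $\sqrt 2$ vs.\ $1$, as in Table~\ref{tab:isometric spheres}), and (c) certify that each constrained optimization really has the claimed sign — the global maxima here are small in absolute value (comparable to the $-0.69$ and $0.14$ figures appearing in Proposition~\ref{empty-ridge1}), so the margins are thin and the verification must be done carefully rather than by a crude estimate. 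Once these five inequalities are established, no point of the Giraud disk $s_0^+\cap s_0^-$ lies in the closed ball bounded by any of $I_1^+, I_{-1}^+, I_{-1}^-, \widehat I_{-1}, \widehat I_0$, which is the assertion of the proposition. The symmetry under $B$ (permuting the three defining points) and the completeness of the neighbour list from Corollary~\ref{cor:intersections} then show that nothing else needs checking.
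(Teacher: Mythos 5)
Your proposal is correct and follows essentially the same route as the paper: parametrize the Giraud disk $I_0^+\cap I_0^-$ in spinal coordinates, reduce each exteriority claim to a trigonometric inequality on the region $\langle V,V\rangle<0$ handled by the same constrained-optimization technique as in Proposition~\ref{empty-ridge1}, and use the earlier triple-intersection propositions (and their $A$-translates) where available. The only cosmetic difference is that the paper establishes emptiness of each triple intersection and then pins down the correct side by evaluating at the single sample point $V(e^{i\pi/4},e^{i\pi/4})$ together with connectedness of the disk, whereas you fold both steps into one global sign check; these are equivalent.
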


\begin{proof}
 The Giraud disk $I_{0}^{+}\cap I_{0}^{-}$ can be parameterized by negative vectors of the
form
$$V(z_1,z_2)=\left(\overline{z_1}\mathbf{q}_{\infty}-B^{-1}(\mathbf{q}_{\infty})\right)\boxtimes \left(\overline{z_2}\mathbf{q}_{\infty}-B(\mathbf{q}_{\infty})\right),$$ where $|z_1|=|z_2|=1$.
We claim that  $I_{0}^{+}\cap I_{0}^{-}$ does not intersect  the isometric spheres $I_{1}^{+}, I_{-1}^{+},I_{-1}^{-},\widehat{I}_{-1}$ and $\widehat{I}_{0}$. This was already proved for $I_{1}^{+}$. One can show that the intersection with the other isometric spheres are  empty as well by using the same technology in the above Proposition \ref{empty-ridge1}.

Since $\langle V(e^{\frac{\pi}{4}i},e^{\frac{\pi}{4}i}), V(e^{\frac{\pi}{4}i},e^{\frac{\pi}{4}i})\rangle=1-2\sqrt{2}<0$, we can choose $V(e^{\frac{\pi}{4}i},e^{\frac{\pi}{4}i})$ as a sample point in the disk $I_{0}^{+}\cap I_{0}^{-}$.

From some simple calculation, we get
\begin{eqnarray*}
&&|\langle V(e^{\frac{\pi}{4}i},e^{\frac{\pi}{4}i}), \mathbf{q}_{\infty}\rangle|^2=4,\\
&&|\langle V(e^{\frac{\pi}{4}i},e^{\frac{\pi}{4}i}), AB^{-1}(\mathbf{q}_{\infty})\rangle|^2=10+\sqrt{14}+3\sqrt{2}>4,\\
&&|\langle V(e^{\frac{\pi}{4}i},e^{\frac{\pi}{4}i}), A^{-1}B^{-1}(\mathbf{q}_{\infty})\rangle|^2=18+5\sqrt{2}-\sqrt{14}>4,\\
&&|\langle V(e^{\frac{\pi}{4}i},e^{\frac{\pi}{4}i}), A^{-1}B(\mathbf{q}_{\infty})\rangle|^2=10+\sqrt{14}+3\sqrt{2}>4,\\
&&|\langle V(e^{\frac{\pi}{4}i},e^{\frac{\pi}{4}i}), A^{-1}BA^{-1}B(\mathbf{q}_{\infty})\rangle|^2=12+2\sqrt{14}+2\sqrt{2}>4,\\
&&|\langle V(e^{\frac{\pi}{4}i},e^{\frac{\pi}{4}i}), BA^{-1}B(\mathbf{q}_{\infty})\rangle|^2=12+2\sqrt{14}+2\sqrt{2}>4.
\end{eqnarray*}
Therefore, $V(e^{\frac{\pi}{4}i},e^{\frac{\pi}{4}i})$ lies outside the isometric spheres  $I_{1}^{+}, I_{-1}^{+},I_{-1}^{-},\widehat{I}_{-1}$ and $\widehat{I}_{0}$. The ridge $s_{0}^{+}\cap s_{0}^{-}$ will be the Giraud disk $I_{0}^{+}\cap I_{0}^{-}$. See Figure \ref{fig:ridge-B-and-b}.

\end{proof}

\begin{figure}[htbp]
	\centering
	\begin{minipage}[t]{0.48\textwidth}
		\centering
		\includegraphics[width=5cm]{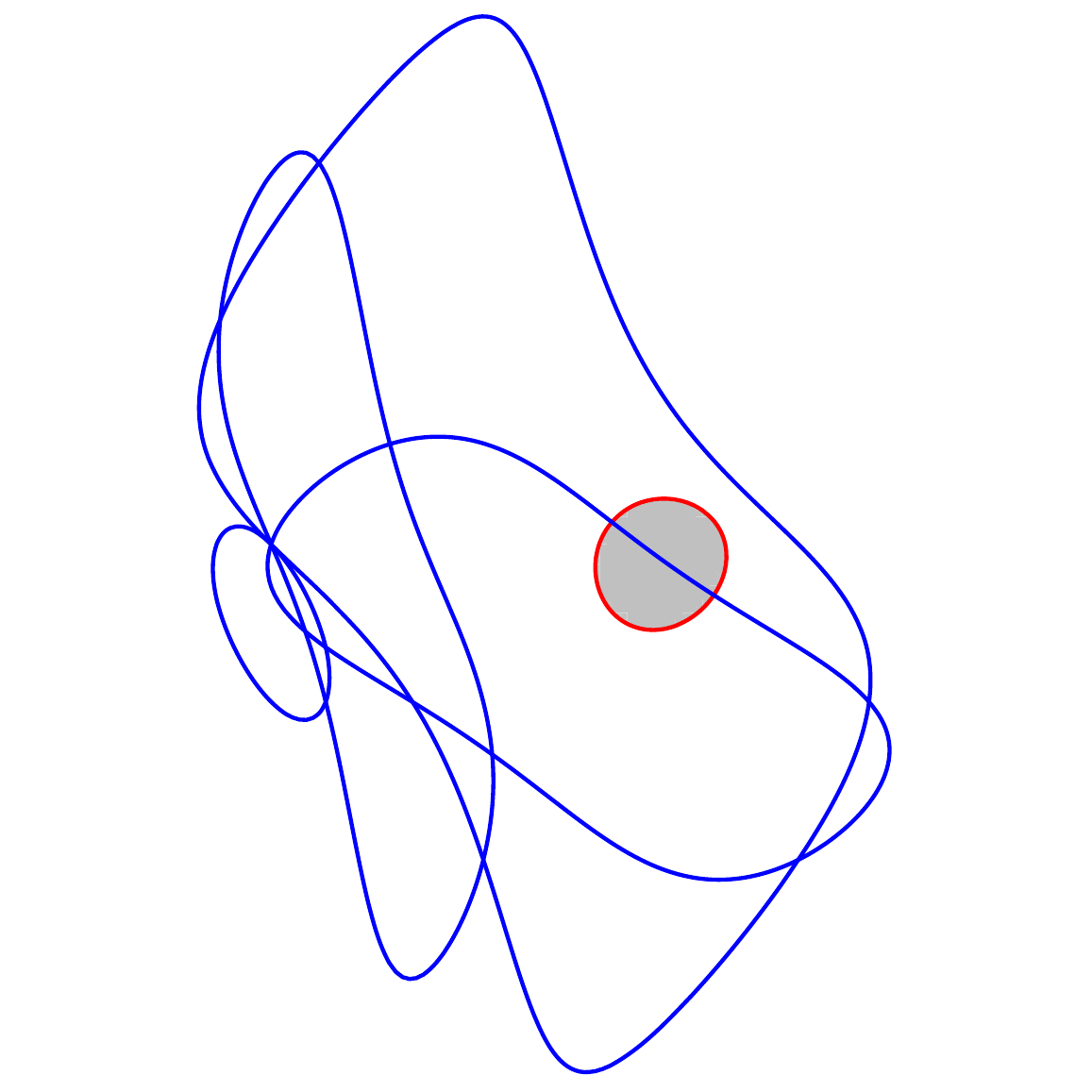}
		\caption{The shaded disk is the intersection $I_{0}^{+}\cap I_{1}^{+}$, which is contained in the interior of $I_{0}^{-}$.}
		\label{fig:ridges-B-and-ABa}
	\end{minipage}
	\begin{minipage}[t]{0.48\textwidth}
		\centering
		\includegraphics[width=5cm]{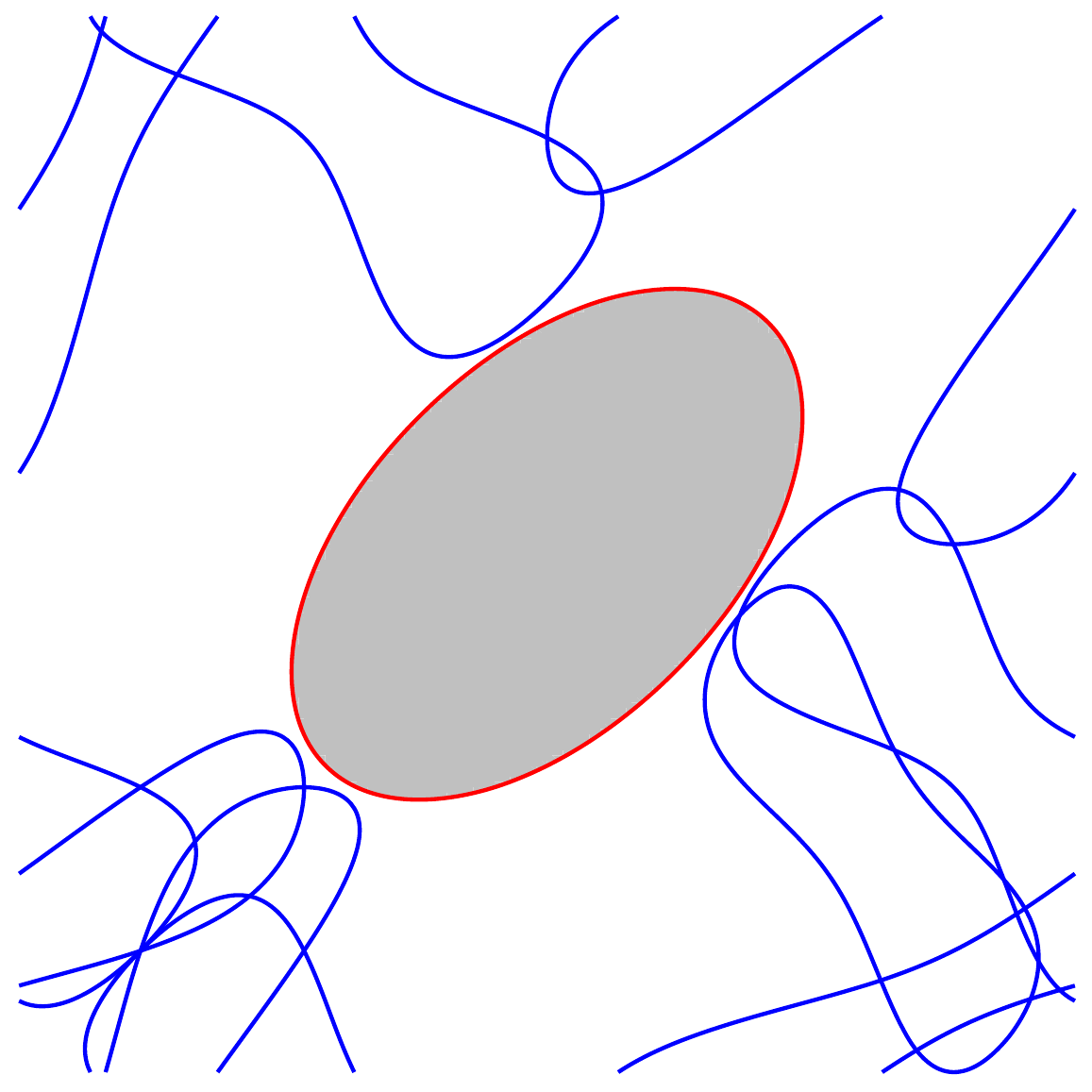}
		\caption{The shaded region is the ridge $I_{0}^{+}\cap I_{0}^{-}$.}
		\label{fig:ridge-B-and-b}
	\end{minipage}
\end{figure}

\begin{prop}\label{ridgesectors34p}
The ridges $s_{0}^{+}\cap s_{-1}^{-}$ and $s_{0}^{+}\cap \widehat{s}_{-1}$ are topologically the union of two sectors.
\end{prop}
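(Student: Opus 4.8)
The plan is to analyze the two ridges $s_{0}^{+}\cap s_{-1}^{-}$ and $s_{0}^{+}\cap \widehat{s}_{-1}$ by the same method already used for the other ridges: parameterize the relevant Giraud disk in spinal coordinates, locate the defining equations of the remaining isometric spheres on it, and check which arcs of those equations actually cut the disk. First I would recall that, by Corollary \ref{cor:intersections}(1), the side $s_{0}^{+}$ meets exactly the six isometric spheres $I_{1}^{+}$, $I_{-1}^{+}$, $I_{0}^{-}$, $I_{-1}^{-}$, $\widehat{I}_{0}$ and $\widehat{I}_{-1}$; so on the Giraud disk $I_{0}^{+}\cap I_{-1}^{-}$ the only possible further traces come from the other four of these, namely $I_{1}^{+}$, $I_{-1}^{+}$, $I_{0}^{-}$ and $\widehat{I}_{-1}$ (and similarly, on $I_{0}^{+}\cap \widehat{I}_{-1}$, the potential extra traces come from $I_{1}^{+}$, $I_{-1}^{+}$, $I_{0}^{-}$ and $I_{-1}^{-}$). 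I would parameterize $I_{0}^{+}\cap I_{-1}^{-}$ by $V(z_1,z_2)=(\overline{z_1}\mathbf{q}_{\infty}-B^{-1}(\mathbf{q}_{\infty}))\boxtimes(\overline{z_2}\mathbf{q}_{\infty}-A^{-1}B^{-1}A(\mathbf{q}_{\infty}))$ with $|z_1|=|z_2|=1$, restricted to $\langle V,V\rangle<0$, exactly as in the proof of Proposition \ref{empty-ridge1}, and likewise for $\widehat{s}_{-1}$.

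The key steps, in order, are: (1) verify that $I_{0}^{+}\cap I_{-1}^{-}$ is a nonempty smooth (Giraud) disk by exhibiting a sample point with $\langle V,V\rangle<0$, using Proposition \ref{prop:Giraud}; (2) on that disk, write the trace of each of the four candidate isometric spheres as an equation $|\langle V(z_1,z_2),u\rangle|=|\langle V(z_1,z_2),v\rangle|$ with suitable lifts $u,v$, and — using Maple's \texttt{DirectSearch} optimization as in Proposition \ref{empty-ridge1} — show that $I_{1}^{+}$, $\widehat{I}_{-1}$ and $I_{0}^{-}$ give \emph{empty} traces (equivalently, that on the whole disk the relevant inequality is strict), so the only isometric sphere genuinely cutting $I_{0}^{+}\cap I_{-1}^{-}$ is $I_{-1}^{+}$; (3) observe that, by the tangency in Corollary \ref{cor:intersections}(1), the ridge $s_{-1}^{-}\cap s_{-1}^{+}$ touches the parabolic fixed point $A^{-1}(p_{AB})$ lying on $\partial_{\infty}$, and that the arc $I_{0}^{+}\cap I_{-1}^{-}\cap I_{-1}^{+}$ together with the ideal boundary circle $I_{0}^{+}\cap I_{-1}^{-}\cap \partial\hc$ divides the Giraud disk $I_{0}^{+}\cap I_{-1}^{-}$ into two pieces, only one of which lies in $D_{\Gamma_2}$; (4) conclude that the complement in $I_{0}^{+}\cap I_{-1}^{-}$ of the interior of $I_{-1}^{+}$ — i.e. the actual ridge $s_{0}^{+}\cap s_{-1}^{-}$ — is the union of two sectors, where by a \emph{sector} one means a triangular region bounded by an arc of $I_{0}^{+}\cap I_{-1}^{-}\cap I_{-1}^{+}$, an arc of the ideal boundary circle, and part of the singular arc, meeting at the tangency point; (5) repeat verbatim for $s_{0}^{+}\cap\widehat{s}_{-1}$, this time finding that $I_{-1}^{+}$ (via the tangency at $A^{0}(p_{B^{-1}AB^{-1}})=p_{B^{-1}AB^{-1}}$ of $\widehat{I}_{-1}$ with $\widehat{I}_{0}$, and the interplay with $I_{0}^{+}$) is the relevant cutting sphere, again splitting the Giraud disk into two sectors.

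The main obstacle I anticipate is step (2): showing that three of the four candidate isometric spheres fail to meet the Giraud disk requires a global sign analysis of a real-analytic function of two angular variables on a region cut out by $\langle V,V\rangle<0$, and the naive estimates will not obviously work — some of these isometric spheres are genuinely close to the disk (indeed tangent at a boundary point), so the \emph{interior} trace is empty only because the tangency occurs exactly on $\partial\hc$. Consequently I would need to combine the numerical global-maximization argument (as in Proposition \ref{empty-ridge1}) with a careful treatment of the boundary circle, checking that the only common points of $I_{0}^{+}$, $I_{-1}^{-}$ and the third sphere lie on $\partial\hc$ and hence contribute no ridge in the interior; the bookkeeping of which of the two resulting pieces of each Giraud disk lies in $D_{\Gamma_2}$, and verifying that each such piece is genuinely a union of exactly two (and not one, or more) sectors, is the delicate combinatorial point that must be pinned down from the explicit equations and Figure \ref{figure:projE-34p}.
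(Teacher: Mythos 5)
Your overall strategy (parameterize the Giraud disk in spinal coordinates, test the traces of the remaining candidate spheres, keep the part outside them) is the right framework and is what the paper does; but the concrete picture you build on top of it is wrong in a way that would make the proof fail. In step (2) you conclude that $I_{1}^{+}$, $\widehat I_{-1}$ and $I_{0}^{-}$ have empty trace and that ``the only isometric sphere genuinely cutting $I_{0}^{+}\cap I_{-1}^{-}$ is $I_{-1}^{+}$.'' This is exactly backwards. By Proposition \ref{empty-ridge2} the triple intersection $I_{0}^{+}\cap I_{-1}^{+}\cap I_{-1}^{-}$ is \emph{empty}, so $I_{-1}^{+}$ leaves no trace at all on this Giraud disk; the sphere that does cut it is $\widehat I_{-1}$. (You also omit $\widehat I_{0}$ from the list of candidates, and your parameterizing vector uses $A^{-1}B^{-1}A(\mathbf q_\infty)=A^{-1}B^{-1}(\mathbf q_\infty)$, which is the center of $I_{-1}^{+}$ rather than of $I_{-1}^{-}$.)

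More importantly, the mechanism you propose in steps (3)--(4) cannot produce the stated topology. A single smooth arc together with the ideal boundary circle and a tangency at a parabolic fixed point cuts a disk into two \emph{lunes}, not into a pair of sectors meeting at an interior vertex. The actual source of the sector structure is that the trace of $\widehat I_{-1}$ on $I_{0}^{+}\cap I_{-1}^{-}$ is \emph{degenerate}: in spinal coordinates the defining equation reduces to $-1-\cos(\theta_1-\theta_2)+\cos\theta_1+\cos\theta_2=0$, whose solution set factors into the three lines $\theta_1=0$, $\theta_2=0$ and $\theta_2=\theta_1-\pi$; the last lies entirely outside $\hc$, and the first two are crossed segments meeting at one interior point. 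These two crossed segments divide the Giraud disk into four sectors, one opposite pair lies inside $\widehat I_{-1}$, and the ridge $s_{0}^{+}\cap s_{-1}^{-}$ is the other opposite pair. Without identifying this factorization (and the analogous one for $I_{0}^{+}\cap\widehat I_{-1}$ cut by $I_{-1}^{-}$), there is no way to see why the ridge is a union of exactly two sectors, so the central step of the proof is missing.
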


\begin{proof}
We parametrize the Giraud disk $I_{0}^{+}\cap I_{-1}^{-}$ by negative vectors of the form
\begin{eqnarray*}
 V(z_1,z_2)&=&\left(\overline{z_1}\mathbf{q}_{\infty}-B^{-1}(\mathbf{q}_{\infty})\right)\boxtimes \left(\overline{z_2}\mathbf{q}_{\infty}-A^{-1}B(\mathbf{q}_{\infty})\right)\\
 &=& \left[
                                \begin{array}{c}
                                 2z_1\\
                                -z_1+z_2-2 \\
                                 -2 \\
                               \end{array}
                             \right],
                             \end{eqnarray*}
 where $|z_1|=|z_2|=1$.

 As in the above proposition, we can show that  $I_{0}^{+}\cap I_{-1}^{-}$ lies outside the isometric spheres  $I_{1}^{+}, I_{-1}^{+},I_{0}^{-}$ and $\widehat{I}_{0}$. So we only
 need to consider the intersection  of   $I_{0}^{+}\cap I_{-1}^{-}$ and the isometric sphere $\widehat{I}_{-1}$. Their intersection can be described by the following equation
 \begin{equation}\label{equation:intersection34p}
 |\langle V(z_1,z_2),\mathbf{q}_{\infty}\rangle|=|\langle V(z_1,z_2),A^{-1}BA^{-1}B(\mathbf{q}_{\infty})\rangle|.
 \end{equation}

 Writing $z_1=e^{i\theta_1},z_2=e^{i\theta_2}$, Equation (\ref{equation:intersection34p}) is equivalent to
   \begin{equation}\label{equaytion:intersection34preformulation}
   -1-\cos(\theta_1-\theta_2)+\cos(\theta_1)+\cos(\theta_2)=0.
  \end{equation}
The three  solutions of  Equation (\ref{equaytion:intersection34preformulation}) are
 $$\theta_1=0,\quad \theta_2=0,\quad\theta_2=\theta_1-\pi.$$
When $\theta_2=\theta_1-\pi$, we have
\begin{eqnarray*}
 &&\langle V(z_1,z_2),V(z_1,z_2)\rangle\\
 &=&6-2\sin(\theta_1)\sin(\theta_2)-2\cos(\theta_1)\cos(\theta_2)-4\cos(\theta_1)-4\cos(\theta_2)\\
 &=& 8.
 \end{eqnarray*}
Therefore, the points  $V(z_1,z_2)$ corresponding to the straight segment  $$\{\theta_2=\theta_1-\pi, \theta_1\in [-\pi,\pi]\}$$ lie entirely outside complex hyperbolic space.

We will see that the two crossed straight segments
\begin{eqnarray*}
&&\{\theta_1=0,\theta_2\in [-\arccos(1/3), \arccos(1/3)]\},\\
&& \{\theta_2=0,\theta_1\in [-\arccos(1/3), \arccos(1/3)]\}
\end{eqnarray*}
decompose  the Giraud disk $I_{0}^{+}\cap I_{-1}^{-}$ into four sectors with common vertices. Furthermore, one can check that one  pair of the four sectors lie inside the isometric  sphere $\widehat{I}_{-1}$. Therefore, the  ridge $s_{0}^{+}\cap s_{-1}^{-}$ is the other pair of the four sectors.  See Figure \ref{fig:ridges-B-and-abA}.

A similar analysis can be applied to the ridge $s_{0}^{+}\cap \widehat{s}_{-1}$. See Figure \ref{fig:ridges-B-and-abAbA}.
\end{proof}

\begin{figure}[htbp]
	\centering
	\begin{minipage}[t]{0.48\textwidth}
		\centering
		\includegraphics[width=5cm]{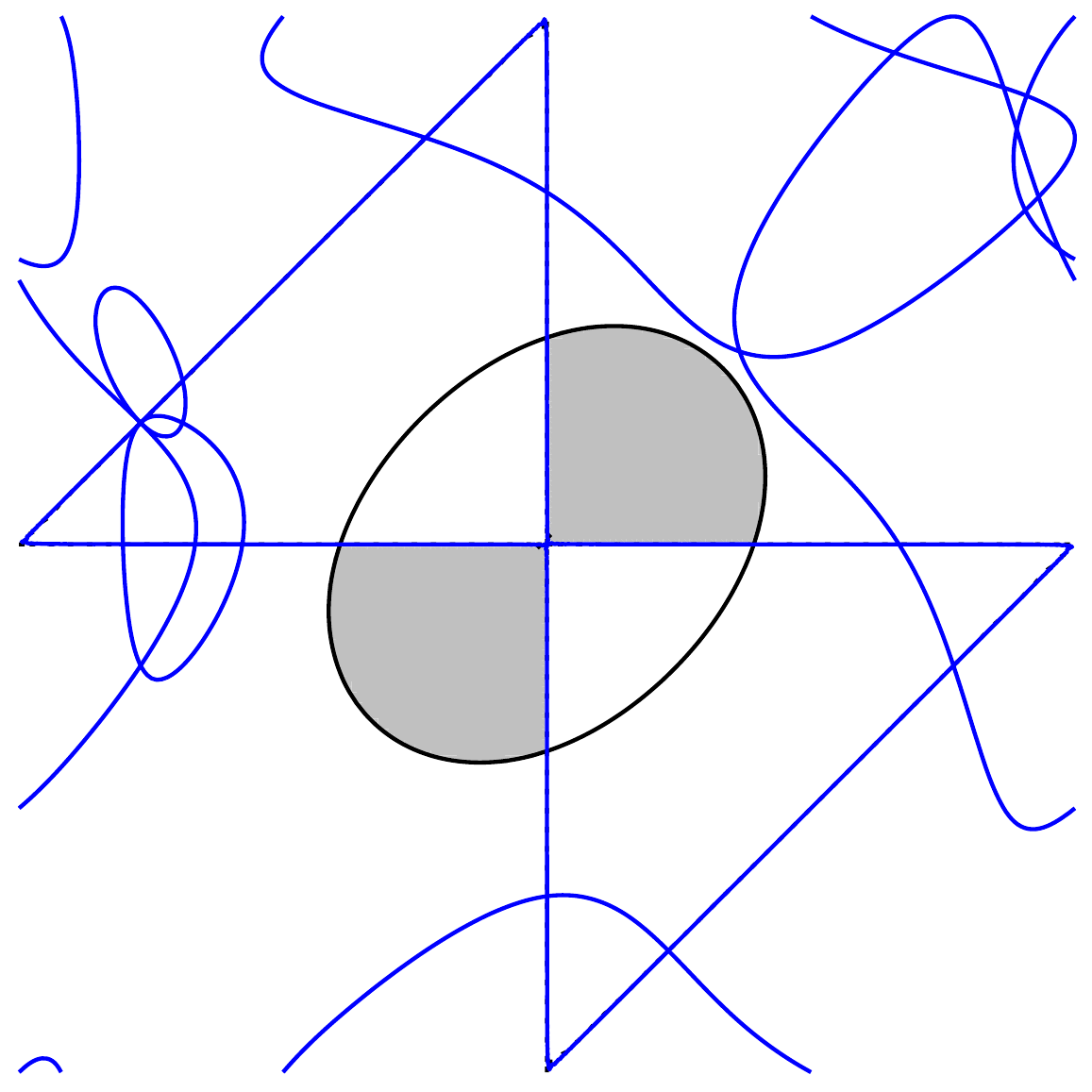}
		\caption{The ridge $s_{0}^{+}\cap s_{-1}^{-}$ on the Giraud disk $I_{0}^{+}\cap I_{-1}^{-}$ is the pair of shaded sectors. }
		\label{fig:ridges-B-and-abA}
	\end{minipage}
	\begin{minipage}[t]{0.48\textwidth}
		\centering
		\includegraphics[width=5cm]{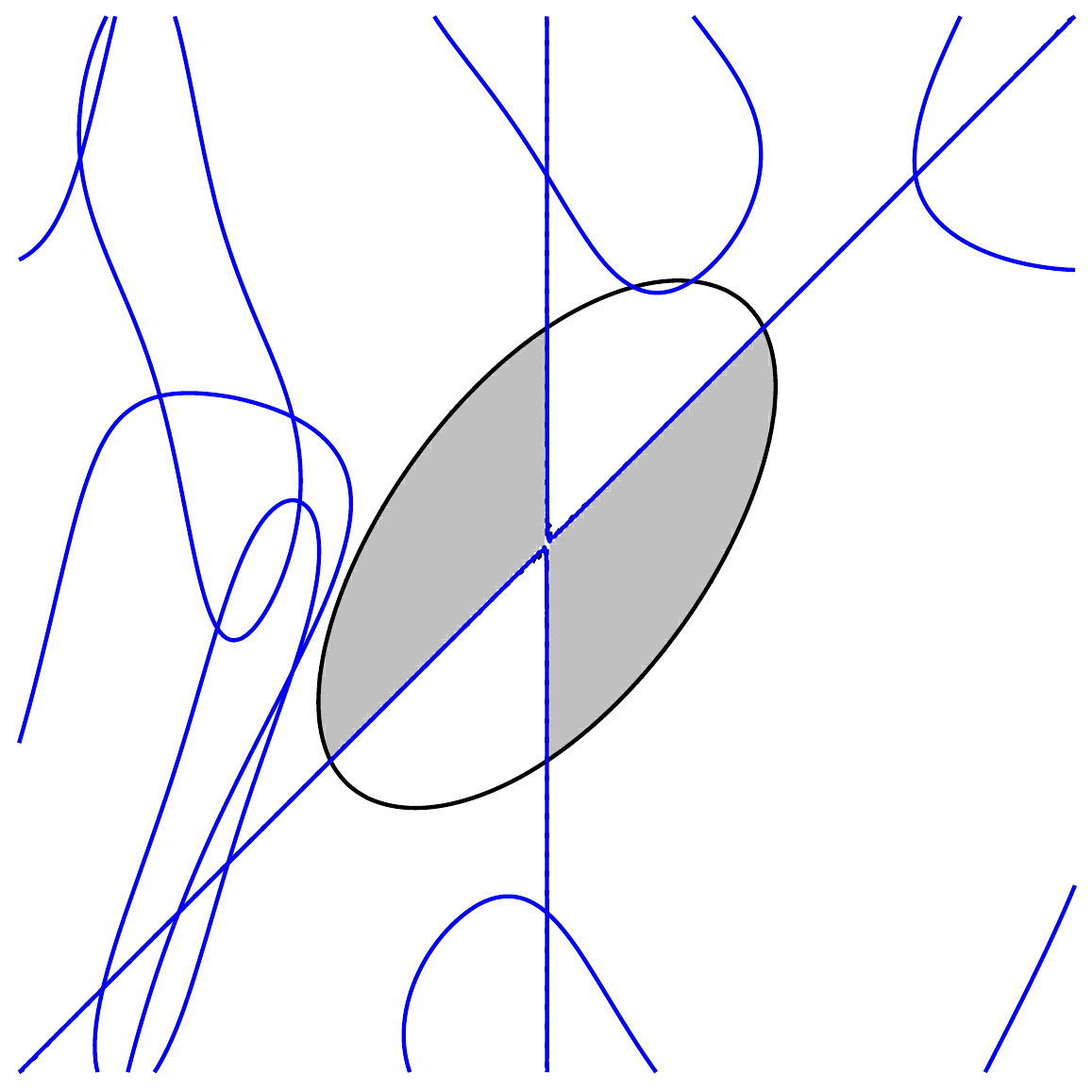}
		\caption{The ridge $s_{0}^{+}\cap \widehat{s}_{-1}$ on the Giraud disk $I_{0}^{+}\cap \widehat{I}_{-1}$ is the pair of shaded sectors.}
		\label{fig:ridges-B-and-abAbA}
	\end{minipage}
\end{figure}

\begin{prop}\label{empty-ridge6}
The ridges $\widehat{s}_{0}\cap s_{0}^{-}$ and $\widehat{s}_{0}\cap s_{1}^{+}$ are topologically the union of two sectors.
\end{prop}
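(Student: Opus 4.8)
The plan is to adapt the argument used for Proposition~\ref{ridgesectors34p}. Consider first the ridge $\widehat{s}_0\cap s_0^-$, which lies on the Giraud disk $\widehat{I}_0\cap I_0^-$. I would parametrize this disk by the negative vectors
$$V(z_1,z_2)=\bigl(\overline{z_1}\mathbf{q}_{\infty}-BA^{-1}B(\mathbf{q}_{\infty})\bigr)\boxtimes\bigl(\overline{z_2}\mathbf{q}_{\infty}-B(\mathbf{q}_{\infty})\bigr),\qquad |z_1|=|z_2|=1,$$
where $BA^{-1}B(\mathbf{q}_{\infty})$ and $B(\mathbf{q}_{\infty})$ are the centres of $\widehat{I}_0=I(B^{-1}AB^{-1})$ and $I_0^-=I(B^{-1})$. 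By Corollary~\ref{cor:intersections}(3), the only isometric spheres in $\mathcal{F}$ other than $I_0^-$ which are not disjoint from $\widehat{I}_0$ are $I_0^+$, $I_1^+$ and $I_1^-$, so these are the only spheres that can cut into the Giraud disk $\widehat{I}_0\cap I_0^-$. Proposition~\ref{empty-ridge5} records that $\widehat{I}_0\cap I_0^+\cap I_0^-=\emptyset$, and a single sample-point computation of the relevant Hermitian products at a point of $\widehat{I}_0\cap I_0^-$ with $\langle V,V\rangle<0$ (as in the proof of Proposition~\ref{Prop:ridgegiraud}) shows that this disk lies in the exterior of both $I_0^+$ and $I_1^-$. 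Hence the ridge $\widehat{s}_0\cap s_0^-$ is obtained from the Giraud disk by deleting its intersection with the single remaining sphere $I_1^+$.

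To analyse that intersection I would write its defining equation $|\langle V(z_1,z_2),\mathbf{q}_{\infty}\rangle|=|\langle V(z_1,z_2),AB^{-1}(\mathbf{q}_{\infty})\rangle|$, set $z_j=e^{i\theta_j}$, and reduce it to a trigonometric equation of the same shape as~(\ref{equaytion:intersection34preformulation}). I expect its solution set to split into three line segments in the $(\theta_1,\theta_2)$-square: one segment along which $\langle V(z_1,z_2),V(z_1,z_2)\rangle>0$ identically, so that it corresponds to points lying entirely outside $\hc$ and contributes nothing, and two ``crossed'' segments meeting at an interior point, which cut the Giraud disk into four sectors with a common vertex. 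Evaluating $V$ at one interior point of each of the two pairs of opposite sectors and comparing $|\langle V,\mathbf{q}_{\infty}\rangle|$ with $|\langle V,AB^{-1}(\mathbf{q}_{\infty})\rangle|$ (and with the Hermitian products against the centres of the remaining spheres of $\mathcal{F}$) should show that one pair lies in the interior of $I_1^+$ while the other pair lies in the exterior of every isometric sphere. The latter pair is then the ridge $\widehat{s}_0\cap s_0^-$, a union of two sectors.

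The ridge $\widehat{s}_0\cap s_1^+$ is treated by the mirror argument: it lies on the Giraud disk $\widehat{I}_0\cap I_1^+$, into which only $I_0^+$, $I_0^-$ and $I_1^-$ can cut; Proposition~\ref{empty-ridge4} eliminates $I_1^-$, a sample-point computation eliminates $I_0^+$, and the same trigonometric analysis with cutting sphere $I_0^-$ exhibits $\widehat{s}_0\cap s_1^+$ as a union of two sectors --- note that the two crossed segments are in both cases the single set $\widehat{I}_0\cap I_0^-\cap I_1^+$. The main obstacle is entirely computational: verifying that on one of the three solution segments the Hermitian norm $\langle V(z_1,z_2),V(z_1,z_2)\rangle$ stays positive, correctly identifying which opposite pair of the four sectors is swallowed by the cutting isometric sphere, and confirming via the sample-point inequalities that the complementary pair is disjoint from the interiors of all the other spheres in $\mathcal{F}$. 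These checks are routine but delicate and, as in the proofs of Propositions~\ref{empty-ridge1}--\ref{ridgesectors34p}, are best carried out with computer assistance, for instance Maple's DirectSearch package to control the sign of $\langle V,V\rangle$.
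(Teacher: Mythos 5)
Your proposal is correct and follows essentially the paper's intended route: the paper's own proof of this proposition is the one-line remark that the argument of Proposition \ref{ridgesectors34p} applies, and your write-up supplies exactly that adaptation, correctly identifying the relevant Giraud disks, the spheres to be excluded (via Propositions \ref{empty-ridge4} and \ref{empty-ridge5} plus sample points), the cutting spheres ($I_1^{+}$ for $\widehat{I}_0\cap I_0^{-}$ and $I_0^{-}$ for $\widehat{I}_0\cap I_1^{+}$), and the common crossed segments $\widehat{I}_0\cap I_0^{-}\cap I_1^{+}$. The only point worth making explicit, on which the paper is equally terse, is that each single-sample-point exclusion presupposes that the corresponding triple intersection is empty, so that the connected Giraud disk lies entirely on one side of the sphere in question.
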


 One can also use the similar argument in Proposition \ref{ridgesectors34p} to show Proposition \ref{empty-ridge6}.  We omit the details for the proof. Please refer to the Figures \ref{fig:ridges-bAb-and-b}, \ref{fig:ridges-bAb-and-ABb}.

\begin{figure}[htbp]
\centering
\begin{minipage}[t]{0.48\textwidth}
\centering
\includegraphics[width=5cm]{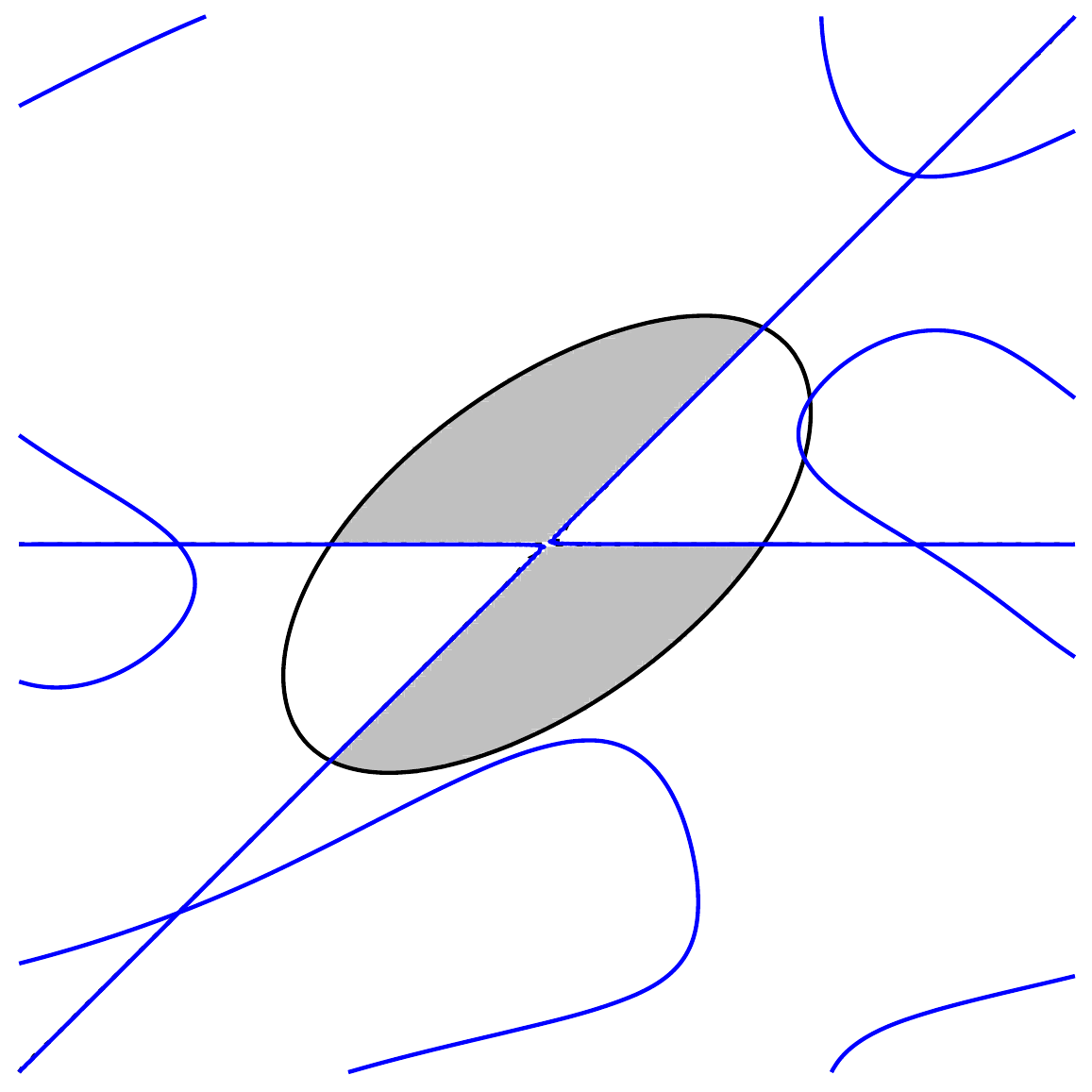}
\caption{ The ridge $\widehat{s}_{0}\cap s_{0}^{-}$ on the Giraud disk $\widehat{I}_{0}\cap I_{0}^{-}$ is the pair of shaded sectors. }
\label{fig:ridges-bAb-and-b}
\end{minipage}
\begin{minipage}[t]{0.48\textwidth}
\centering
\includegraphics[width=5cm]{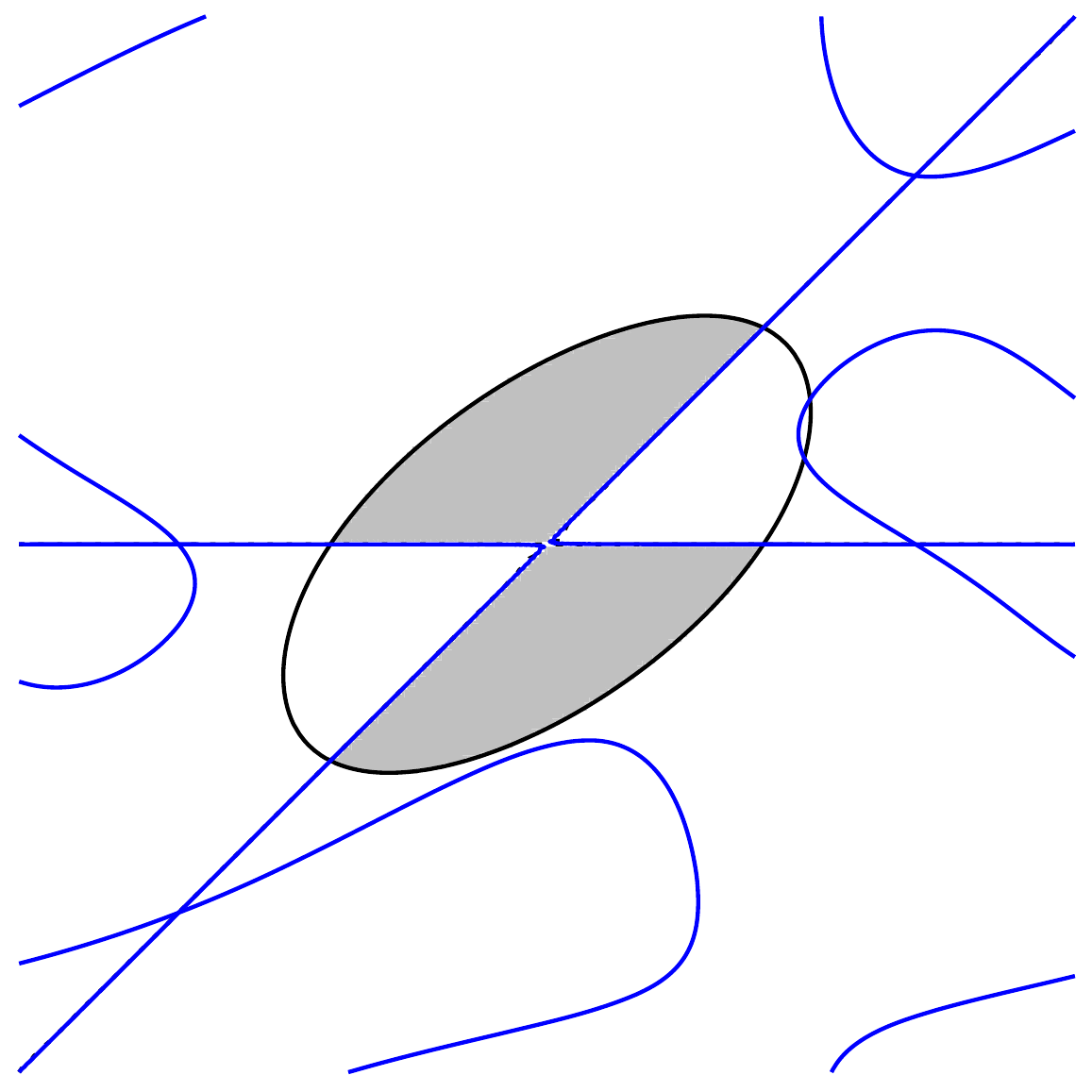}
\caption{The ridge $\widehat{s}_{0}\cap s_{1}^{+}$ on the Giraud disk $\widehat{I}_{0}\cap I_{1}^{+}$ is the pair of shaded sectors. }
\label{fig:ridges-bAb-and-ABb}
\end{minipage}
\end{figure}

Next, we  describe the combinatorics of the sides.

\begin{prop}\label{ridgecylinder-lightcone34p}
\begin{enumerate}
\item The side $s_{k}^{+}$(resp. $s_{k}^{-}$) is a topological solid cylinder in $\hc\cup\partial\hc$. The intersection of $\partial s_{k}^{+}$(resp. $\partial s_{k}^{-}$) with
$\hc$ is the disjoint union of two disks.
\item The side $\widehat{s}_{k}$ is a topological solid light cone in $\hc\cup\partial\hc$. The intersection of $\partial \widehat{s}_{k}$ with
$\hc$ is a  light cone.
\end{enumerate}
\end{prop}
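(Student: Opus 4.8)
The plan is to reduce both statements to the case $k=0$ via the $A$-equivariance $A^{k}(I_{0}^{\pm})=I_{k}^{\pm}$, $A^{k}(\widehat I_{0})=\widehat I_{k}$, which carries $s_{0}^{\pm}$ homeomorphically onto $s_{k}^{\pm}$ and $\widehat s_{0}$ onto $\widehat s_{k}$; for (1) it then suffices to treat $s_{0}^{+}$, since $B$ maps $s_{0}^{+}$ onto $s_{0}^{-}$. Recall that $s_{0}^{+}=I_{0}^{+}\cap D_{\Gamma_{2}}$, where the isometric sphere $I_{0}^{+}$ is a closed topological $3$-ball whose boundary is the spinal sphere of $B$ and lies entirely in $\partial\hc$, and that $D_{\Gamma_{2}}$ is the intersection of the closures of the exteriors of all isometric spheres; hence $s_{0}^{+}$ is obtained from the $3$-ball $I_{0}^{+}$ by deleting the open chunks $I_{0}^{+}\cap\mathrm{int}(I(g))$, and by Corollary \ref{cor:intersections}(1) only $I_{1}^{+}$, $I_{-1}^{+}$, $I_{0}^{-}$, $I_{-1}^{-}$, $\widehat I_{0}$, $\widehat I_{-1}$, and $I_{1}^{-}$ can contribute, the last only through the tangency point $p_{AB}$.

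The first step is to identify the genuine ridges of $s_{0}^{+}$. Using the containment statements of Propositions \ref{empty-ridge1}, \ref{empty-ridge2} and \ref{empty-ridge5} together with the connectedness of intersections of Cygan spheres, I would show that the chunk of $I_{0}^{+}$ cut off by $I_{1}^{+}$ (resp. $I_{-1}^{+}$, resp. $\widehat I_{0}$) is contained in the chunk cut off by $I_{0}^{-}$ (resp. $I_{-1}^{-}$, resp. $I_{0}^{-}$): for each one upgrades the known statement that the intersection \emph{curve} lies in the interior of the relevant sphere to the statement that the whole chunk does, by a single sample-point check on the correct component. Consequently the ridges of $s_{0}^{+}$ are exactly the Giraud disk $s_{0}^{+}\cap s_{0}^{-}$ (Proposition \ref{Prop:ridgegiraud}, Figure \ref{fig:ridge-B-and-b}), the pair of sectors $s_{0}^{+}\cap s_{-1}^{-}$ (Proposition \ref{ridgesectors34p}, Figure \ref{fig:ridges-B-and-abA}), the pair of sectors $s_{0}^{+}\cap\widehat s_{-1}$ (Proposition \ref{ridgesectors34p}, Figure \ref{fig:ridges-B-and-abAbA}), and the isolated tangency point $p_{AB}$ on $I_{1}^{-}$ (Proposition \ref{B-pairwise-tangent}).

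Next I would assemble $\partial s_{0}^{+}$ from these pieces. On the spinal sphere of $B$ the ideal boundary $s_{0}^{+}\cap\partial\hc$ is that $2$-sphere with two disjoint open disks removed — one bounded by the boundary circle of the Giraud ridge on $I_{0}^{-}$, the other by the boundary circle of the sector configuration on $I_{-1}^{-}\cup\widehat I_{-1}$ (the tangency with $I_{1}^{-}$ at $p_{AB}$ removing nothing) — and hence is an annulus; meanwhile $\partial s_{0}^{+}\cap\hc$ is the disjoint union of the Giraud disk on $I_{0}^{-}$ and the disk obtained by gluing the two sectors on $I_{-1}^{-}$ to the two sectors on $\widehat I_{-1}$ along their shared edges, with exactly the vertex identifications recorded in the proof of Proposition \ref{ridgesectors34p}. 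Since deleting finitely many disk-bounded chunks from a $3$-ball leaves a $3$-ball, $s_{0}^{+}$ is a $3$-ball whose boundary is an annulus in $\partial\hc$ together with two disjoint disks in $\hc$; this is precisely a solid cylinder $D^{2}\times[0,1]$ with $\partial s_{0}^{+}\cap\hc=D^{2}\times\{0,1\}$, which proves (1).

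Part (2) runs along the same lines for $\widehat s_{0}=\widehat I_{0}\cap D_{\Gamma_{2}}$: by Corollary \ref{cor:intersections}(3) and the containments of Propositions \ref{empty-ridge4} and \ref{empty-ridge5}, the chunks of $\widehat I_{0}$ cut off by $I_{0}^{+}$ and $I_{1}^{-}$ are subsumed by those cut off by $I_{0}^{-}$ and $I_{1}^{+}$, so the ridges of $\widehat s_{0}$ are the two pairs of sectors $\widehat s_{0}\cap s_{0}^{-}$ and $\widehat s_{0}\cap s_{1}^{+}$ (Proposition \ref{empty-ridge6}, Figures \ref{fig:ridges-bAb-and-b} and \ref{fig:ridges-bAb-and-ABb}), together with the two tangency points at which $\widehat I_{0}$ touches its neighbours $\widehat I_{-1}$ and $\widehat I_{1}$. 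Because those neighbours meet $\widehat I_{0}$ tangentially rather than crossing it, the spinal sphere of $\widehat I_{0}$ is pinched at the two tangency points, so that $\widehat s_{0}\cap\partial\hc$ collapses to a union of $2$-disks meeting at those points while the four sector-ridges assemble into the lateral light cone $\partial\widehat s_{0}\cap\hc$; this exhibits $\widehat s_{0}$ as a solid light cone. The main obstacle is the combinatorial bookkeeping of the second and fourth paragraphs — rigorously promoting the curve-containments to chunk-containments so that only the genuine ridges survive, and then verifying that the two-plus-two sector configurations glue into a single disk (respectively into the light-cone boundary) with exactly the vertex identifications dictated by Propositions \ref{ridgesectors34p} and \ref{empty-ridge6}; once this is done the rest of the argument is formal.
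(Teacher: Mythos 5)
Your part (1) is essentially the paper's own argument: the ridges of $s_0^{+}$ are the Giraud disk $s_0^{+}\cap s_0^{-}$ together with the two sector-pairs $s_0^{+}\cap s_{-1}^{-}$ and $s_0^{+}\cap\widehat s_{-1}$, the remaining candidate intersections are swallowed by Propositions \ref{empty-ridge1}, \ref{empty-ridge2} and \ref{empty-ridge5}, the four sectors close up into a single disk along the crossed segments, and $s_0^{+}$ is therefore a $3$-ball whose boundary meets $\hc$ in two disjoint disks; the reduction to $k=0$ and the promotion of curve-containments to chunk-containments are sensible additions.

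Part (2), however, rests on a wrong mechanism. You attribute the pinching to the tangencies of $\widehat I_{0}$ with $\widehat I_{\pm1}$ at the ideal points $p_{B^{-1}AB^{-1}}$ and $A(p_{B^{-1}AB^{-1}})$, and claim that $\widehat s_{0}\cap\partial\hc$ is ``a union of $2$-disks meeting at those points.'' Neither is correct: by Proposition \ref{prop:side-plus} the ideal part of $\widehat s_{0}$ is two \emph{disjoint} bigons (with vertex sets $\{A(y),A(g)\}$ and $\{A(c),A(r)\}$) — these are the two caps of the solid light cone and do not touch — and the apex of the cone is not an ideal point at all, but the crossing point of the two crossed segments $\widehat I_{0}\cap I_{0}^{-}\cap I_{1}^{+}$, a point of $\hc$ (the common vertex of the four sectors in Figures \ref{fig:ridges-bAb-and-b} and \ref{fig:ridges-bAb-and-ABb}; the analogue of the computation in Proposition \ref{ridgesectors34p} gives $\langle V,V\rangle<0$ there). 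The light-cone structure comes from the gluing pattern: the two ridges $\widehat s_{0}\cap s_{0}^{-}$ and $\widehat s_{0}\cap s_{1}^{+}$ occupy the \emph{same} pair of opposite positions around the crossing point, so one sector of each glues to one sector of the other along both of its radial edges, forming one nappe whose ideal boundary is one bigon; the other two sectors form the second nappe, and the nappes meet only at the crossing point. (In part (1) the two sector-pairs sit in complementary positions, which is exactly why there they close up into a single disk instead.) This gluing analysis is the entire content of the paper's proof of (2); without it, and with the tangency points playing no role, your argument does not establish that $\partial\widehat s_{0}\cap\hc$ is a light cone.
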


\begin{proof}
The side $s_{k}^{+}$ is contained in the isometric sphere $I_{k}^{+}$. It is bounded by the ridges  $s_k^{+} \cap s_{k-1}^{-}$, $s_k^{+} \cap \widehat{s}_{k-1}$ and $s_k^{+} \cap s_{k}^{-}$. The ridges  $s_k^{+} \cap s_{k-1}^{-}$ and $s_k^{+} \cap \widehat{s}_{k-1}$ are pair of sectors on the isometric sphere $I_{k}^{+}$. Furthermore, $I_{k}^{+}\cap I_{k}^{-}\cap \widehat{s}_{k-1}$ is the union of two crossed segments. Therefore, the union of the ridges $\widehat{s}_{k-1}\cap s_{k}^{+}$ and $s_{k-1}^{-}\cap s_{k}^{+}$ is a topological disc. Proposition \ref{Prop:ridgegiraud} tell us that the smooth disc $s_k^{+} \cap s_{k}^{-}$ does not intersect with any other isometric sphere. This implies that $s_{k}^{+}$ is a topological solid cylinder. See Figure \ref{fig:bigside}.

The side $\widehat{s}_{k}$ is bounded by two  ridges   $\widehat{s}_k \cap s_k^{-}$ and $\widehat{s}_k \cap s_{k+1}^{+}$. This two ridges  are two pairs of sectors with common boundary which is the union of two crossed segments $\widehat{I}_k\cap I_k^{-} \cap I_{k+1}^{+}$.  From the gluing process of two ridges along their boundaries, we see that $\widehat{s}_{k}$ is topological a solid light cone. See Figure \ref{fig:smallside}.
\end{proof}
\begin{figure}
	\begin{center}
		\begin{tikzpicture}[scale=1]
		\coordinate (O) at (0,0) ;
		\draw[black,dashed,fill=gray] (O) ellipse (0.3 and 1) ;
		\coordinate (O1) at (3,0) ;
		\draw[black,fill=red] (O1) ellipse (0.3 and 1) ;
		\draw (0,1) --(3,1);
		\draw (0,-1) --(3,-1);
		\draw[black] (0,0) +(90:1) arc (90:270:0.3 and 1) ;
		\draw (3,-1) --(3,1);
		\draw (2.7,0) --(3.3,0);
		\path [fill=blue] (3,0) --(3,1)--(3,0) +(90:1) arc (90:180:0.3 and 1) -- (2.3,0)-- (3,0);
		\path [fill=blue] (3,0) --(3,-1)--(3,0) +(270:1) arc (270:360:0.3 and 1) -- (3.3,0)-- (3,0);
		\draw[->] (3.1,0.2) -- (4,1);
		\draw[->] (3.1,-0.2) -- (4,-1);
		\coordinate [label=right:$s_k^{+}\cap s_k^{-}$] (r1) at (-1.7,0);
		\coordinate [label=right:$s_k^{+}\cap s_{k-1}^{-}$] (r1) at (4.1,1);
		\coordinate [label=right:$s_k^{+}\cap \widehat{s}_{k-1}$] (r1) at (4.1,-1);
		\end{tikzpicture}
	\end{center}
	\caption{A schematic view of the side $s_k^{+}$.}
	\label{fig:bigside}
\end{figure}
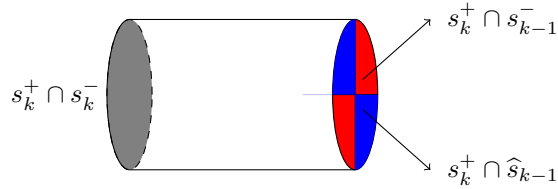

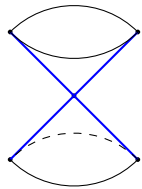
\begin{figure}
	\begin{center}
		\begin{tikzpicture}[scale=0.6]
		\fill (45:2) circle (1.5pt) (45*3:2) circle (1.5pt) (45*5:2) circle (1.5pt) (45*7:2) circle (1.5pt);
		\fill (0,0) circle (1.5pt);
		\draw[thick,blue] (0,0) -- (45:2);
		\draw[thick,blue] (0,0) -- (45*3:2);
		\draw[thick,blue] (0,0) -- (45*5:2);
		\draw[thick,blue] (0,0) -- (45*7:2);
		\draw (45:2) arc (45*1:45*3:2);
		\draw (45*3:2) arc (45*5:45*7:2);
		\draw (45*5:2) arc (45*5:45*7:2);
		\draw[dashed] (45*7:2) arc (45*1:45*3:2);
		\end{tikzpicture}
	\end{center}
	\caption{A schematic view of the side $\widehat{s}_k$.}
	\label{fig:smallside}
\end{figure}

Applying a version of the   Poincar\'{e} polyhedron theorem for coset decompositions, we show that $D_{\Gamma_2}$  will be the Ford domain for the group $\Gamma_2$ generated by $A$ and $B$.
\begin{thm}\label{thm:fundamental-domain34p}
$D_{\Gamma_2}$ is a fundamental domain for the cosets of $\langle A \rangle$ in $\Gamma_2$. Moreover, the group $\Gamma_2=\langle A, B \rangle$  is discrete and  has the presentation
\begin{equation*}
\langle  A, B: B^3=(A^{-1}B)^4=id\rangle.
\end{equation*}
\end{thm}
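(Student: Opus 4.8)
~\textbf{Proof strategy for Theorem~\ref{thm:fundamental-domain34p}.}
The plan is to apply the Poincar\'e polyhedron theorem for coset decompositions (in the form recalled in \cite{dpp, ParkerWill:2016}) to the candidate domain $D_{\Gamma_2}$, using the side--pairing maps $A^k B A^{-k}\colon s_k^{+}\to s_k^{-}$, $A^k B^{-1} A^{-k}\colon s_k^{-}\to s_k^{+}$ and $A^k B^{-1}AB^{-1}A^{-k}\colon \widehat{s}_k\to \widehat{s}_k^{\,-1}$, together with the generator $A$ of the stabilizer $\langle A\rangle$ of $q_\infty$. First I would verify that the side--pairing maps are genuine homeomorphisms between the claimed sides and that they respect the ridge structure: this is where Corollary~\ref{cor:intersections} and Propositions~\ref{Prop:ridgegiraud}--\ref{ridgecylinder-lightcone34p} do the work, since they already pin down exactly which isometric spheres meet which and with what combinatorial type (Giraud disk, union of two sectors, solid cylinder, solid light cone). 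Note that the element $B^{-1}AB^{-1}$ is conjugate via $B$ to its own inverse up to a power of $A$ (using $B^{-1}AB^{-1}=A^{-1}BA^{-1}BA^{-1}$ as recorded before Proposition~\ref{B-pairwise-intersection}), so $\widehat{I}_k$ is paired to another isometric sphere in the same $A$-orbit; I would make the bookkeeping of this pairing explicit.

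The second step is the \textbf{local tessellation} condition: one must check that around each ridge the images of $D_{\Gamma_2}$ under the cyclic sequence of side--pairings fit together to tile a neighbourhood. As in Section~\ref{sec-Ford-3pp}, the ridge $s_k^{+}\cap s_k^{-}$ is a Giraud disk cyclically permuted by the order-$3$ map $B$ (conjugated by $A^k$), so the cycle transformation is $A^kBA^{-k}$ with cycle relation $(A^kBA^{-k})^3=\mathrm{id}$; Giraud's theorem (Proposition~\ref{prop:Giraud}) gives the tiling around such ridges directly. The ridges of the form $s_k^{+}\cap s_{k-1}^{-}$, $s_k^{+}\cap\widehat{s}_{k-1}$, $\widehat{s}_k\cap s_k^{-}$, $\widehat{s}_k\cap s_{k+1}^{+}$ are unions of two sectors glued along crossed segments, and their cycle transformations will be powers of $B^{-1}AB^{-1}$ and of its relatives; the expected cycle relation is $(A^{-1}B)^4=\mathrm{id}$, equivalently $(B^{-1}AB^{-1}\cdot B)^{\,?}=\mathrm{id}$ after rewriting, so I would trace each such ridge cycle carefully and confirm the relator it produces is a consequence of $B^3=(A^{-1}B)^4=\mathrm{id}$. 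The sides $\widehat{s}_k$ being solid light cones means there is also a ridge cycle living on the ``cone point'' locus, and one must check the tessellation there too.

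The third step is \textbf{completeness}: I would construct a consistent system of horoballs at each parabolic fixed point. By Corollary~\ref{cor:intersections} the only tangencies among the isometric spheres occur at $A^k(p_{AB})$ (where $I_k^{+}$ meets $I_{k+1}^{-}$) and at $A^{k+1}(p_{B^{-1}AB^{-1}})$ (where $\widehat{I}_k$ meets $\widehat{I}_{k+1}$), and at $q_\infty$ itself. Exactly as in the $\Delta_{3,\infty,\infty;\infty}$ case (Proof of Theorem~\ref{thm:fundamental-domain3pp}), I would follow the side--pairing maps around the cycle of each such tangency point, show that up to powers of $A$ the cycle stabilizer is unipotent (generated by $AB$ at $p_{AB}$, and by an appropriate conjugate of $AB^{-1}AB^{-1}$ or equivalently a unipotent word at $p_{B^{-1}AB^{-1}}$), and invoke the fact that a unipotent element preserves every horoball at its fixed point; the consistency at $q_\infty$ is automatic since $D_{\Gamma_2}$ is $\langle A\rangle$-invariant by construction. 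Having all three conditions, the Poincar\'e polyhedron theorem then simultaneously gives that $D_{\Gamma_2}$ is a fundamental domain for the cosets of $\langle A\rangle$, that $\Gamma_2$ is discrete, and that the only relations are the ridge-cycle relations, which reduce to $B^3=(A^{-1}B)^4=\mathrm{id}$.

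The main obstacle I anticipate is the \textbf{local tessellation around the sector-type ridges and around the apex of the light-cone sides $\widehat{s}_k$}: unlike the Giraud ridge, where Giraud's theorem gives the answer for free, these ridges have a more delicate combinatorial geometry (crossed segments, two sectors, with one of the two sectors of each relevant Giraud disk lying \emph{inside} a neighbouring isometric sphere, cf.\ Propositions~\ref{ridgesectors34p} and \ref{empty-ridge6}), so one must be careful that the cycle of side--pairings genuinely closes up, produces the relator $(A^{-1}B)^4$ and tiles a full neighbourhood rather than overlapping or leaving a gap. A secondary but real concern is that several of the disjointness and tangency facts underlying Corollary~\ref{cor:intersections} rely on numerical optimization (the \texttt{DirectSearch}/Maple estimates in Propositions~\ref{empty-ridge1}ff.); for a fully rigorous argument these inequalities should ultimately be replaced by exact algebraic bounds, but for the purposes of this proof we take Corollary~\ref{cor:intersections} as established.
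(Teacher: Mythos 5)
Your proposal follows essentially the same route as the paper: the Poincar\'e polyhedron theorem for cosets of $\langle A\rangle$, the side--pairings $A^kBA^{-k}\colon s_k^{+}\to s_k^{-}$ and $A^k(B^{-1}AB^{-1})A^{-k}\colon \widehat{s}_k\to \widehat{s}_{k-1}$ (note the target is $\widehat{s}_{k-1}$, not ``$\widehat{s}_k^{-1}$''), the two ridge cycles yielding $(A^kBA^{-k})^3=\mathrm{id}$ and $A^k(A^{-1}B)^4A^{-k}=\mathrm{id}$, and completeness via unipotent cycle stabilizers at the tangency points. The ``main obstacle'' you flag around the sector-type ridges is resolved in the paper exactly as for the Giraud ridge: since each such ridge sits inside a Giraud disk contained in precisely three bisectors (Proposition~\ref{prop:Giraud}), three copies of $D_{\Gamma_2}$ tile its neighbourhood, and the apex locus of $\widehat{s}_k$ is codimension three, so it imposes no extra ridge condition.
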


The side pairing maps are defined by:
\begin{eqnarray*}
 A^{k}BA^{-k}:& s_{k}^{+} \longrightarrow s_{k}^{-}, \\
 A^{k}(B^{-1}AB^{-1})A^{-k} : & \widehat{s}_{k} \longrightarrow \widehat{s}_{k-1}.
\end{eqnarray*}

Note that $D_{\Gamma_2}$ is invariant under a group $\langle A\rangle$ that is compatible with the side pairing maps.

By Corollary \ref{cor:intersections}, the ridges are $s_{k}^{+} \cap s_{k-1}^{-}$, $s_{k}^{+} \cap \widehat{s}_{k-1}$, $s_{k}^{+} \cap s_{k}^{-}$, $s_{k}^{-} \cap \widehat{s}_{k}$  and $s_{k}^{-} \cap s_{k+1}^{+}$  for $k\in\mathbb{Z}$, and the sides and ridges are related as follows:
\begin{itemize}
\item The side $s_k^{+}$ is bounded by the ridges $s_k^{+} \cap s_{k-1}^{-}$, $s_k^{+} \cap \widehat{s}_{k-1}$ and $s_k^{+} \cap s_{k}^{-}$.
\item The side $s_k^{-}$ is bounded by the ridges $s_k^{-} \cap \widehat{s}_k$, $s_k^{-} \cap s_{k+1}^{+}$  and $s_k^{+} \cap s_{k}^{-}$.
\item The side $\widehat{s}_k$ is bounded by the ridges $\widehat{s}_k \cap s_k^{-}$ and $\widehat{s}_k \cap s_{k+1}^{+}$.
\end{itemize}
\begin{prop}\label{prop:sidepairing34p}
\begin{enumerate}
\item[(1)] The  side pairing $A^{k}BA^{-k}$ is a homeomorphism from  $s_{k}^{+}$ to $s_{k}^{-}$. Moreover, it sends the ridges $s_k^{+} \cap s_{k-1}^{-}$, $s_k^{+} \cap \widehat{s}_{k-1}$ and $s_k^{+} \cap s_{k}^{-}$ to $s_k^{-} \cap \widehat{s}_{k}$, $s_k^{-} \cap s_{k+1}^{+}$ and $s_k^{+} \cap s_{k}^{-}$ respectively.
\item[(2)]  The  side pairing $ A^{k}(B^{-1}AB^{-1})A^{-k}$ is a homeomorphism from  $\widehat{s}_{k}$ to $\widehat{s}_{k-1}$. Moreover, it sends the ridges $\widehat{s}_k \cap s_{k}^{-}$ and $\widehat{s}_k \cap s_{k+1}^{+}$ to $s_k^{+} \cap \widehat{s}_{k-1}$ and $s_{k-1}^{-} \cap \widehat{s}_{k-1}
    $ respectively.
\end{enumerate}
\end{prop}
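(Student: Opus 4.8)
\textbf{Proof proposal for Proposition \ref{prop:sidepairing34p}.}

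The plan is to reduce everything to the case $k=0$ by conjugating by $A^{k}$, then verify the claims for the two side pairings $B\colon s_{0}^{+}\to s_{0}^{-}$ and $B^{-1}AB^{-1}\colon\widehat{s}_{0}\to\widehat{s}_{-1}$ at the level of the defining isometric spheres. The key observation, already used implicitly in Section \ref{sec-Ford-3pp}, is that checking an isometry sends one ridge to another amounts to checking that the corresponding unordered triple of points defining the two coequidistant bisector intersections is permuted correctly; this is the content of the remark following Proposition \ref{prop:Giraud}. So for part (1) I would record that a ridge of $s_{0}^{+}$ is cut out on $I_{0}^{+}=I(B)$ by a second isometric sphere, and each such ridge corresponds to a triple of centers $\{q_{\infty}, B^{-1}(q_{\infty}), g^{-1}(q_{\infty})\}$ for the appropriate $g$. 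Applying $B$ permutes $q_{\infty}\mapsto B(q_{\infty})$ and $B^{-1}(q_{\infty})\mapsto q_{\infty}$; it then remains to identify $B(g^{-1}(q_{\infty}))$ as the third center of the target ridge. Concretely: $B$ maps $I(B)$ to $I(B^{-1})$ by the first bullet of the proposition on isometric spheres; the ridge $s_0^{+}\cap s_{-1}^{-}=I(B)\cap I(A^{-1}BA^{-1})$ is carried to $I(B^{-1})\cap I(BA^{-1}BA^{-1})$, and since $BA^{-1}BA^{-1}=B\cdot A^{-1}BA^{-1}$ has the same isometric sphere as $A^{-1}B^{-1}AB^{-1}=\widehat{I}_{0}$ (using $B^{3}=\mathrm{id}$, so $B^{-1}=B^{2}$, together with $(A^{-1}B)^{4}=\mathrm{id}$ from Theorem \ref{thm:fundamental-domain34p}), this is exactly $s_0^{-}\cap\widehat{s}_{0}$. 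Similarly $s_0^{+}\cap\widehat{s}_{-1}=I(B)\cap I(A^{-1}BA^{-1}BA^{-1})$ goes to $I(B^{-1})\cap I(BA^{-1}BA^{-1}BA^{-1})$, and the algebraic identity $BA^{-1}BA^{-1}BA^{-1}=A B A^{-1}$ (again via the two relations) identifies the image with $s_0^{-}\cap s_{1}^{+}$. Finally $B$ fixes the Giraud disk $I(B)\cap I(B^{-1})$ setwise because it cyclically permutes $\{q_{\infty},B^{-1}(q_{\infty}),B(q_{\infty})\}$, exactly as in Proposition \ref{prop:sideparing3pp}, so $s_0^{+}\cap s_0^{-}$ is sent to itself. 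Homeomorphism onto the image follows since $B$ is a global isometry and, by Corollary \ref{cor:intersections} and Proposition \ref{Prop:ridgegiraud}, it sends the exterior/interior configuration of isometric spheres meeting $s_0^{+}$ to that meeting $s_0^{-}$, hence preserves the facet stratification.

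For part (2), the same bookkeeping applies with $\widehat{I}_{0}=I(B^{-1}AB^{-1})$ in place of $I_{0}^{+}$. The side pairing $h:=B^{-1}AB^{-1}$ maps $\widehat{I}_{0}=I(h)$ to $I(h^{-1})=I(BA^{-1}B)$; using $B^{3}=\mathrm{id}$ one checks $I(BA^{-1}B)=\widehat{I}_{-1}=I(A^{-1}B^{-1}AB^{-1}A)$, so $h$ maps $\widehat{s}_{0}$ onto $\widehat{s}_{-1}$. The ridge $\widehat{s}_{0}\cap s_{0}^{-}=I(h)\cap I(B^{-1})$ is carried to $I(h^{-1})\cap I(B^{-1}h^{-1})$; the element $B^{-1}h^{-1}=B^{-1}BA^{-1}B=A^{-1}B$ has isometric sphere equal to that of $B^{-1}A^{-1}=(AB)^{-1}$... more directly $A^{-1}B$ and $A^{-1}BA^{-1}$ share an isometric sphere up to the unipotent factor $A^{-1}$ acting on the left... so I would instead track it through $I(B^{-1}h^{-1})$ using the identity $I(fg)=I(g)$ for unipotent $f$ fixing $q_\infty$: since $B^{-1}h^{-1}=A^{-1}B$ and $A^{-1}$ is not unipotent this needs the computation $I(A^{-1}B)=A\,I(B)=I_{1}^{+}$ wait — here one uses $I(A^{k}g)=A^{-k}I(g)$, giving $I(A^{-1}B)=A\cdot I(B)=I_{1}^{+}$, hence the target ridge is $I(h^{-1})\cap I_{1}^{+}\cap\cdots$; reconciling this with the stated target $s_{k}^{+}\cap\widehat{s}_{k-1}$ at $k=0$, i.e. $s_{0}^{+}\cap\widehat{s}_{-1}$, requires the parabolic tangency data of Corollary \ref{cor:intersections}(3) to pin down which of the several spheres meeting $\widehat{I}_{-1}$ is hit. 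The cleanest route is to track the tangency point $A(p_{B^{-1}AB^{-1}})$ of Proposition \ref{B-pairwise-tangent}: $h$ sends it to a specific parabolic fixed point, and since by Corollary \ref{cor:intersections}(3) each facet of $\widehat{s}_{-1}$ is distinguished by its parabolic fixed point on $\partial\hc$, this determines the target ridges uniquely. So in the write-up I would (i) fix the homogeneous coordinates of the relevant parabolic fixed points, (ii) verify the image point under $h$, and (iii) invoke Corollary \ref{cor:intersections} to conclude.

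The main obstacle I anticipate is purely algebraic: producing the word identities in $\Gamma_{2}$ — e.g. $BA^{-1}BA^{-1}BA^{-1}=ABA^{-1}$, $BA^{-1}B=\widehat{(\cdot)}$ type equalities, and their conjugates — rigorously from the two presentation relations $B^{3}=\mathrm{id}$ and $(A^{-1}B)^{4}=\mathrm{id}$, and making sure that ``same word up to left multiplication by a power of $A$'' is handled consistently with the rule $I(A^{k}g)=A^{-k}I(g)$ for isometric spheres. None of this is deep, but it is exactly the place where a sign or an index can slip. I would therefore present the proof as: reduce to $k=0$; tabulate the five (resp. two) ridges of $s_{0}^{\pm}$ (resp. $\widehat{s}_{0}$) together with the word and the parabolic fixed point labelling each; apply the side pairing isometry, compute the image word and image fixed point; and read off from Corollary \ref{cor:intersections} and Proposition \ref{Prop:ridgegiraud} that the image is the asserted target ridge. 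Injectivity and continuity are automatic because the maps are restrictions of global isometries of $\hc$, and surjectivity onto the stated ridge follows because the side pairing carries $D_{\Gamma_{2}}$ to a neighbouring copy whose facet adjacent along that ridge is precisely the image side, by the local tessellation part of the proof of Theorem \ref{thm:fundamental-domain34p}.
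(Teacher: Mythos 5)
Your overall skeleton --- conjugating by $A^{k}$ to reduce to $k=0$, using the triple-of-centers description of each Giraud disk together with word identities coming from $B^{3}=(A^{-1}B)^{4}=\mathrm{id}$ to match up the bounding isometric spheres, and using the order-3 cyclic permutation of $\{q_{\infty},B^{-1}(q_{\infty}),B(q_{\infty})\}$ to handle the ridge $s_{0}^{+}\cap s_{0}^{-}$ --- is the same as the paper's. The gap is in the remaining ridges: the bookkeeping you describe only shows that $B$ carries the \emph{Giraud disk} $I_{0}^{+}\cap I_{-1}^{-}$ onto the Giraud disk $I_{0}^{-}\cap\widehat{I}_{0}$, whereas by Propositions \ref{ridgesectors34p} and \ref{empty-ridge6} the ridges $s_{0}^{+}\cap s_{-1}^{-}$ and $s_{0}^{-}\cap\widehat{s}_{0}$ are only two of the four sectors into which each of these disks is divided (the complementary pairs lying inside $\widehat{I}_{-1}$ and $I_{1}^{+}$ respectively). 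So after the word computation one must still check that the distinguished pair of sectors is sent to the distinguished pair and not to the complementary one. Your proposal elides this: in part (1) you identify ``the ridge'' with the full bisector intersection, and in part (2) you propose to resolve the ambiguity by tracking the tangency point $A(p_{B^{-1}AB^{-1}})$ of Proposition \ref{B-pairwise-tangent} and asserting that each facet of $\widehat{s}_{-1}$ is distinguished by its parabolic fixed point. That mechanism does not work: the four sectors of a given Giraud disk share their vertices (the crossing points of the two segments), and Corollary \ref{cor:intersections} records tangencies between spheres rather than a labelling of the ridges of $\widehat{s}_{-1}$ by parabolic points, so a boundary point cannot tell the two pairs of sectors apart.

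The paper closes exactly this gap with an interior sample point: it notes that the complementary sectors of $I_{0}^{+}\cap I_{-1}^{-}$ lie inside $\widehat{I}_{-1}$ while those of $I_{0}^{-}\cap\widehat{I}_{0}$ lie inside $I_{1}^{+}$, exhibits an explicit point $p^{\star}$ in the ridge $s_{0}^{+}\cap s_{-1}^{-}$, and verifies that $B(p^{\star})$ lies in the exterior of $I_{1}^{+}$; connectedness of the sectors then forces the whole ridge onto $s_{0}^{-}\cap\widehat{s}_{0}$. Your closing remark that $B$ ``sends the exterior/interior configuration of isometric spheres meeting $s_{0}^{+}$ to that meeting $s_{0}^{-}$'' would also suffice if carried out explicitly (one can check, using the identity $B^{-1}AB^{-1}AB^{-1}=A^{-1}BA^{-1}$, that $B$ carries the part of $I_{0}^{+}$ interior to $\widehat{I}_{-1}$ into the interior of a sphere equal to $I_{1}^{+}$), but as written it asserts precisely the fact that needs proving. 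Everything else in your plan --- the reduction to $k=0$, the order-3 argument for $s_{0}^{+}\cap s_{0}^{-}$, and deducing the homeomorphism property from the fact that the side pairings are restrictions of global isometries --- matches the paper's proof.
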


\begin{proof}

We only need to consider the case $k=0$.
$s_0^{+}$ is contained in the isometric sphere $I_0^{+}$ and $s_0^{-}$ in the isometric sphere $I_0^{-}$. The ridge $s_0^{+} \cap s_0^{-}$ is the whole smooth disc $I_0^{-} \cap I_0^{+}$, which is defined by the triple equality
$$
|\langle {\bf z}, {\bf{q}_{\infty}} \rangle|=|\langle {\bf z}, B^{-1}({\bf{q}_{\infty}}) \rangle|=|\langle {\bf z}, B({\bf{q}_{\infty}}) \rangle|.
$$
The elliptic element $B$ of order three sends this ridge to itself.

The ridge $s_0^{+} \cap s_{-1}^{-}$ is a pair of sectors contained in the disc $I_0^{+}  \cap I_{-1}^{-}$, which is defined by the triple equality
$$
|\langle {\bf z}, {\bf{q}_{\infty}} \rangle|=|\langle {\bf z}, B^{-1}({\bf{q}_{\infty}}) \rangle|=|\langle {\bf z}, A^{-1}B({\bf{q}_{\infty}}) \rangle|.
$$

$B$ maps $q_{\infty}$ to $B(q_{\infty})$, $B^{-1}(q_{\infty})$ to $q_{\infty}$ and $A^{-1}B(q_{\infty})$ to $BA^{-1}B(q_{\infty})$. That is,
$B$ maps the disc $I_0^{+}  \cap I_{-1}^{-}$ to the disc $I_0^{-1}  \cap \widehat{I}_{0}$.
Note that the other pair of sectors different from the ridge on the disc $I_0^{+}  \cap I_{-1}^{-}$  lie in  the interior of the isometric sphere $\widehat{I}_{-1}$,  and the other pair of sectors different from the ridge on the disc $I_0^{-}  \cap\widehat{I}_{0}$  lie in  the interior of the isometric sphere $I_1^{+}$. One can check that the point $p^{\star}$ with  homogeneous coordinates 
\begin{eqnarray*}
 p^{\star}=\left[
	\begin{array}{c}
		-\frac{\sqrt{2}}{2}-\frac{\sqrt{2}}{2}i\\
	1 \\
		1 \\
	\end{array}
	\right]\\
\end{eqnarray*}
 is contained in the ridge $s_0^{+} \cap s_{-1}^{-}$.   It is easily verified that $B(p^{\star})$ lies in the exterior of $I_1^{+}$. Thus $B(p^{\star})$ is contained in the ridge $s_0^{-1}  \cap \widehat{s}_{0}$, lies in  the exterior of the isometric sphere $I_1^{+}$.
Hence $B$ maps the ridge $s_0^{+} \cap s_{-1}^{-}$ to the ridge $s_0^{-1}  \cap \widehat{s}_{0}$. Similarly, the rest  can be proved by using the same arguments.
\end{proof}

According to the side-paring maps, the ridge cycles are:
\begin{eqnarray*}
&(s_k^{+} \cap s_{k-1}^{-}, s_k^{+} , s_{k-1}^{-}) \xrightarrow{A^{k}BA^{-k}}
(s_k^{-} \cap \widehat{s}_k, \widehat{s}_k , s_k^{-})  \xrightarrow{A^{k}(B^{-1}AB^{-1})A^{-k}} \\
&(\widehat{s}_{k-1} \cap s_k^{+}, s_k^{+} , \widehat{s}_{k-1})  \xrightarrow{A^{k}BA^{-k}}
(s_k^{-} \cap s_{k+1}^{+}, s_{k+1}^{+} , s_k^{-}) \xrightarrow{A^{-1}} (s_k^{+} \cap s_{k-1}^{-}, s_k^{+} , s_{k-1}^{-}),
\end{eqnarray*}
and
\begin{eqnarray*}
  &(s_k^{+} \cap s_{k}^{-}, s_k^{+} , s_{k}^{-})  \xrightarrow{A^{k}BA^{-k}}
( s_{k}^{-} \cap s_k^{+} , s_{k}^{-} , s_k^{+} ).
\end{eqnarray*}

 Every ridge cycle is equivalent to one of the cycles listed above.

Note that $B^{-1}AB^{-1}=A^{-1}BA^{-1}BA^{-1}$. Thus the cycle transformations are
$$
A^{-1}\cdot A^{k}BA^{-k}\cdot A^{k}(A^{-1}BA^{-1}BA^{-1})A^{-k}\cdot A^{k}BA^{-k} =A^{k}(A^{-1}B)^{4}A^{-k}=id.
$$
and
$$
(A^{k}BA^{-k})^3=A^{k}B^{3}A^{-k},
$$
which are equal to the identity map, since we have $B^3=(A^{-1}B)^{4}=id$.

After Propositions  \ref{ridgecylinder-lightcone34p} and  \ref{prop:sidepairing34p}, we now show

\paragraph{{\bf Local tessellation}.}

In our case, the intersection of each pair of  two bisectors is a smooth disk. It follows from Proposition \ref{prop:Giraud} that the ridges of $D_{\Gamma_2}$ are on precisely three
isometric spheres, hence there are three copies of $D_{\Gamma_2}$ tiling its neighborhood. Therefore the ridge cycles satisfy the local tessellation
conditions of the Poincar\'{e} polyhedron theorem.

\paragraph{{\bf Completeness}}

We must construct a system of consistent horoballs at the parabolic fixed points.
First, we consider the side pairing maps on the parabolic fixed points in Proposition \ref{prop:sidepairing34p}.
We have
\begin{eqnarray*}
  B &:&   p_{AB} \longrightarrow B(p_{AB}),\\
   AB^{-1} A^{-1}&:&   p_{AB} \longrightarrow A(p_{AB}).
  \end{eqnarray*}

  Up to powers of $A$,  the cycles for the parabolic fixed points are the following
$$
p_{AB} \xrightarrow{AB^{-1}A^{-1}} A(p_{AB})\xrightarrow{A^{-1}} p_{AB}.
$$
That is,  $p_{AB}$ is fixed by $B^{-1}A^{-1}$.
The element $B^{-1}A^{-1}$ is parabolic and so preserves all horoballs at $p_{AB}$.

This ends the proof of  Theorem \ref{thm:fundamental-domain34p}.

\section{Manifold at infinity of the complex hyperbolic triangle group  $\Delta_{3,4, \infty;\infty}$}\label{section:3-mfd-34p}

Based on the results in Section \ref{sec-ford-34p}, in particular, the combinatorial structures of $I^{\pm}_{k} \cap \overline {\bf H}^2_{\mathbb{C}}$ and  $\widehat{I}_{k} \cap \overline {\bf H}^2_{\mathbb{C}}$, we study the manifold  at infinity of the even subgroup $\Gamma_2$ of the complex hyperbolic triangle group  $\Delta_{3,4,\infty;\infty}$ in this section.
We restate Theorem \ref{thm:34p} as:

\begin{thm} The manifold at infinity of $\Gamma_2$ is homeomorphic to the hyperbolic 3-manifold $m295$ in the Snappy  Census.
\end{thm}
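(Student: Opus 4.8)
The plan is to follow the same strategy that worked for $\Delta_{3,\infty,\infty;\infty}$ in Section \ref{section:3ppmanifold}, but adapted to the more complicated combinatorics of $D_{\Gamma_2}$. First I would analyze the ideal boundary $\partial_\infty D_{\Gamma_2} = D_{\Gamma_2} \cap \partial\hc$, using the combinatorial description of the sides $s_k^\pm$ and $\widehat{s}_k$ obtained in Proposition \ref{ridgecylinder-lightcone34p} together with the tangency and exterior relations in Corollary \ref{cor:intersections}. Since the sides $s_k^+$ and $s_k^-$ are paired by the order-$3$ map $A^kBA^{-k}$ and the spinal spheres $I_k^+, I_k^-, \widehat{I}_k$ are mutually tangent at parabolic fixed points $A^k(p_{AB})$ and $A^{k+1}(p_{B^{-1}AB^{-1}})$, the ideal boundary is again an unknotted infinite-genus handlebody invariant under $\langle A\rangle$; this is the content needed for the analogue of Proposition \ref{prop:ideal-boundary}, and I would record it as a proposition (the Proposition \ref{prop:unknotted-34p} referenced in the excerpt), proving the ``unknotted'' part by exhibiting an $A$-invariant $\mathbb{R}$-circle as in the $3,\infty,\infty$ case.

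Next I would build the global combinatorial model of $\partial_\infty D_{\Gamma_2}$ promised in Subsection \ref{subsection:globalmodel34p}: one specifies the cell structure of the singular surface $\bigcup_k (s_k^\pm \cup \widehat{s}_k) \cap \partial\hc$ abstractly, and then verifies by the geometric estimates of Section \ref{sec-ford-34p} (Cygan-distance computations and the Giraud-disk parametrizations already set up there) that the geometric realization coincides with the true ideal boundary. Then I would choose a fundamental domain for the $\langle A\rangle$-action by cutting with piecewise-linear planes $E_1, E_2$ (again only partly geometric, the relevant portions being parts of fans through the tangency points), obtaining a region $H$; then cut $H$ along finitely many disks --- some parts of fans, others unions of ruled surfaces as with the disk $D_2$ of Subsection \ref{subsection:3pphandlebody} --- until one reaches a handlebody $H'$, and finally cut $H'$ into a topological $3$-ball $N$. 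The key point, exactly as in Section \ref{subsection:3ppmanifoldfinal}, is that $N$ is a piecewise-smooth $3$-ball embedded in $\partial\hc = S^3$ whose boundary $2$-sphere bounds a ball on each side.

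With the ball $N$ and its face identification structure in hand, I would read off the side-pairing maps of the fundamental domain for $\Gamma_2$ acting on $\Omega_{\Gamma_2}$ (restrictions of $A^kBA^{-k}$, $A^k(B^{-1}AB^{-1})A^{-k}$, and the $E_i$- and $D_i$-pairings introduced by the cuts), enumerate the ridge cycles in a table as in Table \ref{table:edgecircle3pp}, extract the corresponding cycle relations, and thereby obtain an explicit finite presentation of $\pi_1$ of the manifold at infinity $M'$ of $\Gamma_2$. Finally I would compare this presentation with the known presentation of $\pi_1(m295)$ from the SnapPy census: using Magma (or GAP) one checks the two finitely presented groups are isomorphic, then by the prime decomposition of $3$-manifolds \cite{Hempel} together with the Poincar\'e conjecture concludes $M' \cong m295$, since both are irreducible (being Dehn fillings / link complements with the given hyperbolic structure) with the same fundamental group, and a $3$-manifold with the homotopy type of $m295$ and trivial connected summands must be $m295$ itself.

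The main obstacle I expect is the middle step: choosing the cutting disks for $\Delta_{3,4,\infty;\infty}$ correctly and proving rigorously that they cut $H$ (and then $H'$) into a ball, i.e. that their interiors are disjoint from all the (infinitely many) spinal spheres and from all $A$-translates of the $E_i$'s. Because $\widehat{I}_k$ has a different radius and the side $\widehat{s}_k$ is a ``solid light cone'' rather than a solid cylinder, the handlebody $\partial_\infty D_{\Gamma_2}$ has a genuinely richer combinatorial structure than in the $\Delta_{3,\infty,\infty;\infty}$ case, so the number and complexity of the cutting disks grows, and each must be handled by explicit (though routine) Heisenberg-coordinate estimates of the type appearing in Section \ref{subsection:3ppfundamentaldomain}. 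Verifying the ridge-cycle bookkeeping and matching the resulting presentation to $m295$ is then comparatively mechanical, modulo a finite computer-algebra check.
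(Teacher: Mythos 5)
Your proposal follows essentially the same route as the paper: combinatorial analysis of the sides and ridges from Section \ref{sec-ford-34p}, a global combinatorial model of $\partial_\infty D_{\Gamma_2}$ verified by geometric estimates, unknottedness via an $A$-invariant curve contained in the complement of the Ford domain (Proposition \ref{prop:unknotted-34p}), a fundamental domain for $\langle A\rangle$ cut along finitely many disks into a ball, ridge cycles and cycle relations, an explicit presentation of $\pi_1$, a Magma comparison with $\pi_1(m295)$, and prime decomposition plus the Poincar\'e conjecture to upgrade the group isomorphism to a homeomorphism.

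The one step where your plan as literally written would not go through is the cutting. You propose geometric piecewise-linear planes (parts of fans through the tangency points) together with ``routine Heisenberg-coordinate estimates,'' transplanting the $(3,\infty,\infty)$ method. The paper records explicitly that this fails here: no piecewise geometrically meaningful plane can be found passing simultaneously through the required boundary arcs on the spinal spheres and the parabolic point $A^{-1}(p_{AB})$, and the defining equations of the relevant triple intersections (and of the $B$-images of geometric arcs, which one needs in order to identify the side-pairing maps on the cut faces) cannot be written down in closed form. The paper's workaround is the hybrid cut: the surface $E'$ and the three disks $D_1,D_2,D_3$ used to reduce $H'$ to a ball are geometric only along their boundaries (arcs lying on spinal spheres and passing through the distinguished points $u$, $B(u)$, $B^2(u)$, $y,g,c,r$ and their $A$-images), and purely topological in their interiors; this is exactly enough to read off the identifications. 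So your outline is correct, but the middle step you flag as the ``main obstacle'' genuinely cannot be handled by coordinate estimates alone and must be replaced by this geometric-on-the-boundary, topological-in-the-interior device before the ridge-cycle bookkeeping and the Magma check can proceed.
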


The ideal boundary of  $D_{\Gamma_2}$ is made up of the ideal boundaries of the  sides $s_k^{\pm}$ and $\widehat{s}_k$.  For convenience,  when we speak of sides and ridges we implicitly mean their intersections with $\partial \hc$ in this section.

We first take four points in Heiserberg coordinates,
\begin{eqnarray*}
y&=&\left[\frac{4}{3}+\frac{\sqrt{2}}{3}i,0\right], \\
c&=&\left[\frac{4}{3}-\frac{\sqrt{2}}{3}i,0\right], \\
g&=&\left[\frac{2}{3}+\frac{\sqrt{2}}{3}i,-\frac{4\sqrt{2}}{3}\right], \\
r&=&\left[\frac{2}{3}-\frac{\sqrt{2}}{3}i,\frac{4\sqrt{2}}{3}\right]. \\
\end{eqnarray*}
In Figure \ref{figure:34pabstract}, the point  $y$ (resp. $g$, $c$, $r$) is indicated by a small yellow (resp. green, cyan, red) dot.

By simply calculation, we have
\begin{eqnarray*}
	A(y)=B(g)&=& \left[-\frac{1}{6}+\frac{2\sqrt{2}-3\sqrt{7}}{6}i, \frac{2\sqrt{7}}{3}+\sqrt{2}\right],\\
	A(g)=B(c)&=&\left[-\frac{5}{6}+\frac{2\sqrt{2}-3\sqrt{7}}{6}i, \frac{4\sqrt{7}-\sqrt{2}}{3}\right],\\
	A(c)=B(r)&=&\left[-\frac{1}{6}-\frac{2\sqrt{2}+3\sqrt{7}}{6}i, \frac{2\sqrt{7}}{3}-\sqrt{2}\right],\\
	A(r)=B(y)&=&\left[-\frac{5}{6}-\frac{2\sqrt{2}+3\sqrt{7}}{6}i, \frac{4\sqrt{7}+\sqrt{2}}{3}\right].
\end{eqnarray*}

\begin{figure}
\begin{center}
\begin{tikzpicture}
\node at (0,0) {\includegraphics[width=6cm,height=6cm]{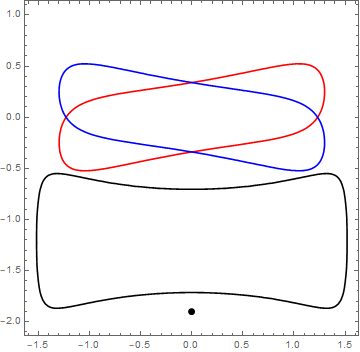}};
\coordinate [label=left:$\scriptstyle{I_{0}^{+}\cap I_{-1}^{-}}$] (S) at (2.5,2.2);
\coordinate [label=left:$\scriptstyle{I_{0}^{+}\cap \widehat{I}_{-1}}$] (S) at (-1, 2.2);
\coordinate [label=left:$\scriptstyle{I_{0}^{+}\cap I_{1}^{-}}$] (S) at (-1, -1.8);
\coordinate [label=left:$\scriptstyle{y}$] (S) at (0.45, 1.8);
\coordinate [label=left:$\scriptstyle{c}$] (S) at (0.45, 0.3);
\coordinate [label=left:$\scriptstyle{p_{AB}}$] (S) at (0.4, -2.5);
\coordinate [label=left:$\scriptstyle{g}$] (S) at (-1.9, 1);
\coordinate [label=left:$\scriptstyle{r}$] (S) at (2.7, 1);
\end{tikzpicture}
\end{center}
  \caption{The ideal side $s_{0}^{+}$ is the region exterior to the three Jordan closed curves $I_{0}^{+}\cap I_{-1}^{-}$, $I_{0}^{+}\cap \widehat{I}_{-1}$ and $I_{0}^{+}\cap I_{1}^{-}$
on the spinal sphere of $B$. These Jordan curves are drawn with geographical coordinates introduced in \cite{ParkerWill:2016}.}
	\label{figure:34p0}
\end{figure}

\begin{figure}[htbp]
	\centering
	\begin{minipage}[t]{0.48\textwidth}
		\centering
		\begin{tikzpicture}
		\node at (0,0) {\includegraphics[width=5cm]{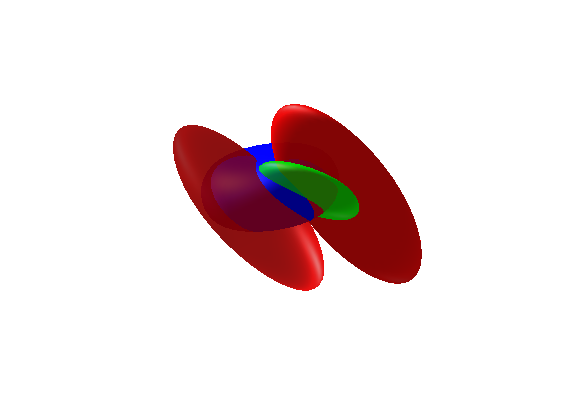}};
		\coordinate [label=left:$I_{0}^{-}$] (S) at (-0.6,0.9);
		\coordinate [label=left:$I_{-1}^{-}$] (S) at (0.6,1);
		\coordinate [label=above:$I_{0}^{+}$] (S) at (-0.3,0.4);
		\draw[->] (0.5,-0.15)--(0.5,-0.65);
		\coordinate [label=below:$\widehat{I}_{-1}$] (S) at (0.5,-0.65);
		\end{tikzpicture}
		\caption{The side $s_{0}^{+}$ on the blue spinal sphere $I_{0}^{+}$ is topologically an annulus.}
		\label{figure:side-B}
	\end{minipage}
	\begin{minipage}[t]{0.48\textwidth}
		\centering
		\begin{tikzpicture}
		\node at (0,0) {\includegraphics[width=5cm]{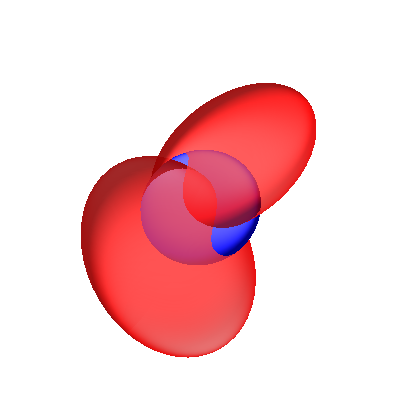}};
		\coordinate [label=left:$I_{0}^{-}$] (S) at (1,-1.8);
		\draw[->] (0.7,-0.35)--(0.9,-0.35);
		\coordinate [label=right:$\widehat{I}_{0}$] (S) at (0.9,-0.35);
		\coordinate [label=right:$I_{1}^{+}$] (S) at (1.4,1);
		\end{tikzpicture}
		\caption{The side $\widehat{s}_{0}$ on the blue spinal sphere with two disjoint connect components.}
		\label{figure:side-bAb}
	\end{minipage}
\end{figure}

Now we study the combinatorial properties of the sides. See Figures 	\ref{figure:34p0}, \ref{figure:side-B},  \ref{figure:side-bAb} and 	\ref{figure:34pabstract}.

\begin{prop}\label{prop:side-plus}
	\begin{itemize}
		\item The interior of the side $s_0^{+}$ is an annulus.  One boundary component of it is the Jordan curve $s_0^{+}\cap s_0^{-}$ and the other boundary component is a quadrilateral with vertices $y,g,c,r$.
		\item The interior of the side $\widehat{s}_0$ has two disjoint connected components. Each component is a bigon. One bigon with vertices $A(y),A(g)$ and the other bigon with vertices $A(c),A(r)$.
	\end{itemize}
\end{prop}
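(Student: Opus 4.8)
The plan is to read off the $2$--dimensional structure of each ideal side from the $3$--dimensional description of the corresponding face already obtained in Proposition~\ref{ridgecylinder-lightcone34p}, and then to pin down the vertices by a short computation in Heisenberg coordinates. Throughout I use the convention of this section that $s_0^+$, $\widehat{s}_0$ and the ridges now denote their intersections with $\partial\hc$; thus $s_0^+$ is the part of the spinal sphere of $B$ (that is, $I_0^+\cap\partial\hc$) lying in the exterior of all the other isometric spheres, and similarly for $\widehat{s}_0$.

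\textbf{The side $s_0^+$.} By Proposition~\ref{ridgecylinder-lightcone34p}(1) the face $s_0^+\subset\overline{\hc}$ is a topological solid cylinder $D^2\times[0,1]$ whose two caps have interiors in $\hc$; hence its ideal part $s_0^+\cap\partial\hc$ is the lateral surface $S^1\times[0,1]$, an annulus. I then identify its two boundary circles with the circles at infinity of the two caps. One cap is the ridge $s_0^+\cap s_0^-$, which by Proposition~\ref{Prop:ridgegiraud} equals the entire Giraud disk $I_0^+\cap I_0^-$; since the boundary at infinity of a Giraud disk is a circle, this accounts for the Jordan curve component. The other cap is the disk $(s_0^+\cap s_{-1}^-)\cup(s_0^+\cap\widehat{s}_{-1})$ built in the proof of Proposition~\ref{ridgecylinder-lightcone34p}(1) by gluing the two pairs of sectors of Proposition~\ref{ridgesectors34p} along their common pair of crossed segments, which in both cases is the triple intersection $I_0^+\cap I_{-1}^-\cap\widehat{I}_{-1}$. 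Passing to $\partial\hc$, each sector contributes exactly its outer arc (an arc of the circle at infinity of the Giraud disk carrying it, since the two straight edges run from an apex interior to $\hc$ out to $\partial\hc$), and the four endpoints of the crossed segments become the four vertices; so this cap's circle at infinity is a quadrilateral whose edges alternate between arcs of $\partial_\infty(I_0^+\cap I_{-1}^-)$ and of $\partial_\infty(I_0^+\cap\widehat{I}_{-1})$. It then remains to check that those vertices are the listed points: I verify by substitution that the null lifts of $y$, $g$, $c$, $r$ all satisfy the defining equations of $I_0^+$, $I_{-1}^-$ and $\widehat{I}_{-1}$ simultaneously (one immediately checks $|z|^4+t^2=4$ for each, so they lie on the spinal sphere of $B$), so they are precisely the four points of $I_0^+\cap I_{-1}^-\cap\widehat{I}_{-1}$ in $\partial\hc$. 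The tangency point $p_{AB}$ of $I_0^+$ with $I_1^-$ is a single point on one quadrilateral edge and does not affect the annulus topology.

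\textbf{The side $\widehat{s}_0$.} By Proposition~\ref{ridgecylinder-lightcone34p}(2) the face $\widehat{s}_0$ is a topological solid light cone, so its ideal part is a light cone surface whose interior is a disjoint union of two open disks, giving the first claim of the second bullet. Its two ridges $\widehat{s}_0\cap s_0^-$ and $\widehat{s}_0\cap s_1^+$ are the two pairs of sectors of Proposition~\ref{empty-ridge6}, glued along the crossed segments $\widehat{I}_0\cap I_0^-\cap I_1^+$; as before this yields, in $\partial\hc$, two bigons joined at the image of the crossing point, with edges arcs of $\partial_\infty(\widehat{I}_0\cap I_0^-)$ and of $\partial_\infty(\widehat{I}_0\cap I_1^+)$. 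To name the four corners I use equivariance: since $D_{\Gamma_2}$ is invariant under $\langle A\rangle$ and $B$ pairs $s_0^+$ with $s_0^-$ (Proposition~\ref{prop:sidepairing34p}), we have $s_1^+=A(s_0^+)$ and $s_0^-=B(s_0^+)$ as subsets of $\partial\hc$, so the corners of $\widehat{s}_0$ lie in $\{A(y),A(g),A(c),A(r)\}\cap\{B(y),B(g),B(c),B(r)\}$; the identities $A(y)=B(g)$, $A(g)=B(c)$, $A(c)=B(r)$, $A(r)=B(y)$ computed above show both sets equal $\{A(y),A(g),A(c),A(r)\}$. A last check of which two corners bound a common sector separates them into $\{A(y),A(g)\}$ and $\{A(c),A(r)\}$, one set per bigon.

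\textbf{Main difficulty.} The geometry is essentially already in hand from Section~\ref{sec-ford-34p}; the remaining work is bookkeeping — tracking which arc on each spinal sphere belongs to which ridge, how the arcs alternate around the quadrilateral, and how the crossed-segment gluing splits the light cone into two bigons — together with the routine verification that $y,g,c,r$ and their images are the claimed vertices. The one genuinely delicate point is keeping the adjacency/alternation pattern consistent between $s_0^+$, $\widehat{s}_0$ and their $A$-- and $B$--translates, since this combinatorial data is exactly what the gluing analysis of the following subsections relies on.
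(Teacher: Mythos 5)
Your argument for the first bullet is essentially the paper's: both read the annulus structure of $s_0^{+}\cap\partial\hc$ off the decomposition of $\partial s_0^{+}$ into the Giraud disk $s_0^{+}\cap s_0^{-}$ (Proposition \ref{Prop:ridgegiraud}) on one end and the two pairs of sectors of Proposition \ref{ridgesectors34p} glued along the crossed segments on the other, and both identify $y,g,c,r$ as the four ideal points of $I_0^{+}\cap I_{-1}^{-}\cap\widehat{I}_{-1}$ by direct substitution. For the second bullet your equivariance trick — corners of $\widehat{s}_0$ are $\widehat{I}_0\cap I_0^{-}\cap I_1^{+}\cap\partial\hc = A\bigl(I_0^{+}\cap I_{-1}^{-}\cap\widehat{I}_{-1}\cap\partial\hc\bigr)$, hence $\{A(y),A(g),A(c),A(r)\}$ — is a clean alternative to the paper's direct computation and is perfectly valid given Proposition \ref{prop:sidepairing34p}.

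The one substantive omission is your final sentence, ``a last check of which two corners bound a common sector separates them into $\{A(y),A(g)\}$ and $\{A(c),A(r)\}$.'' That check is precisely the non-routine content of the second bullet: a priori the pairing could just as well be $\{A(y),A(c)\}$ with $\{A(g),A(r)\}$, and nothing in the sector combinatorics alone decides it. The paper resolves this by exhibiting an explicit point $p^{\diamond}=\bigl[-\tfrac12+(1-\tfrac{\sqrt7}{2})i,\,1+\sqrt7\bigr]$ on the spinal sphere $\widehat{I}_0$ lying outside $I_0^{-}$ and $I_1^{+}$, and verifying that the two segments $[A(y),p^{\diamond}]$ and $[A(g),p^{\diamond}]$ also stay outside those spheres, so that $A(y)$ and $A(g)$ are joined by a path inside one component of the ideal side. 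Some such connectivity argument (or an explicit parametrization of the two arcs of $\partial_\infty(\widehat{I}_0\cap I_0^{-})$) is needed; as written, your proof asserts the conclusion of that step rather than proving it. Everything else is sound.
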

\begin{proof}
	(1). The traces of  the  Giraud disks $I_0\cap I_{-1}^{-}$ and   $I_0\cap \widehat{I}_{-1}$ on the spinal sphere $I_0$ are two   closed Jordan  curves. By simple calculation, we see that there are four intersection points (labeled by  $y$, $g$, $c$  and $r$) of this two closed curves. From the structure of the ridges, we known that the   trace of  the  ridge $s_0\cap s_{-1}^{-}$ (resp. $s_0\cap \widehat{s}_{-1}$) is a union of two Jordan arcs $[y,g]$ and $[c,r]$ (resp. $[g,c]$ and $[r,y]$).
	The trace of  the ridge  $s_0\cap s_{-1}$ on  the spinal sphere $I_0$ is also a  closed Jordan  curve. Note that $I_{-1}$ does not intersect with $I_{-1}^{-}$ and  $\widehat{I}_{-1}$.
	This completes the proof of the first part of  Proposition \ref{prop:side-plus}. See Figure \ref{figure:34p1}.

	(2). From the structure of the side $\widehat{s}_0$, the side $\widehat{s}_0$ in the ideal boundary is the union of two disjoint discs, which  are bounded by the ideal boundary of the  ridges $\widehat{s}_0\cap s_{0}^{-}$ and  $\widehat{s}_0\cap s_{1}^{+}$. By calculation, we find the intersection of $\widehat{I}_0\cap I_{0}^{-}\cap I_{1}^{+}$ with the ideal boundary are the four points $A(y),A(g),A(c),A(r)$.  We will determine that one component on  the ideal side $\widehat{s}_0$ with vertices $A(y),A(g)$ and the other component with vertices $A(c),A(r)$. We only prove the first case.  We choose a point $p^{\diamond}$ on the spinal sphere $\widehat{I}_0$  with Heisenberg coordinates $$\left[-\frac{1}{2}+(1-\frac{\sqrt{7}}{2})i,1+\sqrt{7}\right]$$
	 such that $p^{\diamond}$ lies outside the spinal spheres $I_{0}^{-}$ and $I_{1}^{+}$. One can also check that the two straight line segments $[A(y),p^{\diamond}]$ and $[A(g),p^{\diamond}]$ also lie outside the spinal spheres $I_{0}^{-}$ and $I_{1}^{+}$ other than the two end points $A(y)$ and  $A(g)$.  Therefore, the points $A(y)$, $A(g)$ must be the vertices of one bigon and  the points $A(c)$, $A(r)$ must be the vertices of the another bigon.
	
\end{proof}

\subsection{A global combinatorial model of the boundary at infinity of the Ford domain of   $\Delta_{3,4,\infty;\infty}$} \label{subsection:globalmodel34p}

We have shown in Section \ref{sec-ford-34p} the ideal boundary of the Ford domain of  $\Delta_{3,4,\infty;\infty}$
consists of $A$-translates of isometric spheres  of
 $\{B, B^{-1},  B^{-1}AB^{-1},  BA^{-1}B\}$.
We gave the detailed local information on these spinal spheres previously, and we now study the global topology of the region which is outside  of  all spinal spheres, which is $D_{\Gamma_2} \cap \partial \mathbf{H}^2_{\mathbb{C}}$. From these information we can identify the topology of the manifold  at infinity.

Figure \ref{figure:34p1} is a  realistic  view   of the ideal boundary of the Ford domain of $\Delta_{3,4,\infty;\infty}$ with center $q_{\infty}$. This figure is just for motivation, and we need to show the figure is correct. We hope the figure is explicit enough such that the reader can see the ``holes"  in  Figures \ref{figure:34p1}. For example, one of them is the white region enclosed by the spinal spheres $I_{0}^{+}$, $I_{0}^{-}$, $I_{-1}^{-}$, $I_{-1}^{+}$. The ideal boundary of Ford domain is just the intersection of  the exteriors  of all the spinal spheres. Moreover, since the  realistic  view in Figure \ref{figure:34p1} is difficult to understand, we show a combinatorial model of it in Figure	\ref{figure:34pabstract}. Our  computations later will show  the combinatorial model  coincides with Figure \ref{figure:34p1}.

\begin{figure}
\begin{center}
\begin{tikzpicture}
\node at (0,0) {\includegraphics[width=9cm,height=9cm]{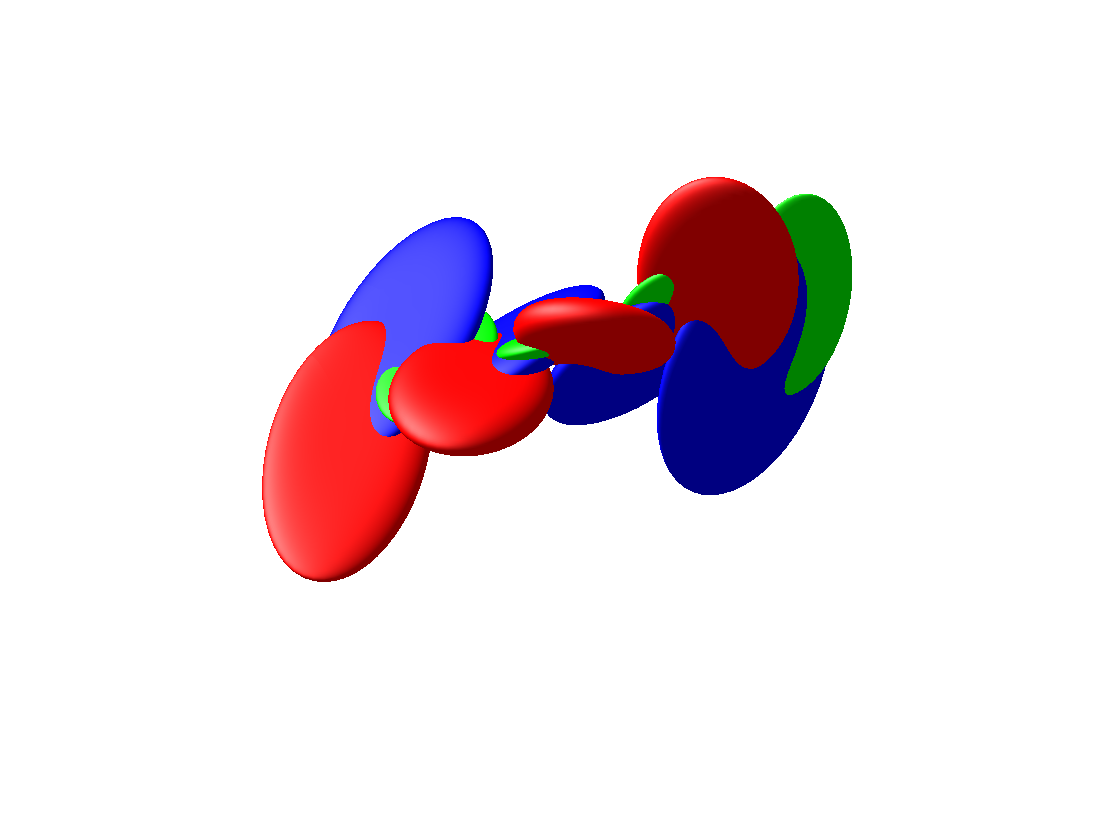}};
\coordinate [label=left:$I_{-2}^{-}$] (S) at (-1,2.1);
\coordinate [label=left:$I_{-2}^{+}$] (S) at (-1.2,-2.2);
\coordinate [label=below:$I_{0}^{-}$] (S) at (0.6,-0.1);
\coordinate [label=above:$I_{0}^{+}$] (S) at (0.24,0.52);
\coordinate [label=above:$I_{1}^{+}$] (S) at (1.5,2.5);
\coordinate [label=above:$I_{1}^{-}$] (S) at (1.5,-1.5);
\coordinate [label=below:$I_{-1}^{+}$] (S) at (-0.3,-0.5);
\coordinate [label=above:$I_{-1}^{-}$] (S) at (0.24,1.2);
\coordinate [label=above:$\widehat{I}_{1}$] (S) at (2.5,2.3);

\draw[->](0.9,1.3)--(0.65,2);
\coordinate [label=above:$\widehat{I}_{0}$] (S) at (0.65,2);

\draw[->](-0.5,0.8)--(-0.3,2);
\coordinate [label=above:$\widehat{I}_{-2}$] (S) at (-0.3,1.9);

\draw[->](-0.25,0.65)--(0.45,-0.8);
\coordinate [label=below:$\widehat{I}_{-1}$] (S) at (0.45,-0.7);

\end{tikzpicture}
\end{center}
  \caption{Realistic view   of the ideal boundary of the Ford domain of $\Delta_{3,4,\infty;\infty}$, which is the region outside all of the spheres.}
  \label{figure:34p1}
\end{figure}

\begin{figure}
	\begin{center}
		\begin{tikzpicture}
		\node at (0,0) {\includegraphics[width=12cm,height=7cm]{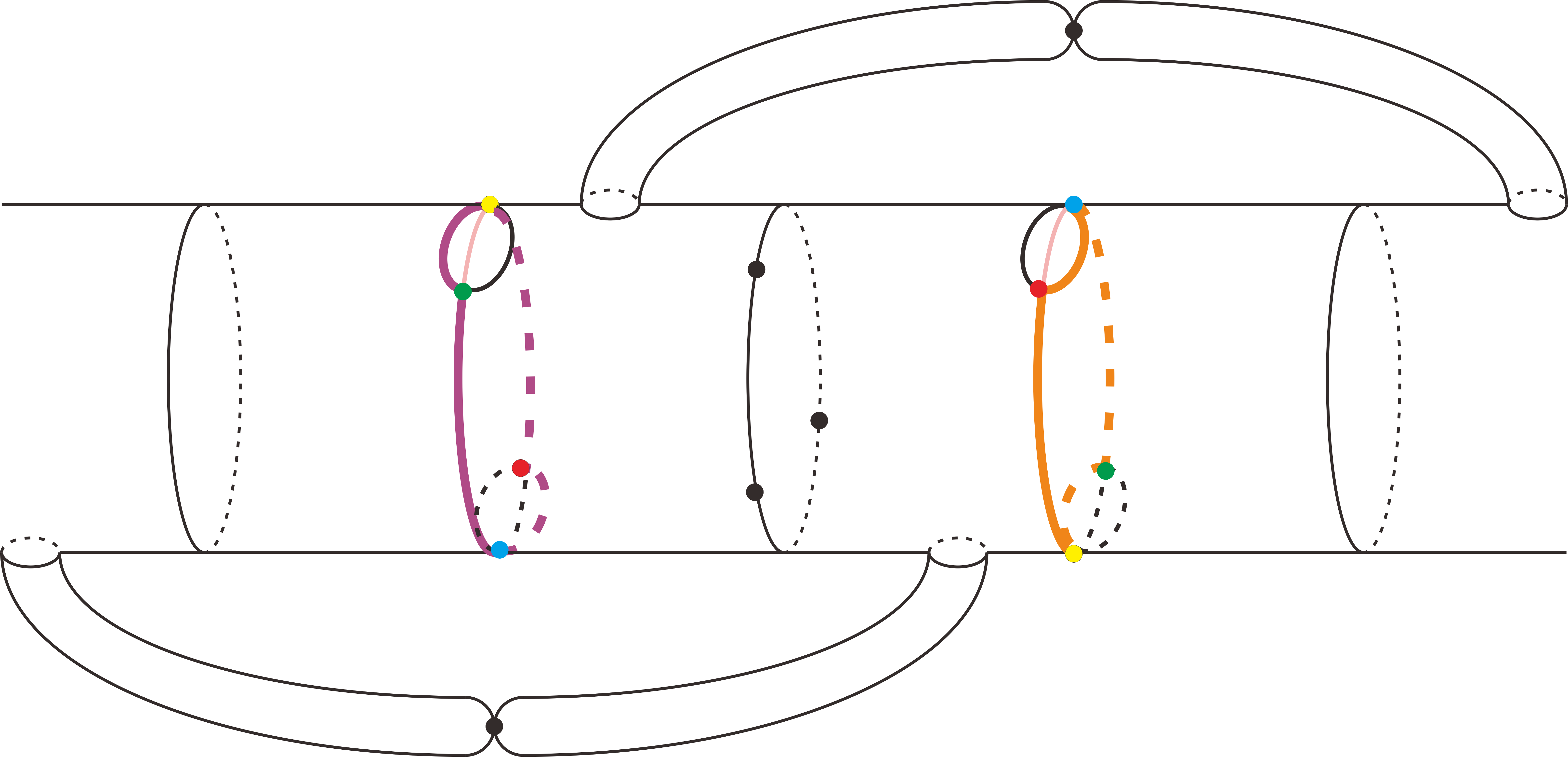}};
		\coordinate [label=left:$\scriptstyle{B(u)}$] (S) at (-0.1,1.2);
		\coordinate [label=left:$\scriptstyle{B^{-1}}(u)$] (S) at (-0.1,-1.2);
		\coordinate [label=left:$\scriptstyle{u}$] (S) at (0.3,-0.2);
		\coordinate [label=left:$\scriptstyle{B}$] (S) at (-0.8,0);
		\coordinate [label=left:$\scriptstyle{A^{-1}B^{-1}A}$] (S) at (-2.6,0);
		\coordinate [label=left:$\scriptstyle{A^{-1}BA}$] (S) at (-5,0);
		\coordinate [label=left:$\scriptstyle{B^{-1}}$] (S) at (1.7,0);
		\coordinate [label=left:$\scriptstyle{ABA^{-1}}$] (S) at (4.1,0);
		\coordinate [label=left:$\scriptstyle{AB^{-1}A^{-1}}$] (S) at (6.5,0);
		\coordinate [label=left:$A^{-1}(p_{AB})$] (S) at (-1.5,-3.7);
		\coordinate [label=left:$c$] (S) at (-2,-1.8);
		\coordinate [label=left:$r$] (S) at (-1.6,-0.7);
		\coordinate [label=left:$g$] (S) at (-2,0.7);
		\coordinate [label=left:$y$] (S) at (-2,1.9);
		\coordinate [label=left:$\scriptstyle{A(y)}$] (S) at (2.7,-1.9);
		\coordinate [label=left:$\scriptstyle{A(g)}$] (S) at (3.3,-0.9);
		\coordinate [label=left:$\scriptstyle{A(r)}$] (S) at (2,0.8);
		\coordinate [label=left:$\scriptstyle{A(c)}$] (S) at (2.7,1.8);
		\coordinate [label=left:$p_{AB}$] (S) at (2.7,3.7);
		\end{tikzpicture}
	\end{center}
	\caption{A combinatorial picture of  the ideal boundary of the Ford domain of $\Delta_{3,4,\infty;\infty}$, which is out side of the singular surface $\Sigma$.}
	\label{figure:34pabstract}
\end{figure}

It seems to the authors  that it is very difficult to write down explicitly  the  definition equations for the intersections  of the spinal spheres of $\{B,A^{-1}B^{-1}A\}$,  $\{B,A^{-1}B^{-1}AB^{-1}A\}$ and  $\{A^{-1}B^{-1}A,A^{-1}B^{-1}AB^{-1}A\}$ in the Heisenberg group, even though we have proved each of them is a circle, and the intersection of any of them with ideal boundary of the Ford domain is one or two arcs.
In this section, we first describe a global combinatorial object. We then use
geometric arguments to show that it is combinatorially equivalent to the ideal boundary of the  Ford domain of   $\Delta_{3,4,\infty;\infty}$. Last, we describe the topology of the manifold at infinity of $\Delta_{3,4,\infty;\infty}$ from the combinatorial model.
In particular, we will show that the ideal boundary  of the Ford domain of   $\Delta_{3,4,\infty;\infty}$ is an infinite genus $A$-invariant handlebody.

We first take $C^{*}$ to be the plane
\begin{equation}\label{C}
\left\{ \left[r-\frac{1}{2}, r-\frac{\sqrt{7}}{2},s\right] \,\Big| \, r,s \in \mathbb{R} \right\}
\end{equation}
in the Heisenberg group. See Figure 	\ref{figure:geodisk34p} for part of this plane, which  is the red plane in Figure	\ref{figure:geodisk34p}.

\begin{figure}
\begin{center}
\begin{tikzpicture}
\node at (0,0) {\includegraphics[width=6cm,height=6cm]{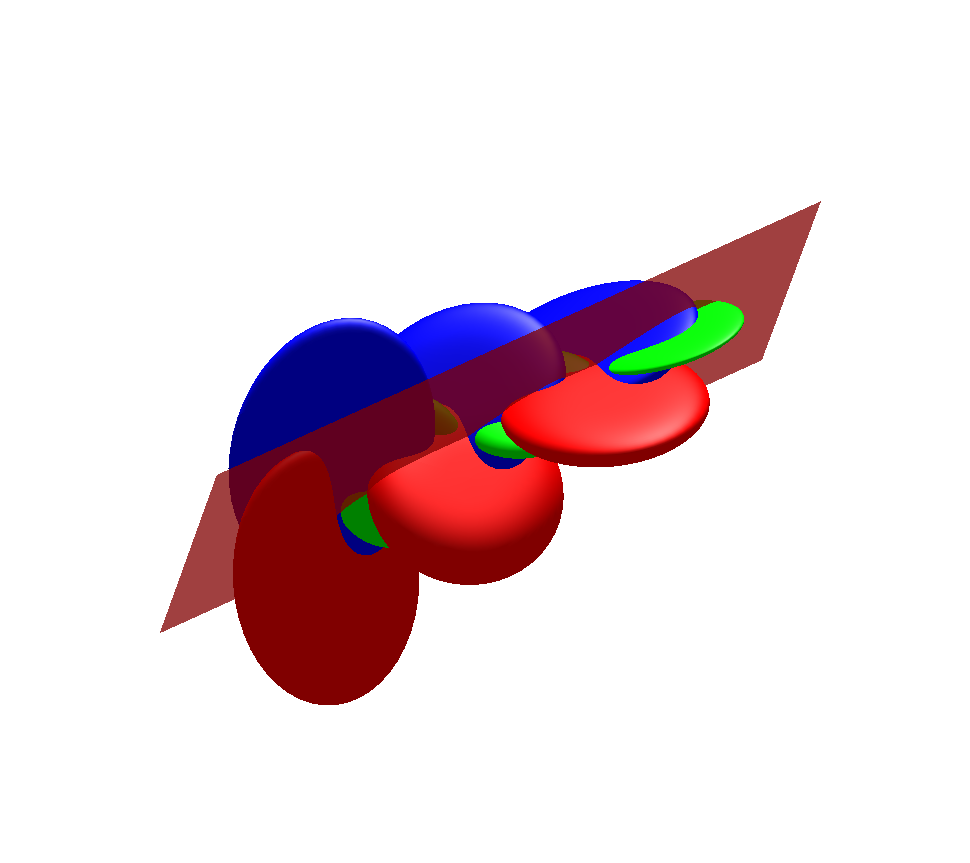}};
\coordinate [label=left:$ C^{*}$] (S) at (1.8,1.4);

\end{tikzpicture}
\end{center}
  \caption{Realistic view of the $A$-invariant plane $ C^{*}$  for the Ford domain of $\Delta_{3,4,\infty;\infty}$.}
	\label{figure:geodisk34p}
\end{figure}

We now consider the intersections of  $C^{*}$ with the spinal spheres of  $B$, $B^{-1}$, $ABA^{-1}$, and $AB^{-1}A^{-1}$:

$I_{0}^{+}  \cap C^{*}$  is the circle in $C^{*}$ with equation
\begin{equation}\label{34p-B-C*}
\left(\left(r-1/2\right)^2+\left(r-\sqrt{7}/2\right)^2\right)^2+s^2 = 4;
\end{equation}
$I_{0}^{-}  \cap C^{*}$  is the circle in $C^{*}$ with equation
\begin{equation}\label{34p-b-C*}
\left(\left(r-1\right)^2+r^2\right)^2+\left(s+r-\sqrt{7}+\sqrt{7}r\right)^2 = 4;
\end{equation}
$I_{1}^{+}  \cap C^{*}$  is the circle in $C^{*}$ with equation
\begin{equation}\label{34p-ABa-C*}
\left(\left(r+1\right)^2+r^2\right)^2+\left(s-\sqrt{7}-3r+\sqrt{7}r\right)^2 = 4;
\end{equation}
$I_{1}^{-}  \cap C^{*}$  is the circle in $C^{*}$ with equation
\begin{equation}\label{34p-Aba-C*}
\left(\left(r+1/2\right)^2+\left(r+\sqrt{7}/2\right)^2\right)^2+\left(s-2r+2\sqrt{7}r\right)^2 = 4.
\end{equation}

Now $C^*$ passes through $[-\frac{1}{2},-\sqrt{7}/2,0]$, which is  the tangent point of the spinal spheres  $I_{0}^{+}$ and $I_{1}^{-}$  in the  Heisenberg group, and  it corresponds to the point $p_{AB}$ in Section \ref{sec-ford-34p},  which is also the point $r=s=0$ in Figure 	\ref{figure:reddisk-34p}.

With the definition equations above, it can be showed  rigorously that the circles $I_{0}^{+}  \cap C^{*}$  and $I_{0}^{-}  \cap C^{*}$ intersect in four points in $C^{*}$, we  take one of them with coordinates $r_1=0.038807$ and $s_1=0.7312373192$  numerically, and in fact $r_1$ is one root of
\begin{equation}\label{34p-B-b-solution}
\begin{split}
1 - 14 r - 6 \sqrt{7}r + 70 r^2 + 16 \sqrt{7} r^2 - 94 r^3 -
30 \sqrt{7}r^3 \\
+ 88 r^4 + 20  \sqrt{7} r^4 - 36 r^5 -
12 \sqrt{7} r^5 + 16 r^6=0.
\end{split}
\end{equation}
 Equation (\ref{34p-B-b-solution}) has a pair of complex roots $0.693921 \pm1.3301i$ and four real roots  $0.0388075$, $0.713275$, $0.741563$
and	$1.35283$ numerically.

From the equations above, we can see that  $I_{0}^{-}\cap C^*$ and  $ I_{1}^{-} \cap C^*$ intersect in four points in $C^*$. We take one of them, say $r=0$ and $s=\sqrt{7}-\sqrt{3}$, which is the upper vertex of the red disk in Figure 	\ref{figure:reddisk-34p}.

The circles $I_{1}^{+}  \cap C^{*}$  and $I_{1}^{-}  \cap C^{*}$ intersect in four points in $C^{*}$, we take one of them with coordinates  $r_2=-0.038807$ and $s_2=0.858972$ numerically, and in fact $r_2$ is one root of
\begin{equation}\label{34p-ABa-Aba-solution}
\begin{split}
1 + \sqrt{7} + 56 r + 20 \sqrt{7} r + 182 r^2 + 86 \sqrt{7} r^2 +
304 r^3 +124 \sqrt{7} r^3\\
+ 228 r^4 + 108  \sqrt{7}r^4 + 120 r^5 +
48 \sqrt{7} r^5 + 16 r^6 + 16 \sqrt{7} r^6=0.
\end{split}
\end{equation}
 Equation (\ref{34p-ABa-Aba-solution}) has a pair of complex roots
$0.693921 \pm 1.3301i$ and four real roots $-0.0388075$, $-1.35283$, $-0.741563$ and  $-0.713275$
numerically.

\begin{figure}
\begin{center}
\begin{tikzpicture}
\node at (0,0) {\includegraphics[width=7cm,height=7cm]{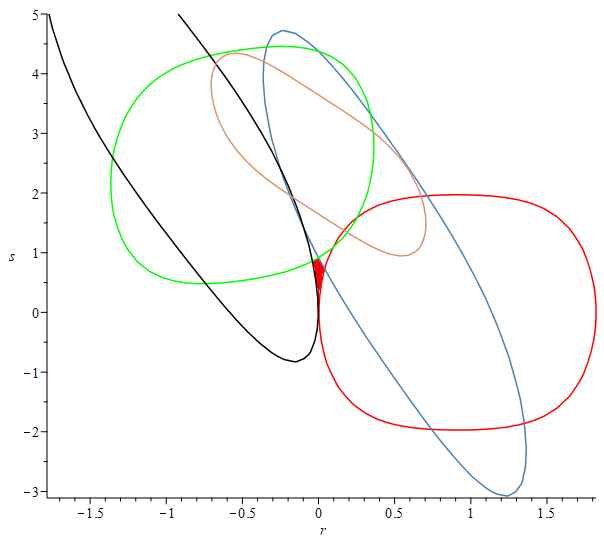}};
\draw[->] (1.2,0.2)--(1.2,0);
\coordinate [label=below:$ C^{*}\cap \widehat{I}_0$] (S) at (1.2,0.2);

\draw[->] (-0.7,-0.65)--(-0.9,-0.85);
\coordinate [label=left:$ C^{*}\cap I_{1}^{-}$] (S) at (-0.8,-0.95);

\draw[->] (-0.4,-0.05)--(-0.6,0.2);
\coordinate [label=left:$ C^{*}\cap I_{1}^{+}$] (S) at (-0.1,0.5);

\draw[->] (0.5,-1.55)--(0.4,-2.1);
\coordinate [label=left:$ C^{*}\cap I_{0}^{+}$] (S) at (0.7,-2.3);

\draw[->] (1.2,-1.55)--(1.4,-1.55);
\coordinate [label=right:$ C^{*}\cap I_{0}^{-}$] (S) at (1.3,-1.55);

\end{tikzpicture}
\end{center}
  \caption{The small red colored disk $C$ in the plane $C^{*}$ which is co-bounded by four arcs  lying in  $C^{*}\cap I_{0}^{-}$, $C^{*}\cap I_{0}^{+}$, $C^{*}\cap I_{1}^{+}$ and $C^{*}\cap I_{1}^{-}$ respectively.}

	\label{figure:reddisk-34p}
\end{figure}

We take $L_0$ be the arc in $I_{0}^{+}  \cap C^{*}$ with $r\in [0,r_1]$ and $s \in [0,s_1]$, which is the intersection of the red circle and  the red disk in Figure \ref{figure:reddisk-34p}. Take $L_1$ to be the arc in $I_{0}^{-}  \cap C^{*}$ with $r\in [0,r_1]$ and $s \in [s_1,\sqrt{7}-\sqrt{3}]$,  which is the steelblue  boundary arc of the red disk in Figure \ref{figure:reddisk-34p}. Note that $s$  decreases  as function of $r$ in $L_1$.   Take $L_2$ to be the arc in $I_{1}^{+}  \cap C^{*}$ with $r\in [r_2,0]$ and $s \in [s_2,\sqrt{7}-\sqrt{3}]$,  which is the green boundary arc of the red disk in Figure \ref{figure:reddisk-34p}, $s$ also  decreases  as function of $r$ in $L_2$.   Take $L_3$ to be the arc in $I_{1}^{-}  \cap C^{*}$ with $r\in [r_2,0]$ and $s \in [0,s_2]$, which is the black boundary arc of the red disk in Figure \ref{figure:reddisk-34p}.

\begin{lem} \label{34p-boundary-red-disk}The union of four arcs $L_0\cup L_1 \cup L_2 \cup L_3$  is a simple closed curve in  $C^{*}$.
	
\end{lem}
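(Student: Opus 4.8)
The plan is to work entirely inside the affine plane $C^{*}$ of Equation (\ref{C}), treating $(r,s)$ as Euclidean coordinates. First I would observe that, since every Cygan sphere is strictly affine convex, each of the four traces $I_{0}^{+}\cap C^{*}$, $I_{0}^{-}\cap C^{*}$, $I_{1}^{+}\cap C^{*}$ and $I_{1}^{-}\cap C^{*}$ is a single convex Jordan curve in $C^{*}$; it is nonempty, since the points displayed above lie on it. Using Equations (\ref{34p-B-C*})--(\ref{34p-Aba-C*}), the monotonicity of $s$ in $r$ recorded in the definitions of $L_{1}$ and $L_{2}$, the nonnegative branch of $s$ for $L_{0}$, and the branch landing in $[0,s_{2}]$ for $L_{3}$, one checks that each $L_{i}$ is the graph of a continuous function of $r$ over its stated $r$-interval --- namely $[0,r_{1}]$ for $L_{0},L_{1}$ and $[r_{2},0]$ for $L_{2},L_{3}$ --- hence each $L_{i}$ is an embedded arc. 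Reading off the values at the endpoints of these $r$-intervals, the endpoints of the four arcs are $p_{AB}=(0,0)$ and $P_{1}=(r_{1},s_{1})$ for $L_{0}$; $P_{1}$ and $P_{2}=(0,\sqrt{7}-\sqrt{3})$ for $L_{1}$; $P_{2}$ and $P_{3}=(r_{2},s_{2})$ for $L_{2}$; and $P_{3}$ and $p_{AB}$ for $L_{3}$. These four points are pairwise distinct (their $r$- or $s$-coordinates already separate them), so it remains only to control the six pairwise intersections of the arcs.

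Four of the six pairs are handled at once by the $r$-ranges: since $[0,r_{1}]\cap[r_{2},0]=\{0\}$, each of $L_{1}\cap L_{2}$, $L_{3}\cap L_{0}$, $L_{0}\cap L_{2}$, $L_{1}\cap L_{3}$ is contained in the line $\{r=0\}$ of $C^{*}$. Substituting $r=0$ into Equations (\ref{34p-B-C*})--(\ref{34p-Aba-C*}) and retaining the prescribed branch gives $L_{0}\cap\{r=0\}=L_{3}\cap\{r=0\}=\{p_{AB}\}$ and $L_{1}\cap\{r=0\}=L_{2}\cap\{r=0\}=\{P_{2}\}$. Hence $L_{3}\cap L_{0}=\{p_{AB}\}$ and $L_{1}\cap L_{2}=\{P_{2}\}$ --- each a common endpoint of the two arcs --- while $L_{0}\cap L_{2}=\emptyset$ and $L_{1}\cap L_{3}=\emptyset$ because $p_{AB}\neq P_{2}$. (Proposition \ref{B-pairwise-tangent}, asserting that $I_{0}^{+}$ and $I_{1}^{-}$ are tangent at $p_{AB}$, already forces $L_{3}\cap L_{0}\subseteq\{p_{AB}\}$, a useful consistency check.)

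The only pairs needing real computation are $L_{0}\cap L_{1}$ and $L_{2}\cap L_{3}$, where the two arcs run over the same $r$-interval. For these I would eliminate $s$ between the two relevant circle equations: the $s^{2}$-terms cancel on subtraction, leaving $s$ rational in $r$, and substituting back gives a polynomial equation in $r$. Carried out with (\ref{34p-B-C*}) and (\ref{34p-b-C*}) this reproduces, up to a nonzero constant, the degree-six polynomial (\ref{34p-B-b-solution}); its four real roots $0.0388,\,0.713,\,0.742,\,1.35$ are precisely the $r$-coordinates of $I_{0}^{+}\cap I_{0}^{-}\cap C^{*}$, of which only $r=r_{1}$ lies in $[0,r_{1}]$. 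Since $L_{0}\cap L_{1}\subseteq I_{0}^{+}\cap I_{0}^{-}\cap C^{*}$ and both arcs lie over $[0,r_{1}]$, this forces $L_{0}\cap L_{1}=\{P_{1}\}$. The same elimination applied to (\ref{34p-ABa-C*}) and (\ref{34p-Aba-C*}) yields (\ref{34p-ABa-Aba-solution}), whose only real root in $[r_{2},0]$ is $r=r_{2}$, so $L_{2}\cap L_{3}=\{P_{3}\}$. Putting the pieces together, the four embedded arcs $L_{0},L_{1},L_{2},L_{3}$ meet one another only at the four distinct points $P_{1},P_{2},P_{3},p_{AB}$, cyclically glued endpoint to endpoint; therefore $L_{0}\cup L_{1}\cup L_{2}\cup L_{3}$ is a simple closed curve in $C^{*}$ (compare Figure \ref{figure:reddisk-34p}).

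The main obstacle is really just the elimination-of-$s$ bookkeeping: the traces $I_{j}^{\pm}\cap C^{*}$ are quartic rather than round curves, so strict affine convexity of the Cygan spheres is what guarantees that each is a single convex Jordan curve, and one must track which branch of $s(r)$ each $L_{i}$ follows for ``graph over an $r$-interval'' to be legitimate. Once that is settled, the only substantive input is the numerical description --- already displayed above --- of the real roots of (\ref{34p-B-b-solution}) and (\ref{34p-ABa-Aba-solution}), which alone pins down the intersections $L_{0}\cap L_{1}$ and $L_{2}\cap L_{3}$.
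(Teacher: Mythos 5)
Your proposal is correct and follows essentially the same route as the paper: the paper's own proof is a one-line assertion that, from the chosen parameters of the four arcs, each $L_i$ meets only $L_{i-1}$ and $L_{i+1}$ and only at the common end vertices, which is exactly the pairwise-intersection pattern you establish. You simply supply the details the authors omit --- the graph-over-an-$r$-interval description of each arc, the observation that the four "mixed" pairs can only meet on $\{r=0\}$, and the root analysis of the two degree-six polynomials for $L_0\cap L_1$ and $L_2\cap L_3$.
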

\begin{proof}From the above chosen parameters of  these arcs, it is easy to see that $L_{i}$ only intersects with   $L_{i-1}$ and  $L_{i+1}$  in its two end vertices, and $L_{i}$ does not intersect $L_{i+2}$, for $i=0, 1, 2, 3\mod 4$. So  $L_0\cup L_1 \cup L_2 \cup L_3$  is a simple closed curve in  $C^{*}$.
\end{proof}

We take $C$ be the disk in the plane  $C^{*}$  with boundary  $L_0\cup L_1 \cup L_2 \cup L_3$.

\begin{lem} \label{geodisk34pintersetion}The disk $C$ has the following properties:
	
(1). The boundary of $C$ is a union of four arcs, in the spinal spheres of $B$, $B^{-1}$, $ABA^{-1}$ and $AB^{-1}A^{-1}$ cyclically.
	
(2). The interior of  the disk $C$  is disjoint from all the spinal spheres of $A$-translations of $B$, $B^{-1}$ and  $B^{-1}AB^{-1}$.
	
(3). The  disk $C$ and any translation $A^k(C)$ for $k \neq 0$ are disjoint.
	
\end{lem}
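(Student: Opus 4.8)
The plan is to verify the three claims of Lemma \ref{geodisk34pintersetion} directly from the explicit equations \eqref{C}, \eqref{34p-B-C*}, \eqref{34p-b-C*}, \eqref{34p-ABa-C*}, \eqref{34p-Aba-C*} defining $C^*$ and the four traces $C^*\cap I_0^{\pm}$, $C^*\cap I_1^{\pm}$, together with the combinatorial facts about the boundary circles established in Section \ref{sec-ford-34p}. Part (1) is essentially a restatement of the construction: by definition $L_0\subset I_0^+\cap C^*$, $L_1\subset I_0^-\cap C^*$, $L_2\subset I_1^+\cap C^*$, $L_3\subset I_1^-\cap C^*$, and since $I_1^+=ABA^{-1}\cdot$-image of $I_0^+$-type sphere and $I_1^-=AB^{-1}A^{-1}$-image of $I_0^-$-type sphere (more precisely $I_1^+=I(ABA^{-1})$-related, $I_1^-$ the inverse), these four arcs lie in the spinal spheres of $B$, $B^{-1}$, $ABA^{-1}$, $AB^{-1}A^{-1}$ cyclically, using Lemma \ref{34p-boundary-red-disk} to know they form a simple closed curve bounding $C$.

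For part (2), the key observation is that $C$ lies in the plane $C^*$, which is $A$-invariant (one checks from \eqref{C} and the Heisenberg action of $A$ that $A(C^*)=C^*$, since $A$ is a Heisenberg translation by $[(-3-\sqrt7 i)/2, 2\sqrt7]$ composed with the linear part, and $C^*$ is a coset of the corresponding plane). Thus all spheres $A^k(I_0^\pm)=I_k^\pm$ and $A^k(\widehat I_0)=\widehat I_k$ meet $C^*$ in circles or are disjoint from it, and I must show the open disk $\mathrm{int}(C)$ avoids every such circle. From Corollary \ref{cor:intersections}, $I_0^+$ meets only finitely many of the other spheres, so it suffices to check $\mathrm{int}(C)$ against the traces $C^*\cap I_k^\pm$ and $C^*\cap \widehat I_k$ for the relevant small $|k|$: first, the boundary $\partial C$ already lies on $I_0^\pm\cup I_1^\pm$, so $\mathrm{int}(C)$ is disjoint from these four circles by Lemmas \ref{34p-boundary-red-disk} and the fact that each $L_i$ is an embedded subarc of its circle not re-entering $C$; second, $C$ stays inside the region $r\in[r_2,r_1]\approx[-0.039,0.039]$, $s\in[0,\sqrt7-\sqrt3]$, and the remaining circles ($C^*\cap I_{-1}^{\pm}$, $C^*\cap I_2^{\pm}$, $C^*\cap\widehat I_{-1}$, $C^*\cap\widehat I_0$, etc.) can be shown, using their explicit equations obtained by applying $A^{\pm1}$ to \eqref{34p-B-C*}--\eqref{34p-Aba-C*}, to lie outside this small rectangle --- this is a finite list of inequality checks on quartic curves in the $(r,s)$-plane, of the same flavor as Propositions \ref{empty-ridge1}--\ref{empty-ridge5}.

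For part (3), disjointness of $C$ from $A^k(C)$ for $k\neq 0$: since both lie in the $A$-invariant plane $C^*$, and $A$ acts on $C^*$ as an affine map with a translation component, it suffices to show $C$ and $A(C)$ are disjoint (then $A^k(C)$ for $|k|\geq 2$ follow by iterating, since $A$ acting on $C^*\cong\mathbb R^2$ has unbounded orbits in one direction; one records the displacement of the $s$-coordinate or of the projection). I would bound the range of the coordinate $r$ (or a suitable linear combination of $r,s$) on $C$ and show $A(C)$ occupies a disjoint range: explicitly $C$ lies near $p_{AB}$ with small $r$, while $A(C)$ lies near $A(p_{AB})$, and the explicit Heisenberg coordinates of $p_{AB}$ and $A(p_{AB})$ show the two regions are separated. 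I expect the main obstacle to be part (2): verifying that $\mathrm{int}(C)$ misses all the traces requires carefully handling several quartic plane curves and confirming numerically-then-rigorously that the small disk $C$ sits in the common exterior, much as in the proof of Proposition \ref{empty-ridge1}; the arguments for (1) and (3) are comparatively routine bookkeeping with the explicit coordinates. As the authors do elsewhere in this paper, the routine case-by-case inequality verifications will be stated and the reader referred to the explicit equations, so I would write: ``all three claims can be checked from the equations of $C^*$, the spinal spheres, and the translates $A^k(C^*\cap I_0^\pm)$; we omit the routine details.''
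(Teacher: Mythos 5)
Your overall strategy---verifying all three claims by explicit coordinate and inequality checks in the plane $C^{*}$, reducing to finitely many spinal spheres, and separating the $A$-translates---is the same as the paper's. But your argument rests on a false premise: the plane $C^{*}$ of Equation (\ref{C}) is \emph{not} $A$-invariant. The element $A$ acts on $\partial \mathbf{H}^2_{\mathbb C}\backslash\{q_{\infty}\}$ as the left Heisenberg translation by $[(-3-\sqrt{7}i)/2,\,2\sqrt{7}]$, whose horizontal part $(-3/2,-\sqrt{7}/2)$ is not parallel to the direction $(1,1)$ of the line over which the vertical plane $C^{*}$ sits; consequently $A^{k}(C^{*})$ is the \emph{distinct} parallel vertical plane given in Equation (\ref{AkC}), whose vertical projection is the line $y=x+\bigl((1-\sqrt{7})+k(3-\sqrt{7})\bigr)/2$. (The caption of Figure \ref{figure:geodisk34p} calling $C^{*}$ ``$A$-invariant'' is misleading on exactly this point.) This breaks your proof of part (3) as written: you cannot ``iterate within the invariant plane,'' and even under a genuine translation, disjointness of $C$ from $A(C)$ would not formally imply disjointness from $A^{k}(C)$ for all $k$. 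The correct argument is in fact simpler than what you propose: since $C^{*}$ and $A^{k}(C^{*})$ are disjoint parallel planes for every $k\neq 0$, the disks $C\subset C^{*}$ and $A^{k}(C)\subset A^{k}(C^{*})$ are automatically disjoint, which is essentially the paper's one-line proof of (3).

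Two smaller gaps remain in part (2). First, the reduction to finitely many spheres does not follow from Corollary \ref{cor:intersections}, which concerns intersections among the isometric spheres themselves, not their intersections with $C$; the correct reduction, and the one the paper uses, is via vertical projection: the projection of $C$ to $\mathbb{C}$ is a tiny arc (Figure \ref{figure:projE-34p}) that meets only the projections of $I_{0}^{\pm}$, $I_{1}^{\pm}$ and $\widehat{I}_{0}$, after which the single nontrivial remaining check is that every point of the circle $\widehat{I}_{0}\cap C^{*}$ has $s$-coordinate strictly greater than $\sqrt{7}-\sqrt{3}$, hence misses $C$. Second, asserting that $\mathrm{int}(C)$ avoids $I_{0}^{\pm}$ and $I_{1}^{\pm}$ ``because $\partial C$ already lies on them'' is not sufficient: one must rule out the full circles $I_{j}^{\pm}\cap C^{*}$ re-entering the disk away from the arcs $L_{i}$. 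You name this fact but do not supply it; it is the one place where the explicit parametrizations of $L_{0},\dots,L_{3}$ from Lemma \ref{34p-boundary-red-disk} genuinely have to be used.
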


\begin{proof}
	
We have proved (1) of Lemma \ref{geodisk34pintersetion} above.

For (2) of Lemma \ref{geodisk34pintersetion}, from  the vertical projections of the disk $C$, and the spinal spheres  $I_{k}^{\pm}$  and $\widehat{I}_{k}$ for $k \in \mathbb{Z}$, to the plane $\mathbb{C}$, we can easy see that $C$ does not intersect the spinal spheres  $I_{k}^{\pm}$  and $\widehat{I}_{k}$ except
$I_{0}^{+}$, $I_{0}^{-}$, $I_{1}^{+}$, $I_{1}^{-}$  and $\widehat{I}_{0}$. See Figure \ref{figure:projE-34p} for the vertical projections of them.

The intersection $\widehat{I}_{0} \cap C^{*}$ is a circle
\begin{equation}\label{34p-bAb-C-solution}
	(2r^2)^2+(s-\sqrt{7}-r+\sqrt{7}r)^2 = 1,
	\end{equation}
in the $rs$-plane $C^{*}$. Note that $\sqrt{7}-\sqrt{3}$ is the largest one  of $s$-coordinate of the disk $C$, but take $s=\sqrt{7}-\sqrt{3}$ in Equation (\ref{34p-bAb-C-solution}), there is no real solution of $r$ (numerically, there are four solutions  $r=-0.555156 \pm 0.0702174i$ and	$0.555156 \pm 1.13516i$). Take a sample point on   $\widehat{I}_{0} \cap C^{*}$, for example $r=0$, $s=\sqrt{7}- 1$. Note that $s=\sqrt{7}-1 > s=\sqrt{7}-\sqrt{3}$, this means that the $s$-coordinate of any point  in the circle $\widehat{I}_{0} \cap C^{*}$ is  larger than $\sqrt{7}-\sqrt{3}$. So the disk  $C$  does not intersect with $\widehat{I}_{0}$.

For (3) of Lemma \ref{geodisk34pintersetion}, note that  $A^{k}(C)$ is a subdisk  of the square $r \in[-0.038, -0.038]$ numerically and $s\in [0, \sqrt{7}-\sqrt{3}]$ in the plane $A^{k}(C^{*})$
\begin{equation}\label{AkC}
	\left[r-\frac{1+3k}{2}, r-\frac{\sqrt{7}(k+1)}{2},s+3kr+k\sqrt{7}-kr\sqrt{7}\right],
	\end{equation}
	with $r, s$ reals in Heisenberg coordinates.
Then it is easy to see $C$ and $A^{k}(C)$ are disjoint disks  for $k \neq 0$  in the Heisenberg group.
	
\end{proof}

We then take $E^{*}$ to be the plane
\begin{equation}\label{E^{*}}
\left[r+\frac{1}{4}, -2r-\frac{\sqrt{7}}{4},r+s-\frac{\sqrt{7}}{2}\right]
\end{equation}
with $r$ and $s$ reals
in   Heisenberg coordinates. See Figure 	\ref{figure:eulerdisk34p} for part of this plane, $E^{*}$  is one of the  blue planes in Figure	\ref{figure:eulerdisk34p}.

\begin{figure}
\begin{center}
\begin{tikzpicture}
\node at (0,0) {\includegraphics[width=6cm,height=6cm]{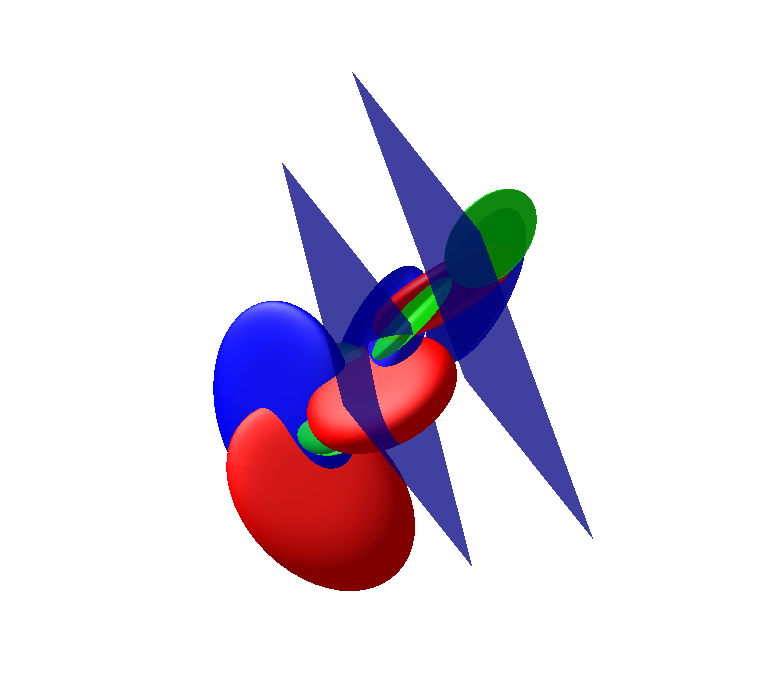}};
\coordinate [label=left:$ E^{*}$] (S) at (-0.5,1.8);
\coordinate [label=left:$ A(E^{*})$] (S) at (0.8,2.4);
\end{tikzpicture}
\end{center}
  \caption{The plane $E^{*}$  intersects only the spinal spheres of $B$ and $B^{-1}$ and the  plane $A(E^{*})$   intersects only the spinal spheres of $ABA^{-1}$ and $AB^{-1}A^{-1}$.}
	\label{figure:eulerdisk34p}
\end{figure}

We consider the intersections of $I_{0}^{+}$ and $I_{0}^{-}$ with  $E^{*}$:
$I_{0}^{+}  \cap E^{*}$ has the definition  equation
\begin{equation}\label{34p-B-E*}
\left(\left(r+\frac{1}{4}\right)^2+\left(2r+\frac{\sqrt{7}}{4}\right)^2\right)^2+\left(r+s-\frac{\sqrt{7}}{2}\right)^2 = 4
\end{equation}
with $r$ and $s$ reals. It is easy to see  Equation (\ref{34p-B-E*}) gives us a closed curve in the $rs$-plane  $E^{*}$.

\begin{lem} \label{scc$I_{0}^{+} E^{*}$-34p} $I_{0}^{+}  \cap E^{*}$ is a simple closed curve in  $E^{*}$.
\end{lem}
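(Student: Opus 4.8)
The plan is to use convexity of Cygan spheres. Work in Heisenberg coordinates, identifying $\mathcal{N}=\mathbb{C}\times\mathbb{R}$ with $\mathbb{R}^3$. Since $I_0^{+}$ is the Cygan sphere centred at $[0,0]$ of radius $\sqrt2$, the closed region it bounds is
$$
B_0=\{(x,y,t)\in\mathbb{R}^3:(x^2+y^2)^2+t^2\le 4\},
$$
a compact convex body: it is the solid of revolution about the $t$-axis with profile $t\mapsto(4-t^2)^{1/4}$, and this profile is concave on $[-2,2]$. The set $E^{*}$ of Equation~(\ref{E^{*}}) is an affine $2$-plane. First I would check that $E^{*}$ meets the interior of $B_0$. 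Writing $f(r)=(r+\tfrac14)^2+(2r+\tfrac{\sqrt7}{4})^2=5r^2+(\sqrt7+\tfrac12)r+\tfrac12$, the left-hand side of Equation~(\ref{34p-B-E*}) is $f(r)^2+(r+s-\tfrac{\sqrt7}{2})^2$; taking $r_0=-\tfrac{1}{10}(\sqrt7+\tfrac12)$, the minimizer of $f$, and $s_0=\tfrac{\sqrt7}{2}-r_0$ gives the value $f(r_0)^2=\bigl(\tfrac{11-4\sqrt7}{80}\bigr)^2<4$, so $(r_0,s_0)$ is a point of $E^{*}$ in the interior of $B_0$.

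Granting this, $E^{*}\cap B_0$ is a compact convex subset of the plane $E^{*}$ with nonempty relative interior, hence is homeomorphic to a closed $2$-disk; and for a convex body met by a plane through its interior the relative boundary of the slice is exactly the intersection of the plane with the boundary of the body, i.e.\ $\partial_{E^{*}}(E^{*}\cap B_0)=E^{*}\cap I_0^{+}$. Since the boundary of a compact convex planar region with nonempty interior is a simple closed curve, $I_0^{+}\cap E^{*}$ is a simple closed curve in $E^{*}$, proving the lemma.

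Alternatively, one can read the statement off Equation~(\ref{34p-B-E*}) directly. The quadratic $f$ has discriminant $\sqrt7-\tfrac{11}{4}<0$, so $f>0$ everywhere, with minimum value $m=\tfrac{11-4\sqrt7}{80}\in(0,2)$; therefore $\{r:f(r)\le 2\}$ is a nondegenerate closed interval $[r_-,r_+]$, with $f(r_\pm)=2$ and $f<2$ in its interior. For $r\in[r_-,r_+]$, Equation~(\ref{34p-B-E*}) is solved by $s=\tfrac{\sqrt7}{2}-r\pm\sqrt{4-f(r)^2}$, so $I_0^{+}\cap E^{*}$ is the union of two arcs, each the graph of a continuous function of $r$ over $[r_-,r_+]$, which meet precisely at the two endpoints $r=r_\pm$; their union is therefore an embedded circle. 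I do not expect any genuine obstacle here: the only care needed is the elementary estimate $0<m<2$ and the observation that the two graphs coincide only at the endpoints, both immediate from strict convexity of $f$. This same template --- presenting the trace of a spinal sphere on an auxiliary plane as $\{g(r)^2+(\text{affine in }r,s)^2=c\}$ for a positive convex quadratic $g$ --- will presumably recur for the analogous lemmas in the rest of Section~\ref{section:3-mfd-34p}.
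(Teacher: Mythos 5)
Both of your arguments are correct, and together they bracket the paper's own proof. Your ``alternative'' reading is essentially what the paper does: it solves Equation (\ref{34p-B-E*}) for $s$ as $s=\tfrac{\sqrt7}{2}-r\pm\tfrac12\sqrt{15-2r-\cdots-100r^4}$ and observes that the quartic radicand --- which is $4\bigl(4-f(r)^2\bigr)=4\bigl(2-f(r)\bigr)\bigl(2+f(r)\bigr)$ in your notation, so its two real roots are exactly the roots of $f(r)=2$ and its complex pair the roots of $f(r)=-2$ --- is nonnegative precisely on a closed interval $[r_-,r_+]$ and vanishes only at the endpoints; the curve is then two graphs over $[r_-,r_+]$ glued along their common endpoints. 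Your packaging via the positive strictly convex quadratic $f$ is cleaner than the paper's explicit quartic root-finding and, as you anticipate, is exactly the template the paper reuses for $I_{0}^{-}\cap E^{*}$ and the circles $\widehat{I}_{k}\cap E^{*}$. Your primary argument is a genuinely different and more conceptual route: slice the compact convex Cygan ball $\{(x^2+y^2)^2+t^2\le 4\}$ by the affine plane $E^{*}$ through an interior point (your check $0<f(r_0)=(11-4\sqrt7)/80<2$ does this) and quote that the relative boundary of a compact convex planar set with nonempty relative interior is a Jordan curve, equal here to $E^{*}\cap I_{0}^{+}$. The convexity you verify from the concave profile $(4-t^2)^{1/4}$ is the same fact the paper invokes elsewhere (``Cygan spheres are strictly affine convex'') to control intersections, and this route has the advantage of working for any affine plane meeting the interior of the ball, with no discriminant computation at all. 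Both arguments are complete; I see no gap.
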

\begin{proof}
For the Equation (\ref{34p-B-E*}), we can solve $s$ in terms of $r$ as
$$s = \frac{1}{4}\sqrt{7}-r \pm\frac{1}{2}\sqrt{-40r^3\sqrt{7}-100r^4-4r^2\sqrt{7}-20r^3+15-4r\sqrt{7}-49r^2-2r}.$$
For one real $r$, there are at most two reals $s$ satisfying Equation (\ref{34p-B-E*}), and there is  exactly  one real $s$ satisfying Equation (\ref{34p-B-E*}) when

\begin{equation}\label{34p-delta}
-40r^3\sqrt{7}-100r^4-4r^2\sqrt{7}-20r^3+15-4r\sqrt{7}-49r^2-2r=0.
\end{equation}
Equation (\ref{34p-delta}) has a pair of complex roots, and  a pair of real toots
$$r=-\frac{1}{20}-\frac{\sqrt{7}}{10}\pm\frac{\sqrt{149+4\sqrt{7}}}{20},$$ which are the maximum and the minimum  of $r$ satisfies Equation (\ref{34p-B-E*}). So the curve $I_{0}^{+}  \cap E^{*}$ is a simple closed curve, that is, there is no self-intersection.
\end{proof}

Similarly, we can prove $I_{0}^{-}  \cap E^{*}$  is a simple closed curve  in the plane $E^{*}$ with definition equation
\begin{equation}\label{34p-b-E*}
\left(\left(r-1/4\right)^2+\left(-2r+\sqrt{7}/4\right)^2\right)^2+\left(s-r+\sqrt{7}r-\sqrt{7}/2\right)^2 = 4
\end{equation}
with  $r$ and $s$ reals.

Now we can solve algebraically that  the triple intersection $I_{0}^{+} \cap I_{0}^{-} \cap E^{*}$ are two points with $r=0$ and $s= \pm \sqrt{15}/2+\sqrt{7}/2$ in the $rs$-plane  $E^{*}$. They are the points $\left[1/4, -\sqrt{7}/4, \pm\sqrt{15}/2\right]$ in the Heisenberg group. In other words, simple closed curves  $I_{0}^{+} \cap E^{*}$ and  $ I_{0}^{-} \cap E^{*}$
intersect in exactly two points.  Each of $I_{0}^{+} \cap E^{*}$ and  $ I_{0}^{-} \cap E^{*}$ bounds a closed disk in $E^{*}$, now the union of them is a bigger disk in $E^{*}$, we denote it by $E$.  See Figure 	\ref{figure:greendisk-34p}.

\begin{figure}
\begin{center}
\begin{tikzpicture}
\node at (0,0) {\includegraphics[width=7cm,height=7cm]{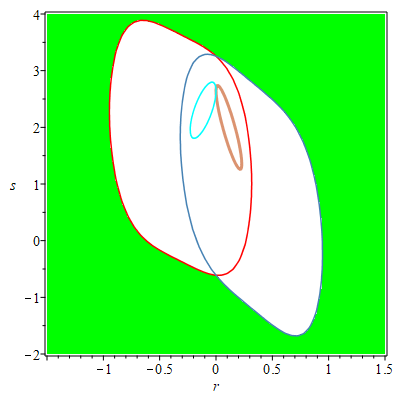}};
\coordinate [label=left:$I_{0}^{+}\cap E^{*}$] (S) at (-0.5,2.5);
\coordinate [label=left:$I_{0}^{-}\cap E^{*}$] (S) at (2.8,-1.5);

\draw[->] (-0.15,1.1)--(-0.55,0.8);
\coordinate [label=below:$\widehat{I}_{-1}\cap E^{*}$] (S) at (-0.55,0.8);
\draw[->] (0.78,0.6)--(1.1,0.3);
\coordinate [label=below:$\widehat{I}_{0}\cap E^{*}$] (S) at (1.1,0.3);

\end{tikzpicture}
\end{center}
\caption{The intersection of the plane $E^{*}$ with $I_{0}^{+}$, $I_{0}^{-}$, $\widehat{I}_{-1}$ and $\widehat{I}_{0}$. The disk $E$ lies in the plane $E^{*}$  is co-bounded by two arcs lying in  two Jordan curves $I_{0}^{+}\cap E^{*}$ and $I_{0}^{-}\cap E^{*}$. $E$ is the  disk which is the complement of the outer green region.  The tangent point of $\widehat{I}_{-1}\cap E^{*}$ and $\widehat{I}_{0}\cap E^{*}$ is the point $[\frac{1}{4},-\frac{\sqrt{7}}{4},-\frac{\sqrt{7}}{2}]$ in the Heisenberg group, which is the triple intersection $\widehat{I}_{-1} \cap \widehat{I}_{0} \cap E^{*}$.}
	\label{figure:greendisk-34p}
\end{figure}

\begin{lem} \label{eulerdisk34pintersetion}The plane $E^{*}$ and disk $E$ have the following properties:
	
(1). The boundary of $E$ is a union of two arcs: one in the spinal sphere of $B$ and the other  in the spinal sphere of  $B^{-1}$.
	
(2). The interior of  the holed plane $E^{*}-E$  is disjoint from all the spinal spheres of $A$-translations of $B$, $B^{-1}$ and  $B^{-1}AB^{-1}$.
	
(3). The  plane $E^{*}$ and any translation $A^k(E^{*})$ for $k \neq 0$ are disjoint.

\end{lem}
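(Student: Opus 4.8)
The plan is to adapt, for the plane $E^{*}$ and the disk $E$, the three-part argument already carried out for $C^{*}$ and $C$ in Lemma \ref{geodisk34pintersetion}.

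Claim (1) will be immediate from the construction of $E$. By Lemma \ref{scc$I_{0}^{+} E^{*}$-34p} and the discussion following it, $I_{0}^{+}\cap E^{*}$ and $I_{0}^{-}\cap E^{*}$ are simple closed curves in the plane $E^{*}$ meeting in exactly the two points $\left[\frac14,-\frac{\sqrt7}{4},\pm\frac{\sqrt{15}}{2}\right]$, and $E=\overline{D}_{+}\cup\overline{D}_{-}$, where $\overline D_{\pm}$ denotes the closed topological disk bounded by $I_{0}^{\pm}\cap E^{*}$ in $E^{*}$. Hence $\partial E$ is the union of one subarc of $I_{0}^{+}\cap E^{*}$ with one subarc of $I_{0}^{-}\cap E^{*}$; since $I_{0}^{+}$ is the spinal sphere of $B$ and $I_{0}^{-}$ is the spinal sphere of $B^{-1}$, this gives (1). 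For use in (2) I record that the \emph{whole} curves $I_{0}^{+}\cap E^{*}$ and $I_{0}^{-}\cap E^{*}$ lie in $\overline D_{+}\cup\overline D_{-}=E$.

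For claim (3) I would first observe that the vertical projection of $E^{*}$ to $\mathbb{C}$ is the affine line $\ell=\bigl\{\frac14-\frac{\sqrt7}{4}i+r(1-2i):r\in\mathbb{R}\bigr\}$. The isometry $A$ is a left Heisenberg translation whose $\mathbb{C}$-component is $z_{0}=\frac{-3-\sqrt7\,i}{2}$, and $z_{0}$ is not a real multiple of $1-2i$; consequently the projection of $A^{k}(E^{*})$ is the line $\ell+kz_{0}$, which is parallel to $\ell$ but distinct from it for every $k\neq0$, hence disjoint from $\ell$. Two subsets of the Heisenberg group with disjoint vertical projections are disjoint, so $E^{*}\cap A^{k}(E^{*})=\emptyset$ for $k\neq0$, which is (3).

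Claim (2) also starts from this projection. The vertical projections of the $I_{k}^{\pm}$ are round disks of radius $\sqrt2$, and those of the $\widehat{I}_{k}$ round disks of radius $1$ (by (\ref{eq:cygan-sphere})), whose centres, read off from Table \ref{tab:isometric spheres}, run off to infinity in the directions $\pm z_{0}$ as $k\to\pm\infty$; since $z_{0}$ is not parallel to $\ell$, all but finitely many of these disks lie at distance greater than $\sqrt2$ from $\ell$ and are therefore disjoint from it. A short list of point-to-line distance estimates then shows that the only exceptions are $I_{0}^{+}$, $I_{0}^{-}$, $\widehat{I}_{-1}$ and $\widehat{I}_{0}$ (compare Figures \ref{figure:projE-34p} and \ref{figure:greendisk-34p}); every other sphere of $\mathcal{F}$ misses $E^{*}$, hence misses $E^{*}\setminus E$. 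For $I_{0}^{+}$ and $I_{0}^{-}$, the remark at the end of the previous paragraph shows their entire traces on $E^{*}$ lie in $E$, so $E^{*}\setminus E$ avoids them. There remain $\widehat{I}_{0}$ and $\widehat{I}_{-1}$: I would write $\widehat{I}_{0}\cap E^{*}$ and $\widehat{I}_{-1}\cap E^{*}$ explicitly as circles in the $rs$-plane $E^{*}$, exactly as for $\widehat{I}_{0}\cap C^{*}$ in (\ref{34p-bAb-C-solution}), exhibit an interior point of $E$ lying on each of them (the common tangency point $\left[\frac14,-\frac{\sqrt7}{4},-\frac{\sqrt7}{2}\right]$ will do, once one checks it is interior to $E$), and verify from (\ref{34p-B-E*}), (\ref{34p-b-E*}) and the equations of these two circles that neither of them crosses the two boundary arcs of $\partial E$. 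The Jordan curve theorem in the plane $E^{*}$ then forces both circles into the interior of $E$, so $E^{*}\setminus E$ is disjoint from $\widehat{I}_{0}$ and $\widehat{I}_{-1}$ as well, completing (2).

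The easy parts are (1) and (3); the work concentrates in (2), and within it in two finite but slightly delicate verifications: confirming that $I_{0}^{+}$, $I_{0}^{-}$, $\widehat{I}_{-1}$, $\widehat{I}_{0}$ are the only spinal spheres whose projections meet $\ell$, and confirming that the small circles $\widehat{I}_{0}\cap E^{*}$ and $\widehat{I}_{-1}\cap E^{*}$ do not escape $E$ — the latter reducing to solving or estimating the low-degree polynomial systems coming from (\ref{34p-B-E*}), (\ref{34p-b-E*}) and the analogue of (\ref{34p-bAb-C-solution}). As with the parallel statements elsewhere in the paper these are routine case checks, and I would record only their outcomes.
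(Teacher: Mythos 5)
Your proposal is correct and follows essentially the same route as the paper: claim (1) from the construction of $E$ as the union of the two closed disks bounded by $I_{0}^{+}\cap E^{*}$ and $I_{0}^{-}\cap E^{*}$, claim (3) from the disjoint parallel vertical projections of $E^{*}$ and $A^{k}(E^{*})$, and claim (2) by projecting to cut the list down to $I_{0}^{+}$, $I_{0}^{-}$, $\widehat{I}_{0}$, $\widehat{I}_{-1}$ and then showing the two circles $\widehat{I}_{0}\cap E^{*}$, $\widehat{I}_{-1}\cap E^{*}$ are disjoint from the curves $I_{0}^{\pm}\cap E^{*}$ and contain a point of $E$, hence lie in $E$. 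The only cosmetic difference is your choice of sample point (the tangency point $[\tfrac14,-\tfrac{\sqrt7}{4},-\tfrac{\sqrt7}{2}]$ versus the paper's $r=s=0$), which changes nothing.
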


\begin{proof}We have proved (1) of Lemma \ref{eulerdisk34pintersetion} above.

For (2) of Lemma \ref{eulerdisk34pintersetion}, the vertical projection of the plane  $E^{*}$ to $\mathbb{C}$-plane is a line
\begin{equation}\label{projection-E^{*}-34p}	\left\{r+\frac{1}{4}+\left(-2r-\sqrt{7}/4\right)i \in \mathbb{C} \right\}
\end{equation}
with $r$ reals.
From the  equations  of  $I^{+}_{k}$,  $I^{-}_{k}$ and  $\widehat{I_{k}}$, we can get the vertical projections of them to the $\mathbb{C}$-plane.
It is easy to see that the  vertical projection of $E^{*}$ only intersects with the  vertical projections of
$I^{+}_{0}$,  $I^{-}_{0}$,   $\widehat{I}_{0}$ and  $\widehat{I}_{-1}$ from  the equations of their vertical projections on the $\mathbb{C}$-plane.

We can write down the  equation  of the circle   $\widehat{I}_{0}\cap E^{*}$ in the $rs$-plane $E^{*}$, which is a circle
\begin{equation}\label{bAb-E^{*}-34p}
\left(\left( r+\frac{3}{4}\right)^2+\left(2r-\frac{\sqrt{7}}{4}\right)^2\right)^2+ \left(s+3r-\sqrt{7}+\sqrt{7}r\right)^2=1.
\end{equation}
Then $s$ is solved as function of $r$,
$$	-\sqrt{7}r+\sqrt{7}-3r\pm\frac{\sqrt{-12r+8\sqrt{7}r-77r^2+12\sqrt{7}r^2-60r^3+40r^3\sqrt{7}-100r^4}}{2}.$$

We can also write down the  equation  of the circle   $\widehat{I}_{-1}\cap E^{*}$ in the $rs$-plane $E^{*}$, which is a circle
\begin{equation}\label{abAbA-E^{*}-34p}
\left(\left( r-\frac{3}{4}\right)^2+\left(2r+\frac{\sqrt{7}}{4}\right)^2\right)^2+ \left(s-3r-\sqrt{7}\right)^2=1,
\end{equation}
and we can solve $s$ as function of $r$
\begin{equation}
\sqrt{7}+3r \pm\frac{\sqrt{-40r^3\sqrt{7}-100r^4+12\sqrt{7}r^2+60r^3-8\sqrt{7}r-77r^2+12r}}{2}.
\end{equation}	
	
See Figure \ref{figure:greendisk-34p} for the circles  $\widehat{I}_{0}\cap E^{*}$ and $\widehat{I}_{-1}\cap E^{*}$   in the $rs$-plane $E^{*}$. By solving the  equations of them,  it is easy to see that there is no triple intersection of $I_{0}^{+}$,  $\widehat{I}_{0}$ and  $ E^{*}$, then the two circles  $\widehat{I}_{0}\cap E^{*}$ and $I_{0}^{+}\cap E^{*}$ are disjoint  in the place $E^{*}$. Take a sample point on $\widehat{I}_{0}\cap E^{*}$, for example, take $r=s=0$, we can see $\widehat{I}_{0}\cap E^{*}$ lies in the disk bounded by $I_{0}^{+}\cap E^{*}$ in the place $E^{*}$. Similarly,  we can see $\widehat{I}_{0}\cap E^{*}$ lies in the disk bounded by $I_{0}^{-}\cap E^{*}$ in the place $E^{*}$;   $\widehat{I}_{-1}\cap E^{*}$ lies in the disk bounded by $I_{0}^{+}\cap E^{*}$ in the place $E^{*}$;   $\widehat{I}_{-1}\cap E^{*}$ lies in the disk bounded by $I_{0}^{-}\cap E^{*}$ in the place $E^{*}$. So these two circles lie in the disk $E$.

For (3) of Lemma \ref{eulerdisk34pintersetion},  by the equation of  $E^{*}$ and the action of $A$,  we get that the vertical projection of the plane  $A^{k}(E^{*})$ to $\mathbb{C}$-plane is the  line,
\begin{equation}\label{E^{*}}
\left\{	r+\frac{1}{4}-\frac{3k}{2}+\left( -2r-\frac{\sqrt{7}(1+2k)}{4}\right)i\in \mathbb{C} \right \}
	\end{equation}
with $r$ real, so the  vertical projections of  $E^{*}$ and  $A^{k}(E^{*})$ for $k \neq 0$ are disjoint parallel lines, then  $E^{*}$ and  $A^{k}(E^{*})$ are disjoint in the Heisenberg group for $k \neq 0$.
\end{proof}

\begin{figure}
\begin{tikzpicture}[style=thick,scale=1]
\draw (0,0)  circle (1.414);  \node [above right] at (-0.9,1.3){$I_{0}^{+}$};
\draw (-1.5,-1.32288)  circle (1.414);  \node [above right] at (-2.5,0.1){$I_{1}^{+}$};
\draw (1.5,1.32288)  circle (1.414);  \node [above right] at (1,2.8){$I_{-1}^{+}$};

\draw[red] (0.5,-1.32288)  circle (1.414);  \node [above right] at (1.3,-3){$I_{0}^{-}$};
\draw[red] (2,0)  circle (1.414); \node [above right] at (3.25,-1.2){$I_{-1}^{-}$};
\draw[red] (-1,-2.64575)  circle (1.414); \node [above right] at (-0.8,-4.55){$I_{1}^{-}$};

\draw [blue](-0.5,-1.32288)  circle (1);  \node [above right] at (-0.1,-2.7){$\widehat{I}_{0}$};
\draw[blue] (1,0)  circle (1); \node [above right] at (1.375,-2.7+1.32288){$\widehat{I}_{-1}$};
\draw[blue] (-2,-2.64575)  circle (1);   \node [above right] at (-1.6,-2.7-1.32288){$\widehat{I}_{1}$};

\draw[ thick] (-1.5,2.8386)--(2,-4.1614);\node [above right] at (-1.5,2.8386){$ E^{*}$};

\draw[ thick] (-3.5,-4.323)--(3.5,2.677);\node [above right] at (3.5,2.677){$ C^{*}$};
\draw[red, line width=3pt] (-0.55,-1.373)--(-0.45,-1.273);

\end{tikzpicture}
\caption{The vertical projections of the planes $E^{*}$, $C^{*}$ and the spinal spheres of $I_{k}^{-}$,  $I_{k}^{+}$ and $\widehat{I}_{k}$ for $k=-1,0,1$. The (very) short thick red arc is the vertical projection of the disk $C$. }	
\label{figure:projE-34p}
\end{figure}

In Figure \ref{figure:B-b-annulus-34p}, we give the abstract pictures of  annuli of the intersections of $I_{0}^{+}$ and $I_{0}^{-}$ with the ideal  boundary  of the Ford domain of  $\Delta_{3,4,\infty;\infty}$. The abstract pictures of  annuli of the intersections of $I_{k}^{+}$ and $I_{k}^{-}$ with the ideal boundary  of the Ford domain of  $\Delta_{3,4,\infty;\infty}$ are similar by the $A$-action.

\begin{figure}
\begin{center}
\begin{tikzpicture}
\node at (0,0) {\includegraphics[width=11.5cm,height=5cm]{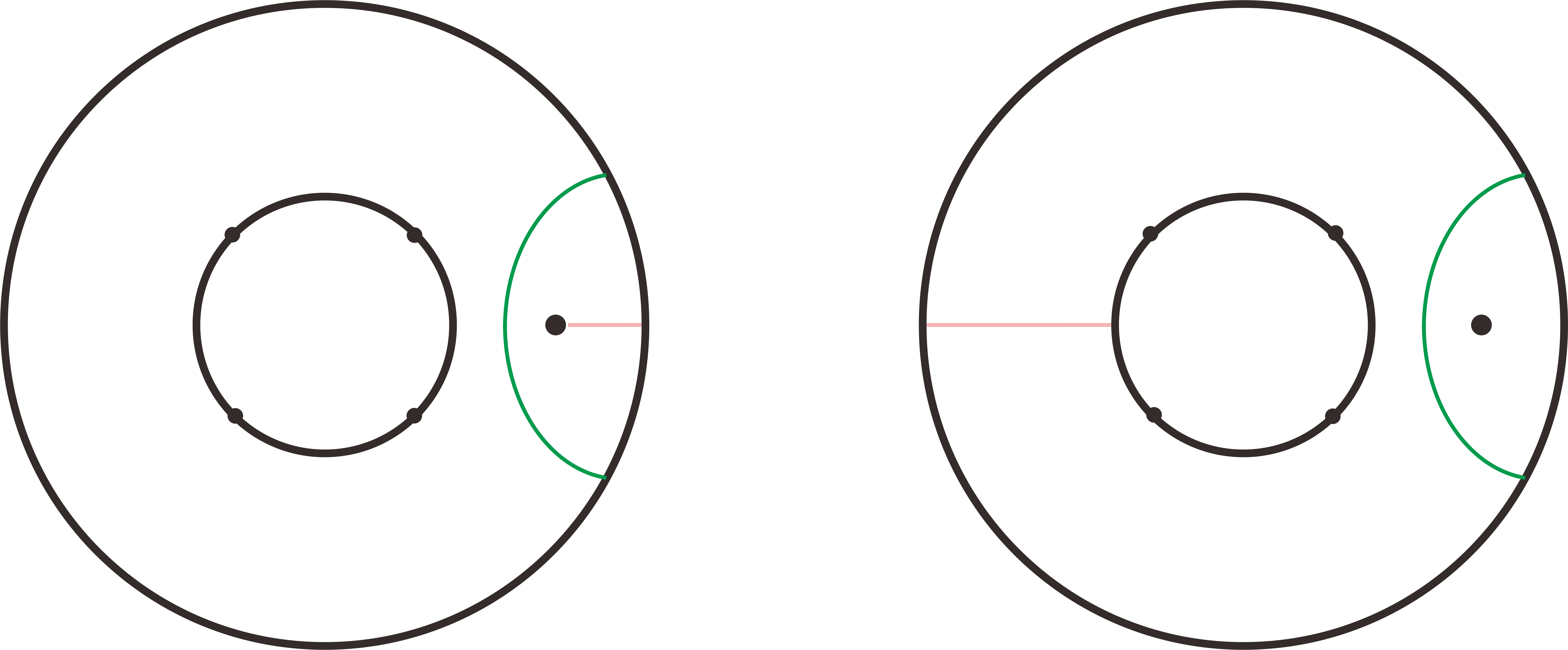}};
\coordinate [label=left:$ \scriptscriptstyle{B^{-1}}$] (S) at (-0.3,0.7);
\coordinate [label=left:$ \scriptscriptstyle{B^{-1}}$] (S) at (-0.3,-0.5);
\coordinate [label=left:$ \scriptscriptstyle{B^{-1}}$] (S) at (-4.3,2.3);
\coordinate [label=left:$\scriptscriptstyle{C}$] (S) at (-1,-0.2);
\coordinate [label=left:$\scriptscriptstyle{p_{AB}}$] (S) at (-1.3,0.16);
\coordinate [label=left:$\scriptscriptstyle{E}$] (S) at (-1.6,0.5);
\coordinate [label=left:$\scriptscriptstyle{A^{-1}B^{-1}A}$] (S) at (-2.35,0.08);
\coordinate [label=left:$\scriptscriptstyle{A^{-1}B^{-1}AB^{-1}A}$] (S) at (-2,1.12);
\coordinate [label=left:$\scriptscriptstyle{A^{-1}B^{-1}AB^{-1}A}$] (S) at (-2,-1.12);
\coordinate [label=left:$\scriptscriptstyle{A^{-1}B^{-1}A}$] (S) at (-4.2,0.08);

\coordinate [label=left:$ \scriptscriptstyle{B}$] (S) at (6.2,0);
\coordinate [label=left:$ \scriptscriptstyle{A^{-1}(p_{AB})}$] (S) at (5.8,0.3);
\coordinate [label=left:$ \scriptscriptstyle{E}$] (S) at (5.2,-0.6);
\coordinate [label=left:$ \scriptscriptstyle{ABA^{-1}}$] (S) at (4.75,0);
\coordinate [label=left:$ \scriptscriptstyle{ABA^{-1}}$] (S) at (3.25,0.1);
\coordinate [label=left:$ \scriptscriptstyle{B^{-1}AB^{-1}}$] (S) at (4,1.2);
\coordinate [label=left:$ \scriptscriptstyle{B^{-1}AB^{-1}}$] (S) at (4,-1.1);
\coordinate [label=left:$ \scriptscriptstyle{C}$] (S) at (1.75,-0.1);
\coordinate [label=left:$ \scriptscriptstyle{B}$] (S) at (1.7,1.9);
\coordinate [label=left:$ \scriptscriptstyle{B}$] (S) at (1.7,-1.9);
\end{tikzpicture}
\end{center}
\caption{The punctured annuli of the intersections of $I_{0}^{+}$ and $I_{0}^{-}$ with the ideal  boundary  of the Ford domain of  $\Delta_{3,4,\infty;\infty}$. The left-side is the annulus  for  $I_{0}^{+}$, the green arc is the intersection of this annulus and the disk $E$; the  pink  arc is the intersection of this annulus and the disk $C$; the three arcs labeled by  $B^{-1}$ mean  these arcs are glued to  annulus  for  $I_{0}^{-}$; the two arcs labeled by  $A^{-1}B^{-1}A$ mean  these arcs are glued to  annulus  for  $I_{-1}^{-}$; the two arcs labeled by  $A^{-1}B^{-1}AB^{-1}A$  mean  these arcs are glued to  bigons  for  $\widehat{I}_{-1}$. The right-side is the annulus  for  $I_{0}^{+}$, the labels obey the similar law as the left-side.}
	\label{figure:B-b-annulus-34p}
\end{figure}

Take a singular surface $$\Sigma =  \bigcup_{k \in \mathbb{Z}} ((\hat{s}_{k} \cup s_k^{\pm})\cap \partial \hc)$$  in the Heisenberg group. See Figure \ref{figure:34pabstract} for an abstract picture of this surface.
  $\Sigma$ is an annulus with $A$-action and which   is tangent to itself  at infinite many points.

From the $A$-action on the plane $E^{*}$ we get a plane
$A(E^{*})$
\begin{equation}\label{$A(E^{*})$}
\left[r-\frac{5}{4}, -2r-\frac{3\sqrt{7}}{4},-5r+s+\frac{(1-2r)\sqrt{7}}{2}\right]
\end{equation}
with $r$ and $s$ reals in  the  Heisenberg group. In particular, $E^{*}$ and $A(E^{*})$ are parallel planes in the    Heisenberg group. One component of $(\partial{\bf H}^2_{\mathbb C}\backslash \{q_{\infty}\})\backslash(E^{*}\cup A(E^{*}))$ contains the disk $C$, we denote the closure of  this component by $U$,  $U$ is homeomorphic $E^{*} \times [0,1]$.  We denote by $H$ the closure of  complement of  all  the 3-balls bounded $\hat{s}_{k}$ and $s_k^{\pm}$  in $U$. Note that $$\partial_{\infty} D_{\Gamma_2} =\cup^{\infty} _{k= -\infty} A^{k}(H).$$
We will show $\partial_{\infty} D_{\Gamma_2}$ is an infinite  genus  handlebody in the Heisenberg group.

\begin{figure}
\begin{center}
\begin{tikzpicture}
\node at (0,0) {\includegraphics[width=11.5cm,height=18cm]{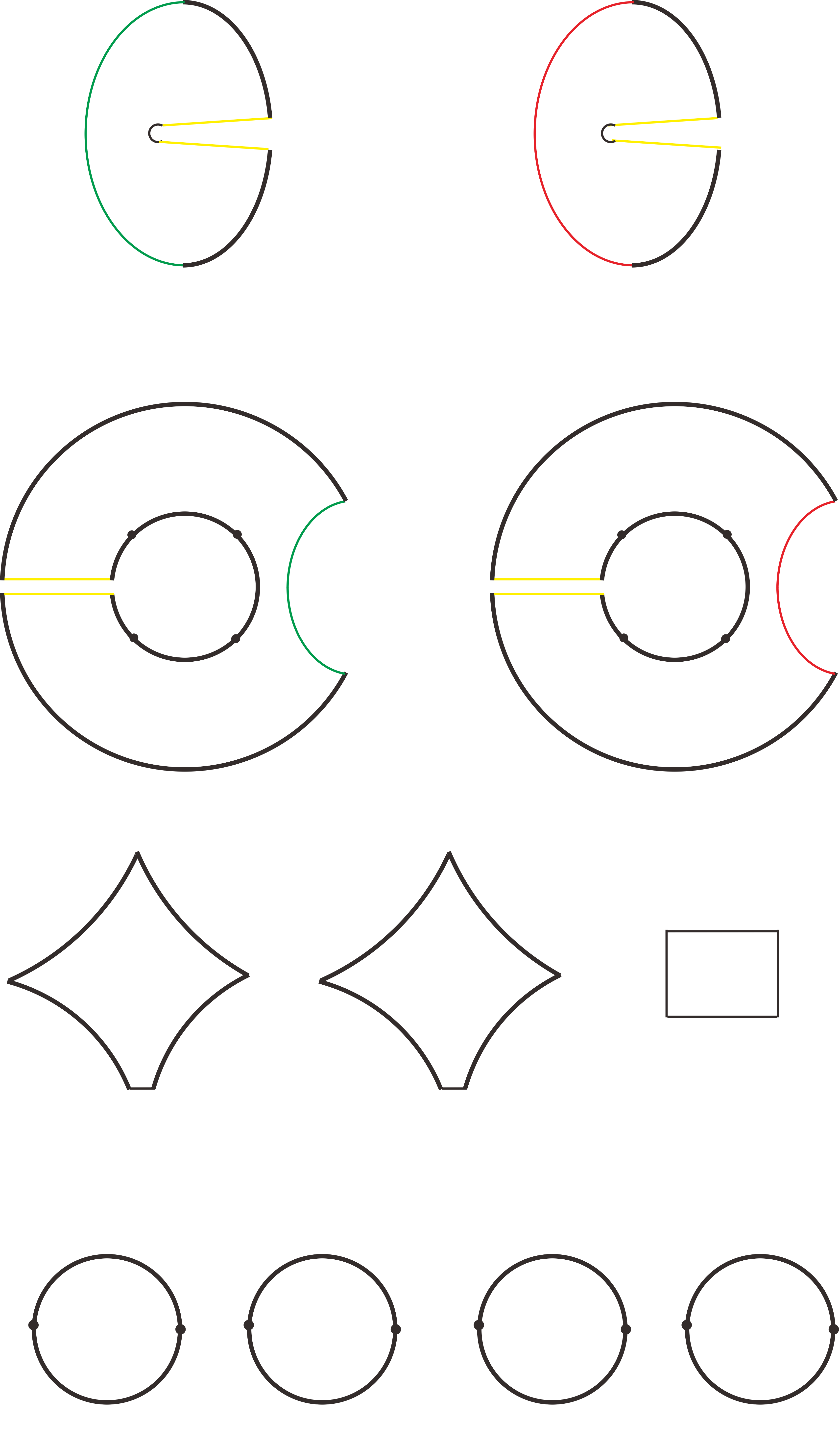}};
\coordinate [label=left:$ \scriptscriptstyle{B^{-1}}$] (S) at (-1.7,6.2);
\coordinate [label=left:$ \scriptscriptstyle{F(B)}$] (S) at (-2.5,5.2);
\coordinate [label=left:$ \scriptscriptstyle{B^{-1}}$] (S) at (-1.7,8.6);
\coordinate [label=left:$ \scriptscriptstyle{C(-)}$] (S) at (-2.3,7);
\coordinate [label=left:$ \scriptscriptstyle{C(+)}$] (S) at (-2.3,7.7);
\coordinate [label=left:$ \scriptscriptstyle{F}$] (S) at (-3,7.35);
\coordinate [label=left:$ \scriptscriptstyle{E}$] (S) at (-4.2,8.4);

\coordinate [label=left:$ \scriptscriptstyle{F(AB^{-1}A^{-1})}$] (S) at (4,5.2);
\coordinate [label=left:$ \scriptscriptstyle{ABA^{-1}}$] (S) at (4.8,6.2);
\coordinate [label=left:$ \scriptscriptstyle{ABA^{-1}}$] (S) at (4.8,8.6);
\coordinate [label=left:$ \scriptscriptstyle{C(-)}$] (S) at (3.9,7);
\coordinate [label=left:$ \scriptscriptstyle{C(+)}$] (S) at (3.9,7.7);
\coordinate [label=left:$ \scriptscriptstyle{F}$] (S) at (3.2,7.35);
\coordinate [label=left:$ \scriptscriptstyle{A(E)}$] (S) at (2.1,8.55);

\coordinate [label=left:$\scriptscriptstyle{F(B^{-1})}$] (S) at (-2,-1.2);
\coordinate [label=left:$\scriptscriptstyle{B}$] (S) at (-4.5,-0.4);
\coordinate [label=left:$\scriptscriptstyle{B}$] (S) at (-4.5,3.75);
\coordinate [label=left:$\scriptscriptstyle{C(-)}$] (S) at (-4.7,1.4);
\coordinate [label=left:$\scriptscriptstyle{C(+)}$] (S) at (-4.7,2);
\coordinate [label=left:$\scriptscriptstyle{ABA^{-1}}$] (S) at (-3.2,2);
\coordinate [label=left:$\scriptscriptstyle{ABA^{-1}}$] (S) at (-3.1,1.4);
\coordinate [label=left:$\scriptscriptstyle{B^{-1}AB^{-1}}$] (S) at (-2.3,2.9);
\coordinate [label=left:$\scriptscriptstyle{B^{-1}AB^{-1}}$] (S) at (-2.2,0.6);
\coordinate [label=left:$\scriptscriptstyle{AB^{-1}A^{-1}}$] (S) at (-1.6,1.7);
\coordinate [label=left:$\scriptscriptstyle{E}$] (S) at (-1.25,1);

\coordinate [label=left:$\scriptscriptstyle{F(ABA^{-1})}$] (S) at (4.8,-1.2);
\coordinate [label=left:$\scriptscriptstyle{AB^{-1}A^{-1}}$] (S) at (2.3,-0.4);
\coordinate [label=left:$\scriptscriptstyle{AB^{-1}A^{-1}}$] (S) at (2.3,3.75);
\coordinate [label=left:$\scriptscriptstyle{C(-)}$] (S) at (2.1,1.4);
\coordinate [label=left:$\scriptscriptstyle{C(+)}$] (S) at (2.1,2);
\coordinate [label=left:$\scriptscriptstyle{B^{-1}}$] (S) at (3.3,2);
\coordinate [label=left:$\scriptscriptstyle{B^{-1}}$] (S) at (3.3,1.4);
\coordinate [label=left:$\scriptscriptstyle{B^{-1}AB^{-1}}$] (S) at (4.4,2.8);
\coordinate [label=left:$\scriptscriptstyle{B^{-1}AB^{-1}}$] (S) at (4.4,0.6);
\coordinate [label=left:$\scriptscriptstyle{B^{-1}}$] (S) at (4.6,1.7);
\coordinate [label=left:$\scriptscriptstyle{A(E)}$] (S) at (5.85,1);

\coordinate [label=left:$\scriptscriptstyle{B^{-1}}$] (S) at (-4.5,-2.4);
\coordinate [label=left:$\scriptscriptstyle{B}$] (S) at (-4.6,-3.9);
\coordinate [label=left:$\scriptscriptstyle{ABA^{-1}}$] (S) at (-2,-2.4);
\coordinate [label=left:$\scriptscriptstyle{AB^{-1}A^{-1}}$] (S) at (-2,-3.9);
\coordinate [label=left:$\scriptscriptstyle{F}$] (S) at (-3.6,-4.35);
\coordinate [label=left:$\scriptscriptstyle{C(+)}$] (S) at (-3.3,-5.2);

\coordinate [label=left:$\scriptscriptstyle{B^{-1}}$] (S) at (-0.2,-2.4);
\coordinate [label=left:$\scriptscriptstyle{B}$] (S) at (-0.3,-3.9);
\coordinate [label=left:$\scriptscriptstyle{ABA^{-1}}$] (S) at (2.3,-2.4);
\coordinate [label=left:$\scriptscriptstyle{AB^{-1}A^{-1}}$] (S) at (2.3,-3.9);
\coordinate [label=left:$\scriptscriptstyle{F}$] (S) at (0.7,-4.35);
\coordinate [label=left:$\scriptscriptstyle{C(-)}$] (S) at (1,-5.2);

\coordinate [label=left:$\scriptscriptstyle{C(+)}$] (S) at (3.4,-3.15);
\coordinate [label=left:$\scriptscriptstyle{C(-)}$] (S) at (5.65,-3.15);
\coordinate [label=left:$\scriptscriptstyle{B}$] (S) at (4.4,-2.35);
\coordinate [label=left:$\scriptscriptstyle{AB^{-1}A^{-1}}$] (S) at (4.8,-3.9);
\coordinate [label=left:$\scriptscriptstyle{F}$] (S) at (4.5,-5.2);

\coordinate [label=left:$\scriptscriptstyle{B}$] (S) at (-4.1,-6.4);
\coordinate [label=left:$\scriptscriptstyle{B^{-1}}$] (S) at (-3.8,-8.6);
\coordinate [label=left:$\scriptscriptstyle{E}$] (S) at (-4.2,-9.4);

\coordinate [label=left:$\scriptscriptstyle{ABA^{-1}}$] (S) at (-0.4,-6.4);
\coordinate [label=left:$\scriptscriptstyle{AB^{-1}A^{-1}}$] (S) at (-0.3,-8.6);
\coordinate [label=left:$\scriptscriptstyle{A(E)}$] (S) at (-0.5,-9.4);

\coordinate [label=left:$\scriptscriptstyle{B^{-1}}$] (S) at (2.3,-6.4);
\coordinate [label=left:$\scriptscriptstyle{ABA^{-1}}$] (S) at (2.4,-8.6);
\coordinate [label=left:$\scriptscriptstyle{F(B^{-1}AB^{-1},1)}$] (S) at (3,-9.4);

\coordinate [label=left:$\scriptscriptstyle{B^{-1}}$] (S) at (5.1,-6.4);
\coordinate [label=left:$\scriptscriptstyle{ABA^{-1}}$] (S) at (5.2,-8.6);
\coordinate [label=left:$\scriptscriptstyle{F(B^{-1}AB^{-1},2)}$] (S) at (5.7,-9.4);

\end{tikzpicture}
\end{center}
  \caption{The eleven pieces of a 2-sphere $S$ for the Ford domain of  $\Delta_{3,4,\infty;\infty}$.}
	\label{figure:piece-2-sphere-34p}
\end{figure}

We consider a surface $S$ which is a union of eleven pieces, see Figure 	\ref{figure:piece-2-sphere-34p}:

\begin{itemize}
\item
the top two in Figure 	\ref{figure:piece-2-sphere-34p}
are the remaining disks of $I_{0}^{+}$ and $I_{1}^{-}$, we denote them by $F(B)$ and $F(AB^{-1}A^{-1})$ respectively, that is,  $F(B)=(I_{0}^{+} \cap H)\backslash C$ and  $F(AB^{-1}A^{-1})=(I_{1}^{-} \cap H)\backslash C$.  Each of $F(B)$ and $F(AB^{-1}A^{-1})$ is a hexagon. The boundary arcs of   $F(B)$  are labeled by $B^{-1}$, $E$, $B^{-1}$, $C(-)$, $F$ and $C(+)$ cyclically, this means that  $F(B)$  is glued to the disks $F(B^{-1})$, $E$, $F(B^{-1})$, $C(-)$, $F$ and $C(+)$ in Figure \ref{figure:piece-2-sphere-34p}  along these arcs. Similarly  $F(AB^{-1}A^{-1})$ is glued to the disks $F(ABA^{-1})$, $A(E)$, $F(ABA^{-1})$, $C(-)$, $F$ and $C(+)$ in Figure \ref{figure:piece-2-sphere-34p}  along its boundary.
	
\item the two disks in the second row of Figure 	\ref{figure:piece-2-sphere-34p}	are the remaining disks of $I_{0}^{-}$ and $I_{1}^{+}$, we denote them by $F(B^{-1})$ and $F(ABA^{-1})$ respectively, that is,  $F(B^{-1})=(I_{0}^{-} \cap H)\backslash C$ and  $F(ABA^{-1})=(I_{1}^{+} \cap H)\backslash C$.  Each of $F(B^{-1})$ and $F(ABA^{-1})$ is a decagon. $F(B^{-1})$ is glued to the disks $E$, $F(B)$,  $C(+)$, $F(ABA^{-1})$,  $F(B^{-1}AB^{-1})$, $F(AB^{-1}A^{-1})$, $F(B^{-1}AB^{-1})$, $F(ABA^{-1})$, $C(-)$ and $F(B)$ in Figure \ref{figure:piece-2-sphere-34p}  along its boundary.  Similarly  $F(ABA^{-1})$  is glued to ten disks  in Figure \ref{figure:piece-2-sphere-34p}.

\item 	The disks $C(+)$ and $C(-)$ in the third row of Figure 	\ref{figure:piece-2-sphere-34p} are two copies of the disk $C$ but with a vertex link of the vertex $p_{AB}$ deleted, so each of $C(+)$ and $C(-)$  is a pentagon. The disk $F$ in the third row of Figure 	\ref{figure:piece-2-sphere-34p} is a square, which is glued to  $C(+)$,  $C(-)$, $F(B)$ and $F(AB^{-1}A^{-1})$.

\item 	The disks $E$ and $A(E)$ in the fourth row of Figure 	\ref{figure:piece-2-sphere-34p} are glued to $F(B)$, $F(B^{-1})$ and $F(ABA^{-1})$, $F(AB^{-1}A^{-1})$ respectively. 	The disks $F(B^{-1}AB^{-1},1)$ and $F(B^{-1}AB^{-1},2)$ in the fourth row of Figure 	\ref{figure:piece-2-sphere-34p} are the remaining disks of $\widehat{I}_{0}^{-}$, that is,  $F(B^{-1}AB^{-1},1) \cup F(B^{-1}AB^{-1},2)=\widehat{I}_{0}^{-} \cap H$.
	
\end{itemize}

\begin{figure}
\begin{center}
\begin{tikzpicture}
\node at (0,0) {\includegraphics[width=9cm,height=9cm]{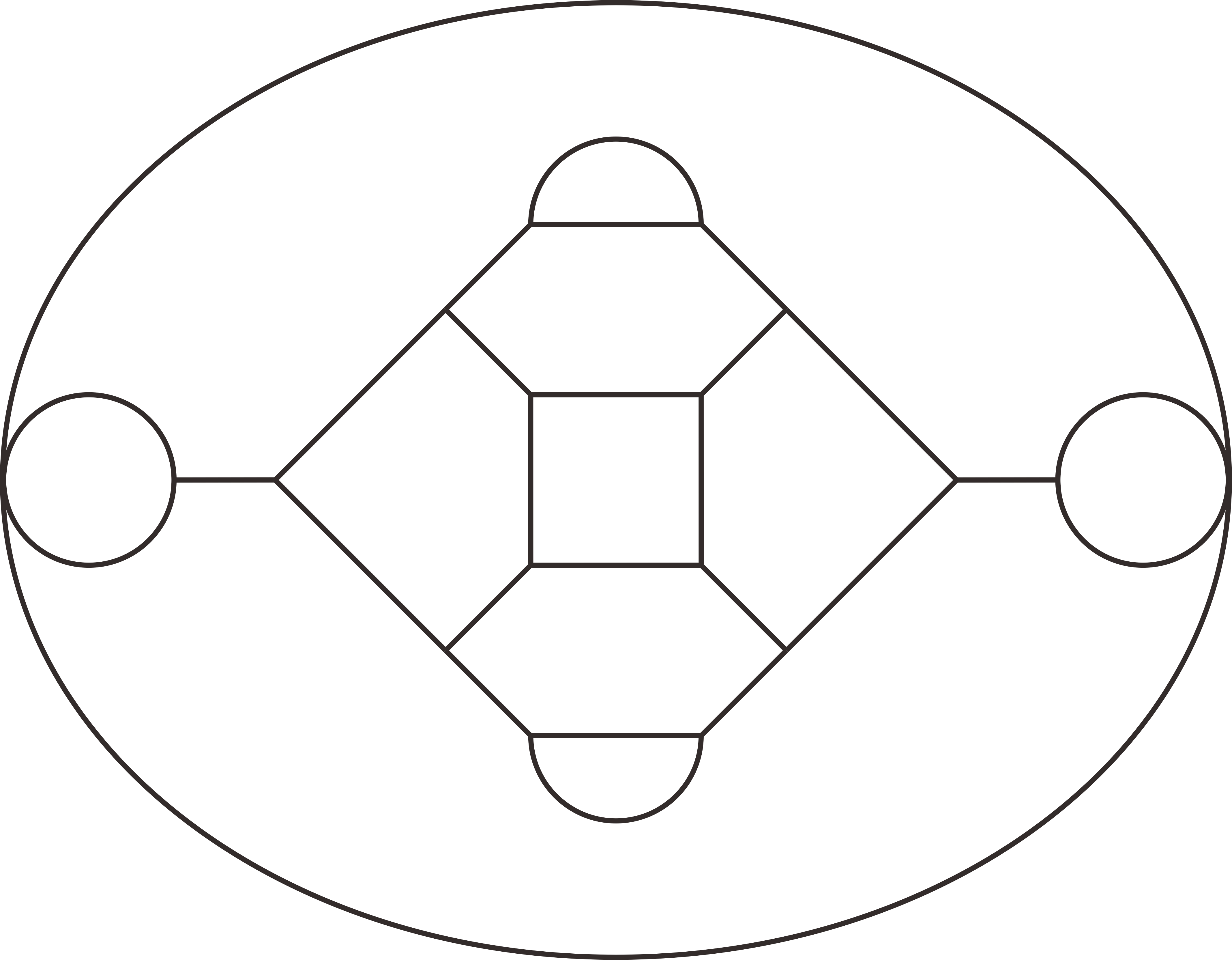}};
\coordinate [label=left:$\scriptstyle{F(B)}$] (S) at (0.4,1.5);
\coordinate [label=left:$\scriptstyle{E}$] (S) at (0.3,2.8);
\coordinate [label=left:$\scriptstyle{F(B^{-1})}$] (S) at (0.5,3.8);
\coordinate [label=left:$\scriptstyle{F}$] (S) at (0.3,0);
\coordinate [label=left:$\scriptstyle{F(AB^{-1}A^{-1})}$] (S) at (1,-1.5);
\coordinate [label=left:$\scriptstyle{A(E)}$] (S) at (0.4,-2.8);
\coordinate [label=left:$\scriptstyle{C(-)}$] (S) at (-1.2,0);
\coordinate [label=left:$\scriptstyle{C(+)}$] (S) at (2,0);
\draw[thick,->] (3.6,-0.5) -- (3.5,-1);
\coordinate [label=left:$\scriptstyle{F(B^{-1}AB^{-1},1)}$] (S) at (4,-1.2);
\draw[thick,->] (-3.5,-0.5) -- (-3.4,-0.9);
\coordinate [label=right:$ \scriptstyle{F(B^{-1}AB^{-1},2)}$] (S) at (-4,-1.1);
\coordinate [label=left:$\scriptstyle{F(ABA^{-1})}$] (S) at (0.6,-3.8);
\end{tikzpicture}
\end{center}
  \caption{Gluing all of eleven pieces in Figure \ref{figure:piece-2-sphere-34p}, we get a 2-sphere.}
	\label{figure:2-sphere-34p}
\end{figure}

We now glue all together  the eleven pieces to a surface $S$.
\begin{lem} \label{Lemma:2sphere-34p} The surface $S$ above is a 2-sphere.	
\end{lem}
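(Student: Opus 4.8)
The plan is to verify Lemma \ref{Lemma:2sphere-34p} purely combinatorially from the gluing data already laid out in Figure \ref{figure:piece-2-sphere-34p} and the accompanying bulleted description, by checking that the quotient of the eleven polygonal pieces under the prescribed edge identifications is a closed surface of Euler characteristic $2$. First I would record each of the eleven faces together with its boundary word: the two hexagons $F(B)$ and $F(AB^{-1}A^{-1})$, the two decagons $F(B^{-1})$ and $F(ABA^{-1})$, the two pentagons $C(+)$ and $C(-)$ (each a copy of $C$ with the vertex link of $p_{AB}$ removed), the square $F$, the two quadrilaterals $E$ and $A(E)$, and the two pieces $F(B^{-1}AB^{-1},1)$, $F(B^{-1}AB^{-1},2)$ coming from $\widehat{I}_0\cap H$. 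Summing the number of edges over all faces and dividing by two (each edge of $S$ is shared by exactly two faces) gives the edge count $E_S$; the face count is $F_S=11$.

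Next I would carry out the vertex identification: walking around each edge-gluing one tracks which corners of which polygons get identified, producing the vertex set of $S$ and hence $V_S$. Here I would use the explicit vertex labels already introduced — $p_{AB}$ and its $A$-translates, together with the points $y,g,c,r$ and their orbits — and the fact, established in Lemma \ref{34p-boundary-red-disk} and Lemma \ref{geodisk34pintersetion}, that $C$ is a genuine quadrilateral with vertices among these points, so $C(\pm)$ are honest pentagons after blowing up $p_{AB}$. One then checks the local picture at each vertex class: around every identified vertex the incident face-corners must close up into a single disk (no cone points, no branching), which is exactly the statement that the gluing pattern of Figure \ref{figure:piece-2-sphere-34p} is consistent. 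Since every piece is a disk and each edge is glued to exactly one other edge, the quotient is automatically a closed surface; computing $\chi(S)=V_S-E_S+F_S$ and getting $2$, together with orientability (inherited from the fact that $S$ bounds a region in $\partial\mathbf{H}^2_{\mathbb C}=S^3$, or directly from the consistent orientations on the pieces), forces $S\cong S^2$ by the classification of surfaces.

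The main obstacle I expect is bookkeeping accuracy rather than any conceptual difficulty: the decagons $F(B^{-1})$ and $F(ABA^{-1})$ have ten sides each with a long gluing word, and one must be careful that the labels $B^{-1}, E, F(B), C(\pm), F(ABA^{-1}), F(B^{-1}AB^{-1})$ etc.\ are matched up consistently and that the two pieces $F(B^{-1}AB^{-1},1)$ and $F(B^{-1}AB^{-1},2)$ — which together form the single disk $\widehat{I}_0\cap H$ — are reattached along the correct arcs so that no spurious handle or extra sphere component is created. I would double-check the edge and vertex counts by an independent route: verify directly from Figure \ref{figure:2-sphere-34p}, where all eleven pieces have been assembled, that the result is visibly a $2$-sphere, and confirm that this matches the combinatorial Euler characteristic computed above. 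Once $\chi(S)=2$ and $S$ is a closed connected orientable surface, the conclusion $S\cong S^2$ is immediate, which is all the lemma asserts.
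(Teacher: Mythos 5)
Your approach is legitimate and is in fact the one the paper explicitly acknowledges but declines to carry out: the proof of Lemma \ref{Lemma:2sphere-34p} opens by saying that one \emph{could} compute the Euler characteristic of $S$, but instead argues pictorially, observing that Figure \ref{figure:2-sphere-34p} is a $2$-disk which becomes a $2$-sphere after gluing the two outermost boundary arcs (one on $F(B^{-1})$, one on $F(ABA^{-1})$), and that the resulting gluing pattern of the eleven pieces coincides with that of Figure \ref{figure:piece-2-sphere-34p}. So your primary route (faces/edges/vertices, $\chi=2$, orientability, classification of surfaces) is genuinely different from the paper's, while your proposed ``independent double-check'' via Figure \ref{figure:2-sphere-34p} \emph{is} the paper's entire argument. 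The trade-off: your route is more robust against misreading a complicated planar figure and makes the verification auditable, but all of its content lies in the bookkeeping you have only sketched --- the actual edge count, the vertex identification classes, and the check that every boundary arc is matched to exactly one arc of one other piece are precisely the work, and none of it is done in the proposal. Two small corrections to that bookkeeping before you attempt it: the disks $E$ and $A(E)$ are bigons, not quadrilaterals (their boundaries are unions of two arcs, by Lemma \ref{eulerdisk34pintersetion}(1)); and once every edge is identified in pairs the quotient of disks is \emph{automatically} a closed surface (vertex links are forced to be circles), so the ``no cone points, no branching'' check you worry about is not an independent condition --- the substantive checks are connectivity, orientability, and the value of $\chi$.
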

\begin{proof}Even through we can show this lemma by the calculation of the Euler  characteristic of  $S$, but we prefer to show it via a picture. Figure \ref{figure:2-sphere-34p}	is a 2-disk,  we glue  the two  biggest (and outer) arcs which is one arc of the boundary of the disk labeled by $F(B^{-1})$ and $F(ABA^{-1})$ respectively, we get a 2-sphere. Now the gluing patterns of the eleven pieces are the same as the gluing pattern of Figure \ref{figure:piece-2-sphere-34p}, so  $S$ above is a two sphere.
\end{proof}

Note that the sphere $S$  is the boundary of a neighborhood of  $(U-H)\cup C$ in the Heisenberg group. In particular,  $S$ bounds a 3-ball containing $(U-H)\cup C$ in the Heisenberg group. Note that $S$ and $A(S)$ intersect in the disk $A(E)$, so  the two 3-balls bounded by $S$ and $A(S)$ intersect in the disk $A(E)$. Then a neighborhood of $$\cup^{\infty}_{k= -\infty} A^{k}((U-H)\cup C)$$ in the Heisenberg group is homeomorphic to $\mathbb{D}^{2} \times (-\infty, \infty)$. By Proposition \ref{prop:unknotted-34p} bellow, this $\mathbb{D}^{2} \times (-\infty, \infty)$ with $A$-action is unknotted in the Heisenberg group. So the complement of a neighborhood of $\cup^{\infty}_{k= -\infty} A^{k}((U-H)\cup C)$ in the Heisenberg space  is homeomorphic to   $(\mathbb{R}^2\backslash\mathbb{D}^{2})\times (-\infty, \infty)$, which is a genus one handlebody.  Then  $\partial _{\infty}D_{\Gamma_2}$ is an infinite genus handlebody with $A$-action, such that if we cut
$\partial _{\infty}D_{\Gamma}$ along the infinite many disjoint disks $\{A^{k}(C)\}^{\infty}_{k=-\infty}$, we get  $(\mathbb{R}^2\backslash\mathbb{D}^{2})\times(-\infty, \infty)$.

\begin{prop}\label{prop:unknotted-34p}
The affine line in Heisenberg coordinates $$L=\left\{\left[-\frac{3}{2}x+\frac{5}{4}, -\frac{\sqrt{7}}{2}x+\frac{\sqrt{7}}{5}, \frac{27\sqrt{7}}{20}x-\frac{3\sqrt{7}}{4}\right]\in \partial \hc; x\in \mathbb{R} \right\}$$ is preserved by the map $A$. Moreover $L$ is contained  in  the complement of $D_{\Gamma_{2}}$.
\end{prop}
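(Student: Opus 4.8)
The statement has two parts, and I would treat them separately. For the first part, $A$-invariance of the affine line $L$, the cleanest approach is a direct computation in homogeneous coordinates: lift a generic point $\left[-\tfrac32 x+\tfrac54,\ -\tfrac{\sqrt7}{2}x+\tfrac{\sqrt7}{5},\ \tfrac{27\sqrt7}{20}x-\tfrac{3\sqrt7}{4}\right]$ of $L$ to a null vector $\mathbf{q}(x)\in\mathbb{C}^3$ via the standard lift formula \eqref{eq:lift}, apply the matrix $A$ (in its normalized form from Section~\ref{sec-ford-34p}), and verify that $A\,\mathbf{q}(x)$ is proportional to $\mathbf{q}(x')$ for an affine reparametrization $x\mapsto x'=x+c$ (one expects $c$ to be a fixed constant determined by the translation length of $A$ in Heisenberg coordinates). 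Since $A$ acts on $\partial\hc\setminus\{q_\infty\}$ by the explicit Heisenberg map recorded earlier — for $\Delta_{3,4,\infty;\infty}$ one reads off from the matrix of $A$ that $(z,t)\mapsto(z+\tfrac{3-\sqrt7 i}{2},\,t+\dots)$ — it may even be faster to check directly that the Heisenberg parametrization of $L$ is mapped into itself by this affine Heisenberg translation. Either way this is a routine but slightly tedious linear-algebra check; the point $x=0$ can be verified first as a sanity check, and then the general reparametrization follows by the group law.

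\textbf{Containment in the complement of $D_{\Gamma_2}$.} This is the substantive part. By definition $D_{\Gamma_2}$ is the intersection of the exteriors of the isometric spheres $I_k^{\pm}$ and $\widehat I_k$; so I must show that every point of $L$ lies in the \emph{interior} of at least one isometric sphere. Since $L$ is $A$-invariant and $A$ permutes the isometric spheres (sending $I_k^{\pm}\mapsto I_{k+1}^{\pm}$ and $\widehat I_k\mapsto\widehat I_{k+1}$), it suffices to show that a fundamental segment of $L$ for the $\langle A\rangle$-action — say the portion with $x$ in one period interval — is covered by finitely many isometric spheres, e.g. by $I_0^{+}, I_0^{-}, \widehat I_0$ and their immediate neighbors. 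Concretely, for a point $p=p(x)\in L$ I would compute the two quantities $|\langle \mathbf{p}(x),\mathbf{q}_\infty\rangle|$ and $|\langle\mathbf{p}(x), G^{-1}(\mathbf{q}_\infty)\rangle|$ for $G$ running over the relevant short list, and show that for each $x$ in the period the first exceeds the second for at least one $G$; equivalently, in Cygan-metric terms, $d_{\mathrm{Cyg}}(p(x),\text{center of }I(G))<r_{I(G)}$ for some $G$. The centers and radii are tabulated in Table~\ref{tab:isometric spheres}, so each such inequality reduces to a polynomial inequality in the single real variable $x$, checkable on the period interval by elementary calculus (locating the maximum of the relevant polynomial, exactly as is done repeatedly in Sections~\ref{section:3ppmanifold} and \ref{section:3-mfd-34p}). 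The natural way to organize this: choose the period so that it splits into a few subintervals, on each of which a single isometric sphere (one of $I_0^{\pm}$, $\widehat I_0$, $I_1^{\pm}$) does the covering, with overlaps at the subdivision points.

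\textbf{Where the difficulty lies.} The $A$-invariance is bookkeeping; the real work is verifying the covering inequalities with correct, not merely numerical, constants — the coefficients $\tfrac{5}{4}, \tfrac{\sqrt7}{5}, \tfrac{27\sqrt7}{20}$ are somewhat unusual (note the $1/5$ and $27/20$, not the $1/2$ one might naively expect), which suggests the line was reverse-engineered precisely so that it threads through the interiors of consecutive isometric spheres. The main obstacle is thus an honest estimate: one must confirm that as $x$ moves across a period, the point $p(x)$ never escapes to the common exterior, i.e.\ that the "inside $I_0^{+}$" region and the "inside $I_0^{-}$" (or $\widehat I_0$, or $I_1^{\pm}$) regions overlap along $L$ rather than leaving a gap. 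I would handle this by writing $f_G(x) = |\langle\mathbf{p}(x),\mathbf{q}_\infty\rangle|^2 - |\langle\mathbf{p}(x),G^{-1}\mathbf{q}_\infty\rangle|^2$ for each candidate $G$, a polynomial in $x$, and showing $\max_G f_G(x)>0$ for all $x$ in the period — a finite check once the period and the candidate list are fixed. This also dovetails with Proposition~\ref{prop:ideal-boundary}: just as the segment $\tau$ there exhibits the unknottedness of the infinite handlebody $\partial_\infty D_{\Gamma_1}$, here the $A$-invariant line $L$ lying entirely outside $D_{\Gamma_2}$ is exactly what is needed to conclude $\partial_\infty D_{\Gamma_2}$ is \emph{unknotted}, which is how the proposition is used downstream.
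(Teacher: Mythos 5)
Your proposal is correct and follows essentially the same route as the paper: the paper verifies that $A$ acts on $L$ as the translation $L_x\mapsto L_{x+1}$, and then covers the fundamental segment $x\in[0,1]$ by the interiors of just two isometric spheres ($I_{-1}^{-}$ on $[0,1/3]$ and $I_{0}^{+}$ on $[1/3,1]$), exactly the reduction-to-one-period-plus-polynomial-inequalities scheme you describe. The only detail you leave open — which spheres from your candidate list do the covering and where the subdivision point sits — is the pair $I_{-1}^{-},\,I_{0}^{+}$ with overlap at $x=1/3$.
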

\begin{proof}
	We denote by
$$L_{x}=\left[-\frac{3}{2}x+\frac{5}{4}, -\frac{\sqrt{7}}{2}x+\frac{\sqrt{7}}{5}, \frac{27\sqrt{7}}{20}x-\frac{3\sqrt{7}}{4}\right]$$ a point in $L$.
The parabolic map $A$ acts on $L$ as:
	$$A:L_{x}\rightarrow L_{x+1}.$$
	So $A$ acts on $L$ as a translation through 1. We claim that the segment on $L$ with parameter $x\in [0,1]$ is contained in $I_{0}^{+}\cup I_{-1}^{-}$.
	It is easy to check that the segment on $L$ with parameter $x\in [1/3,1]$ is contained in the interior of the spinal sphere $I_{0}^{+}$ and the segment on $L$ with parameter $x\in [0,1/3]$ in the interior of the spinal sphere $I_{-1}^{-}$. Therefore, the line $L$ is in  the complement of $D_{\Gamma_{2}}$.	
\end{proof}

Now we have
\begin{lem} \label{34pcombinatorialmodel}  $\partial _{\infty}D_{\Gamma_2}$ is an infinite genus handlebody with $A$-action,   such that  the  3-manifold $\partial _{\infty}D_{\Gamma_2}/ \Gamma_2$ at infinity  of $\Gamma_2$ is obtained from   $\partial _{\infty}D_{\Gamma_2}$ by side-pairings on $I_{k}^{+} \cap \partial _{\infty}D_{\Gamma_2}$,  $I_{k}^{-} \cap  \partial _{\infty}D_{\Gamma_2}$ and $\widehat{I}_{k}^{+} \cap \partial _{\infty}D_{\Gamma_2}$ for all $k \in \mathbb{Z}$.	
\end{lem}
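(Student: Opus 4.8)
The plan is to combine two ingredients that are already essentially in place: the combinatorial model of $\partial_\infty D_{\Gamma_2}$ built in Subsection \ref{subsection:globalmodel34p}, and the side-pairing maps established in Proposition \ref{prop:sidepairing34p} and in Theorem \ref{thm:fundamental-domain34p}. First I would recall that $D_{\Gamma_2}$ is a fundamental domain for the cosets of $\langle A\rangle$ in $\Gamma_2$ (Theorem \ref{thm:fundamental-domain34p}), so that its intersection with a fundamental domain $U$ of $\langle A\rangle$ acting on $\partial_\infty\mathbf{H}^2_{\mathbb C}\setminus\{q_\infty\}$ is a genuine fundamental domain $H$ for the action of $\Gamma_2$ on its domain of discontinuity $\Omega_{\Gamma_2}\cap\partial\mathbf{H}^2_{\mathbb C}$. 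Here $U$ is the slab between $E^*$ and $A(E^*)$, which is legitimate because Lemma \ref{eulerdisk34pintersetion}(3) says $E^*$ and $A^k(E^*)$ are disjoint for $k\neq 0$ and $E^*$ is a plane separating the Heisenberg group, so the slab is homeomorphic to $E^*\times[0,1]$ and tiles $\partial_\infty\mathbf{H}^2_{\mathbb C}\setminus\{q_\infty\}$ under $\langle A\rangle$.

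The first assertion, that $\partial_\infty D_{\Gamma_2}$ is an infinite genus handlebody with $A$-action, follows from the discussion already given after Lemma \ref{Lemma:2sphere-34p}: the sphere $S$ of Figure \ref{figure:piece-2-sphere-34p} bounds a $3$-ball in $\partial\mathbf{H}^2_{\mathbb C}=\mathbb{S}^3$ containing $(U\setminus H)\cup C$, and since $S$ meets $A(S)$ exactly in the disk $A(E)$, a neighborhood of $\bigcup_k A^k((U\setminus H)\cup C)$ is a tube $\mathbb{D}^2\times(-\infty,\infty)$; by Proposition \ref{prop:unknotted-34p} this tube is unknotted (the affine line $L$ inside it is $A$-invariant and sits in the complement of $D_{\Gamma_2}$), so its complement in Heisenberg space is $(\mathbb{R}^2\setminus\mathbb{D}^2)\times(-\infty,\infty)$, a genus-one handlebody, and adding back the infinitely many disjoint disks $\{A^k(C)\}$ (disjoint by Lemma \ref{geodisk34pintersetion}(3), and each with interior disjoint from all spinal spheres by Lemma \ref{geodisk34pintersetion}(2)) gives the infinite genus handlebody $\partial_\infty D_{\Gamma_2}$, plainly preserved by $A$ since $A$ permutes the spinal spheres $I_k^\pm,\widehat I_k$ and the disks $A^k(C)$. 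Thus the topological claim is really a bookkeeping assembly of Lemmas \ref{geodisk34pintersetion}, \ref{eulerdisk34pintersetion}, \ref{Lemma:2sphere-34p} and Proposition \ref{prop:unknotted-34p}.

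For the second assertion, that the $3$-manifold at infinity $\partial_\infty D_{\Gamma_2}/\Gamma_2$ is obtained by performing the side-pairings on the pieces $I_k^+\cap\partial_\infty D_{\Gamma_2}$, $I_k^-\cap\partial_\infty D_{\Gamma_2}$, $\widehat I_k\cap\partial_\infty D_{\Gamma_2}$, I would argue as follows. Since $\Omega_{\Gamma_2}/\Gamma_2$ is by definition the $3$-manifold at infinity, and $D_{\Gamma_2}$ modulo $\langle A\rangle$ (equivalently $H$) is a fundamental domain, the quotient is $H$ with its boundary identified by the side-pairing maps. The boundary $\partial H\cap\partial_\infty\mathbf{H}^2_{\mathbb C}$ consists of: the pieces of the spinal spheres $I_k^\pm\cap\partial_\infty D_{\Gamma_2}$ and $\widehat I_k\cap\partial_\infty D_{\Gamma_2}$ lying in the slab $U$, together with the two "artificial" walls $E^*\cap\partial_\infty D_{\Gamma_2}$ and $A(E^*)\cap\partial_\infty D_{\Gamma_2}$. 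The isometric-sphere faces are paired by $A^kBA^{-k}\colon s_k^+\to s_k^-$ and $A^k(B^{-1}AB^{-1})A^{-k}\colon\widehat s_k\to\widehat s_{k-1}$ from Theorem \ref{thm:fundamental-domain34p} and Proposition \ref{prop:sidepairing34p}, while the two walls $E^*$ and $A(E^*)$ are identified by $A$ itself; so modding out by $A$ first (which glues $E^*$ to $A(E^*)$ and sews $\partial_\infty D_{\Gamma_2}$ into a solid torus worth of identifications) leaves exactly the side-pairings on the spinal-sphere faces, and these descend to $\partial_\infty D_{\Gamma_2}/\langle A\rangle$ as the claimed face identifications. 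Applying the Poincar\'e polyhedron theorem for the coset decomposition (the version cited for Theorem \ref{thm:fundamental-domain34p}), whose hypotheses were checked there, the quotient is a manifold and equals $\partial_\infty D_{\Gamma_2}/\Gamma_2$.

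The main obstacle I anticipate is making the "first mod out by $\langle A\rangle$" step rigorous without circularity: one must be careful that the slab $U$ genuinely produces a fundamental domain for $\Gamma_2$ (not just for $\langle A\rangle\backslash\Gamma_2$ acting on $D_{\Gamma_2}$), i.e. that the walls $E^*$, $A(E^*)$ together with the spinal spheres actually cut out a region meeting each $\Gamma_2$-orbit exactly once up to the prescribed gluings. This requires knowing that $\partial_\infty D_{\Gamma_2}\cap U$ is connected and that the walls $E^*\cap\partial_\infty D_{\Gamma_2}$ intersect the spinal-sphere faces only along the arcs recorded in Lemma \ref{eulerdisk34pintersetion}(1)–(2) and Figure \ref{figure:B-b-annulus-34p}; all of this is the content of the lemmas just proved, so the step is a matter of carefully invoking them rather than proving something new, but the combinatorial matching of the arcs in Figures \ref{figure:B-b-annulus-34p} and \ref{figure:piece-2-sphere-34p} with the abstract side-pairing pattern is the place where an error could hide, and is where I would spend the most care.
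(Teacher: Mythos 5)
Your proposal is correct and follows essentially the same route as the paper: the handlebody claim is exactly the paper's assembly of Lemmas \ref{geodisk34pintersetion}, \ref{eulerdisk34pintersetion}, \ref{Lemma:2sphere-34p} and Proposition \ref{prop:unknotted-34p} (the sphere $S$ bounding a ball containing $(U\setminus H)\cup C$, the tube $\mathbb{D}^2\times(-\infty,\infty)$, unknottedness via the $A$-invariant line $L$, and cutting along the disks $A^k(C)$), and the side-pairing claim is, as in the paper, an immediate consequence of Theorem \ref{thm:fundamental-domain34p} and Proposition \ref{prop:sidepairing34p} together with the Poincar\'e polyhedron theorem already invoked there. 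Your closing caution about verifying that $E^*$ and $A(E^*)$ meet the spinal-sphere faces only along the recorded arcs is precisely the content of Lemma \ref{eulerdisk34pintersetion}, so no new work is needed.
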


In Figure 	\ref{figure:34pabstract}, we give a  combinatorial model of  $\partial _{\infty}D_{\Gamma_2}$.  Note that in Figure 	\ref{figure:34pabstract}, the plane $E^{*}$ is twisted (and we do not draw it), which is an infinite (topological) disk  intersects the punctured annuli labeled by $B$ and $B^{-1}$ both in one finite arc, and  is disjoint from all of the remaining punctured annuli.

We now show that the infinite polyhedron $\partial _{\infty}D_{\Gamma_2}$ is an infinite genus handlebody with $A$-action, and the region $H$ of  $\partial _{\infty}D_{\Gamma_2}$ co-bounded by $E^{*}$ and $A(E^{*})$ is a fundamental domain of $A$-action on $\partial _{\infty}D_{\Gamma_2}$,  $\partial _{\infty}D_{\Gamma_2}/ \Gamma_2$ is our 3-manifold at infinity of $\Gamma_2$. But the authors have difficulty to   study  $\partial _{\infty}D_{\Gamma_2}/ \Gamma_2$ directly  from the region $H$, by this we mean that, for example,  we can write down the definition equation for arc $E \cap I_{0}^{+}$ in the Heisenberg group, but it seems difficult to write down  the definition equation for  the $B$-action of  $E \cap I_{0}^{+}$, which is an arc in  $I_{0}^{-}$, the explicit  definition equation is helpful to get the   side-pairing maps. So we will cut out a fundamental domain $H'$ of the $A$-action on $\partial _{\infty}D_{\Gamma_2}$ in a new way in next subsection. We cut $\partial _{\infty}D_{\Gamma_2}$ in a geometrical way in the boundary of $\partial _{\infty}D_{\Gamma_2}$, that is in $\Sigma$, but in a topological way in
$\partial _{\infty}D_{\Gamma_2}\backslash \Sigma$. This fundamental domain $H'$ is a genus  three handlebdody (but $A(H')$ and $H'$ intersect in a two-holed plane). From $H'$, we can use side-pairing maps to show the quotient space from  $\partial _{\infty}D_{\Gamma_2}$ modulo $\Gamma_2$ is  the 3-manifold   $m295$ in Snappy  Census \cite{CullerDunfield:2014}.

\subsection{Cutting out the Ford domain of the complex hyperbolic triangle group  $\Delta_{3,4,\infty;\infty}$}\label{subsection:newcutting34p}

Recall the fours points $y$, $g$, $c$ and $r$, and also their $A$-translations.
In  Figure \ref{figure:34p1} we take small neighborhoods of the points $y$, $g$, $c$ and $r$ with yellow, green, cyan and red colors respectively.

We fix a point $u$ in  the intersection of the  spinal spheres of  $B$ and
$B^{-1}$, then we consider $B(u)$ and $B^{2}(u)$, they also lie in the intersection of the  spinal spheres of  $B$ and
$B^{-1}$.

We take
$$u=\left[\frac{1}{4}, -\frac{\sqrt{7}}{4}, \frac{\sqrt{15}}{2}\right]$$  in Heisenberg coordinates, then
$$B(u)=\left[\frac{7-\sqrt{105}}{16},-\frac{\sqrt{15}+7\sqrt{7}}{16},0\right],$$
in Heisenberg coordinates, and
$$B^2(u)=\left[\frac{1+\sqrt{105}}{16},\frac{\sqrt{15}-\sqrt{7}}{16},-\frac{\sqrt{15}}{2}\right]$$
in Heisenberg coordinates.

Figure \ref{figure:34pabstract} is a combinatorial picture of  the ideal boundary of the Ford domain of $\Delta_{3,4,\infty;\infty}$.
Each of the big annulus with a cusp is the intersection of the  spinal sphere of some $A^{k}BA^{-k}$ or
$A^{k}B^{-1}A^{-k}$ with the ideal boundary of the Ford domain.
There are also bigons  which are  the intersections of the  spinal sphere of some $A^{k}B^{-1}AB^{-1}A^{-k}$ or $A^{k}BA^{-1}BA^{-k}$ with the ideal boundary of the Ford domain.  For example, the two bigons with vertices $y$, $g$, and $c$, $r$   are  the intersection of the  spinal sphere of  $A^{-1}B^{-1}AB^{-1}A$  with the ideal boundary of the Ford domain (we add a
diameter in these bigons for future purposes, which are the colored arcs in Figure  \ref{figure:34pabstract}). In other words, the  spinal sphere of  $A^{-1}B^{-1}AB^{-1}A$ contributes two parts in the ideal boundary of the Ford domain of $\Delta_{3,4,\infty;\infty}$, this is due to the fact that $B^{-1}AB^{-1}$ and $A^{-1}BA^{-1}A$ have the same spinal sphere. The point $A^{-1}(p_{AB})$ is the tangent point of the  spinal spheres of  $A^{-1}BA$ and  $B^{-1}$,  and the point $p_{AB}$ is the tangent point of the  spinal spheres of  $AB^{-1}A^{-1}$ and  $B$.
The $A$-action is the horizontal translation with a half-turn to the right. The ideal boundary of the Ford domain of $\Delta_{3,4,\infty;\infty}$ is the region which is  out side all the spheres in  Figure \ref{figure:34pabstract}. Note that $A(A^{-1}(p_{AB}))=p_{AB}$ and  $B(p_{AB})=A^{-1}(p_{AB})$.

We take a fundamental domain of $A$-action on the ideal boundary of the Ford domain of $\Delta_{3,4,\infty;\infty}$. That is, we take a topological  holed-plane $E'$ in $\partial_{\infty}{\bf H}^2_{\mathbb C}$, such that $\{q_{\infty}, A^{-1}(p_{AB}) \}\subset E'$, and the boundary of the holed-plane $E'$ is exactly the
purple-colored circle in Figure \ref{figure:34pabstract} (one of the thickest circle with vertices labeled by $y$, $g$, $c$ and $r$). Unfortunately, this holed-plane $E'$ is not a  subsurface of  $E^{*}$ in Subsection \ref{subsection:globalmodel34p}. The key point is that   the boundary of $E'$ is geometric, but on the other hand the entire $E'$ is topological. We can not find a   piecewise geometrical meaningful  plane which pass through the boundary of $E'$ and $A^{-1}(p_{AB})$
simultaneously, but $E'$ above with geometrical  boundary (and passing through  $A^{-1}(p_{AB})$) is enough for the using of side-pairing map.
Then $A(E')$ is a holed-plane in $\partial_{\infty}{\bf H}^2_{\mathbb C}$, such $\{q_{\infty}, p_{AB}\}\subset A(E')$, and the boundary of the disk $A(E')$ is exactly the orange-colored circle in  Figure \ref{figure:34pabstract} (one of the thickest circle with vertices labeled by $A(c)$, $A(r)$, $A(y)$ and $A(g)$). We may also assume $E' \cap A(E')=\emptyset$.

In  Figure \ref{figure:34pabstract}, there are three arcs with  end vertices $y$ and $g$:
\begin{itemize}
	\item  We denote by $[y,g]_1$ the purple one. The arc  $[y,g]_1$  is one component of the intersection of the spinal spheres of $A^{-1}B^{-1}A$ and $A^{-1}B^{-1}AB^{-1}A$;
	\item   We denote by  $[y,g]_2$ the pink  one.  The arc  $[y,g]_2$  lies in the spinal sphere of $A^{-1}B^{-1}AB^{-1}A$;
	\item   We denote by  $[y,g]_3$ the black one.   The arc $[y,g]_3$  is one component of the intersection of the spinal spheres of $B$ and $A^{-1}B^{-1}AB^{-1}A$.
	\end{itemize} 
		 Similarly,   there are three arcs with  end vertices $c$ and $r$:
		 \begin{itemize}
		 	\item 
		 	 We denote by $[c,r]_1$ the black arc with ending points $c$ and $r$. It is one component of the intersection of the spinal spheres of $A^{-1}B^{-1}A$ and $A^{-1}B^{-1}AB^{-1}A$;	\item    We denote $[c,r]_2$ the black arc with end vertices $c$ and $r$. It lies in the spinal sphere of  $A^{-1}B^{-1}AB^{-1}A$;
		 	 	\item     We denote  $[c,r]_3$ the purple arc with end vertices $c$ and $r$. It is one component of the intersection of the spinal spheres of $B$ and $A^{-1}B^{-1}AB^{-1}A$.
		 	 \end{itemize}
		 	 Moreover, 	 there are three arcs with  end vertices $A(c)$ and $A(r)$:
		 	  \begin{itemize}
		 	 	\item 
		 	   We denote by $[A(c),A(r)]_1$ the black arc with end vertices $A(c)$ and $A(r)$. It  is one component of the intersection of the spinal spheres of $B^{-1}$ and $B^{-1}AB^{-1}$;
		 	      	\item     We denote $[A(c),A(r)]_2$ the pink  arc with end vertices $A(c)$ and $A(r)$.   It lies in the spinal sphere of  $B^{-1}AB^{-1}$;
		 	      	   	\item    We denote  $[A(c),A(r)]_3$ the orange  arc with end vertices $A(c)$ and $A(r)$. It  is one component of the intersection of the spinal spheres of $ABA^{-1}$ and $B^{-1}AB^{-1}$.
		 	    \end{itemize}
	For the  three arcs with  end vertices $A(g)$ and $A(y)$:
		 	    \begin{itemize}
		 	   	\item 
We  denote by $[A(g),A(y)]_1$ the orange arc with end vertices $A(g)$ and $A(y)$. It  is one component of the intersection of the spinal spheres of $B^{-1}$ and $B^{-1}AB^{-1}$;
   	\item  
   We denote $[A(g),A(y)]_2$ the black arc with end vertices $A(g)$ and $A(y)$. It lies in the spinal sphere of  $B^{-1}AB^{-1}$;
   	\item     We denote  $[A(r),A(y)]_3$ the black  arc with end vertices $A(y)$ and $A(y)$. It  is one component of the intersection of the spinal spheres of $ABA^{-1}$ and $B^{-1}AB^{-1}$.
 \end{itemize}

It is easy to show that  $B^{-1}AB^{-1}(F_{2,-}\cup F_{2,+})=(F_{2,-}\cup F_{2,+})$ is the bigon in Figure  \ref{figure:34pabstract} with boundary consists of $[c,r]_1$ and $[c,r]_3$, so we have
$$A(B^{-1}AB^{-1}(F_{2,-}\cup F_{2,+}))=F_{2,-}\cup F_{2,+},$$ but more precisely we  can show that
$$B^{-1}AB^{-1}([A(c), A(r)]_3)=[c,r]_1$$
and
$$B^{-1}AB^{-1}([A(c), A(r)]_1)=[c,r]_3.$$

Now  the part  of  the ideal boundary of the Ford domain of  $\Delta_{3,4,\infty;\infty}$  co-bounded by $E'$  and  $A(E')$, is denoted by $H'$, we now show $H'$ is a genus 3 handlebody.
The  boundary of $H'$ consists of  $E'$, $A(E')$, and also parts of  the spinal  spheres of $B$,  $B^{-1}$,  $A^{-1}B^{-1}AB^{-1}A$, $B^{-1}AB^{-1}$,   see
Figure \ref{figure:34pdisk}.
Now in Figure \ref{figure:34pdisk}, we take three disks $D_1$, $D_2$ and $D_3$ carefully which cut the  $H'$ into a 3-ball:
\begin{itemize}
\item The boundary of the  disk   $D_1$   consists of three arcs $[A^{-1}(p_{AB}),c]$, $[c,u]$ and $[u,A^{-1}(p_{AB})]$. The arc  $[A^{-1}(p_{AB}),c]$  is in $E'$,  the arc $[c,u]$ is in the spinal sphere of $B$ and the arc  $[u,A^{-1}(p_{AB})]$ is  in the spinal sphere of $B^{-1}$. These are the blue-colored arcs in  Figure \ref{figure:34pdisk}. We may also assume   the arc $[c,u]$ does not intersect  the spinal sphere of $B^{-1}$.
\item
The boundary of the  disk   $D_2$    consists of three arcs $[p_{AB}, A(c)]$, $[A(c),B^{2}(u)]$ and $[B^{2}(u), p_{AB}]$. The arc $[p_{AB}, A(c)]$ is in $A(E')$,  the arc $[A(c),B^{-1}(u)]$ is in the spinal sphere of $B^{-1}$ and the arc $[B^{-1}(u), p_{AB}]$ is in the spinal sphere of  $B$.  These are the red-colored arcs in  Figure \ref{figure:34pdisk}. We may also assume   the arc  $[B^{2}(u), p_{AB}]$ does not intersect  the spinal sphere of $B^{-1}$.
\item
The boundary of the  disk   $D_3$  consists of six arcs $[q_{\infty}, y]$, $[y,r]$, $[r, B(u)]$,  $[B(u),A(g)]$,   $[A(g), A(y)]_1$ and $[A(y), q_{\infty}]$. The arc  $[q_{\infty}, y]$ lies in the disk $E$,  the arc  $[y,r]$ lies in the intersection of  spinal spheres of $B$ and $A^{-1}B^{-1}A$,  the arc  $[r, B(u)]$ lies in the spinal sphere of $B$, the arc  $[B(u),A(g)]$ lies in the   spinal sphere of $B^{-1}$,  the arc  $[A(g), A(y)]_1$  lies in the intersection of   spinal spheres of $B^{-1}$ and $B^{-1}AB^{-1}$,  $[A(y), q_{\infty}]$ lies in the disk $A(E')$.
	These are the green-colored arcs in  Figure \ref{figure:34pdisk}.
\end{itemize}

We may also assume that $B([r,B(u)])=[A(c),B^{2}(u)]$,   $B([c,u])=[A(g),B(u)]$,   $B([p_{AB},B^{2}(u)])=[A^{-1}(p_{AB}),u]$  $A([y,q_{\infty}])=[A(y),q_{\infty}]$ and  $A([c,A^{-1}(p_{AB})])=[A(c),p_{AB}]$.

We now need more arcs:  $u$, $B(u)$, $B^2(u)$ divide the intersection circle of the spinal spheres of $B$ and $B^{-1}$ into three arcs $[u, B(u)]$, $[B(u), B^2(u)]$ and $[B^2(u),u]$.

\begin{figure}
	\begin{center}
		\begin{tikzpicture}
		\node at (0,0) {\includegraphics[width=8cm,height=8cm]{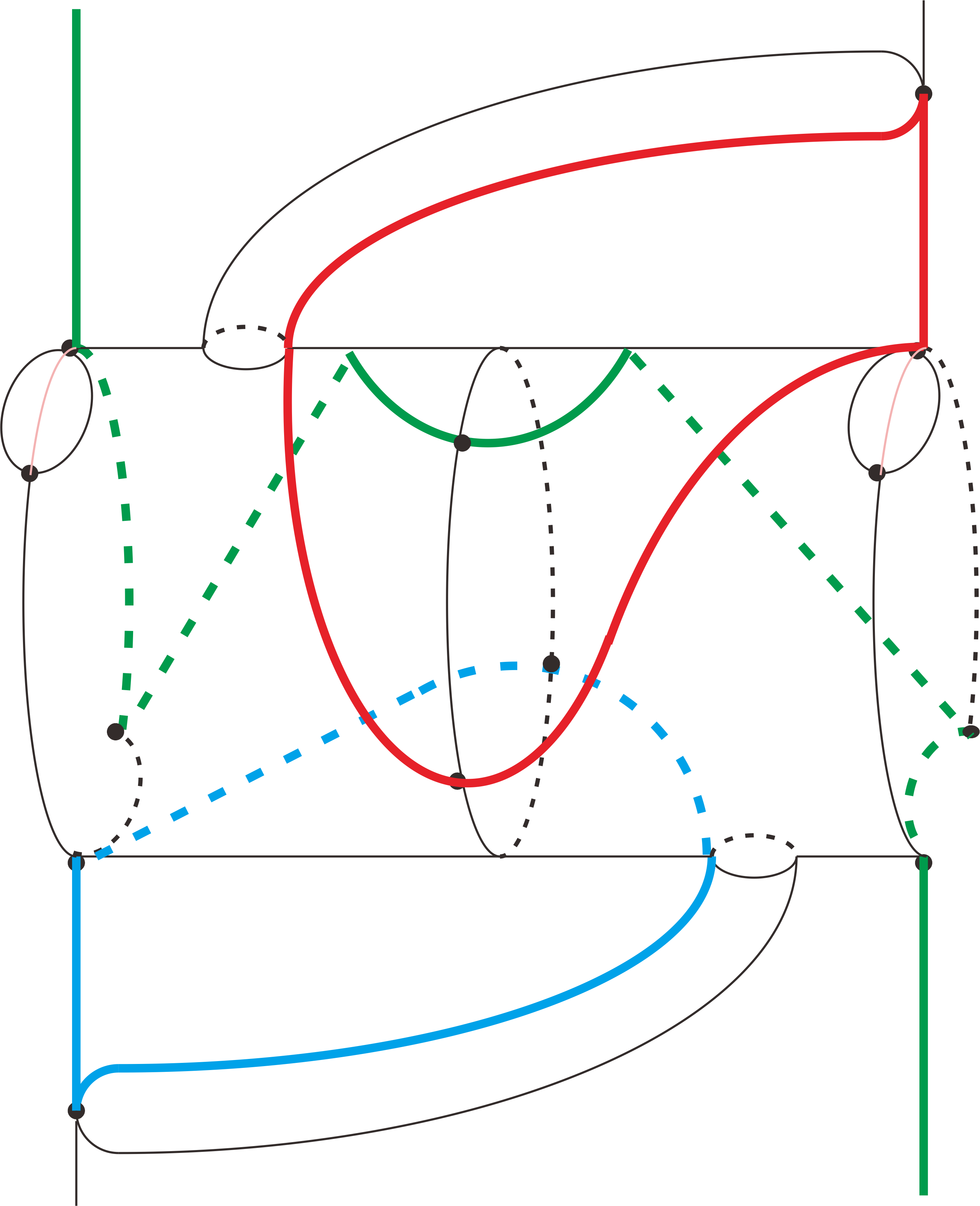}};
		\coordinate [label=left:$\scriptstyle{B(u)}$] (S) at (-0.1,1.0);
		\coordinate [label=left:$\scriptstyle{B^{-1}(u)}$] (S) at (-0.1,-1.4);
		\coordinate [label=left:$\scriptstyle{u}$] (S) at (0.9,-0.3);
		\coordinate [label=left:$\scriptstyle{B^{-1}}$] (S) at (2.5,-0.2);
		\coordinate [label=left:$\scriptstyle{B}$] (S) at (-1.5,-0.2);
		\coordinate [label=left:$\scriptstyle{r}$] (S) at (-3,-1);
		\coordinate [label=left:$\scriptstyle{c}$] (S) at (-3.3,-1.9);
		\coordinate [label=left:$\scriptstyle{A^{-1}(p_{AB})}$] (S) at (-3.3,-3.5);
		\coordinate [label=left:$\scriptstyle{g}$] (S) at (-3.8,0.8);
		\coordinate [label=left:$\scriptstyle{y}$] (S) at (-3.3,1.9);
		\coordinate [label=left:$\scriptstyle{q_{\infty}}$] (S) at (-3.3,3.9);
		\coordinate [label=left:$\scriptstyle{p_{AB}}$] (S) at (4.3,3.4);
		\coordinate [label=left:$\scriptstyle{A(c)}$] (S) at (4.3,1.8);
		\coordinate [label=left:$\scriptstyle{A(r)}$] (S) at (4,0.7);
		\coordinate [label=left:$\scriptstyle{A(g)}$] (S) at (4.7,-0.7);
		\coordinate [label=left:$\scriptstyle{A(y)}$] (S) at (4.25,-1.5);
		\coordinate [label=left:$\scriptstyle{q_{\infty}}$] (S) at (4.1,-3.5);
		\end{tikzpicture}
	\end{center}
	\caption{Cutting disks of a fundamental domain $H'$ for the $\langle A \rangle$-action on the ideal boundary of  the Ford domain of $\Delta_{3,4,\infty;\infty}$. They are the disks with thick red, blue and green-colored boundaries.}
	\label{figure:34pdisk}
\end{figure}

As in Subsection \ref{subsection:globalmodel34p}, we now cut $H'$ alone the three disks $D_{i}$ for $i=1,2,3$, we get a 3-ball, so $H'$ is a genus three handlebody.
In Figure \ref{figure:34ppolytope}, we give  a picture of the boundary of the polytope we obtained from $H'$ by cutting along the disks $D_{i}$ for $i=1,2,3$.
What is shown in Figure \ref{figure:34ppolytope} is a disk, but we glue the pairs of edges labeled  with ending vertices $q_{\infty}$ and $A(y)$, we get a new disk, then  we glue the pairs of edges labeled  with ending vertices $q_{\infty}$ and $y$, we get a 2-sphere.

We note that  for each   disk $D_{i}$, now there are two copies of them in the boundary of the 3-ball, we denote them by $D_{i,+}$ and $D_{i,-}$.  $D_{i,-}$ is the copy of $D_{i}$  which is closer to us in Figure \ref{figure:34pdisk}, and  $D_{i,+}$ is the copy of $D_{i}$  which is further away from us in Figure \ref{figure:34pdisk}.
Cutting the  2-disk $E'$ along arcs $[q_{\infty}, y]$ and  $[c,A^{-1}(p_{AB})]$  we get a 2-disk $E_{1}$. Cutting the  2-disk $A(E')$ along arcs $[q_{\infty}, A(y)]$ and  $[A(c),A^{-1}(p_{AB})]$  we get a 2-disk $E_{2}$.

The intersection of  the spinal sphere  of $B$ and the boundary of $H'$ is an annulus with one boundary consists of $[u, B(u)]$, $[B(u), B^2(u)]$ and $[B^2(u),u]$, another boundary  consists of $[y, r]$, $[r, c]$,$[c,g]_3$  and $[g,y]_3$ and one cusp $p_{AB}$. Now this 2-disk is divided into two 2-disks by $[B(u), r]$, $[c,u]$ and $[p_{AB},B^2(u)]$. One of them is a quadrilateral, we denote it by $B_4$ in Figure \ref{figure:34ppolytope}, another one is a
nonagon  and we denote it by $B_9$ in Figure \ref{figure:34ppolytope}.
The intersection of  the spinal sphere  of $B^{-1}$ and the boundary of $H'$ is an annulus with one boundary consists of $[u, B(u)]$, $[B(u), B^2(u)]$ and $[B^2(u),u]$, another boundary  consists of $[A(y), A(r)]$,  $[A(r), A(c)]_1$ $[A(c), A(g)]$  and  $[A(g), A(y)]_1$  and one cusp $A^{-1}(p_{AB})$. Now this 2-disk is divided into two 2-disks by $[B(u), r]$, $[c,u]$ and $[p_{AB},B^2(u)]$. One of them is a quadrilateral, we denote it by $B^{-1}_4$ in Figure \ref{figure:34ppolytope}, another one is a
nonagon  and we denote it by $B^{-1}_9$ in Figure \ref{figure:34ppolytope}.
Cutting  $E'$ alone arcs $[q_{\infty}, y]$ and  $[c,A^{-1}(p_{AB})]$  we get a disk  $E_{1}$, and cutting  $A(E')$ alone arcs $[q_{\infty}, A(y)]$ and  $[A(c),p_{AB}]$  we get a disk  $E_{2}$.

Now $[y,g]_2$ divides the bigon with vertices $y$ and $g$ in 	Figure \ref{figure:34pdisk} into two disks $F_{1,-}$  and  $F_{1,+}$, where $F_{1,-}$  is the  bigon  with boundary $[y,g]_1 \cup [y,g]_2$, and  $F_{1,+}$  is the  bigon  with boundary $[y,g]_2 \cup [y,g]_3$. Similarly $[A(c),A(r)]_2$ divides the bigon with vertices $A(c)$ and $A(r)$ in 	Figure \ref{figure:34pdisk} into two disks $F_{2,-}$  and  $F_{2,+}$, where $F_{2,-}$  is the  bigon  with boundary $[A(c),A(r)]_1 \cup [A(c),A(r)]_2$, and  $F_{2,+}$  is the  bigon  with boundary $[A(c),A(r)]_2 \cup [A(c),A(r)]_3$.

\begin{figure}
\begin{center}
\begin{tikzpicture}
\node at (0,0) {\includegraphics[width=10cm,height=12cm]{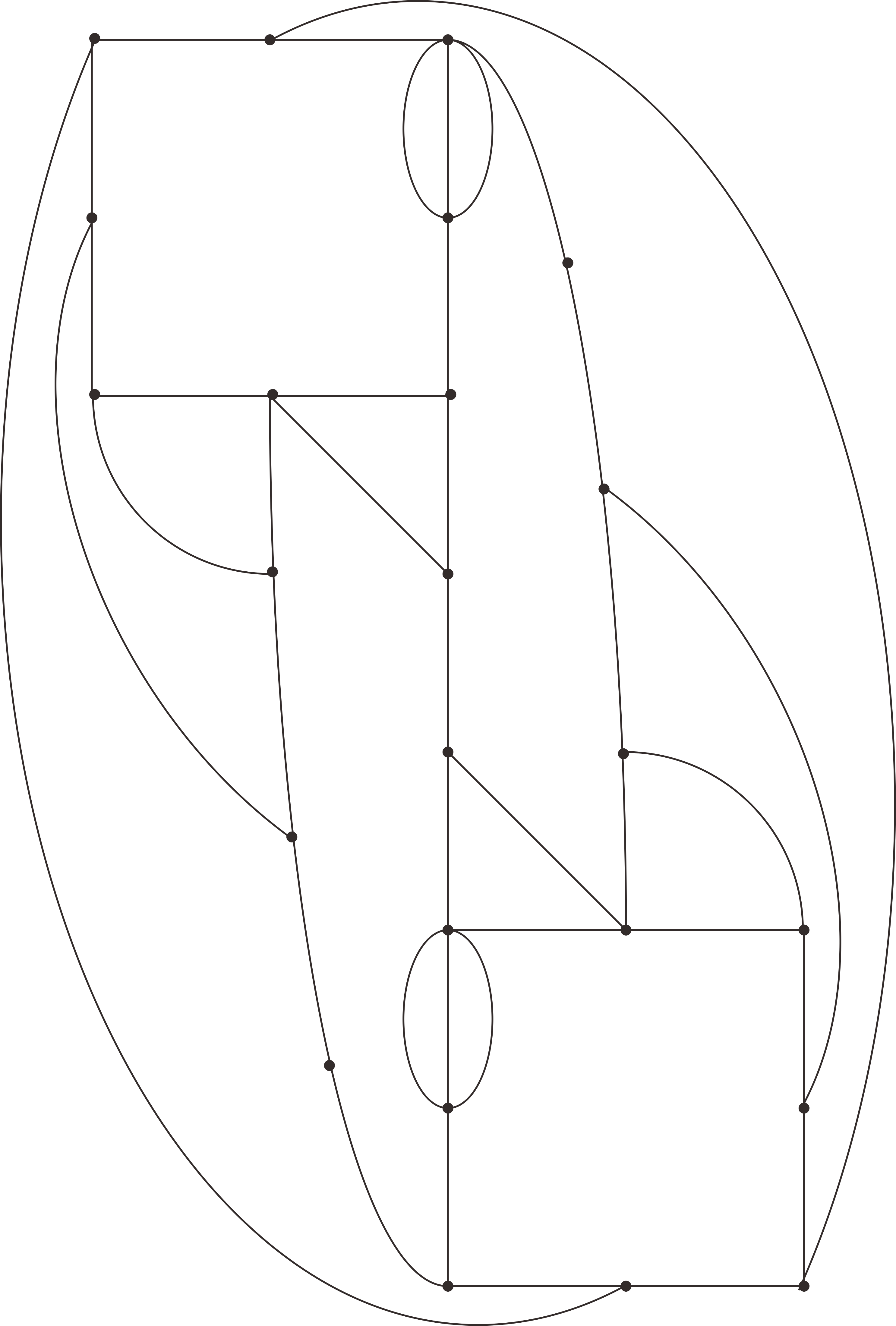}};
\coordinate [label=left:$\scriptstyle{u}$] (S) at (0.4,0.8);
\coordinate [label=left:$\scriptstyle{u}$] (S) at (-1.5,0.8);
\coordinate [label=left:$\scriptstyle{c}$] (S) at (0.4,2.4);
\coordinate [label=left:$\scriptstyle{A^{-1}(p_{AB})}$] (S) at (-1.4,2.7);
\coordinate [label=left:$\scriptstyle{c}$] (S) at (-3.7,2.6);
\coordinate [label=left:$\scriptstyle{D_{1,+}}$] (S) at (-0.1,1.9);

\coordinate [label=left:$\scriptstyle{g}$] (S) at (0.4,4);
\coordinate [label=left:$\scriptstyle{y}$] (S) at (0.4,5.8);
\coordinate [label=left:$\scriptstyle{B^{-1}(u)}$] (S) at (1.2,-0.8);
\coordinate [label=left:$\scriptstyle{D_{2,-}}$] (S) at (1.1,-2);
\coordinate [label=left:$\scriptstyle{ B^{-1}_9}$] (S) at (-0.6,-0.5);
\coordinate [label=left:$\scriptstyle{A(c)}$] (S) at (0,-2.4);

\coordinate [label=left:$\scriptstyle{A(r)}$] (S) at (0.8,-4.2);
\coordinate [label=left:$\scriptstyle{A(y)}$] (S) at (0.8,-5.4);
\coordinate [label=left:$\scriptstyle{q_{\infty}}$] (S) at (2.5,-5.9);
\coordinate [label=left:$\scriptstyle{E_{2}}$] (S) at (2.5,-3.9);
\coordinate [label=left:$\scriptstyle{A(y)}$] (S) at (4.2,-5.9);
\coordinate [label=left:$\scriptstyle{A(g)}$] (S) at (4,-3.9);
\coordinate [label=left:$\scriptstyle{A(c)}$] (S) at (4,-2.6);
\coordinate [label=left:$\scriptstyle{p_{AB}}$] (S) at (2.5,-2.6);
\coordinate [label=left:$\scriptstyle{D_{2,+}}$] (S) at (3.25,-1.7);
\coordinate [label=left:$\scriptstyle{B^{-1}(u)}$] (S) at (3,-0.6);
\coordinate [label=left:$\scriptstyle{ B^{-1}_4}$] (S) at (2.9,0.2);
\coordinate [label=left:$\scriptstyle{B_9}$] (S) at (1.3,1.2);
\coordinate [label=left:$\scriptstyle{B(u)}$] (S) at (2.5,1.7);
\coordinate [label=left:$\scriptstyle{r}$] (S) at (1.3,3.7);
\coordinate [label=left:$\scriptstyle{D_{3,+}}$] (S) at (3,3);
\coordinate [label=left:$\scriptstyle{E_{1}}$] (S) at (-1.6,4);
\coordinate [label=left:$\scriptstyle{q_{\infty}}$] (S) at (-1.6,5.9);

\coordinate [label=left:$\scriptstyle{D_{1,-}}$] (S) at (-2.4,1.7);
\coordinate [label=left:$\scriptstyle{B_{4}}$] (S) at (-2.4,0);
\coordinate [label=left:$\scriptstyle{D_{3,-}}$] (S) at (-2.4,-2.5);
\coordinate [label=left:$\scriptstyle{B(u)}$] (S) at (-1.6,-1.7);
\coordinate [label=left:$\scriptstyle{A(g)}$] (S) at (-1.3,-3.6);
\coordinate [label=left:$\scriptstyle{r}$] (S) at (-3.55,4);
\coordinate [label=left:$\scriptstyle{y}$] (S) at (-3.55,5.8);

\end{tikzpicture}
\end{center}
 \caption{The boundary of the polytope for  $\Delta_{3,4,\infty;\infty}$.}
	\label{figure:34ppolytope}
\end{figure}

If we  denote the 3-manifold at infinity of the even subgroup $\Gamma_2=\langle I_1I_2, I_2I_3 \rangle$ of  $\Delta_{3,4,\infty;\infty}$ by $M$, then $M$ is just the quotient space of the 3-ball obtained from $H'$ cutting alone$D_1 \cup D_2 \cup D_3$, then the side-pairings are
\begin{eqnarray*}
	\gamma_1&: &F_{1,-}\longrightarrow F_{1,+};\\
	\gamma_2&: &F_{2,-}\longrightarrow F_{2,+};\\
	\alpha&:&E_1 \longrightarrow E_2;\\
	\delta_1&:& D_{1,-}\longrightarrow D_{1,+};\\
	\delta_2&: &D_{2,-}\longrightarrow D_{2,+};\\
	\delta_3&: &D_{3,-}\longrightarrow D_{3,+};\\
		\beta_4&: &B_{4}\longrightarrow  B^{-1}_{4}; \\
	\beta_9&: &B_{9}\longrightarrow B^{-1}_{9}.
\end{eqnarray*}
 $\gamma_1$ is the homeomorphism  from  $F_{1,-}$ to $F_{1,+}$ such that $\alpha_1(y)=y$,  $\gamma_1(g)=g$; similarly we have $\gamma_2$;  $\alpha$ is the  orientation-reversing  homeomorphism  from $E_{1}$ to  $E_{2}$ such that $\alpha(g)=A(g)$,  $\alpha(r)=A(r)$,  $\alpha(q_{\infty})=q_{\infty}$,  $\alpha(y)=A(y)$ and  $\alpha(c)=A(c)$, here  we give  orientations of both  $E_{1}$  and  $E_{2}$ from an orientation of the 3-ball,  so $\alpha$ is uniquely determined ;  $\delta_
{i}$ is the homeomorphism  from  $D_{i,-}$ to $D_{i,+}$ which  preserves the labels on vertices for $i=1,2,3$; $\beta_4$  is the homeomorphism  from $B_{4}$ to $B^{-1}_{4}$  such that  $\beta_4(c)=A(g)=B(c)$,  $\beta_4(r)=A(c)=B(r)$,  $\beta_4(B(u))=B^2(u)$ and  $\beta_4(u)=B(u)$; $\beta_9$  is the homeomorphism  from $B_{9}$ to $B^{-1}_{9}$  such that  $\beta_9(p_{AB})=A^{-1}(p_{AB})$,  $\beta_9(B^2(u))=u$,  $\beta_9(B(u))=B^2(u)$, $\beta_9(r)=A(c)=B(r)$, $\beta_9(y)=A(r)=B(y)$,
$\beta_9(g)=A(y)=B(g)$, $\beta_9(c)=A(g)=B(c)$, $\beta_9(u)=B(u)$ and  $\beta_9(B^2(u))=u$.

We write down the  ridge circles for the 3-manifold $M$  in Table \ref{table:edgecircle34p}.

\begin{table}[!htbp]
\caption{Ridge Cycles of the 3-manifold at infinity of  of $\Delta_{3,4,\infty;\infty}$.}
  \centering
\begin{tabular}{c|c}
\toprule
\textbf{Ridge} & \textbf{Ridge Cycle  } \\
\midrule
$e_{1}$ & $E_1\cap F_{1,-}\xrightarrow{\gamma_1}  F_{1,+}\cap B_9 \xrightarrow{\beta_9}  B^{-1}_9\cap E_2 \xrightarrow{\alpha^{-1}}  E_1\cap D_{3,-} \xrightarrow{\delta_3}   D_{3,+}\cap B_9$ \\
		& $	\xrightarrow{\beta_9}   B^{-1}_9\cap  F_{2,-}  \xrightarrow{\gamma_2}  F_{2,+}\cap E_2  \xrightarrow{\alpha^{-1}}  E_1 \cap B_4  \xrightarrow{\beta_4}  B^{-1}_4\cap E_2$  \\
		& $ \xrightarrow{\alpha^{-1}}  E_1 \cap B_9     \xrightarrow{\beta_9}  B^{-1}_9 \cap D_{3,-}     \xrightarrow{\delta_3}  D_{3,+} \cap E_2  \xrightarrow{\alpha^{-1}}  E_1\cap F_{1,-} $  \\
\hline		
$e_{2}$ & $B_9\cap B^{-1}_4\xrightarrow{\beta_9^{-1}} B_4\cap B_9 \xrightarrow{\beta_4^{-1}} B_9\cap B^{-1}_9 \xrightarrow{\beta_9^{-1}} B_9\cap B^{-1}_4$\\
\hline	
$e_{3}$ & $D_{1,+}\cap B_9\xrightarrow{\delta_1} B^{-1}_9\cap D_{3,-} \xrightarrow{\beta_9} D_{3,+}\cap B^{-1}_4 \xrightarrow{\delta_3} B_4\cap D_{1,-} \xrightarrow{\beta_4^{-1}}  D_{1,+}\cap B_9$  \\
\hline		
$e_{4}$ & $D_{1,+}\cap E_{1}\xrightarrow{\delta_1} E_{2}\cap D_{2,+} \xrightarrow{\alpha} D_{2,-}\cap E_{2} \xrightarrow{\delta_2^{-1}} E_{1}\cap D_{1,-} \xrightarrow{\alpha^{-1}} D_{1,+}\cap E_{1}$ \\
\hline		
$e_{5}$ & $D_{3,+}\cap E_{1}\xrightarrow{\delta_3} E_{2}\cap D_{3,+} \xrightarrow{\alpha} D_{3,-}\cap E_{2} \xrightarrow{\delta_3^{-1}} E_1\cap D_{3,-} \xrightarrow{\alpha^{-1}} D_{3,+}\cap E_{1}$  \\
\hline		
$e_{6}$ & $B^{-1}_9\cap D_{1,+}\xrightarrow{\delta_1^{-1}} D_{1,-}\cap B^{-1}_{9} \xrightarrow{\beta_9^{-1}} B_{9}\cap D_{2,-} \xrightarrow{\delta_2} D_{2,+}\cap B_{9}\xrightarrow{\beta_9} B^{-1}_9\cap D_{1,+}$  \\
\hline		
$e_{7}$ & $B_9 \cap D_{3,+}\xrightarrow{\beta_9^{-1}} D_{3,-}\cap B_4 \xrightarrow{\delta_3^{-1}} B^{-1}_4 \cap D_{2,+} \xrightarrow{\beta_4} D_{2,-}\cap B^{-1}_9 \xrightarrow{\delta_2^{-1}} B_9 \cap D_{3,+}$ \\
\hline		
$e_{8}$ & $F_{1,-}\cap F_{1,+}\xrightarrow{\gamma_1} F_{1,-}\cap F_{1,+}$ \\
\hline		
$e_{9}$ & $F_{2,-}\cap F_{2,+}\xrightarrow{\gamma_2}F_{2,-}\cap F_{2,+}$ \\
\bottomrule
\end{tabular}
\label{table:edgecircle34p}
\end{table}

\subsection{The 3-manifold at infinity of $\Delta_{3,4, \infty;\infty}$} \label{subsection:34pmanifoldfinal}

From Table	\ref{table:edgecircle34p}, we get Table  \ref{table:relation34p}, then we get a presentation of the fundamental group of  the 3-manifold $M$,  which is a group $\pi_{1}(M)$ with eight generators $\alpha$,  $\gamma_1$, $\gamma_2$, $\delta_1$, $\delta_2$, $\delta_3$, $\beta_4$, $\beta_9$ and nine relations in Table \ref{table:relation34p}.

\begin{table}[!htbp]
\caption{Cycle relation  of the 3-manifold at infinity of  of $\Delta_{3,4,\infty;\infty}$.}
  \centering
\begin{tabular}{c|c}
\toprule
\textbf{Ridge} & \textbf{Ridge relation  } \\
\midrule
$e_{1}$  & $\alpha^{-1}  \delta_3 \beta_9  \alpha^{-1}   \beta_4 \alpha^{-1} \gamma_2 \beta_9 \delta_3 \alpha^{-1} \beta_9 \gamma_1$ \\	

$e_{2}$  & $\beta^{-1}_9 \beta^{-1}_4 \beta^{-1}_9$ \\
		
$e_{3}$  & $\beta^{-1}_4 \delta_3\beta_9 \delta_1$ \\
		
$e_{4}$  & $\alpha^{-1} \delta^{-1}_2 \alpha \delta_1$ \\
		
$e_{5}$  & $\alpha^{-1}  \delta^{-1}_3 \alpha  \delta_3$ \\
		
$e_{6}$  & $\beta_9 \delta_2 \beta^{-1}_9 \delta^{-1}_1$ \\
			
$e_{7}$ & $\delta^{-1}_2 \beta_4 \delta^{-1}_3 \beta^{-1}_9$ \\
			
$e_{8}$ & $\gamma_1 $ \\
			
$e_{9}$ & $\gamma_2$ \\
\bottomrule
\end{tabular}
\label{table:relation34p}
\end{table}

Consider the 3-manifold  $m295$ in Snappy  Census \cite{CullerDunfield:2014}, it is hyperbolic with volume 4.4153324775... and $$\pi_{1}(m295)=\langle s,t| st^{-4}s^2t^{-1}s^{-1}t^{4}s^{-2}t\rangle.$$
 \texttt{Magma} tells us  that $\pi_{1}(m295)$ and $\pi_{1}(M)$ above are isomorphic and finds an isomorphism $\Psi_2: \pi_1(m295)\longrightarrow \pi_1(M)$  given by
$$
\Psi_2(s)=\delta_3^{-1}\beta_9^{-1}, \quad \Psi_2(t)=\alpha\delta_3^{-1}\beta_9^{-1}.
$$

 As in the proof of Theorem \ref{thm:3pp},  we get that  $M$ is homeomorphic to $m295$. This finishes the proof of Theorem \ref{thm:34p}.

\end{document}